\documentclass[11pt]{article}

\usepackage[margin=1.12in]{geometry}
\usepackage{amsmath,amssymb,amsthm,mathtools}
\usepackage{enumitem}
\usepackage{microtype}
\usepackage{needspace}
\usepackage[hidelinks]{hyperref}

\numberwithin{equation}{section}

\newtheorem{theorem}{Theorem}[section]
\newtheorem{proposition}[theorem]{Proposition}
\newtheorem{lemma}[theorem]{Lemma}
\newtheorem{corollary}[theorem]{Corollary}

\newcommand{\T}{\mathbb T}
\newcommand{\Z}{\mathbb Z}
\newcommand{\N}{\mathbb N}

\newcommand{\id}{\mathrm{id}}
\newcommand{\sgn}{\operatorname{sgn}}

\newcommand{\Tr}{\operatorname{Tr}}

\newcommand{\one}{\mathbf 1}

\title{Closed Response Calculus for SLE Weldings and\\
Weil--Petersson K\"ahler Geometry}
\author{Chunhao Cai\\
{\small School of Mathematics (Zhuhai), Sun Yat-Sen University}\\
{\small \texttt{caichh9@mail.sysu.edu.cn}}}
\date{September 2026}

\begin{document}
\maketitle

\begin{abstract}
For $0<\kappa\leq4$, we construct a closed response calculus for
$\mathrm{SLE}_\kappa$ weldings and a canonical Dirichlet form.
The divergence covariance combines the Weil--Petersson and
Velling--Kirillov forms. The integrated response gives exact changes of
measure and canonical Liouville--capacity increments on conformally
removable weldings.
\end{abstract}

\noindent\textit{2020 Mathematics Subject Classification.}
Primary 60J67; Secondary 30C62, 30F60, 31C25, 58B20.

\noindent\textit{Keywords.}
Schramm--Loewner evolution, conformal welding, response calculus,
Dirichlet forms, Weil--Petersson geometry, Liouville action, quasi-invariance.

\tableofcontents
\clearpage

\section{Introduction}
\label{sec:introduction}

The universal Liouville action is a K\"ahler potential for the
Weil--Petersson metric and equals $\pi$ times the Loewner energy of the
welding curve \cite{TT06,Wang19}.  For Schramm--Loewner evolution (SLE)
loop measures, infinitesimal conformal restriction gives Virasoro
divergence formulas \cite{GQW25}.  For their welding homeomorphisms,
analytic adjoint and likelihood formulas are studied in \cite{BJ25},
and quasi-invariance under Weil--Petersson composition is proved in
\cite{FS25} for SLE parameters in $(0,4)$.
We construct a closed response differential on rough SLE weldings,
identify the covariance geometry of its divergence, and integrate the
divergence to finite transformations.  The resulting increments agree
with analytic limits of the Liouville--capacity action on every conformally
removable Jordan welding.  Integration by parts for the actual right
flow connects this differential with the change of the welding law.

\subsection{The welding law and response differential}

Write $\widehat{\mathbb C}=\mathbb C\cup\{\infty\}$ for the Riemann sphere,
and set
\[
 \mathbb D:=\{z\in\mathbb C:|z|<1\},\qquad
 \mathbb S^1:=\partial\mathbb D,\qquad
 \mathbb D^*:=\widehat{\mathbb C}\setminus\overline{\mathbb D}.
\]
We identify the angular coordinate $\theta\in\T:=\mathbb R/(2\pi\mathbb Z)$
with $e^{i\theta}\in\mathbb S^1$.
Let $\operatorname{Homeo}_+(\mathbb S^1)$ denote the space of
orientation-preserving homeomorphisms of $\mathbb S^1$.

Let $\gamma\subset\mathbb C$ be a Jordan curve.  Denote the two
components of $\widehat{\mathbb C}\setminus\gamma$ by $U_+$ and $U_-$,
with $\infty\in U_-$.  Conformal bijections
\[
 f:\mathbb D\longrightarrow U_+,\qquad
 g:\mathbb D^*\longrightarrow U_-
\]
extend to homeomorphisms of the boundaries and define the
\emph{welding homeomorphism}
\[
 h:=g^{-1}\circ f\big|_{\mathbb S^1}
 \in\operatorname{Homeo}_+(\mathbb S^1).
\]
The curve $\gamma$ is \emph{conformally removable} if every homeomorphism
of $\widehat{\mathbb C}$ that is conformal on
$\widehat{\mathbb C}\setminus\gamma$ is a M\"obius transformation.
We denote by $\mathcal W$ the set of welding homeomorphisms obtained from
conformally removable Jordan curves in this way.
For each $h\in\mathcal W$, the welding pair is unique up to a common
M\"obius postcomposition.  We choose its unique representative $(f_h,g_h)$
satisfying
\[
 h=g_h^{-1}\circ f_h\quad\text{on }\mathbb S^1,\qquad
 f_h(0)=0,\quad f_h'(0)=1,\quad g_h(\infty)=\infty,
\]
and write $f,g$ when $h$ is fixed.

Fix $0<\kappa\leq4$ and set
\begin{equation}\label{eq:intro-central-data}
 Q_\kappa:=\frac{\sqrt\kappa}{2}+\frac{2}{\sqrt\kappa},
 \qquad c_{\mathrm L}:=1+6Q_\kappa^2.
\end{equation}
Let $\widetilde\nu_\kappa$ be the SLE welding probability measure of
\cite[Definition~1.1]{BJ25}.  Its exterior parametrization is marked by a
uniform rotation, independent of the normalized SLE loop.
This measure is carried by $\mathcal W$; conformal removability at
$\kappa=4$ follows from \cite[Theorem~1.1]{KMS22}.
We equip $\operatorname{Homeo}_+(\mathbb S^1)$ with the uniform topology
and $\mathcal W$ with the trace of its Borel sigma-algebra.
We use the completion of this sigma-algebra for $L^p(\widetilde\nu_\kappa)$.
Unless otherwise stated, $L^2(\widetilde\nu_\kappa)$ is a real Hilbert space.
The equivalent realization by normalized welding pairs is established in
Appendix~\ref{sec:DR-frame-spaces}.
For a bounded pointwise function $F$, $\|F\|_{\sup}$ denotes the supremum
of $|F|$ over its whole domain.

We identify a smooth real vector field on the circle with its coefficient
$v\in C^\infty(\T;\mathbb R)$ in $v(\theta)\partial_\theta$.
Its Fourier coefficients are
\[
 v_n:=\frac1{2\pi}\int_0^{2\pi}v(\theta)e^{-in\theta}\,d\theta,
 \qquad n\in\mathbb Z.
\]
The field is called \emph{horizontal} if $v_{-1}=v_0=v_1=0$, in which case
\begin{equation}\label{eq:intro-horizontal-field}
 v(\theta)=\sum_{|n|\geq2}v_ne^{in\theta},
 \qquad v_{-n}=\overline{v_n}.
\end{equation}
On the smooth horizontal fields define the inner product
\begin{equation}\label{eq:intro-WP-metric}
 (v,w)_{\mathrm{WP}}
 :=2\operatorname{Re}\sum_{n=2}^{\infty}
 (n^3-n)v_n\overline{w_n},
 \qquad \|v\|_{\mathrm{WP}}^2:=(v,v)_{\mathrm{WP}}.
\end{equation}
Let $H_{\mathrm{WP}}$ be their completion for this norm, and let
$H_{\mathrm{WP}}^{\mathrm{fin}}$ be the subspace of finite Fourier sums.
For a smooth real field $v$, let $\psi_t^v$ be its circle flow.
Smooth right composition preserves $\mathcal W$ by
\cite[Lemma~4.1]{FS25}.  For $F:\mathcal W\to\mathbb R$ define
\begin{equation}\label{eq:intro-right-response}
 T_t^vh:=h\circ\psi_t^v,
 \qquad D_vF(h):=\left.\frac d{dt}\right|_{t=0}F(T_t^vh)
\end{equation}
whenever the derivative exists.

For $a\in C^\infty(\T;\mathbb R)$ and
$b\in C^\infty(\mathbb S^1;\mathbb R)$, set
\begin{equation}\label{eq:intro-averaged-observable}
 J_{a,b}(h):=\frac1{2\pi}\int_0^{2\pi}
 a(\theta)b(h(e^{i\theta}))\,d\theta.
\end{equation}
For $m\geq1$, let $C_b^\infty(\mathbb R^m;\mathbb R)$ denote the smooth
functions whose values and derivatives of every order are bounded.
Choose $a^{(j)}\in C^\infty(\T;\mathbb R)$ and
$b^{(j)}\in C^\infty(\mathbb S^1;\mathbb R)$ for $1\leq j\leq m$.
Let $\mathcal C$ be the algebra of all functions of the form
\begin{equation}\label{eq:intro-core}
 F(h)=\phi\bigl(J_{a^{(1)},b^{(1)}}(h),\ldots,J_{a^{(m)},b^{(m)}}(h)\bigr),
 \qquad \phi\in C_b^\infty(\mathbb R^m;\mathbb R),
\end{equation}
as $m$, $a^{(j)}$, $b^{(j)}$, and $\phi$ vary.
Lemma~\ref{lem:B1-J-response} and the chain rule show that, for each
$F\in\mathcal C$, there is a constant $C_F<\infty$ such that
\[
 \sup_{h\in\mathcal W}|D_vF(h)|\leq C_F\|v\|_{\mathrm{WP}}
\]
for every smooth horizontal field $v$.
Thus $v\mapsto D_vF(h)$ extends to a bounded real linear functional on
$H_{\mathrm{WP}}$.  Its Riesz representative $DF(h)\in H_{\mathrm{WP}}$
is characterized by
\begin{equation}\label{eq:intro-gradient-identity}
 D_vF(h)=(DF(h),v)_{\mathrm{WP}},\qquad v\in H_{\mathrm{WP}}.
\end{equation}
This defines the response operator
\begin{equation}\label{eq:intro-gradient-operator}
 D:\mathcal C\subset L^2(\widetilde\nu_\kappa)
 \longrightarrow L^2(\widetilde\nu_\kappa;H_{\mathrm{WP}}).
\end{equation}
We use the same notation $D$ for its closure, whose existence is proved
in the next theorem, and write $\operatorname{Dom}(D)$ for its domain.
On this domain define the bilinear form
\begin{equation}\label{eq:intro-Dirichlet-form}
 \mathcal E(F,G):=\int(DF,DG)_{\mathrm{WP}}\,d\widetilde\nu_\kappa,
 \qquad F,G\in\operatorname{Dom}(D).
\end{equation}
A \emph{normal contraction} is a function $\eta:\mathbb R\to\mathbb R$
satisfying
\[
 \eta(0)=0,\qquad |\eta(x)-\eta(y)|\leq|x-y|\quad(x,y\in\mathbb R).
\]
\begin{theorem}\label{thm:B1-main}
The response operator $D$ in \eqref{eq:intro-gradient-operator} is densely
defined and closable, with
\begin{equation}\label{eq:intro-core-density}
 \overline{\mathcal C}^{\,L^2(\widetilde\nu_\kappa)}
 =L^2(\widetilde\nu_\kappa).
\end{equation}
Its closure defines the symmetric Dirichlet form
$(\mathcal E,\operatorname{Dom}(D))$ on $L^2(\widetilde\nu_\kappa)$.
For $F\in\operatorname{Dom}(D)$ and every normal contraction $\eta$,
\begin{equation}\label{eq:intro-energy-contraction}
 \eta\circ F\in\operatorname{Dom}(D),\qquad
 \mathcal E(\eta\circ F,\eta\circ F)\leq\mathcal E(F,F).
\end{equation}
\end{theorem}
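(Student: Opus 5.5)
We will prove the three assertions in order: density of $\mathcal C$, closability of $D$ via an integration-by-parts formula, and then the Dirichlet property by approximation.

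\emph{Density.} The functions $J_{a,b}$ are continuous on $\mathcal W$ in the uniform topology. They separate points: taking $b$ among the real and imaginary parts of $e^{ik\cdot}$ and $a$ among the trigonometric polynomials recovers every Fourier coefficient of $\theta\mapsto h(e^{i\theta})$. Hence the countable family $\{J_{a,b}\}$ generates the Borel sigma-algebra of the Polish space $\operatorname{Homeo}_+(\mathbb S^1)$ restricted to $\mathcal W$. Bounded functions of finitely many $J$'s, smoothed through $C_b^\infty$, are dense in $L^2$ of any finite measure on that sigma-algebra. This follows from a monotone class argument together with martingale convergence along an increasing sequence of finite families. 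This gives \eqref{eq:intro-core-density}.

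\emph{Closability.} The key input is integration by parts along the right flow. For each $v\in H^{\mathrm{fin}}_{\mathrm{WP}}$, quasi-invariance of $\widetilde\nu_\kappa$ under $T_t^v$ (from \cite{FS25}, with the likelihood formulas of \cite{BJ25}) should provide a divergence $\delta(v)\in L^2(\widetilde\nu_\kappa)$ such that
\[
 \int D_vF\,d\widetilde\nu_\kappa=\int F\,\delta(v)\,d\widetilde\nu_\kappa,\qquad F\in\mathcal C.
\]
To derive this, we differentiate $\int F(T_t^vh)\,d\widetilde\nu_\kappa=\int F\,\rho_t^v\,d\widetilde\nu_\kappa$ at $t=0$. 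The interchange is justified by the uniform bound $\sup|D_vF|\le C_F\|v\|_{\mathrm{WP}}$ and by $L^2$-differentiability of the Radon--Nikodym density $\rho_t^v$ at $t=0$. We identify $\delta(v)=\partial_t\rho_t^v|_{t=0}$, which should be the Virasoro/Liouville divergence of \cite{GQW25}. Applying the formula to $FG$ and using the product rule gives
\[
 \int G\,D_vF\,d\widetilde\nu_\kappa=\int F\bigl(G\,\delta(v)-D_vG\bigr)\,d\widetilde\nu_\kappa,\qquad F,G\in\mathcal C.
\]
Now suppose $F_k\to0$ in $L^2$ and $DF_k\to\Phi$ in $L^2(H_{\mathrm{WP}})$. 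Then $\int G\,(\Phi,v)_{\mathrm{WP}}\,d\widetilde\nu_\kappa=0$ for all $G\in\mathcal C$ and all $v$ in the countable dense set $H^{\mathrm{fin}}_{\mathrm{WP}}$. Here we use that $G\,\delta(v)-D_vG\in L^2$, since $G$ and $D_vG$ are bounded. By density of $\mathcal C$ we conclude $\Phi=0$. This is the main obstacle: establishing $\delta(v)\in L^2$ (not merely $L^1$) and the differentiability of $\rho^v_t$ for trigonometric fields. I would first handle single modes $v=\operatorname{Re}(c\,e^{in\theta})$, then pass to finite sums by linearity of $\delta$. If the $L^2$ bound is delicate, $\delta(v)\in L^1$ suffices for closability once $G$ is restricted to $\mathcal C$.

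\emph{Dirichlet property.} The form $\mathcal E$ is symmetric, nonnegative and closed, since it is the graph form of a closed operator. For $F\in\mathcal C$ and smooth $\eta$ with $\eta(0)=0$ and $|\eta'|\le1$, the chain rule gives $\eta\circ F\in\mathcal C$ and $D(\eta\circ F)=\eta'(F)\,DF$, so $\mathcal E(\eta\circ F)\le\mathcal E(F)$. For general $F\in\operatorname{Dom}(D)$, we approximate by $F_k\in\mathcal C$ in graph norm. The sequence $D(\eta\circ F_k)$ is then bounded in $L^2(H_{\mathrm{WP}})$. A weakly convergent subsequence together with closedness (via Banach--Saks or Ces\`aro means) yields $\eta\circ F\in\operatorname{Dom}(D)$ with the energy inequality, by lower semicontinuity. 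A general normal contraction $\eta$ is the uniform limit of smooth normal contractions $\eta_\epsilon$, obtained by mollifying and recentring at $0$. The same weak compactness argument then gives \eqref{eq:intro-energy-contraction}.
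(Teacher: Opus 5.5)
\textbf{Verdict: genuine gap in the closability step.} Your density argument and your treatment of the Markov property are sound and follow the paper's architecture. Recovering the Fourier coefficients of $\operatorname{Re}h$ and $\operatorname{Im}h$ via $b=\operatorname{Re},\operatorname{Im}$ is a cleaner way to separate points than the paper's bump functions. Passing directly to a weak limit of $D(\eta\circ F_k)$ is a legitimate shortcut past the paper's strong chain rule on $\operatorname{Dom}(D)$.

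The gap is in closability. Everything there rests on an integration-by-parts identity $\int D_vF\,d\widetilde\nu_\kappa=\int F\,\delta(v)\,d\widetilde\nu_\kappa$ with $\delta(v)$ integrable enough. You only assert that quasi-invariance ``should provide'' it, and you flag its existence and integrability as the main obstacle. Quasi-invariance, as you invoke it from \cite{FS25}, supplies Radon--Nikodym densities for each fixed $t$. It does not give their differentiability in $L^1$ or $L^2$ at $t=0$, nor an identification of the derivative; you would have to prove both. Moreover, that reference covers only $0<\kappa<4$, so your route leaves $\kappa=4$ untreated. Your fallback remark that $\delta(v)\in L^1$ suffices is also not justified as written, because pairing $F_k\to0$ in $L^2$ against $G\,\delta(v)$ needs $G\,\delta(v)\in L^2$. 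It can be rescued: replace $F_k$ by $\eta(F_k)$ for a bounded smooth $\eta$ with $\eta(0)=0$ and $\eta'(0)=1$, pass to an a.e.\ convergent subsequence, and use dominated convergence.

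The paper supplies the missing input in the opposite logical order. Proposition~\ref{prop:DR-right-IBP} proves $\int D_vF\,d\widetilde\nu_\kappa=\int F\rho_v\,d\widetilde\nu_\kappa$ with the explicit stress score $\rho_v=\sum_n\{x_n\operatorname{Im}t_n+y_n\operatorname{Re}t_n\}$. Here the $t_n$ are the coefficients of $\frac{c_{\mathrm L}}{12}\mathcal Sf+f'^2/f^2-z^{-2}$. Each $t_n$ is a polynomial in $a_1,\ldots,a_n$, so de Branges' bound $|a_m|\leq m+1$ makes $\rho_v$ bounded. The integrability question then disappears, and Lemma~\ref{lem:B1-scalar-close} runs exactly as in your sketch.

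The identity itself comes from the infinitesimal conformal restriction of the SLE loop measure \cite{GQW25}, a Ward identity with central charge $c_{\mathrm m}$, lifted to framed loops. There the ghost trace $\frac{26}{12}$ shifts $c_{\mathrm m}$ to $c_{\mathrm L}=26-c_{\mathrm m}$. The capacity tilt $d\widehat\nu_\kappa=e^{2k}d\widetilde\nu_\kappa$ produces the $f'^2/f^2-z^{-2}$ term. Quasi-invariance is then an output, obtained by integrating the score (Lemma~\ref{lem:DR-integrated-divergence}), not an input. Without an argument of this kind, or an explicit density shown to be differentiable with square-integrable derivative, your closability step, and hence the theorem, is not proved.
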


\subsection{Stress covariance and geometry}

We express integration by parts for the right response $D_v$ through the
coefficients of the normalized interior map.  For $h\in\mathcal W$, write $f=f_h$ and define
$a_m(h)$ by
\begin{equation}\label{eq:intro-interior-map}
 f(z)=z\left(1+\sum_{m\geq1}a_m(h)z^m\right),\qquad z\in\mathbb D.
\end{equation}
Its Schwarzian derivative is
\[
 \mathcal Sf:=\frac{f'''}{f'}-\frac32\left(\frac{f''}{f'}\right)^2.
\]
For an integer $j$, let $[z^j]$ denote the coefficient of $z^j$ in a
Laurent expansion at zero.  Define the stress function $q_f$ and its
coefficients $t_n$ by
\begin{align}
 q_f(z)&:=\frac{c_{\mathrm L}}{12}\mathcal Sf(z)
   +\frac{f'(z)^2}{f(z)^2}-\frac1{z^2},
 \label{eq:intro-stress}\\
 \shortintertext{and}
 t_n(h)&:=[z^{n-2}]q_f(z),\qquad n\geq1.
 \label{eq:intro-stress-mode}
\end{align}
The normalization of $f$ gives
\[
 \frac{f'(z)^2}{f(z)^2}-\frac1{z^2}
 =\frac{2a_1(h)}z+O(1),\qquad z\to0.
\]
Thus $q_f$ has at most a simple pole at zero, and
$q_f(z)=\sum_{n\geq1}t_n(h)z^{n-2}$.
Each $t_n$ is a polynomial in finitely many coefficients of $f$ and is
bounded on $\mathcal W$; see Lemma~\ref{lem:DR-stress-polynomial}.

Every $v\in H_{\mathrm{WP}}^{\mathrm{fin}}$ can be written, for some
integer $N\geq2$ and real coefficients $x_n,y_n$, as
\begin{equation}\label{eq:intro-real-finite-field}
 v(\theta)=\sum_{n=2}^{N}
 \{x_n\cos(n\theta)+y_n\sin(n\theta)\}.
\end{equation}
Its stress score is the bounded real-valued function
\begin{equation}\label{eq:intro-score}
 \rho_v(h):=\sum_{n=2}^{N}
 \{x_n\operatorname{Im}t_n(h)+y_n\operatorname{Re}t_n(h)\}.
\end{equation}
We write $\rho(v)=\rho_v$ for the resulting linear map on
$H_{\mathrm{WP}}^{\mathrm{fin}}$ and use the same notation for its continuous
extension below.
Proposition~\ref{prop:DR-right-IBP}, proved in Appendix~\ref{app:right-IBP},
identifies $\rho_v$ as the divergence of the actual right flow:
\begin{equation}\label{eq:intro-right-IBP}
 \int D_vF\,d\widetilde\nu_\kappa
 =\int F\rho_v\,d\widetilde\nu_\kappa,
 \qquad F\in\mathcal C,\quad v\in H_{\mathrm{WP}}^{\mathrm{fin}}.
\end{equation}
This identity gives the closability in Theorem~\ref{thm:B1-main} and the
adjoint relation in the next theorem.

\begin{samepage}
To state the covariance formula, define on $H_{\mathrm{WP}}$ the
Velling--Kirillov form and the operator $J_0$ by
\begin{align}
 (v,w)_{\mathrm{VK}}&:=\operatorname{Re}\sum_{n=2}^{\infty}
 n v_n\overline{w_n},\label{eq:intro-VK-form}\\
 \shortintertext{and}
 (J_0v)_n&:=i\,\sgn(n)v_n,\qquad |n|\geq2.
 \label{eq:intro-complex-structure}
\end{align}
\end{samepage}
Set
\begin{align}
 G_\kappa(v,w)&:=\frac{c_{\mathrm L}}{12}(v,w)_{\mathrm{WP}}
                 +4(v,w)_{\mathrm{VK}},
 \label{eq:intro-geometric-form}\\
 \omega_{\kappa,0}(v,w)&:=G_\kappa(J_0v,w).
 \label{eq:intro-symplectic-form}
\end{align}
Let $L_0^2(\widetilde\nu_\kappa;\mathbb R)$ denote the real square-integrable
functions with zero integral, and let $D^*$ be the Hilbert-space adjoint of
the closed operator $D$ from Theorem~\ref{thm:B1-main}.
We identify $v\in H_{\mathrm{WP}}$ with the constant function $h\mapsto v$
in $L^2(\widetilde\nu_\kappa;H_{\mathrm{WP}})$.

\begin{theorem}\label{thm:intro-Fisher-Kahler}
The score map $\rho$ extends uniquely to a bounded real-linear map
\begin{equation}\label{eq:intro-score-extension}
 \rho:H_{\mathrm{WP}}\longrightarrow
 L_0^2(\widetilde\nu_\kappa;\mathbb R).
\end{equation}
For $v,w\in H_{\mathrm{WP}}$, we have
\begin{equation}\label{eq:intro-adjoint-score}
 v\in\operatorname{Dom}(D^*),\qquad D^*v=\rho_v,
\end{equation}
\noindent and
\begin{equation}\label{eq:intro-Fisher-form}
 \int\rho_v\rho_w\,d\widetilde\nu_\kappa=G_\kappa(v,w).
\end{equation}
\end{theorem}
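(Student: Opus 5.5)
The heart of the proof is the covariance identity \eqref{eq:intro-Fisher-form} on finite fields, $v,w\in H_{\mathrm{WP}}^{\mathrm{fin}}$. Once it holds there, $\rho$ is an isometry from $(H_{\mathrm{WP}}^{\mathrm{fin}},G_\kappa^{1/2})$ into $L^2$. Since
\[
 G_\kappa(v,v)\le\Bigl(\tfrac{c_{\mathrm L}}{12}+C\Bigr)\|v\|_{\mathrm{WP}}^2
\]
(because $n\le n^3-n$ for $n\ge2$), $\rho$ is WP-bounded on the dense subspace $H_{\mathrm{WP}}^{\mathrm{fin}}$. It therefore extends uniquely by continuity, and \eqref{eq:intro-Fisher-form} passes to the limit. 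The zero-mean property comes from \eqref{eq:intro-right-IBP} with $F\equiv1\in\mathcal C$: since $D_v1=0$, we get $\int\rho_v\,d\widetilde\nu_\kappa=0$, and $L_0^2$ is closed.

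For the adjoint statement \eqref{eq:intro-adjoint-score}, I take $v$ finite first. Then \eqref{eq:intro-right-IBP} combined with \eqref{eq:intro-gradient-identity} gives
\[
 \langle DF,v\rangle_{L^2(H_{\mathrm{WP}})}=\int D_vF\,d\widetilde\nu_\kappa=\langle F,\rho_v\rangle
\]
for $F\in\mathcal C$. Since $\mathcal C$ is a core for the closure $D$ (Theorem~\ref{thm:B1-main}), this identity extends to all $F\in\operatorname{Dom}(D)$ by graph-norm approximation. Hence $v\in\operatorname{Dom}(D^*)$ and $D^*v=\rho_v$. For general $v$, I take finite $v_k\to v$ in $H_{\mathrm{WP}}$. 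Then $\rho_{v_k}\to\rho_v$ in $L^2$, and the closedness of $D^*$ gives $v\in\operatorname{Dom}(D^*)$ with $D^*v=\rho_v$.

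It remains to prove \eqref{eq:intro-Fisher-form} for finite fields. Writing $\rho_v$ in terms of the modes $t_n$, it suffices by bilinearity to compute the second moments $\mathbb E[t_n t_m]$ and $\mathbb E[t_n\overline{t_m}]$ for $n,m\ge2$. The target values are
\[
 \mathbb E[t_n\overline{t_m}]=\delta_{nm}\Bigl(\tfrac{c_{\mathrm L}}{12}(n^3-n)+2n\Bigr)\cdot\tfrac12,\qquad \mathbb E[t_nt_m]=0,
\]
with constants to be matched against \eqref{eq:intro-geometric-form} once $x_n,y_n$ are expressed via $v_n$. The relation $\rho_v=\sum\{x_n\operatorname{Im}t_n+y_n\operatorname{Re}t_n\}$ pairs $v$ with $J_0$-rotated modes, and this is why the WP-plus-VK form appears.

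To get these moments I will apply the integration by parts \eqref{eq:intro-right-IBP} again, with $F$ a (truncated, $\phi\in C_b^\infty$) function of $\operatorname{Re}t_m$ or $\operatorname{Im}t_m$. This requires knowing that $t_m$ lies in $\operatorname{Dom}(D)$ and requires computing $D_vt_m$. Here I would use the conformal-welding variational formula: right composition $h\circ\psi_t^v$ perturbs $f$ by a Beltrami/Kirillov vector field. The first variation of $f$ is $\dot f=f'\cdot$(the projection of $v$ onto the interior Hardy part), plus the contribution of $g$ through the welding Hilbert transform.

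The expected result is a Virasoro-type commutation
\[
 D_{v}t_m=\tfrac{c_{\mathrm L}}{12}\,(\text{cubic anomaly in }m)\,v_{m}+(\text{terms with zero mean}),
\]
where the Schwarzian transformation law produces the $\frac{c_{\mathrm L}}{12}(m^3-m)$ central term and the $f'^2/f^2$ term produces the linear $2m$ term. After averaging, only the central constants survive. Indeed, the non-central parts $\sum_k (\ldots)t_{m-k}$ integrate to zero by the zero-mean property already proved, together with the vanishing of the mean of products of fields that carry a rotation charge. That vanishing holds because the uniform rotation marking makes $\widetilde\nu_\kappa$ rotation-invariant, so $\mathbb E[t_nt_m]=0$ and $\mathbb E[t_n\overline{t_m}]=0$ for $n\ne m$ by charge counting.

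The main obstacle is this last step. It requires justifying $t_m\in\operatorname{Dom}(D)$, i.e.\ approximating $t_m$ by elements of $\mathcal C$ with convergent responses, and it requires pinning down exactly the central term in $D_vt_m$, including the contribution of the exterior map $g$. For the approximation I would first try to express $a_k(h)$ through the observables $J_{a,b}$ using the boundary correspondence; the remaining difficulty is controlling the exterior contribution in the variation.
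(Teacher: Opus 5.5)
\textbf{Gap: the finite-mode covariance is not proved.} Your handling of the peripheral assertions is sound and is essentially the paper's. The bounded extension and the passage of \eqref{eq:intro-Fisher-form} to the limit follow by density. The zero mean follows from \eqref{eq:intro-right-IBP} with $F\equiv1$. The adjoint statement follows from the core property of $\mathcal C$ together with closedness of $D^*$. What is missing is the heart of the theorem, $\int t_n\overline{t_n}\,d\widetilde\nu_\kappa=C_{\kappa,n}$, and both obstacles you flag are real.

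\emph{First obstacle: the test class.} \eqref{eq:intro-right-IBP} is only available for cylinder functions, and your plan to place $t_m$ in $\operatorname{Dom}(D)$ through the observables $J_{a,b}$ has no mechanism behind it. The coefficients $a_m(h)$ are only Borel functions of $h$ for the uniform topology; the paper gets their measurability via Lusin--Souslin and frame spaces, not via continuity. So nothing supplies cylinder approximants whose responses converge to the pointwise $D_vt_m$. Even then, you would still have to identify $(Dt_m,v)_{\mathrm{WP}}$ with that pointwise response. The paper avoids this entirely: Proposition~\ref{prop:DR-right-IBP} is proved directly on the larger algebra $\mathcal A_{\mathrm{fl}}$, which contains $a_m$ and $\overline{a_m}$. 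The proof goes through the frame-space Ward identity of Appendix~\ref{app:physical-ward} and the Laurent approximation of Appendix~\ref{app:right-IBP}. Thus $F=t_n$ is an admissible test from the start, and this is not a consequence of the $\mathcal C$ version.

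\emph{Second obstacle: the exact central term.} Rotation charge cannot produce it. With $D_n^+=D_{n,c}+iD_{n,s}$, the function $D_n^+t_n$ has charge zero, so symmetry does not fix its mean; you need its exact value, not a ``cubic anomaly plus zero-mean terms'' heuristic. The missing ingredient is Lemma~\ref{lem:DR-exact-change-frame}. On coefficient polynomials, $D_n^+=i\mathcal L_n$ with $\mathcal L_n=\partial_{a_n}+\sum_{m\geq1}(m+1)a_m\partial_{a_{m+n}}$. This holds because in the holomorphic combination the compensating Beltrami term $B\circ f$ cancels, leaving $D_n^+f=iz^{n+1}f'$; that term is exactly the exterior contribution you were worried about.

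The weight grading gives $t_n-C_{\kappa,n}a_n\in\mathbb C[a_1,\ldots,a_{n-1}]$, so $\mathcal L_nt_n=C_{\kappa,n}$ identically (Lemma~\ref{lem:DR-stress-polynomial}). Then $\operatorname{Div}D_n^+=i\overline{t_n}$ gives $iC_{\kappa,n}=\int D_n^+t_n\,d\widetilde\nu_\kappa=i\int|t_n|^2\,d\widetilde\nu_\kappa$. The conjugate combination $D_{n,c}-iD_{n,s}=i\mathcal L_{-n}$, where $g$ genuinely enters, never has to be evaluated. The off-diagonal moments $\int t_nt_m$ and $\int t_n\overline{t_m}$ ($n\neq m$) follow from rotation invariance alone, as you say.

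\emph{Normalization slip.} The correct target is $\int t_n\overline{t_n}\,d\widetilde\nu_\kappa=C_{\kappa,n}=2n+\tfrac{c_{\mathrm L}}{12}(n^3-n)$, with no factor $\tfrac12$. The $\tfrac12$ belongs to $\int(\operatorname{Re}t_n)^2=\int(\operatorname{Im}t_n)^2$. Only with this value does $\int\rho_v^2\,d\widetilde\nu_\kappa$ match $G_\kappa(v,v)$.
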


The covariance metric $G_\kappa$ is equivalent to the Weil--Petersson
metric.  Indeed, for $v,w\in H_{\mathrm{WP}}$, the definitions give
\begin{align}
 \frac{c_{\mathrm L}}{12}\|v\|_{\mathrm{WP}}^2
 &\leq G_\kappa(v,v)
 \leq\left(\frac{c_{\mathrm L}}{12}+\frac23\right)
       \|v\|_{\mathrm{WP}}^2,
 \label{eq:intro-score-norm-bounds}\\
 \shortintertext{and}
 G_\kappa(J_0v,J_0w)&=G_\kappa(v,w).
 \label{eq:intro-score-J-invariance}
\end{align}
Together with $J_0^2v=-v$, these identities make
$(G_\kappa,J_0,\omega_{\kappa,0})$ a constant strong K\"ahler structure on
$H_{\mathrm{WP}}$.

\subsection{Integrated response and the rough action}

We next integrate the finite-mode scores along the right flows.
Fix an integer $\ell\geq1$, times $t_1,\ldots,t_\ell\in\mathbb R$, and
fields $v_1,\ldots,v_\ell\in H_{\mathrm{WP}}^{\mathrm{fin}}$.
Let
\begin{equation}\label{eq:intro-finite-composition}
 \Phi:=\psi_{t_1}^{v_1}\circ\cdots\circ\psi_{t_\ell}^{v_\ell},
\end{equation}
and write $\Phi_0=\id$ and
$\Phi_j=\psi_{t_1}^{v_1}\circ\cdots\circ\psi_{t_j}^{v_j}$ for
$1\leq j\leq\ell$.
For $h\in\mathcal W$, define the integrated response by
\begin{equation}\label{eq:intro-integrated-response}
 B_\Phi(h):=\sum_{j=1}^{\ell}\int_0^{t_j}
 \rho_{v_j}(h\circ\Phi_{j-1}\circ\psi_s^{v_j})\,ds.
\end{equation}
This expression is initially attached to the chosen sequence of flows.
Theorem~\ref{thm:DR-radial-closure} shows that it depends only on the
resulting circle diffeomorphism $\Phi$.

For a normalized welding pair $(f,g)$, write
$g'(\infty):=\lim_{z\to\infty}g(z)/z$ and define
\begin{equation}\label{eq:DR-capacity-potential-definition}
 k(h):=\log\left|\frac{g'(\infty)}{f'(0)}\right|
      =\log|g'(\infty)|.
\end{equation}
Let $dA$ denote planar Lebesgue area.
For a smooth circle diffeomorphism $h\in\mathcal W$, the universal Liouville action
in the normalization of \cite{TT06} is
\begin{equation}\label{eq:DR-universal-Liouville-definition}
 S_1(h):=\int_{\mathbb D}\left|\frac{f''}{f'}\right|^2dA
 +\int_{\mathbb D^*}\left|\frac{g''}{g'}\right|^2dA-4\pi k(h).
\end{equation}
The corresponding Liouville--capacity action is
\begin{equation}\label{eq:intro-smooth-action}
 H_\kappa(h):=\frac{c_{\mathrm L}}{24\pi}S_1(h)+2k(h).
\end{equation}
For $h\in\mathcal W$ and $0<r<1$, define
\begin{equation}\label{eq:intro-radial-map}
 f^{(r)}(z):=r^{-1}f(rz)
 =z\left(1+\sum_{m\geq1}r^ma_m(h)z^m\right).
\end{equation}
The map $f^{(r)}$ extends conformally across $\mathbb S^1$, so its boundary
curve is analytic.  Choose a conformal bijection $g^{(r)}$ from
$\mathbb D^*$ onto the component of
$\widehat{\mathbb C}\setminus f^{(r)}(\mathbb S^1)$ containing infinity,
with $g^{(r)}(\infty)=\infty$, and set
\[
 h^{(r)}:=(g^{(r)})^{-1}\circ f^{(r)}\big|_{\mathbb S^1}.
\]
Then $h^{(r)}\in\mathcal W$ is a real-analytic circle diffeomorphism.
The choice of $g^{(r)}$ is unique up to a rotation of its argument;
this is the exterior rotational mark.  We denote the limit of
$H_\kappa(h^{(r)}\circ\Phi)-H_\kappa(h^{(r)})$ as $r\uparrow1$, when it
exists, by $B_\Phi^{\mathrm{rad}}(h)$.

For the same $h\in\mathcal W$, let $(\widehat h_N)_{N\geq1}\subset\mathcal W$
be any sequence of real-analytic circle diffeomorphisms, with normalized
interior maps $\widehat f_N$, such that
\[
 a_m(\widehat h_N)=[z^{m+1}]\widehat f_N\longrightarrow a_m(h)
 \qquad\text{for every }m\geq1.
\]

\begin{theorem}\label{thm:DR-radial-closure}
For every $h\in\mathcal W$, every $\Phi$ as in
\eqref{eq:intro-finite-composition}, and every sequence $(\widehat h_N)$
specified above, both limits below exist and satisfy
\begin{align}
 \lim_{r\uparrow1}
 \{H_\kappa(h^{(r)}\circ\Phi)-H_\kappa(h^{(r)})\}
 &=B_\Phi^{\mathrm{rad}}(h)=B_\Phi(h),
 \label{eq:intro-radial-equality}\\
 \shortintertext{and}
 \lim_{N\to\infty}
 \{H_\kappa(\widehat h_N\circ\Phi)-H_\kappa(\widehat h_N)\}
 &=B_\Phi(h).
 \label{eq:intro-analytic-approximation}
\end{align}
The common value $B_\Phi(h)$ is independent of the representation of $\Phi$ as a
composition of flows and of the exterior rotational marks.
\end{theorem}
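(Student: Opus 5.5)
The plan is to prove the identity first for smooth (real-analytic) weldings, where $H_\kappa$ is a genuine function, and then pass to the rough welding $h$ by approximation. The key claim is that on smooth weldings the score $\rho_v$ is exactly the directional derivative of $H_\kappa$ along the right flow:
\[
 D_vH_\kappa(\widehat h)=\rho_v(\widehat h),\qquad v\in H_{\mathrm{WP}}^{\mathrm{fin}}.
\]

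\textbf{Step 1: first variation on smooth weldings.} For a real-analytic diffeomorphism $\widehat h\in\mathcal W$ and $v\in H_{\mathrm{WP}}^{\mathrm{fin}}$, I would compute $\frac{d}{dt}|_{t=0}H_\kappa(\widehat h\circ\psi_t^v)$ and show it equals $\rho_v(\widehat h)$.

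The Takhtajan--Teo variational formula \cite{TT06} gives the $S_1$ term: the variation of $S_1$ under right composition is $\operatorname{Re}$ of a pairing of $v$ with $12\,\mathcal Sf$ (up to the factor $2\pi$ and a sign), through the interior Schwarzian modes $[z^{n-2}]\mathcal Sf$. Multiplying by $c_{\mathrm L}/(24\pi)$ produces the $\frac{c_{\mathrm L}}{12}\mathcal Sf$ part of $q_f$.

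The capacity term $2k$ is handled by a Schiffer/Loewner-type variation of $\log|g'(\infty)|$ under the interior deformation generated by $v$. This should contribute the modes of $f'^2/f^2-1/z^2$. Since $v_0=v_{\pm1}=0$, the normalization terms drop out.

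Matching real and imaginary parts against the basis $\cos n\theta$, $\sin n\theta$ should reproduce \eqref{eq:intro-score} exactly. The Schwarzian coefficient depends only on $f$, so it is independent of the exterior rotational mark. This is a finite algebraic bookkeeping check.

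\textbf{Step 2: integrate along the flows.} Integrating Step 1 along $s\mapsto \widehat h\circ\Phi_{j-1}\circ\psi^{v_j}_s$ and telescoping over $j$ gives
\[
 H_\kappa(\widehat h\circ\Phi)-H_\kappa(\widehat h)=B_\Phi(\widehat h)
\]
for every smooth $\widehat h$. Smooth right composition preserves $\mathcal W$ and analyticity fails only mildly, so $C^\infty$ suffices for the variational formula.

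Since the left side depends only on $\Phi$, independence of the flow representation holds on smooth weldings. $H_\kappa$ is invariant under rotating the exterior mark, because $S_1$ and $|g'(\infty)|$ are, so mark independence also holds there.

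\textbf{Step 3: pass to rough $h$, the main obstacle.} It remains to show $B_\Phi(\widehat h_N)\to B_\Phi(h)$ whenever $a_m(\widehat h_N)\to a_m(h)$ for all $m$; this gives \eqref{eq:intro-analytic-approximation}. The radial family $h^{(r)}$ is a special case, since $a_m(h^{(r)})=r^ma_m(h)$, and this gives \eqref{eq:intro-radial-equality}. Independence of the representation and of the marks then transfers to $h$ from Step 2.

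For the convergence, $\rho_{v_j}$ is a finite combination of the $t_n$, and each $t_n$ is a polynomial in finitely many $a_m$ (Lemma~\ref{lem:DR-stress-polynomial}). The difficulty is that the integrand is evaluated at $\widehat h_N\circ\Phi_{j-1}\circ\psi_s$, so I need the coefficients of the interior map of $\widehat h_N\circ\Psi$ to converge to those of $h\circ\Psi$ for a fixed smooth diffeomorphism $\Psi$, uniformly in $s$. Coefficientwise convergence of $a_m$ is locally uniform convergence of $\widehat f_N\to f$ on $\mathbb D$, and by univalence this is equivalent to Carathéodory kernel convergence of the interior domains.

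To get it, I would argue via compactness:
\begin{itemize}[leftmargin=2em]
\item The interior maps of $\widehat h_N\circ\Psi$ are normalized univalent maps, hence form a normal family.
\item Any subsequential limit is determined through the welding of the curve obtained by conformally welding $\Psi$ onto the limiting pair.
\item Conformal removability of the limit curve (as $h\in\mathcal W$) forces uniqueness of the welding, hence the limit is $f_{h\circ\Psi}$.
\end{itemize}
The subtle point is showing that the limiting curves stay Jordan and do not degenerate. For this I would use that $\Psi$ is smooth, so the new welding is a quasiconformal deformation with a fixed Beltrami coefficient supported on one side. Uniform quasiconformal bounds give equicontinuity, and removability then identifies the limit.

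Boundedness of the $t_n$ on $\mathcal W$ supplies dominated convergence in $s$, which completes the argument.
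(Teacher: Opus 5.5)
Your Steps 1--2 follow the paper, as does the reduction at the start of Step 3 to convergence of finitely many coefficients along the flows. (The paper gets $D_vH_\kappa=\rho_v$ from the Sung--Wang first variation and the capacity response rather than from Takhtajan--Teo; that difference is immaterial.)

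The gap is in the continuity step, the paper's Lemma~\ref{lem:DR-finite-mode-continuity}. You identify subsequential limits through a ``limiting pair'' and ``limiting curves'' whose non-degeneration you plan to prove. But the hypothesis $a_m(\widehat h_N)\to a_m(h)$ only gives $\widehat f_N\to f$ locally uniformly on $\mathbb D$. It controls neither $\widehat g_N$ nor the curves $\widehat\gamma_N$.

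For a counterexample, take $h=\id$ and let $\widehat\gamma_N$ be analytic Jordan curves approximating $\mathbb S^1$ with an outward spike of width $1/N$ along $[1,2]$. Kernel convergence gives $\widehat f_N\to\id$. Meanwhile $\widehat\gamma_N\to\mathbb S^1\cup[1,2]$ in the Hausdorff sense, and $\widehat g_N$ converges, up to marks, to the exterior map of $\{|z|>1\}\setminus[1,2]$. So the non-degeneration you want is false in general, and there is no limiting welding to which removability of $\gamma$ could be applied.

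What is missing is that the interior map of $h\circ\Psi$ is determined by interior data alone. This needs more than ``$\Psi$ smooth, coefficient on one side.'' The extension $\widehat\Psi$ must be conformal on a collar of $\mathbb S^1$, with $\operatorname{supp}\mu_{\widehat\Psi}\subset K\Subset\mathbb D$ and $\|\mu_{\widehat\Psi}\|_\infty\le q_0<1$. Such an extension exists because the generating fields are trigonometric polynomials, which extend holomorphically near $\mathbb S^1$; the paper composes one flow at a time. If instead the coefficient reaches $\mathbb S^1$, the physical coefficient lives up to $\widehat\gamma_N$, where interior convergence gives no control.

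With the collar extension, the interior map of $\widehat h_N\circ\Psi$ is $\operatorname{Norm}(\Psi_N\circ\widehat f_N\circ\widehat\Psi)$. Here $\Psi_N$ is the normalized solution for a physical coefficient $\beta_N$ supported in $\widehat f_N(K_1)$, for a fixed $K_1\Subset\mathbb D$. Removability is used only at fixed $N$, via $\widehat h_N\circ\Psi\in\mathcal W$. The paper then shows $\beta_N\to\beta_\infty$ a.e.\ and applies \cite[Lemma~18]{AB60}.

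Your compactness route also closes at this point, provided you argue as follows:
\begin{itemize}
\item A subsequential limit $\Psi_\infty$ of the normalized $q_0$-quasiconformal maps is conformal off $f(K_1)$.
\item $\Psi_\infty\circ f\circ\widehat\Psi$ is conformal on $\mathbb D$.
\item Form the pair $(\Psi_\infty\circ f\circ\widehat\Psi,\Psi_\infty\circ g)$ with the exterior map $g$ of $h$ itself, not with any limit of $\widehat g_N$. This pair welds to $h\circ\Psi\in\mathcal W$ by \cite[Lemma~4.1]{FS25}.
\item Uniqueness of the normalized pair then identifies the limit as $f_{h\circ\Psi}$.
\end{itemize}
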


The same increment determines the change of the welding law under right
composition.  Let $R_\Phi h:=h\circ\Phi$, and let
$(R_\Phi)_\#\widetilde\nu_\kappa$ denote the image measure.
In the target variable $y\in\mathcal W$,
Proposition~\ref{prop:DR-finite-RN} and
Theorem~\ref{thm:DR-radial-closure} give the Radon--Nikodym density
\begin{equation}\label{eq:intro-RN-identity}
 \frac{d(R_\Phi)_\#\widetilde\nu_\kappa}{d\widetilde\nu_\kappa}(y)
 =\exp\{B_\Phi^{\mathrm{rad}}(y\circ\Phi^{-1})\},
 \qquad \widetilde\nu_\kappa\text{-a.e. }y.
\end{equation}
For $v,w\in H_{\mathrm{WP}}^{\mathrm{fin}}$ and $s,t\in\mathbb R$, put
$\Phi_{s,t}:=\psi_s^v\circ\psi_t^w$.
The negative logarithm of this density is
\[
 A_{s,t}^{\mathrm{rad}}(y):=
 -B_{\Phi_{s,t}}^{\mathrm{rad}}(y\circ\Phi_{s,t}^{-1}).
\]
Its ordered mixed derivative, first in $s$ and then in $t$, exists in
$L^\infty(\widetilde\nu_\kappa)$ at $(s,t)=(0,0)$.
Corollary~\ref{cor:DR-radial-Kahler-Hessian} gives
\begin{equation}\label{eq:intro-action-Hessian}
 \int\left.\partial_t\left(
 \left.\partial_s A_{s,t}^{\mathrm{rad}}\right|_{s=0}
 \right)\right|_{t=0}d\widetilde\nu_\kappa=G_\kappa(v,w).
\end{equation}
Thus the covariance in Theorem~\ref{thm:intro-Fisher-Kahler} is also the
averaged second derivative of the negative log density.

Section~\ref{sec:deterministic-interface} fixes the deterministic
covariance and complex-structure normalizations using the welding operator
of \cite{FVW26}.  Section~\ref{sec:sle-closed} constructs the response core
on rough weldings.  Section~\ref{sec:direct-right} states the coefficient responses and right
integration-by-parts formula; their proofs occupy
Appendices~\ref{app:coefficient-response}--\ref{app:right-IBP}.
The final proofs of Theorems~\ref{thm:B1-main},
\ref{thm:intro-Fisher-Kahler}, and~\ref{thm:DR-radial-closure} are given in
Sections~\ref{sec:proof-closed-response},
\ref{sec:proof-stress-covariance}, and~\ref{sec:proof-radial-action},
respectively.

\section{Deterministic calibration of the response calculus}
\label{sec:deterministic-interface}

The covariance response of the welding operator fixes the
Weil--Petersson and complex-structure normalizations used below.

\subsection{Round-welding differential and Hilbert--Schmidt response}
\label{sec:round}

Let $\varphi$ be an orientation-preserving quasisymmetric homeomorphism
of $\mathbb S^1$.  For $s,t\in\T$ with $s\ne t$, define the logarithmic
kernel of Fan--Viklund--Wang \cite{FVW26} by
\begin{equation}\label{eq:Lphi}
 L_\varphi(s,t)
 =\log\left|
 \frac{\varphi(e^{is})-\varphi(e^{it})}{e^{is}-e^{it}}
 \right|,
\end{equation}
and its Fourier coefficients
\begin{equation}\label{eq:fourier-lambda}
 \widehat\lambda_{k,\ell}(\varphi)
 =\frac{1}{(2\pi)^2}\int_0^{2\pi}\int_0^{2\pi}
 L_\varphi(s,t)e^{-i(ks+\ell t)}\,ds\,dt,
 \qquad k,\ell\in\Z.
\end{equation}
For $k,\ell\neq0$, put
\begin{equation}\label{eq:normalized-lambda}
 \lambda_{k,\ell}(\varphi)
 =\sqrt{|k\ell|}\,\widehat\lambda_{k,\ell}(\varphi).
\end{equation}
Write $\mathbb N=\{1,2,\ldots\}$.  On
$\ell^2(\N)\oplus\ell^2(\N)$, the welding Grunsky operator has the block form
\begin{equation}\label{eq:Lambda-block}
 \Lambda_\varphi
 =\begin{pmatrix}M_\varphi&N_\varphi\\
 \overline{N_\varphi}&\overline{M_\varphi}\end{pmatrix},
 \qquad
 \begin{aligned}
 (M_\varphi)_{k\ell}&=\lambda_{k,-\ell}(\varphi),\\[3pt]
 (N_\varphi)_{k\ell}&=\lambda_{k,\ell}(\varphi),
 \end{aligned}
\end{equation}
for $k,\ell\geq1$.  Quasisymmetry ensures integrability of the kernel and
boundedness of this self-adjoint operator \cite[Theorem~1.3]{FVW26}.

Let $v,w\in C^\infty(\T;\mathbb R)$, with the Fourier convention
of Section~\ref{sec:introduction}.
For $|\varepsilon|$ sufficiently small,
\begin{equation}\label{eq:phi-eps}
 \varphi_\varepsilon(e^{i\theta})
 =e^{i(\theta+\varepsilon v(\theta))}
\end{equation}
defines a smooth orientation-preserving circle diffeomorphism.  We write
\begin{equation}\label{eq:response-def}
 \dot\Lambda_v
 =\left.\frac{d}{d\varepsilon}\right|_{\varepsilon=0}
 \Lambda_{\varphi_\varepsilon},
\end{equation}
where the derivative will be proved below to exist in Hilbert--Schmidt norm.

The modes $-1,0,1$ are the infinitesimal M\"obius directions.
On horizontal fields we use the Weil--Petersson inner product
\eqref{eq:intro-WP-metric}; see \cite[Eq.~(11)]{Nag92} for its Fourier
description at the identity.

We write $\one$, $A^*$, $\|A\|$, and $\Tr A$ for the identity,
adjoint, operator norm, and trace, respectively.  Write $S_2$ for the
Hilbert--Schmidt class between the Hilbert spaces in use, with norm
$\|\cdot\|_{\mathrm{HS}}$, and $S_2^{\mathrm{sa}}=\{A\in S_2:A^*=A\}$
for the real vector space of self-adjoint operators on a fixed Hilbert
space.  Complex inner products are linear in the first argument.
On either Fourier block, and on their direct sum, the Hilbert--Schmidt
pairing is $\langle A,B\rangle_{\mathrm{HS}}:=\Tr(AB^*)$.  Its real part is
\begin{equation}\label{eq:real-HS}
 (A,B)_{\mathrm{HS},\mathbb R}
 =\operatorname{Re}\Tr(AB^*).
\end{equation}

Define the matrix $\dot N_v$ by
\[
 (\dot N_v)_{k\ell}:=i\sqrt{k\ell}\,v_{k+\ell},\qquad k,\ell\geq1.
\]

\begin{proposition}\label{prop:det-round-response}
The response along \eqref{eq:phi-eps} has the following properties.

\begin{enumerate}[label=\textup{(\roman*)},leftmargin=2.3em]
\item The Hilbert--Schmidt derivative satisfies
\begin{equation}\label{eq:det-round-HS-derivative-limit}
 \lim_{\varepsilon\to0}
 \left\|\frac{\Lambda_{\varphi_\varepsilon}-\Lambda_{\id}}{\varepsilon}
          -\dot\Lambda_v\right\|_{\mathrm{HS}}=0.
\end{equation}

\item For all $k,\ell\in\Z\setminus\{0\}$,
\begin{equation}\label{eq:coeff-response-main}
 \left.\frac{d}{d\varepsilon}\right|_{0}
 \widehat\lambda_{k,\ell}(\varphi_\varepsilon)
 =\frac{i}{2}\bigl(\sgn k+\sgn\ell\bigr)v_{k+\ell}.
\end{equation}
Consequently
\begin{equation}\label{eq:block-response-main}
 \dot\Lambda_v
 =\begin{pmatrix}0&\dot N_v\\ \overline{\dot N_v}&0\end{pmatrix}.
\end{equation}
In particular, every infinitesimal M\"obius vector field is annihilated by
$v\mapsto\dot\Lambda_v$.

\item If $v,w$ are horizontal, then
\begin{equation}\label{eq:isometry-main}
 (v,w)_{\mathrm{WP}}
 =6\,(\dot\Lambda_v,\dot\Lambda_w)_{\mathrm{HS},\mathbb R}.
\end{equation}
Equivalently,
\begin{equation}\label{eq:norm-isometry-main}
 \|v\|_{\mathrm{WP}}^2
 =6\|\dot\Lambda_v\|_{\mathrm{HS}}^2.
\end{equation}

\item The response map has
a unique continuous extension
\begin{equation}\label{eq:extension-main}
 v\longmapsto\dot\Lambda_v:
 H_{\mathrm{WP}}\longrightarrow S_2^{\mathrm{sa}}
\end{equation}
for which
\[
 v\longmapsto\sqrt6\,\dot\Lambda_v
 \quad\text{is an isometric embedding}.
\]
Its range is closed.
\end{enumerate}
\end{proposition}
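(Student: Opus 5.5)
The plan is to reduce everything to an explicit computation with the kernel \eqref{eq:Lphi} along the family \eqref{eq:phi-eps}. Since $|e^{ia}-e^{ib}|=2|\sin((a-b)/2)|$, with $x:=s-t$ we have
\[
 L_{\varphi_\varepsilon}(s,t)=\log\left|\frac{\sin\bigl((x+\varepsilon(v(s)-v(t)))/2\bigr)}{\sin(x/2)}\right|.
\]
Because $v(s)-v(t)=x\,u(s,t)$ with $u$ smooth on $\T^2$, the ratio inside the logarithm equals $\sin(x(1+\varepsilon u)/2)/\sin(x/2)$. For $|\varepsilon|\,\|u\|_\infty<1/2$, the zeros of numerator and denominator on $\T^2$ both lie exactly on the diagonal $x=0$ and are simple there (numerator: $x(1+\varepsilon u)\equiv0$ mod $2\pi$ forces $x=0$). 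So the ratio is jointly smooth and nonvanishing, and $(s,t,\varepsilon)\mapsto L_{\varphi_\varepsilon}(s,t)$ is smooth on $\T^2\times(-\varepsilon_0,\varepsilon_0)$. This single observation drives both the coefficient formula and the Hilbert--Schmidt convergence.

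For (ii), I would differentiate at $\varepsilon=0$ to get the kernel $\dot L(s,t)=\frac12(v(s)-v(t))\cot(x/2)$, which is smooth. Then I would use the distributional expansion $\cot(x/2)=-i\sum_{n\neq0}\sgn(n)e^{inx}$. Multiplying by $v(s)=\sum_m v_me^{ims}$ and by $v(t)$ and reading off the coefficient of $e^{i(ks+\ell t)}$ gives:
\begin{itemize}
\item from the $v(s)$ term, the index $n=-\ell$ and the contribution $\frac i2\sgn(\ell)v_{k+\ell}$;
\item from the $-v(t)$ term, the index $n=k$ and the contribution $\frac i2\sgn(k)v_{k+\ell}$.
\end{itemize}
This yields \eqref{eq:coeff-response-main}. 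Because $\dot L$ is smooth, each Fourier coefficient can be computed from the product of the Fourier series directly, and the interchange of $d/d\varepsilon$ with the integral \eqref{eq:fourier-lambda} is justified by the joint smoothness above.

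The block form \eqref{eq:block-response-main} then follows. For the $M$ block (indices $k,-\ell$ with $k,\ell\ge1$) we have $\sgn k+\sgn(-\ell)=0$. For the $N$ block, multiplying by $\sqrt{k\ell}$ gives $i\sqrt{k\ell}\,v_{k+\ell}$, and the conjugate blocks follow from $v_{-n}=\overline{v_n}$. Since $k+\ell\ge2$ in $\dot N_v$, the modes $v_{0},v_{\pm1}$ never appear, so the M\"obius fields are annihilated. Self-adjointness of $\dot\Lambda_v$ follows because $\dot N_v$ is symmetric.

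For (iii), I would compute $\|\dot N_v\|_{\mathrm{HS}}^2=\sum_{n\ge2}|v_n|^2\sum_{k=1}^{n-1}k(n-k)=\sum_{n\ge2}\frac{n^3-n}{6}|v_n|^2$. The full block matrix has twice this squared norm, so $6\|\dot\Lambda_v\|_{\mathrm{HS}}^2=2\sum(n^3-n)|v_n|^2=\|v\|_{\mathrm{WP}}^2$. Polarization in the real Hilbert--Schmidt pairing \eqref{eq:real-HS} gives \eqref{eq:isometry-main}. For (iv), the map $v\mapsto\sqrt6\,\dot\Lambda_v$ is a real-linear isometry on the dense subspace of smooth horizontal fields, so it extends uniquely to $H_{\mathrm{WP}}$. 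The image lies in the closed subspace $S_2^{\mathrm{sa}}$, and the range is closed because it is the isometric image of a complete space.

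The main obstacle is (i), the Hilbert--Schmidt convergence, since the weights $\sqrt{|k\ell|}$ are unbounded. By the smoothness above, $R_\varepsilon:=L_{\varphi_\varepsilon}-L_{\id}-\varepsilon\dot L=\varepsilon^2 r_\varepsilon(s,t)$, where $r_\varepsilon$ is bounded in every $C^m(\T^2)$ uniformly for small $\varepsilon$ (Taylor's formula with integral remainder in $\varepsilon$). The matrix entries are $\sqrt{|k\ell|}\,\widehat{R_\varepsilon}(k,\ell)$, so the Hilbert--Schmidt norm of the remainder operator is at most $(\sum|k\ell|\,|\widehat{R_\varepsilon}(k,\ell)|^2)^{1/2}\le\frac{1}{2\pi}\|\partial_s\partial_tR_\varepsilon\|_{L^2(\T^2)}=O(\varepsilon^2)$. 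Dividing by $\varepsilon$ gives \eqref{eq:det-round-HS-derivative-limit}. The only delicate point is justifying the uniform smoothness of the ratio near the diagonal, which is exactly the factorization $v(s)-v(t)=x\,u(s,t)$.
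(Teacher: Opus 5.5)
Your proposal is correct and follows essentially the same route as the paper: you get joint smoothness of the kernel from the difference-quotient factorization (the paper's $b_\varepsilon=1+\varepsilon u$), turn a $C^2$ Taylor remainder into a Hilbert--Schmidt bound via $\sum|k\ell|\,|\widehat F_{k,\ell}|^2\le\|\partial_s\partial_tF\|^2$, and then use the explicit kernel $\frac12\cot\frac{s-t}2\,[v(s)-v(t)]$, the sum $\sum_{k=1}^{n-1}k(n-k)=(n^3-n)/6$, and extension by isometry. The differences are cosmetic: you read off the Fourier coefficients from the principal-value expansion $\cot(x/2)=-i\sum_{n\neq0}\sgn(n)e^{inx}$ where the paper computes $j_m=\frac i2\sgn(m)$ directly, and your $u(s,t)=(v(s)-v(t))/(s-t)$ is only defined in a local lift with $|s-t|<\pi$, not globally on $\T^2$, which is exactly the localization the paper uses.
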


Section~\ref{sec:regularity} establishes the Hilbert--Schmidt
differentiability in part~\textup{(i)}.  Section~\ref{sec:proof-main}
computes the Fourier response and then derives the block formula, the
Weil--Petersson isometry, and the continuous extension, completing the
proof of Proposition~\ref{prop:det-round-response}.

\subsubsection{Kernel regularity and Hilbert--Schmidt control}\label{sec:regularity}

Choose $\varepsilon_0>0$ with
$\varepsilon_0\|v'\|_\infty<1/2$, and put
$a_\varepsilon(s)=s+\varepsilon v(s)$ and
$L_\varepsilon:=L_{\varphi_\varepsilon}$, using
\eqref{eq:Lphi} and \eqref{eq:phi-eps}.

\begin{lemma}\label{lem:smooth-kernel}
The kernels extend smoothly across the diagonal.  For
$|\varepsilon|<\varepsilon_0$ and every integer $m\geq0$,
\begin{equation}\label{eq:det-kernel-smooth-dependence}
 L_\varepsilon\in C^\infty(\T^2),\qquad
 (\varepsilon\mapsto L_\varepsilon)
 \in C^\infty\bigl(({-}\varepsilon_0,\varepsilon_0);C^m(\T^2)\bigr).
\end{equation}
\end{lemma}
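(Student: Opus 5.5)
The plan is to write the kernel as a smooth function via a divided-difference representation. With $a_\varepsilon(s)=s+\varepsilon v(s)$ we have
\[
 \frac{e^{ia_\varepsilon(s)}-e^{ia_\varepsilon(t)}}{e^{is}-e^{it}}
 =e^{i(a_\varepsilon(t)-t)}\,
 \frac{e^{i(a_\varepsilon(s)-a_\varepsilon(t))}-1}{e^{i(s-t)}-1}.
\]
We lift $s,t$ to $\mathbb R$ with $|s-t|<2\pi$ and set $u=s-t$. The increment $a_\varepsilon(s)-a_\varepsilon(t)$ equals $u\,m_\varepsilon(s,t)$, where
\[
 m_\varepsilon(s,t)=\int_0^1 a_\varepsilon'(t+\tau u)\,d\tau=1+\varepsilon\int_0^1 v'(t+\tau u)\,d\tau .
\]
This $m_\varepsilon$ is smooth in $(s,t,\varepsilon)$ and satisfies $1/2<m_\varepsilon<3/2$ because $\varepsilon_0\|v'\|_\infty<1/2$.

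Next we write $E(x):=(e^{ix}-1)/x=\int_0^1 ie^{i\tau x}d\tau$, an entire function. The ratio then becomes
\[
 \frac{E(u\,m_\varepsilon)\,m_\varepsilon}{E(u)}\,e^{i\varepsilon v(t)}.
\]
Taking the modulus kills the phase, so
\[
 L_\varepsilon=\log m_\varepsilon+\log|E(u m_\varepsilon)|-\log|E(u)| .
\]
Since $|E(x)|=|\sin(x/2)|/|x/2|$, the logarithms are smooth provided $|u|$ and $|u m_\varepsilon|$ stay in $(-2\pi,2\pi)$. This holds near the diagonal, so on a neighbourhood of the diagonal $L_\varepsilon$ is a composition of smooth functions of $(s,t,\varepsilon)$. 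Away from the diagonal, $\varphi_\varepsilon(e^{is})\ne\varphi_\varepsilon(e^{it})$ because $\varphi_\varepsilon$ is injective: $a_\varepsilon'>1/2>0$, and $a_\varepsilon(s+2\pi)=a_\varepsilon(s)+2\pi$, so $a_\varepsilon$ descends to a degree-one diffeomorphism. The defining formula \eqref{eq:Lphi} is therefore visibly smooth jointly in $(s,t,\varepsilon)$ off the diagonal. Patching the two regions with a partition of unity on $\T^2$ gives $L_\varepsilon\in C^\infty(\T^2)$, and more strongly $(\varepsilon,s,t)\mapsto L_\varepsilon(s,t)$ is jointly $C^\infty$ on $(-\varepsilon_0,\varepsilon_0)\times\T^2$.

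To get the Banach-valued statement, joint smoothness on $I\times\T^2$ for any compact subinterval $I$ implies that each $\partial_\varepsilon^j L_\varepsilon$ exists as a limit in $C^m(\T^2)$. The reason is that the mixed partials $\partial_\varepsilon^{j+1}\partial_{s,t}^\alpha L$ are uniformly continuous on compacts, and Taylor's formula in $\varepsilon$ with integral remainder then controls the $C^m$ norm of the difference quotient error. This yields $\varepsilon\mapsto L_\varepsilon\in C^\infty((-\varepsilon_0,\varepsilon_0);C^m(\T^2))$ for every $m$.

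The main obstacle is the uniformity near the diagonal and its interplay with periodicity. The lift $u=s-t$ must be chosen consistently, for example on $|u|<\pi$ near the diagonal. We also need $m_\varepsilon$ bounded away from $0$ uniformly so that $\log$ and $\log|E(\cdot)|$ are applied only where their arguments are nonzero; here $|E(x)|>0$ for $|x|<2\pi$ and $|u m_\varepsilon|<3\pi/2$. Both points follow from the chosen bound on $\varepsilon_0$.
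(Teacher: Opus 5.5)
Your proposal is correct and takes essentially the same route as the paper. Your $m_\varepsilon$ is the paper's divided difference $b_\varepsilon$, and your $|E(x)|$ equals $\operatorname{sinc}(x/2)$, so you obtain the same near-diagonal decomposition $L_\varepsilon=\log b_\varepsilon+\log\operatorname{sinc}(\cdot)-\log\operatorname{sinc}(\cdot)$; like the paper, you then use non-vanishing off the diagonal and an integral-remainder Taylor estimate in $C^m$ for the dependence on $\varepsilon$.
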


\begin{proof}
The choice of $\varepsilon_0$ gives
\begin{equation}\label{eq:positive-derivative}
 a_\varepsilon'(s)=1+\varepsilon v'(s)\geq\frac12
 \qquad\text{for all }s\in\mathbb R,
 \quad |\varepsilon|<\varepsilon_0.
\end{equation}
Then $a_\varepsilon$ is a strictly increasing lift of an orientation-preserving
circle diffeomorphism and
$a_\varepsilon(s+2\pi)=a_\varepsilon(s)+2\pi$.
For $s\neq t$ modulo $2\pi$,
\begin{equation}\label{eq:sine-ratio}
 \left|
 \frac{e^{ia_\varepsilon(s)}-e^{ia_\varepsilon(t)}}
 {e^{is}-e^{it}}
 \right|
 =\left|
 \frac{\sin((a_\varepsilon(s)-a_\varepsilon(t))/2)}
 {\sin((s-t)/2)}
 \right|.
\end{equation}
It remains to check the diagonal.  Work in a local lift for which
$|s-t|<\pi$.  Put
\[
 b_\varepsilon(s,t)
 =\begin{cases}
 \dfrac{a_\varepsilon(s)-a_\varepsilon(t)}{s-t},&s\neq t,\\[1.2ex]
 a_\varepsilon'(s),&s=t.
 \end{cases}
\]
The identity
\begin{equation}\label{eq:b-int}
 b_\varepsilon(s,t)
 =\int_0^1 a_\varepsilon'\bigl(t+r(s-t)\bigr)\,dr
\end{equation}
shows that $b_\varepsilon$ is smooth jointly in
$(\varepsilon,s,t)$ and, by \eqref{eq:positive-derivative}, is strictly
positive.  With
\[
 \operatorname{sinc}(x)=\begin{cases}\sin x/x,&x\neq0,\\1,&x=0,\end{cases}
\]
we have near the diagonal
\begin{equation}\label{eq:sinc-factorization}
 \frac{\sin((a_\varepsilon(s)-a_\varepsilon(t))/2)}
 {\sin((s-t)/2)}
 =b_\varepsilon(s,t)
 \frac{\operatorname{sinc}((a_\varepsilon(s)-a_\varepsilon(t))/2)}
 {\operatorname{sinc}((s-t)/2)}.
\end{equation}
On a fixed neighborhood of the diagonal the factors are positive, and
\begin{align*}
 L_\varepsilon(s,t)
 &=\log b_\varepsilon(s,t)
   +\log\operatorname{sinc}\frac{a_\varepsilon(s)-a_\varepsilon(t)}2
   -\log\operatorname{sinc}\frac{s-t}2,\\
 \shortintertext{with diagonal value}
 L_\varepsilon(s,s)&=\log\{1+\varepsilon v'(s)\}.
\end{align*}
This gives a jointly smooth extension in $(\varepsilon,s,t)$.  Off the
diagonal, the two sine factors in \eqref{eq:sine-ratio} do not vanish.
A finite cover of $\T^2$ therefore gives, for every compact interval
$I\subset(-\varepsilon_0,\varepsilon_0)$ and integers $j,m\geq0$,
\[
 \sup_{\varepsilon\in I}
 \max_{a+b\leq m}
 \|\partial_\varepsilon^j\partial_s^a\partial_t^b
       L_\varepsilon\|_\infty<\infty.
\]
For $\varepsilon,\varepsilon+h\in I$, the integral remainder satisfies
\[
 \|L_{\varepsilon+h}-L_\varepsilon
       -h\partial_\varepsilon L_\varepsilon\|_{C^m}
 \leq\frac{h^2}{2}
       \sup_{r\in I}\|\partial_r^2 L_r\|_{C^m}.
\]
Applying the same estimate to every parameter derivative proves
\eqref{eq:det-kernel-smooth-dependence}.
\end{proof}

To pass from kernels to operators, use the normalized norm
\[
 \|F\|_{L^2(\T^2)}^2
 =\frac{1}{(2\pi)^2}\int_{\T^2}|F(s,t)|^2\,ds\,dt.
\]

Let $F\in H^2(\T^2)$ and write
\[
 \widehat F_{k,\ell}
 =\frac{1}{(2\pi)^2}\int_{\T^2}
 F(s,t)e^{-i(ks+\ell t)}\,ds\,dt.
\]
Define the matrix $A_F$ on $\ell^2(\Z\setminus\{0\})$ by
\begin{equation}\label{eq:AF}
 (A_F)_{k\ell}=\sqrt{|k\ell|}\,\widehat F_{k,-\ell},
 \qquad k,\ell\neq0.
\end{equation}

\begin{lemma}\label{lem:fourier-HS}
The matrix $A_F$ defines a Hilbert--Schmidt operator, and
\begin{equation}\label{eq:HS-bound-F}
 \|A_F\|_{\mathrm{HS}}^2
 =\sum_{k,\ell\neq0}|k\ell|\,|\widehat F_{k,\ell}|^2
 \leq
 \|\partial_s\partial_tF\|_{L^2(\T^2)}^2.
\end{equation}
\end{lemma}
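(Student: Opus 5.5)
The plan is to compute the Hilbert--Schmidt norm directly from the matrix entries and then bound it by Parseval applied to the mixed derivative. By definition \eqref{eq:AF}, $A_F$ is indexed by $(k,\ell)\in(\Z\setminus\{0\})^2$ and has entries $\sqrt{|k\ell|}\,\widehat F_{k,-\ell}$. A matrix on $\ell^2(\Z\setminus\{0\})$ whose entries are square-summable defines a Hilbert--Schmidt operator. Its HS norm is the $\ell^2$ norm of the entries:
\[
 \|A_F\|_{\mathrm{HS}}^2=\sum_{k,\ell\neq0}|k\ell|\,|\widehat F_{k,-\ell}|^2 .
\]
Reindexing $\ell\mapsto-\ell$, which is a bijection of $\Z\setminus\{0\}$ preserving $|\ell|$, gives the equality in \eqref{eq:HS-bound-F}. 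So the work reduces to showing that this sum is finite and bounded as claimed.

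For the bound, I would use the Fourier coefficients of the mixed derivative. Since $F\in H^2(\T^2)$, the distribution $\partial_s\partial_tF$ lies in $L^2(\T^2)$. Integrating by parts (first for trigonometric polynomials, then by density in $H^2$) gives
\[
 \widehat{(\partial_s\partial_tF)}_{k,\ell}=(ik)(i\ell)\widehat F_{k,\ell}=-k\ell\,\widehat F_{k,\ell}.
\]
Parseval for the normalized norm then yields
\[
 \|\partial_s\partial_tF\|_{L^2(\T^2)}^2=\sum_{k,\ell\in\Z}k^2\ell^2|\widehat F_{k,\ell}|^2
 =\sum_{k,\ell\neq0}k^2\ell^2|\widehat F_{k,\ell}|^2 ,
\]
because the terms with $k=0$ or $\ell=0$ vanish. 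For $k,\ell\neq0$ we have $|k\ell|\geq1$, hence $|k\ell|\leq k^2\ell^2$. Comparing termwise gives $\sum_{k,\ell\neq0}|k\ell|\,|\widehat F_{k,\ell}|^2\leq\|\partial_s\partial_tF\|_{L^2}^2<\infty$. This simultaneously proves that $A_F$ is Hilbert--Schmidt and establishes the inequality.

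There is no real obstacle. The only points needing care are the justification of the Fourier multiplier identity for $\partial_s\partial_t$ on $H^2(\T^2)$, which holds by definition of weak derivatives tested against $e^{-i(ks+\ell t)}$, and the standard fact that square-summable matrix entries define a Hilbert--Schmidt operator with that norm, via the orthonormal basis of coordinate vectors.
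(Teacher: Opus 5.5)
Your proposal is correct and follows the same route as the paper: the equality comes from reindexing $\ell\mapsto-\ell$, and the bound from $|k\ell|\leq k^2\ell^2$ for $k,\ell\neq0$ combined with Parseval for $\partial_s\partial_tF$. Your added justifications (the Fourier multiplier identity on $H^2(\T^2)$, and square-summable entries giving a Hilbert--Schmidt operator) fill in the details the paper leaves implicit.
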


\begin{proof}
The equality follows from the change of index $\ell\mapsto-\ell$ in
\eqref{eq:AF}.  Since $k,\ell\neq0$,
$|k\ell|\leq k^2\ell^2$.  Parseval's identity gives
\[
 \sum_{k,\ell\in\Z}k^2\ell^2|\widehat F_{k,\ell}|^2
 =\|\partial_s\partial_tF\|_{L^2(\T^2)}^2,
\]
which proves \eqref{eq:HS-bound-F}.
\end{proof}

Set $\dot L_v:=\left.\partial_\varepsilon L_\varepsilon\right|_{\varepsilon=0}$;
Lemma~\ref{lem:smooth-kernel} gives $\dot L_v\in C^\infty(\T^2)$.

\begin{proposition}\label{prop:HS-diff}
As $\varepsilon\to0$,
\begin{equation}\label{eq:C2-Taylor}
 \|L_{\varphi_\varepsilon}-\varepsilon\dot L_v\|_{C^2(\T^2)}
 =O_v(\varepsilon^2),
\end{equation}
and
\begin{equation}\label{eq:S2-Taylor}
 \|\Lambda_{\varphi_\varepsilon}-\varepsilon A_{\dot L_v}\|_{\mathrm{HS}}
 =O_v(\varepsilon^2).
\end{equation}
In particular, $\dot\Lambda_v=A_{\dot L_v}$ in $S_2$.
\end{proposition}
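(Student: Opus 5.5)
The plan is to apply Taylor's formula with integral remainder in the parameter $\varepsilon$, first for the kernel in $C^2$ and then for the associated Hilbert--Schmidt matrix through Lemma~\ref{lem:fourier-HS}.

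\textbf{Step 1: the kernel estimate \eqref{eq:C2-Taylor}.} At $\varepsilon=0$ we have $\varphi_0=\id$, so $L_0\equiv0$ on $\T^2$; this is consistent with the diagonal value $\log 1=0$. Lemma~\ref{lem:smooth-kernel} with $m=2$ says that $\varepsilon\mapsto L_\varepsilon$ is $C^\infty$ into $C^2(\T^2)$ on $(-\varepsilon_0,\varepsilon_0)$. Fix the compact interval $I=[-\varepsilon_0/2,\varepsilon_0/2]$ and use the remainder estimate already displayed in that proof, with base point $0$ and increment $h=\varepsilon$:
\[
 \|L_\varepsilon-\varepsilon\dot L_v\|_{C^2}\leq\frac{\varepsilon^2}{2}\sup_{r\in I}\|\partial_r^2L_r\|_{C^2}.
\]
The right-hand side is finite and depends only on $v$.

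\textbf{Step 2: passing to operators.} The matrix of $\Lambda_\varphi$ is built from the Fourier coefficients of $L_\varphi$. We reorder $\ell^2(\N)\oplus\ell^2(\N)$ as $\ell^2(\Z\setminus\{0\})$, sending the second copy to the negative indices, and check from \eqref{eq:Lambda-block} that $\Lambda_\varphi$ becomes $A_{L_\varphi}$ of \eqref{eq:AF}, possibly up to a fixed unitary relabeling. For example, $(M_\varphi)_{k\ell}=\sqrt{k\ell}\,\widehat\lambda_{k,-\ell}$ is the $(k,\ell)$ entry of $A_F$ with $k,\ell>0$. For the conjugate blocks we use that $L_\varphi$ is real, which gives $\overline{\widehat\lambda_{k,\ell}}=\widehat\lambda_{-k,-\ell}$. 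Getting these index conventions right is the only delicate point. Once they are settled, the map $F\mapsto A_F$ is linear, so $\Lambda_{\varphi_\varepsilon}-\varepsilon A_{\dot L_v}=A_{L_\varepsilon-\varepsilon\dot L_v}$.

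\textbf{Step 3: the Hilbert--Schmidt estimate \eqref{eq:S2-Taylor}.} Lemma~\ref{lem:fourier-HS} gives
\[
 \|A_{L_\varepsilon-\varepsilon\dot L_v}\|_{\mathrm{HS}}\leq\|\partial_s\partial_t(L_\varepsilon-\varepsilon\dot L_v)\|_{L^2(\T^2)}\leq\|L_\varepsilon-\varepsilon\dot L_v\|_{C^2}.
\]
The last inequality holds because the normalized $L^2$ norm is bounded by the sup norm. By Step 1 this is $O_v(\varepsilon^2)$. Dividing by $\varepsilon$ and using $\Lambda_{\id}=A_{L_0}=0$ shows that the difference quotient converges in $S_2$ to $A_{\dot L_v}$. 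Hence $\dot\Lambda_v$ exists in Hilbert--Schmidt norm and equals $A_{\dot L_v}$.

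The main obstacle is the block identification in Step 2. The analytic estimates follow directly from Lemmas~\ref{lem:smooth-kernel} and~\ref{lem:fourier-HS}.
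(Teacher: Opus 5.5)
Your proposal is correct and follows the paper's proof essentially step for step. The paper likewise applies Taylor's formula with integral remainder from Lemma~\ref{lem:smooth-kernel} at base point $0$ (using $L_0=0$), identifies $\Lambda_{\varphi_\varepsilon}=A_{L_\varepsilon}$ under the positive/negative Fourier relabeling, and concludes with Lemma~\ref{lem:fourier-HS} and $\|\partial_s\partial_t G\|_{L^2}\le\|G\|_{C^2}$. Your explicit check of the block identification, which relabels the second copy of $\ell^2(\N)$ by negative indices and uses reality of $L_\varphi$ for the conjugate blocks, is correct and only spells out what the paper asserts directly.
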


\begin{proof}
Since $L_0=0$, Lemma~\ref{lem:smooth-kernel} and the integral form of
Taylor's theorem give, in $C^2(\T^2)$,
\begin{align*}
 L_\varepsilon-\varepsilon\dot L_v
 &=\varepsilon^2\int_0^1(1-r)
       \left.\partial_u^2L_u\right|_{u=r\varepsilon}\,dr,\\
 \shortintertext{and hence}
 \|L_\varepsilon-\varepsilon\dot L_v\|_{C^2}
 &\leq\frac{\varepsilon^2}{2}
       \sup_{|u|\leq\varepsilon_0/2}
       \|\partial_u^2L_u\|_{C^2}
 \qquad (|\varepsilon|\leq\varepsilon_0/2).
\end{align*}
This proves \eqref{eq:C2-Taylor}.  Under the positive/negative Fourier
decomposition, \eqref{eq:fourier-lambda}--\eqref{eq:Lambda-block} give
\[
 \Lambda_{\varphi_\varepsilon}=A_{L_\varepsilon},\qquad
 \Lambda_{\varphi_\varepsilon}-\varepsilon A_{\dot L_v}
 =A_{L_\varepsilon-\varepsilon\dot L_v}.
\]
Lemma~\ref{lem:fourier-HS} therefore yields
\begin{align*}
 \|\Lambda_{\varphi_\varepsilon}-\varepsilon A_{\dot L_v}\|_{\mathrm{HS}}
 &\leq\|\partial_s\partial_t
       (L_\varepsilon-\varepsilon\dot L_v)\|_{L^2(\T^2)}\\
 &\leq\|L_\varepsilon-\varepsilon\dot L_v\|_{C^2(\T^2)}
 =O_v(\varepsilon^2).
\end{align*}
Dividing by $|\varepsilon|$ proves the derivative assertion.
\end{proof}

Fix a family $(a_t)$ of lifts of smooth orientation-preserving circle
diffeomorphisms such that, for a smooth real periodic $v$,
\begin{equation}\label{eq:general-flow-lift-expansion}
 \|a_t-\id-tv\|_{C^4(\T)}=O(t^2)
 \qquad(t\to0).
\end{equation}
For a lift $a$, the notation $L_a$ and $\Lambda_a$ refers to the induced
circle map $e^{i\theta}\mapsto e^{ia(\theta)}$.

\begin{lemma}\label{lem:flow-HS-diff}
The kernel and operator satisfy
\begin{align}
 \|L_{a_t}-t\dot L_v\|_{C^2(\T^2)}&=O(t^2),
 \label{eq:general-flow-kernel-expansion}\\
 \shortintertext{and}
 \|\Lambda_{a_t}-t\dot\Lambda_v\|_{\mathrm{HS}}&=O(t^2).
 \label{eq:general-flow-S2-expansion}
\end{align}
\end{lemma}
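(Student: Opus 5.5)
The plan is to reduce Lemma~\ref{lem:flow-HS-diff} to the straight-line case already handled in Proposition~\ref{prop:HS-diff}. The key idea is to view the kernel $L_a$ as a smooth map of the lift $a$, taken in $C^4$, with values in $C^2(\T^2)$, and then Taylor expand it at $a=\id$.

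\textbf{Step 1: a parametrized kernel.} For lifts $a=\id+u$ with $u$ smooth, $2\pi$-periodic, and $\|u'\|_\infty<1/2$, I repeat the factorization from the proof of Lemma~\ref{lem:smooth-kernel}, now with $a_\varepsilon$ replaced by $a$. Near the diagonal this gives
\[
 L_a(s,t)=\log b_a(s,t)+\log\operatorname{sinc}\frac{a(s)-a(t)}2-\log\operatorname{sinc}\frac{s-t}2,
 \qquad
 b_a(s,t)=\int_0^1 a'\bigl(t+r(s-t)\bigr)\,dr .
\]
Off the diagonal $L_a$ is simply the logarithm of the sine ratio \eqref{eq:sine-ratio}.

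Two facts follow from these formulas. First, $\partial_s^\alpha\partial_t^\beta b_a$ with $\alpha+\beta\le2$ involves at most three derivatives of $a$. Second, $\log$ and $\operatorname{sinc}$ are smooth on the relevant ranges, which are bounded uniformly because $b_a\ge1/2$. Hence the map $u\mapsto L_{\id+u}$ is $C^2$ from a $C^3$-ball into $C^2(\T^2)$. Its second Fréchet derivative is bounded on a small ball $\|u\|_{C^3}\le\delta$, and the bound is uniform over a finite cover of $\T^2$ split into a diagonal part and an off-diagonal part. The first derivative at $u=0$ is linear in $u$ and agrees with $\dot L_u$ on smooth $u$, because differentiating along the path $\varepsilon u$ recovers the definition of $\dot L_u$.

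\textbf{Step 2: Taylor expansion.} I write $a_t=\id+tv+r_t$ with $\|r_t\|_{C^4}=O(t^2)$ by \eqref{eq:general-flow-lift-expansion}. Taylor's formula with integral remainder for the $C^2$ map of Step~1 then gives
\[
 \|L_{a_t}-L_{\id}-DL_{\id}[tv+r_t]\|_{C^2}\le C\|tv+r_t\|_{C^3}^2=O(t^2).
\]
Since $L_{\id}=0$ and $DL_{\id}[tv+r_t]=t\dot L_v+\dot L_{r_t}$, it remains to bound the linear term. The linear bound $\|\dot L_{r_t}\|_{C^2}\le C\|r_t\|_{C^3}=O(t^2)$ holds by the same formulas, which proves \eqref{eq:general-flow-kernel-expansion}.

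\textbf{Step 3: the operator estimate.} As in the proof of Proposition~\ref{prop:HS-diff}, I use $\Lambda_{a_t}-t\dot\Lambda_v=A_{L_{a_t}-t\dot L_v}$, which relies on linearity of $F\mapsto A_F$ and on $\dot\Lambda_v=A_{\dot L_v}$. Lemma~\ref{lem:fourier-HS} bounds the Hilbert--Schmidt norm by $\|\partial_s\partial_t(\cdot)\|_{L^2}\le\|\cdot\|_{C^2}$, and this gives \eqref{eq:general-flow-S2-expansion}.

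\textbf{Main obstacle.} The delicate point is Step~1: proving that the second derivative in $u$ of the diagonal-regularized kernel is controlled in $C^2(\T^2)$ using only finitely many derivatives of $u$, uniformly up to the diagonal. The $C^4$ hypothesis leaves one derivative of room for this. I would handle it by writing everything through the integral representation of $b_a$ and of the analogous difference quotient inside the sinc factor, so that every $s,t$ derivative falls on $a'$ composed with affine maps.
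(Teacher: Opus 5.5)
Your proposal is correct and follows the paper's proof essentially step for step: the same diagonal factorization makes $a\mapsto L_a$ a $C^2$ map into $C^2(\T^2)$, you Taylor-expand at the identity with the linear term split as $t\dot L_v+\dot L_{a_t-\id-tv}$, and you obtain the Hilbert--Schmidt bound from Lemma~\ref{lem:fourier-HS} together with $\dot\Lambda_v=A_{\dot L_v}$. The only difference is that you work on a $C^3$-ball instead of the paper's $C^4$-neighborhood, a valid sharpening that changes nothing, since two $(s,t)$-derivatives of $b_a$ involve only $a'''$.
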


\begin{proof}
For a lift $a$ sufficiently close to the identity in $C^1$, define
\begin{equation}\label{eq:general-flow-b}
 b_a(s,t):=\int_0^1a'\bigl(t+r(s-t)\bigr)\,dr.
\end{equation}
The factorization \eqref{eq:sinc-factorization}, with $b_\varepsilon$
replaced by $b_a$, expresses $L_a$ near the diagonal as
\begin{equation}\label{eq:general-flow-L-factorization}
 L_a(s,t)=\log b_a(s,t)
 +\log\left|
 \frac{\operatorname{sinc}((a(s)-a(t))/2)}
      {\operatorname{sinc}((s-t)/2)}\right|.
\end{equation}
Away from the diagonal the defining logarithmic difference ratio is a
smooth function of $(a(s),a(t))$.  A finite diagonal/off-diagonal cover of
$\T^2$ and \eqref{eq:general-flow-L-factorization} therefore show that
\begin{equation}\label{eq:general-flow-Banach-map}
 a\longmapsto L_a
 \quad\text{is }C^2\text{ from a }C^4\text{-neighborhood of }\id
 \text{ into }C^2(\T^2).
\end{equation}
Its derivative at $\id$ in direction $u$ is $\dot L_u$ by definition.
The factorization gives a constant $C<\infty$ such that, for every
sufficiently small smooth real periodic $u$,
\[
 \|\dot L_u\|_{C^2}\leq C\|u\|_{C^4},\qquad
 \sup_{0\leq r\leq1}
 \|\partial_r^2L_{\id+ru}\|_{C^2}\leq C\|u\|_{C^4}^2,
\]
where $C$ is uniform on a fixed $C^4$ neighborhood of zero.
Taylor's formula along $\id+r(a_t-\id)$ now gives
\begin{align*}
 L_{a_t}-t\dot L_v
 &=\dot L_{a_t-\id-tv}
   +\int_0^1(1-r)\partial_r^2
            L_{\id+r(a_t-\id)}\,dr,\\
 \shortintertext{so that}
 \|L_{a_t}-t\dot L_v\|_{C^2}
 &\leq C\|a_t-\id-tv\|_{C^4}
       +\frac C2\|a_t-\id\|_{C^4}^2
 =O(t^2).
\end{align*}
Here \eqref{eq:general-flow-lift-expansion} controls both terms.
Finally,
\[
 \|\Lambda_{a_t}-t\dot\Lambda_v\|_{\mathrm{HS}}
 \leq\|L_{a_t}-t\dot L_v\|_{C^2}=O(t^2),
\]
by Lemma~\ref{lem:fourier-HS} and $\dot\Lambda_v=A_{\dot L_v}$.
This proves both assertions.
\end{proof}

\subsubsection{Exact computation of the first response}\label{sec:proof-main}

We now calculate $\dot L_v$ and its Fourier coefficients.

\begin{lemma}\label{lem:kernel-response}
For $s\neq t$ modulo $2\pi$,
\begin{equation}\label{eq:kernel-response}
 \dot L_v(s,t)
 =\frac12\cot\frac{s-t}{2}\,[v(s)-v(t)].
\end{equation}
The extension across the diagonal satisfies
\begin{equation}\label{eq:det-kernel-response-diagonal}
 \dot L_v\in C^\infty(\T^2),\qquad \dot L_v(s,s)=v'(s).
\end{equation}
\end{lemma}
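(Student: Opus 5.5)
The plan is to differentiate the explicit off-diagonal formula \eqref{eq:sine-ratio} in $\varepsilon$ at $\varepsilon=0$, and then to read off the diagonal value from the factorization used in Lemma~\ref{lem:smooth-kernel}. By that lemma, $\varepsilon\mapsto L_\varepsilon$ is smooth into $C^m(\T^2)$. So $\dot L_v$ exists as a smooth function, and it may be computed pointwise as the derivative at $\varepsilon=0$ of $L_\varepsilon(s,t)$ for each fixed $(s,t)$.

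For $s\neq t$ modulo $2\pi$, \eqref{eq:sine-ratio} gives
\[
 L_\varepsilon(s,t)=\log\bigl|\sin\bigl((a_\varepsilon(s)-a_\varepsilon(t))/2\bigr)\bigr|-\log\bigl|\sin((s-t)/2)\bigr|.
\]
The second term does not depend on $\varepsilon$. At $\varepsilon=0$ the sine in the first term is nonzero, so the first term is smooth in $\varepsilon$ near $0$. Since $\partial_\varepsilon a_\varepsilon(s)=v(s)$, the chain rule gives
\[
 \partial_\varepsilon\big|_{0}L_\varepsilon(s,t)=\tfrac12\cot\tfrac{s-t}{2}\,[v(s)-v(t)],
\]
which is \eqref{eq:kernel-response}.

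For the diagonal, smoothness of $\dot L_v$ on $\T^2$ is already part of Lemma~\ref{lem:smooth-kernel}. The value can be obtained in two ways. The first is to differentiate the diagonal formula $L_\varepsilon(s,s)=\log\{1+\varepsilon v'(s)\}$ at $\varepsilon=0$, which gives $v'(s)$. This is legitimate because the jointly smooth extension in $(\varepsilon,s,t)$ lets us interchange the diagonal restriction with $\partial_\varepsilon$. The second is to take the limit of \eqref{eq:kernel-response} as $t\to s$: since $\tfrac12\cot\tfrac{s-t}{2}\sim (s-t)^{-1}$ and $v(s)-v(t)\sim v'(s)(s-t)$, the limit is $v'(s)$. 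I would also record the manifestly smooth representation
\[
 \dot L_v(s,t)=\tfrac12\cos\tfrac{s-t}{2}\cdot\frac{(s-t)/2}{\sin((s-t)/2)}\int_0^1 v'\bigl(t+r(s-t)\bigr)\,dr,
\]
which gives \eqref{eq:det-kernel-response-diagonal} directly, independently of the lemma.

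The only delicate point is justifying that the pointwise $\varepsilon$-derivative agrees with the $C^m$-valued derivative $\dot L_v$. This follows because convergence in $C^0$ implies pointwise convergence. There is no genuine obstacle here.
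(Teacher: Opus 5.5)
Your main argument is correct and is the paper's own: differentiate the off-diagonal log-sine expression from \eqref{eq:sine-ratio} at $\varepsilon=0$, and get the diagonal value $v'(s)$ as the limit $t\to s$ (or, equivalently, from $L_\varepsilon(s,s)=\log\{1+\varepsilon v'(s)\}$). Smoothness is then taken from Lemma~\ref{lem:smooth-kernel}.

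The one slip is in your auxiliary ``manifestly smooth'' formula, which has a spurious factor $\tfrac12$. Writing $x=s-t$, the correct identity is $\tfrac12\cot\tfrac{x}{2}\cdot x=\cos\tfrac x2\cdot\frac{x/2}{\sin(x/2)}$. Hence
\[
 \dot L_v(s,t)=\cos\tfrac{s-t}{2}\cdot\frac{(s-t)/2}{\sin((s-t)/2)}\int_0^1 v'\bigl(t+r(s-t)\bigr)\,dr.
\]
As you wrote it, the formula would give the diagonal value $v'(s)/2$, contradicting the claim it was meant to verify.

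This representation also holds only in a local lift with $|s-t|<2\pi$, so it is a near-diagonal chart rather than a global formula on $\T^2$. That is enough for your purpose, since the cotangent formula is already smooth off the diagonal.
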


\begin{proof}
Using \eqref{eq:sine-ratio}, for $s\neq t$ we have
\[
 L_{\varphi_\varepsilon}(s,t)
 =\log\left|
 \frac{\sin((s-t+\varepsilon(v(s)-v(t)))/2)}
 {\sin((s-t)/2)}
 \right|.
\]
Differentiating at $\varepsilon=0$ gives
\eqref{eq:kernel-response}.  As $t\to s$,
\[
 \frac12\cot\frac{s-t}{2}\,[v(s)-v(t)]
 =\left(\frac{1}{s-t}+O(s-t)\right)
 \left(v'(s)(s-t)+O((s-t)^2)\right),
\]
which tends to $v'(s)$.  The smooth extension follows from
Lemma~\ref{lem:smooth-kernel}.
\end{proof}

For $0<u<2\pi$ and $m\in\mathbb Z\setminus\{0\}$, set
\[
 q(u):=\frac12\cot\frac u2,\qquad
 j_m:=\frac1{2\pi}\int_0^{2\pi}q(u)(e^{imu}-1)\,du.
\]
The integral is absolutely convergent because
\[
 |e^{imu}-1|\leq2|m|\left|\sin\frac u2\right|,
 \qquad |q(u)(e^{imu}-1)|\leq|m|.
\]

\begin{lemma}\label{lem:cot-integral}
For every nonzero integer $m$,
\begin{equation}\label{eq:Jm}
 j_m=\frac i2\sgn(m).
\end{equation}
\end{lemma}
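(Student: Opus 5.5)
We compute $j_m$ by splitting into real and imaginary parts. Write $e^{imu}-1=(\cos mu-1)+i\sin mu$. The function $q(u)=\frac12\cot\frac u2$ is odd under the reflection $u\mapsto 2\pi-u$. Under the same reflection, $\cos mu-1$ is even. Hence the real part
\[
 \frac1{2\pi}\int_0^{2\pi}q(u)(\cos mu-1)\,du
\]
vanishes: the integrand is absolutely integrable, since $|\cos mu-1|\le m^2u^2/2$ near $0$ and symmetrically near $2\pi$, and it is antisymmetric about $u=\pi$. So it remains to show
\[
 \frac1{2\pi}\int_0^{2\pi}\tfrac12\cot\tfrac u2\,\sin(mu)\,du=\tfrac12\sgn(m).
\]

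Since $\sin(mu)$ is odd in $m$, it suffices to treat $m\ge1$. The case $m\le-1$ then follows because $j_{-m}=\overline{j_m}$, which holds since $q$ is real. For $m\ge1$, I would use the classical trigonometric identity
\[
 \cot\tfrac u2\,\sin(mu)=1+2\sum_{k=1}^{m-1}\cos(ku)+\cos(mu).
\]
This is the conjugate Dirichlet kernel identity. It can be verified by multiplying both sides by $\sin\frac u2$ and telescoping with $2\sin\frac u2\cos(ku)=\sin((k+\frac12)u)-\sin((k-\frac12)u)$. Writing $\cos\frac u2\sin(mu)$ as $\frac12[\sin((m+\frac12)u)+\sin((m-\frac12)u)]$ then matches the two sides.

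Integrating the identity over $[0,2\pi]$, every cosine term averages to zero and only the constant $1$ survives. This gives
\[
 \frac1{2\pi}\int_0^{2\pi}\cot\tfrac u2\sin(mu)\,du=1,
\]
so the imaginary part of $j_m$ equals $\frac12$, and $j_m=\frac i2$ for $m\ge1$. This proves \eqref{eq:Jm}.

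The only delicate point is justifying the splitting of the absolutely convergent integral into real and imaginary parts, and the use of the reflection symmetry. Both pieces are separately absolutely integrable: the sine piece is bounded near the endpoints, and the cosine piece is $O(u)$ there. So no principal-value interpretation is needed. I expect the trigonometric identity to be the main step, and it is routine.
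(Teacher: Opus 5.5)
Your proposal is correct and follows essentially the same route as the paper. The real part vanishes because $q$ is antisymmetric under $u\mapsto 2\pi-u$, the imaginary part comes from integrating the conjugate Dirichlet identity \eqref{eq:dirichlet-id} (checked through the same product-to-sum step), and the case $m<0$ follows by complex conjugation; your telescoping check and integrability remarks simply spell out details the paper states briefly.
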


\begin{proof}
Assume first that $m\geq1$.  Under the change $u\mapsto2\pi-u$,
$q(u)$ changes sign whereas $\cos(mu)-1$ does not.  Hence
\begin{equation}\label{eq:real-zero}
 \int_0^{2\pi}q(u)(\cos(mu)-1)\,du=0.
\end{equation}
For the imaginary part, the trigonometric identity
\begin{equation}\label{eq:dirichlet-id}
 \cot\frac u2\,\sin(mu)
 =1+2\sum_{j=1}^{m-1}\cos(ju)+\cos(mu)
\end{equation}
holds for $0<u<2\pi$.  Indeed, multiply
$2\sin(mu)\cos(u/2)=\sin((m+\tfrac12)u)+\sin((m-\tfrac12)u)$
by $(2\sin(u/2))^{-1}$ and use the Dirichlet-kernel formulas.
Integrating \eqref{eq:dirichlet-id} over $[0,2\pi]$ gives
\[
 \int_0^{2\pi}\cot\frac u2\,\sin(mu)\,du=2\pi.
\]
Since $q=\frac12\cot(u/2)$, this and \eqref{eq:real-zero} imply
$j_m=i/2$ for $m>0$.  Complex conjugation, or replacing $m$ by $-m$,
gives $j_m=-i/2$ for $m<0$.
\end{proof}

\begin{proposition}\label{prop:fourier-response}
For $k,\ell\in\Z\setminus\{0\}$,
\begin{equation}\label{eq:hat-response}
 \widehat{\dot L_v}_{k,\ell}
 =\frac{i}{2}\bigl(\sgn k+\sgn\ell\bigr)v_{k+\ell}.
\end{equation}
Hence
\begin{equation}\label{eq:lambda-response}
 \left.\frac{d}{d\varepsilon}\right|_0
 \lambda_{k,\ell}(\varphi_\varepsilon)
 =\frac{i}{2}\sqrt{|k\ell|}
 \bigl(\sgn k+\sgn\ell\bigr)v_{k+\ell}.
\end{equation}
\end{proposition}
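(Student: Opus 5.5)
We compute the Fourier coefficients of the explicit kernel from Lemma~\ref{lem:kernel-response} directly. Write
\[
 \dot L_v(s,t)=q(s-t)\,[v(s)-v(t)],\qquad q(u)=\tfrac12\cot\tfrac u2 .
\]
Here $\dot L_v$ is smooth on $\T^2$ by \eqref{eq:det-kernel-response-diagonal}, and the product is bounded off the diagonal. So the Fourier integral \eqref{eq:fourier-lambda} may be computed on the full-measure set $s\neq t$.

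We change variables to $u=s-t\in(0,2\pi)$ and $t$. This has unit Jacobian, and periodicity lets $u$ range over $(0,2\pi)$ for each fixed $t$. Then $e^{-i(ks+\ell t)}=e^{-iku}e^{-i(k+\ell)t}$, and $v(s)-v(t)=v(t+u)-v(t)$.

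Since $v$ is smooth, we may expand $v(t+u)-v(t)=\sum_n v_n e^{int}(e^{inu}-1)$. This series converges absolutely and uniformly. The factor $q(u)(e^{inu}-1)$ is bounded by $|n|$, as noted before Lemma~\ref{lem:cot-integral}, and $\sum|n||v_n|<\infty$. Fubini is therefore justified, and integrating in $t$ first gives
\[
 \widehat{\dot L_v}_{k,\ell}
 =\sum_n v_n\,\delta_{n,k+\ell}\,\frac1{2\pi}\int_0^{2\pi}q(u)\,(e^{inu}-1)\,e^{-iku}\,du .
\]
This leaves only $n=k+\ell$. Writing $(e^{i(k+\ell)u}-1)e^{-iku}=(e^{i\ell u}-1)-(e^{-iku}-1)$, the surviving integral equals
\[
 j_\ell-j_{-k}=\tfrac i2\sgn\ell+\tfrac i2\sgn k
\]
by Lemma~\ref{lem:cot-integral}. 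Here $k,\ell\neq0$ are exactly what is needed for both $j$'s to be defined. This proves \eqref{eq:hat-response}. The only point needing care is splitting the integral into two pieces. Each piece $q(u)(e^{imu}-1)$ is separately absolutely integrable, so the split is legitimate.

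For \eqref{eq:lambda-response}, Proposition~\ref{prop:HS-diff} gives the $C^2$, hence uniform, expansion
\[
 L_{\varphi_\varepsilon}=\varepsilon\dot L_v+O(\varepsilon^2).
\]
Each Fourier coefficient is a bounded linear functional in sup norm, so $\widehat\lambda_{k,\ell}(\varphi_\varepsilon)$ is differentiable at $0$ with derivative $\widehat{\dot L_v}_{k,\ell}$. Multiplying by the constant $\sqrt{|k\ell|}$ from \eqref{eq:normalized-lambda} finishes the proof.

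The main obstacle is the justification of Fubini and the splitting near the singularity $u=0$ of $q$. The uniform bound $|q(u)(e^{imu}-1)|\le|m|$ handles it.
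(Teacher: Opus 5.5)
Your proposal is correct and follows the paper's proof essentially line by line: the substitution $u=s-t$, term-by-term integration justified by $|q(u)(e^{inu}-1)|\le|n|$ and $\sum|n||v_n|<\infty$, the $t$-integral forcing $n=k+\ell$, and the reduction to $j_\ell-j_{-k}$ via Lemma~\ref{lem:cot-integral}. Your extra step, deducing differentiability of $\widehat\lambda_{k,\ell}(\varphi_\varepsilon)$ with derivative $\widehat{\dot L_v}_{k,\ell}$ from the uniform expansion in Proposition~\ref{prop:HS-diff}, makes explicit a point the paper leaves implicit.
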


\begin{proof}
Set $u=s-t$.  Smoothness of $v$ and the preceding bound imply
\[
 \sup_{0<u<2\pi}\sum_{n\in\mathbb Z}
 |v_n q(u)(e^{inu}-1)|
 \leq\sum_{n\in\mathbb Z}|n|\,|v_n|<\infty.
\]
We may therefore integrate the Fourier series term by term:
\begin{align*}
 \widehat{\dot L_v}_{k,\ell}
 &=\frac{1}{(2\pi)^2}\int_0^{2\pi}\int_0^{2\pi}
 q(u)\,[v(t+u)-v(t)]
 e^{-iku}e^{-i(k+\ell)t}\,du\,dt\\
 &=\sum_{n\in\Z}v_n
 \left(\frac{1}{2\pi}\int_0^{2\pi}
 e^{i(n-k-\ell)t}\,dt\right)
 \left(\frac{1}{2\pi}\int_0^{2\pi}
 q(u)(e^{inu}-1)e^{-iku}\,du\right).
\end{align*}
The $t$-integral forces $n=k+\ell$.  Therefore
\begin{align}
 \widehat{\dot L_v}_{k,\ell}
 &=v_{k+\ell}\frac{1}{2\pi}\int_0^{2\pi}
 q(u)\bigl(e^{i\ell u}-e^{-iku}\bigr)\,du\notag\\
 &=v_{k+\ell}\bigl(j_\ell-j_{-k}\bigr).
 \label{eq:response-J}
\end{align}
Both differences in the last integral cancel the singularity of $q$.
Lemma~\ref{lem:cot-integral} now gives
\[
 j_\ell-j_{-k}
 =\frac{i}{2}\sgn\ell+\frac{i}{2}\sgn k,
\]
which proves \eqref{eq:hat-response}.  Multiplication by
$\sqrt{|k\ell|}$ gives \eqref{eq:lambda-response}.
\end{proof}

\begin{proof}[Proof of Proposition~\ref{prop:det-round-response}]
Part (i) is Proposition~\ref{prop:HS-diff}.  Part (ii) follows from
Proposition~\ref{prop:fourier-response}.  Indeed, for $k,\ell\geq1$,
\eqref{eq:lambda-response} gives
\begin{equation}\label{eq:N-dot}
 (\dot N_v)_{k\ell}=i\sqrt{k\ell}\,v_{k+\ell},
\end{equation}
whereas
\begin{equation}\label{eq:M-dot}
 \left.\frac{d}{d\varepsilon}\right|_0(M_{\varphi_\varepsilon})_{k\ell}
 =\left.\frac{d}{d\varepsilon}\right|_0
 \lambda_{k,-\ell}(\varphi_\varepsilon)
 =\frac{i}{2}\sqrt{k\ell}(1-1)v_{k-\ell}=0.
\end{equation}
This proves \eqref{eq:block-response-main}.  Since $k+\ell\geq2$ in
\eqref{eq:N-dot}, the modes $v_{-1},v_0,v_1$ make no contribution to either
off-diagonal block; hence infinitesimal M\"obius directions are killed.

For part (iii), the block $\dot N_v$ in \eqref{eq:block-response-main} satisfies
\begin{align}
 \langle \dot N_v,\dot N_w\rangle_{\mathrm{HS}}
 &=\sum_{k,\ell\geq1}
 k\ell\,v_{k+\ell}\overline{w_{k+\ell}}\notag\\
 &=\sum_{n=2}^\infty
 \left(\sum_{k=1}^{n-1}k(n-k)\right)
 v_n\overline{w_n}.
 \label{eq:H-HS-1}
\end{align}
The finite sum is
\begin{align}
 \sum_{k=1}^{n-1}k(n-k)
 &=n\sum_{k=1}^{n-1}k-\sum_{k=1}^{n-1}k^2\notag\\
 &=n\frac{n(n-1)}2-\frac{(n-1)n(2n-1)}6
 =\frac{n^3-n}{6}.
 \label{eq:finite-sum}
\end{align}
Thus
\begin{equation}\label{eq:H-HS-2}
 \langle \dot N_v,\dot N_w\rangle_{\mathrm{HS}}
 =\frac16\sum_{n=2}^\infty
 (n^3-n)v_n\overline{w_n}.
\end{equation}
The block formula \eqref{eq:block-response-main} gives
\begin{align}
 (\dot\Lambda_v,\dot\Lambda_w)_{\mathrm{HS},\mathbb R}
 &=2\operatorname{Re}\langle \dot N_v,\dot N_w\rangle_{\mathrm{HS}}\notag\\
 &=\frac13\operatorname{Re}\sum_{n=2}^\infty
 (n^3-n)v_n\overline{w_n}.
 \label{eq:full-HS}
\end{align}
Comparison with \eqref{eq:intro-WP-metric} gives
\eqref{eq:isometry-main}, and taking $w=v$ gives
\eqref{eq:norm-isometry-main}.

For part (iv), take $v\in H_{\mathrm{WP}}$ and smooth horizontal fields
$v_j\to v$ in $H_{\mathrm{WP}}$.  Equation \eqref{eq:norm-isometry-main}
gives
\[
 \|\dot\Lambda_{v_j}-\dot\Lambda_{v_k}\|_{\mathrm{HS}}
 =6^{-1/2}\|v_j-v_k\|_{\mathrm{WP}}\longrightarrow0.
\]
Define $\dot\Lambda_v:=\lim_j\dot\Lambda_{v_j}$ in $S_2$.  The same
estimate applied to two approximating sequences proves independence of
the sequence and uniqueness of the continuous extension.  Passing to
the limit gives
\[
 \|\dot\Lambda_v\|_{\mathrm{HS}}
 =6^{-1/2}\|v\|_{\mathrm{WP}},\qquad
 \dot\Lambda_v=0\ \Longrightarrow\ v=0.
\]
If $\dot\Lambda_{v_j}\to A$ in $S_2$, then
\[
 \|v_j-v_k\|_{\mathrm{WP}}
 =\sqrt6\,\|\dot\Lambda_{v_j}-\dot\Lambda_{v_k}\|_{\mathrm{HS}}
 \longrightarrow0.
\]
Completeness gives $v_j\to v\in H_{\mathrm{WP}}$, and
\[
 \|A-\dot\Lambda_v\|_{\mathrm{HS}}
 \leq\|A-\dot\Lambda_{v_j}\|_{\mathrm{HS}}
      +6^{-1/2}\|v_j-v\|_{\mathrm{WP}}\longrightarrow0.
\]
Thus the range is closed.
\end{proof}

\subsection{Covariance response at a general welding}\label{sec:global}

\subsubsection{The positive composition coordinate}

Let $H_\partial:=H^{1/2}(\mathbb S^1,\mathbb C)/\mathbb C$.
We use the mean-zero representative of each class.  For $u,w\in H_\partial$,
let $u_n,w_n$ be their Fourier coefficients and set
\begin{equation}\label{eq:Hhalf-inner-product}
 \langle u,w\rangle_{H_\partial}
 :=\sum_{n\ne0}|n|u_n\overline{w_n}.
\end{equation}
We identify $H_\partial$ with $\ell^2(\mathbb Z\setminus\{0\})$ using
the orthonormal Fourier basis $e^{in\theta}/\sqrt{|n|}$, $n\ne0$,
with positive modes in the first block and negative modes in the second.
For a quasisymmetric circle homeomorphism $\varphi$, initially define on
smooth functions modulo constants
\begin{equation}\label{eq:Cphi-global}
 C_\varphi u
 =u\circ\varphi-\frac{1}{2\pi}\int_0^{2\pi}
 u\circ\varphi(e^{it})\,dt,
 \qquad u\in C^\infty(\mathbb S^1;\mathbb C)/\mathbb C.
\end{equation}
Quasisymmetry gives a bounded extension to $H_\partial$, with inverse
$C_{\varphi^{-1}}$ \cite{NS95,FVW26}.  In the Fourier basis used above,
\cite[Theorem~1.10]{FVW26} gives
\begin{equation}\label{eq:P-def}
 P_\varphi:=\one-2\Lambda_\varphi=C_\varphi C_\varphi^*.
\end{equation}
For quasisymmetric $\varphi_1,\varphi_2$, their Corollary~1.11 gives
\begin{equation}\label{eq:Lambda-cocycle-global}
 \Lambda_{\varphi_1\circ\varphi_2}
 =C_{\varphi_2}\Lambda_{\varphi_1}C_{\varphi_2}^*
  +\Lambda_{\varphi_2}.
\end{equation}
Using \eqref{eq:P-def}, we obtain
\begin{equation}\label{eq:P-congruence}
 P_{\varphi_1\circ\varphi_2}
 =C_{\varphi_2}(\one-2\Lambda_{\varphi_1})C_{\varphi_2}^*
 =C_{\varphi_2}P_{\varphi_1}C_{\varphi_2}^*.
\end{equation}
For a quasiconformal map $H$, write $\mu_H:=H_{\bar z}/H_z$ for its
Beltrami coefficient.  Write $\operatorname{WP}(\mathbb S^1)$ for the
quasisymmetric maps admitting
a quasiconformal extension $\widehat\varphi:\mathbb D\to\mathbb D$ whose
Beltrami coefficient satisfies
\[
 \int_{\mathbb D}\frac{|\mu_{\widehat\varphi}(z)|^2}
 {(1-|z|^2)^2}\,dA(z)<\infty.
\]
For $\varphi\in\operatorname{WP}(\mathbb S^1)$,
\cite[Theorem~1.5]{FVW26} and \eqref{eq:P-def} give
\begin{gather*}
 P_\varphi-\one=-2\Lambda_\varphi\in S_2,\\
 \shortintertext{and}
 \|C_\varphi^{-1}\|^{-2}\one
 \leq P_\varphi\leq\|C_\varphi\|^2\one.
\end{gather*}
In particular, $P_\varphi$ is positive and boundedly invertible.

The positive operators and their tangent operators below act on the
real-valued subspace of $H_\partial$.  We identify them with their
complex-linear extensions to $H_\partial$, which commute with complex
conjugation.  This identification preserves operator and Hilbert--Schmidt
norms, as well as traces of trace-class operators.  We use the real Hilbert
manifold of positive Hilbert--Schmidt perturbations
\begin{equation}\label{eq:P2-space}
 \mathcal P_2
 =\{P=P^*>0:P^{-1}\text{ is bounded and }P-\one\in S_2\}.
\end{equation}
For $P\in\mathcal P_2$ and self-adjoint Hilbert--Schmidt $A,B$, set
\begin{equation}\label{eq:G-P}
 G_P(A,B)
 =\frac32\operatorname{Re}\Tr(P^{-1}AP^{-1}B).
\end{equation}
Since
\[
 \operatorname{Re}\Tr(P^{-1}AP^{-1}A)
 =\|P^{-1/2}AP^{-1/2}\|_{\mathrm{HS}}^2,
\]
this is a positive definite Hilbert metric on each tangent space.

Let $S$ be bounded and boundedly invertible, preserving the real-valued
subspace.  For
$P\in\mathcal P_2$ and self-adjoint Hilbert--Schmidt operators $A,B$, put
$P'=SPS^*$, $A'=SAS^*$, and $B'=SBS^*$, and assume $P'\in\mathcal P_2$.

\begin{lemma}\label{lem:G-congruence}
\begin{equation}\label{eq:G-invariance}
 G_{P'}(A',B')=G_P(A,B).
\end{equation}
\end{lemma}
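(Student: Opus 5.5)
The identity is a direct trace computation; the only care needed is that every trace is well defined. With $P'=SPS^*$ we have $(P')^{-1}=(S^*)^{-1}P^{-1}S^{-1}$, because $S$, $S^*$ and $P$ are all boundedly invertible. Then
\[
 (P')^{-1}A'(P')^{-1}B'
 =(S^*)^{-1}P^{-1}S^{-1}\,SAS^*\,(S^*)^{-1}P^{-1}S^{-1}\,SBS^*
 =(S^*)^{-1}\bigl(P^{-1}AP^{-1}B\bigr)S^*.
\]
So the operator inside the trace for $G_{P'}(A',B')$ is a similarity conjugate of the one for $G_P(A,B)$.

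Next we check trace-class membership. Since $A,B\in S_2$ and $P^{-1}$ is bounded, the product $P^{-1}AP^{-1}B$ is a product of two Hilbert--Schmidt operators, $P^{-1}A$ and $P^{-1}B$. Hence it lies in the trace class $S_1$. The same reasoning applies to the primed operators: $A',B'\in S_2$ because $S,S^*$ are bounded, and $(P')^{-1}$ is bounded. So both traces in \eqref{eq:G-P} are defined.

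The key step is cyclicity. For $T\in S_1$ and bounded $X$ we have $\Tr(XT)=\Tr(TX)$. Apply this with $T=P^{-1}AP^{-1}BS^*$ (trace class, since $S^*$ is bounded) and $X=(S^*)^{-1}$. This gives
\[
 \Tr\bigl((S^*)^{-1}P^{-1}AP^{-1}BS^*\bigr)=\Tr\bigl(P^{-1}AP^{-1}BS^*(S^*)^{-1}\bigr)=\Tr(P^{-1}AP^{-1}B).
\]
Taking real parts and multiplying by $\tfrac32$ yields \eqref{eq:G-invariance}.

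The only delicate point is justifying cyclicity with one factor merely bounded, which is the standard fact that $S_1$ is an ideal on which the trace is cyclic against bounded operators. There is also a bookkeeping point: the operators act on the real subspace but are identified with their complex-linear extensions. Since $S$ preserves the real-valued subspace, its extension commutes with conjugation, and the stated identification preserves traces. So the computation can be done in the complexified space $H_\partial$ without change.
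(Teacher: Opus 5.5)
Your proof is correct and follows the paper's argument. You rewrite $(P')^{-1}A'(P')^{-1}B'$ as the similarity conjugate $(S^*)^{-1}(P^{-1}AP^{-1}B)S^*$, note that $P^{-1}AP^{-1}B$ is trace class as a product of two Hilbert--Schmidt operators, and conclude by cyclicity of the trace against a bounded factor; your remark on the complexified space is extra bookkeeping that the paper covers by its standing identification convention.
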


\begin{proof}
We have $(P')^{-1}=(S^{-1})^*P^{-1}S^{-1}$ and therefore
\[
 (P')^{-1}A'(P')^{-1}B'
 =(S^{-1})^*P^{-1}AP^{-1}BS^*.
\]
Writing $\|\cdot\|_{\mathrm{tr}}$ for the trace norm, the ideal inequality
gives
\[
 \|P^{-1}AP^{-1}B\|_{\mathrm{tr}}
 \leq\|P^{-1}\|^2\|A\|_{\mathrm{HS}}\|B\|_{\mathrm{HS}}<\infty.
\]
Cyclicity of the trace therefore gives
\[
 \Tr((S^{-1})^*P^{-1}AP^{-1}BS^*)
 =\Tr(P^{-1}AP^{-1}B),
\]
which proves the claim.
\end{proof}

\subsubsection{Response at a general Weil--Petersson welding}

We view the universal Weil--Petersson Teichm\"uller space as
the left M\"obius quotient
$T_0(1):=\operatorname{Mob}(\mathbb S^1)\backslash
\operatorname{WP}(\mathbb S^1)$, where
$\operatorname{Mob}(\mathbb S^1)$ denotes the orientation-preserving circle
M\"obius transformations.  At the identity, its real tangent Hilbert space is
$H_{\mathrm{WP}}$.  Let $g_{\mathrm{WP}}$ be the right-invariant
Weil--Petersson metric, normalized by \eqref{eq:intro-WP-metric}
\cite{Nag92,TT06}.  For a smooth real field $v$, let $\Psi_t$ be
the lifted flow determined by
\begin{equation}\label{eq:wp-flow}
 \partial_t\Psi_t(\theta)=v(\Psi_t(\theta)),
 \qquad \Psi_0(\theta)=\theta,
 \qquad \psi_t^v(e^{i\theta})=e^{i\Psi_t(\theta)}.
\end{equation}
Differentiation in time gives
$\partial_t^2\Psi_t=(v'v)\circ\Psi_t$.  Hence
\[
 \Psi_t-\id-tv=\int_0^t(t-r)(v'v)\circ\Psi_r\,dr.
\]
For each integer $m\geq0$, the functions $(v'v)\circ\Psi_r$ have bounded
$C^m$ norms on compact time intervals, so
\begin{equation}\label{eq:wp-flow-Cm-expansion}
 \|\Psi_t-\id-tv\|_{C^m(\T)}
 \leq\frac{t^2}{2}\sup_{|r|\leq|t|}
       \|(v'v)\circ\Psi_r\|_{C^m(\T)}
 =O_{v,m}(t^2).
\end{equation}
For a fixed representative $\varphi\in\operatorname{WP}(\mathbb S^1)$
and a smooth horizontal $v$, put
\begin{equation}\label{eq:right-tangent}
 X_{\varphi,v}
 =\left.\frac{d}{dt}\right|_{0}[\psi_t^v\circ\varphi].
\end{equation}
For right translation $R_\varphi(\psi):=\psi\circ\varphi$ and smooth
horizontal $v,w$, these vectors satisfy
\begin{align*}
 X_{\varphi,v}&=(dR_\varphi)_{\id}v,\\[3pt]
 g_{\mathrm{WP},[\varphi]}(X_{\varphi,v},X_{\varphi,w})
 &=(v,w)_{\mathrm{WP}}.
\end{align*}
The derivative in this subsection follows $\psi_t^v\circ\varphi$.
The right action \eqref{eq:intro-right-response} on random weldings follows
$\varphi\circ\psi_t^v$; the two paths have the same tangent at $\varphi=\id$.

For a tangent path in \eqref{eq:right-tangent}, write
$(d\Lambda)_\varphi(X_{\varphi,v})$
for its derivative in Hilbert--Schmidt norm when this derivative exists,
and write $(dP)_\varphi=-2(d\Lambda)_\varphi$.

\begin{proposition}\label{prop:det-global-response}
For every smooth horizontal $v$, the Hilbert--Schmidt derivative at the
fixed welding $\varphi$ satisfies
\begin{equation}\label{eq:global-response-formula}
 (d\Lambda)_\varphi(X_{\varphi,v})
 =\left.\frac{d}{dt}\right|_{0}
 \Lambda_{\psi_t^v\circ\varphi}
 =C_\varphi\dot\Lambda_v C_\varphi^*.
\end{equation}
Equivalently,
\begin{equation}\label{eq:global-P-response}
 (dP)_\varphi(X_{\varphi,v})
 =-2C_\varphi\dot\Lambda_vC_\varphi^*.
\end{equation}
For smooth horizontal tangent vectors $X,Y$ at $[\varphi]$,
\begin{equation}\label{eq:global-WP-P}
 g_{\mathrm{WP},[\varphi]}(X,Y)
 =G_{P_\varphi}((dP)_\varphi X,(dP)_\varphi Y),
\end{equation}
or, equivalently,
\begin{equation}\label{eq:global-WP-Lambda}
 g_{\mathrm{WP},[\varphi]}(X,Y)
 =6\operatorname{Re}\Tr\!\left(
 P_\varphi^{-1}(d\Lambda)_\varphi(X)
 P_\varphi^{-1}(d\Lambda)_\varphi(Y)\right).
\end{equation}
The response has a unique continuous extension from smooth horizontal
vectors to the full tangent Hilbert space
\begin{equation}\label{eq:global-response-extension}
 (d\Lambda)_\varphi:T_{[\varphi]}T_0(1)\longrightarrow S_2^{\mathrm{sa}},
\end{equation}
and its range is closed in $S_2^{\mathrm{sa}}$.
\end{proposition}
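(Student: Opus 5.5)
The plan is to reduce everything to the round computation of Proposition~\ref{prop:det-round-response} by means of the cocycle identity \eqref{eq:Lambda-cocycle-global}. Apply it with $\varphi_1=\psi_t^v$ and $\varphi_2=\varphi$:
\[
 \Lambda_{\psi_t^v\circ\varphi}
 =C_\varphi\Lambda_{\psi_t^v}C_\varphi^*+\Lambda_\varphi .
\]
The second term does not depend on $t$. Hence
\[
 \Lambda_{\psi_t^v\circ\varphi}-\Lambda_\varphi
 -tC_\varphi\dot\Lambda_vC_\varphi^*
 =C_\varphi\bigl(\Lambda_{\psi_t^v}-t\dot\Lambda_v\bigr)C_\varphi^*.
\]
The flow lift $\Psi_t$ satisfies \eqref{eq:general-flow-lift-expansion} by \eqref{eq:wp-flow-Cm-expansion} with $m=4$. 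Lemma~\ref{lem:flow-HS-diff} then gives $\|\Lambda_{\psi_t^v}-t\dot\Lambda_v\|_{\mathrm{HS}}=O(t^2)$. The ideal inequality $\|C A C^*\|_{\mathrm{HS}}\le\|C\|^2\|A\|_{\mathrm{HS}}$, with $C_\varphi$ bounded by quasisymmetry, transfers this bound. This proves \eqref{eq:global-response-formula}, and multiplying by $-2$ gives \eqref{eq:global-P-response}.

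For the metric identity, I would use Lemma~\ref{lem:G-congruence} with $S=C_\varphi$ and $P=\one$. By \eqref{eq:P-def} we have $SPS^*=C_\varphi C_\varphi^*=P_\varphi\in\mathcal P_2$. The operator $C_\varphi$ is bounded and boundedly invertible, with inverse $C_{\varphi^{-1}}$, and it preserves real-valued functions. Take $A=-2\dot\Lambda_v$ and $B=-2\dot\Lambda_w$. Then
\[
 G_{P_\varphi}\bigl((dP)_\varphi X_{\varphi,v},(dP)_\varphi X_{\varphi,w}\bigr)
 =G_{\one}(-2\dot\Lambda_v,-2\dot\Lambda_w)
 =6\operatorname{Re}\Tr(\dot\Lambda_v\dot\Lambda_w).
\]
Since $\dot\Lambda_w$ is self-adjoint, this equals $6(\dot\Lambda_v,\dot\Lambda_w)_{\mathrm{HS},\mathbb R}=(v,w)_{\mathrm{WP}}$ by \eqref{eq:isometry-main}. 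By right-invariance, $(v,w)_{\mathrm{WP}}=g_{\mathrm{WP},[\varphi]}(X_{\varphi,v},X_{\varphi,w})$, which gives \eqref{eq:global-WP-P}. Substituting $dP=-2\,d\Lambda$ and $\tfrac32\cdot4=6$ yields \eqref{eq:global-WP-Lambda}.

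Two points need care here. First, the operators act on the real subspace but are identified with complex extensions. Second, the trace-class justification must be settled; it is covered by the proof of Lemma~\ref{lem:G-congruence}.

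For the extension, I would note that $v\mapsto X_{\varphi,v}=(dR_\varphi)_{\id}v$ is an isometry from $H_{\mathrm{WP}}$ onto $T_{[\varphi]}T_0(1)$. This follows from the right-invariance of $g_{\mathrm{WP}}$, and smooth horizontal fields are dense. Thus $(d\Lambda)_\varphi$ should be defined as the composite $v\mapsto C_\varphi\dot\Lambda_vC_\varphi^*$, using the continuous extension of Proposition~\ref{prop:det-round-response}(iv). This map is bounded, and uniqueness follows from density.

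For closedness of the range, note that $A\mapsto C_\varphi AC_\varphi^*$ is a bounded linear bijection of $S_2^{\mathrm{sa}}$, with inverse $A\mapsto C_\varphi^{-1}A(C_\varphi^*)^{-1}$. It therefore maps the closed range of $\dot\Lambda$ onto a closed subspace. Alternatively, \eqref{eq:global-WP-Lambda} combined with $P_\varphi^{-1}\ge\|C_\varphi\|^{-2}\one$ gives a lower bound $\|(d\Lambda)_\varphi X\|_{\mathrm{HS}}\ge c\|X\|_{\mathrm{WP}}$, which also yields closedness.

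I expect the main obstacle to be the identification of tangent spaces. One has to be precise that the derivative along $\psi_t^v\circ\varphi$ represents the tangent vector $(dR_\varphi)v$ in the Möbius quotient, and that the metric there is $(v,w)_{\mathrm{WP}}$. This is essentially the normalization stated just before the proposition. Everything else follows from results already in the paper.
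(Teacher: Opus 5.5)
Your proof of \eqref{eq:global-response-formula}--\eqref{eq:global-WP-Lambda} follows the paper step for step. Both use the cocycle \eqref{eq:Lambda-cocycle-global} with $\varphi_1=\psi_t^v$ and $\varphi_2=\varphi$, Lemma~\ref{lem:flow-HS-diff} via \eqref{eq:wp-flow-Cm-expansion}, and the ideal inequality. Both then apply Lemma~\ref{lem:G-congruence} with $S=C_\varphi$ and $P=\one$, followed by \eqref{eq:isometry-main} and right invariance. The paper also begins by noting that $\Lambda_{m\circ\varphi}=\Lambda_\varphi$ for M\"obius $m$, from $\Lambda_m=0$ and the cocycle, so that $\Lambda$ and $P$ descend to $T_0(1)$. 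You omit this. It is harmless for the identities at a fixed representative, but it is what justifies reading $(d\Lambda)_\varphi$ as a differential on the quotient.

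Your extension step takes a different route. The paper introduces the moving norm $\|A\|_\varphi=\|P_\varphi^{-1/2}AP_\varphi^{-1/2}\|_{\mathrm{HS}}$. It combines the equivalence \eqref{eq:moving-HS-equivalence} with the isometry \eqref{eq:global-norm-response} and reruns the Cauchy-sequence argument at $\varphi$. This yields existence, uniqueness, injectivity and closed range.

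You instead define the extension as $v\mapsto C_\varphi\dot\Lambda_vC_\varphi^*$, using Proposition~\ref{prop:det-round-response}(iv). You then get closedness by pushing the closed round range through the linear homeomorphism $A\mapsto C_\varphi AC_\varphi^*$ of $S_2^{\mathrm{sa}}$, whose inverse is $A\mapsto C_\varphi^{-1}A(C_\varphi^{-1})^*$. This is correct and shorter, because it reuses the round result instead of repeating it. The paper's version works directly from \eqref{eq:global-norm-response} and the two-sided bounds on $P_\varphi$, without invoking the factorization again.

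Your alternative argument needs one correction. To get a lower bound $\|(d\Lambda)_\varphi X\|_{\mathrm{HS}}\geq c\|X\|_{\mathrm{WP}}$ from \eqref{eq:global-WP-Lambda}, you need an upper bound on $P_\varphi^{-1}$, namely $\|P_\varphi^{-1}\|\leq\|C_\varphi^{-1}\|^2$. This follows from $P_\varphi\geq\|C_\varphi^{-1}\|^{-2}\one$. The inequality you quote, $P_\varphi^{-1}\geq\|C_\varphi\|^{-2}\one$, gives only the boundedness estimate $\|(d\Lambda)_\varphi X\|_{\mathrm{HS}}\leq\|C_\varphi\|^2\|X\|_{\mathrm{WP}}/\sqrt6$. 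Your primary closed-range argument does not depend on this.
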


\begin{proof}
For a circle M\"obius transformation $m$, \cite[Theorem~1.5]{FVW26}
gives $\Lambda_m=0$.  Thus \eqref{eq:Lambda-cocycle-global} yields
\[
 \Lambda_{m\circ\varphi}
 =C_\varphi\Lambda_mC_\varphi^*+\Lambda_\varphi=\Lambda_\varphi,
 \qquad P_{m\circ\varphi}=P_\varphi.
\]
Both operators are therefore defined on the left M\"obius quotient.

Apply \eqref{eq:Lambda-cocycle-global} with
$\varphi_1=\psi_t^v$ and $\varphi_2=\varphi$:
\begin{equation}\label{eq:global-diff-quotient}
 \frac{\Lambda_{\psi_t^v\circ\varphi}-\Lambda_\varphi}{t}
 =C_\varphi\frac{\Lambda_{\psi_t^v}}{t}C_\varphi^*.
\end{equation}
By \eqref{eq:wp-flow-Cm-expansion} and
Lemma~\ref{lem:flow-HS-diff},
\begin{equation}\label{eq:global-flow-S2-limit}
 \left\|\frac{\Lambda_{\psi_t^v}}t-\dot\Lambda_v\right\|_{\mathrm{HS}}
 =O_v(|t|).
\end{equation}
The operator ideal estimate now gives
\begin{align*}
 &\left\|
 \frac{\Lambda_{\psi_t^v\circ\varphi}-\Lambda_\varphi}{t}
       -C_\varphi\dot\Lambda_vC_\varphi^*
 \right\|_{\mathrm{HS}}\\
 &\hspace{2em}\leq\|C_\varphi\|^2
       \left\|\frac{\Lambda_{\psi_t^v}}t-\dot\Lambda_v\right\|_{\mathrm{HS}}
 =O_{\varphi,v}(|t|).
\end{align*}
This proves \eqref{eq:global-response-formula};
\eqref{eq:global-P-response} follows from $P=\one-2\Lambda$.

Now take $X=X_{\varphi,v}$ and $Y=X_{\varphi,w}$.  Equation
\eqref{eq:P-congruence} gives
\[
 P_\varphi=C_\varphi P_{\id}C_\varphi^*,\qquad P_{\id}=\one,
\]
and \eqref{eq:global-P-response} gives
\begin{align*}
 (dP)_\varphi X&=C_\varphi(-2\dot\Lambda_v)C_\varphi^*,\\[3pt]
 (dP)_\varphi Y&=C_\varphi(-2\dot\Lambda_w)C_\varphi^*.
\end{align*}
Lemma~\ref{lem:G-congruence}, followed by the round identity
\eqref{eq:isometry-main}, yields
\begin{align*}
 G_{P_\varphi}((dP)_\varphi X,(dP)_\varphi Y)
 &=G_{\one}(-2\dot\Lambda_v,-2\dot\Lambda_w)\\
 &=6(\dot\Lambda_v,\dot\Lambda_w)_{\mathrm{HS},\mathbb R}\\
 &=(v,w)_{\mathrm{WP}}.
\end{align*}
Right invariance of the Weil--Petersson metric identifies the last quantity
with $g_{\mathrm{WP},[\varphi]}(X,Y)$, proving
\eqref{eq:global-WP-P}.  Substituting $dP=-2d\Lambda$ into
\eqref{eq:global-WP-P} gives \eqref{eq:global-WP-Lambda}.

It remains to pass from smooth vectors to the tangent completion.  Define
\[
 \|A\|_{\varphi}
 =\|P_\varphi^{-1/2}AP_\varphi^{-1/2}\|_{\mathrm{HS}}.
\]
Because $P_\varphi$ and $P_\varphi^{-1}$ are bounded,
\begin{equation}\label{eq:moving-HS-equivalence}
 \|P_\varphi\|^{-1}\|A\|_{\mathrm{HS}}
 \leq \|A\|_\varphi
 \leq \|P_\varphi^{-1}\|\|A\|_{\mathrm{HS}}.
\end{equation}
For smooth horizontal $X$, \eqref{eq:global-WP-Lambda} gives
\begin{equation}\label{eq:global-norm-response}
 \|X\|_{\mathrm{WP}}^2
 =6\|(d\Lambda)_\varphi X\|_\varphi^2.
\end{equation}
For an arbitrary tangent vector $X$, choose
smooth horizontal $X_n\to X$ in the Weil--Petersson norm.  Then
\[
 \|(d\Lambda)_\varphi X_n-(d\Lambda)_\varphi X_m\|_{\mathrm{HS}}
 \leq\frac{\|P_\varphi\|}{\sqrt6}\|X_n-X_m\|_{\mathrm{WP}}
 \longrightarrow0.
\]
The limit defines the unique continuous extension.
Equation \eqref{eq:global-norm-response} passes to the limit and proves
injectivity.  Conversely, if $(d\Lambda)_\varphi X_n\to A$ in $S_2$, then
\[
 \|X_n-X_m\|_{\mathrm{WP}}
 \leq\sqrt6\,\|P_\varphi^{-1}\|
       \|(d\Lambda)_\varphi X_n-(d\Lambda)_\varphi X_m\|_{\mathrm{HS}}
 \longrightarrow0.
\]
Hence $X_n\to X$ in the tangent Hilbert space, and
\[
 \|A-(d\Lambda)_\varphi X\|_{\mathrm{HS}}
 \leq\|A-(d\Lambda)_\varphi X_n\|_{\mathrm{HS}}
      +\frac{\|P_\varphi\|}{\sqrt6}\|X_n-X\|_{\mathrm{WP}}
 \longrightarrow0.
\]
Thus the range is closed.
\end{proof}

\subsubsection{Covariance interpretation}

The normalized composition operator in \eqref{eq:Cphi-global} is the same
pullback operator denoted $\Pi(\varphi)$ by Fan--Sung \cite{FS25}.
For quasisymmetric $\varphi$, Fan--Sung's Gaussian equivalence criterion
\cite[Theorem~3.3]{FS25}, with zero mean shift, is $P_\varphi-\one\in S_2$.
The derivative $(dP)_{\id}v=-2\dot\Lambda_v$ therefore identifies the
covariance tangent, with its Weil--Petersson normalization given by
Proposition~\ref{prop:det-round-response}.

\subsection{K\"ahler and Fisher calibration}
\label{sec:action-kahler}

On $H_\partial$, define the Fourier Hilbert transform by
\begin{equation}\label{eq:det-Hilbert-transform}
 J_\partial e^{in\theta}
 :=i\,\sgn(n)e^{in\theta},
 \qquad n\in\mathbb Z\setminus\{0\}.
\end{equation}
The normalized composition operator $C_\varphi$ in
\eqref{eq:Cphi-global} is symplectic on the real form of $H_\partial$:
\begin{equation}\label{eq:det-composition-symplectic}
 C_\varphi^*J_\partial C_\varphi=J_\partial.
\end{equation}
This is the composition-operator realization of the universal period map
\cite{NS95,TT06}.  Invertibility of $C_\varphi$ and $J_\partial^{-1}=-J_\partial$
also give $C_\varphi J_\partial C_\varphi^*=J_\partial$.  Hence
\begin{equation}\label{eq:det-P-symplectic}
 P_\varphi J_\partial P_\varphi
 =C_\varphi(C_\varphi^*J_\partial C_\varphi)C_\varphi^*
 =C_\varphi J_\partial C_\varphi^*=J_\partial.
\end{equation}

Define the restricted positive symplectic cone by
$\mathcal P_2^{\mathrm{sp}}:=\{P\in\mathcal P_2:PJ_\partial P=J_\partial\}$.
Its tangent space at $P$ is
\begin{equation}\label{eq:det-cone-tangent}
 T_P\mathcal P_2^{\mathrm{sp}}
 =\{A\in S_2^{\mathrm{sa}}:
 AJ_\partial P+PJ_\partial A=0\}.
\end{equation}
The inclusion from left to right follows by differentiating
$P_tJ_\partial P_t=J_\partial$.  For the converse, functional calculus gives
\[
 J_\partial P^s=P^{-s}J_\partial,\qquad s\in\mathbb R.
\]
If $A\in S_2^{\mathrm{sa}}$ satisfies the constraint in
\eqref{eq:det-cone-tangent}, set $K=P^{-1/2}AP^{-1/2}$.  Then
\[
 KJ_\partial+J_\partial K
 =P^{-1/2}(AJ_\partial P+PJ_\partial A)P^{-1/2}=0.
\]
The curve $P_t=P^{1/2}e^{tK}P^{1/2}$ satisfies
\[
 \begin{gathered}
 P_0=P,\qquad \dot P_0=A,\\[3pt]
 P_tJ_\partial P_t
 =P^{1/2}e^{tK}J_\partial e^{tK}P^{1/2}=J_\partial.
 \end{gathered}
\]
The remainder satisfies
\[
 \|P_t-P-tA\|_{\mathrm{HS}}
 \leq\|P\|\,\|e^{tK}-\one-tK\|_{\mathrm{HS}}
 \leq\frac{t^2}{2}\|P\|e^{|t|\|K\|}\|K\|\,\|K\|_{\mathrm{HS}}.
\]
Thus $P_t\in\mathcal P_2^{\mathrm{sp}}$, proving
\eqref{eq:det-cone-tangent}.

For $A,B\in T_P\mathcal P_2^{\mathrm{sp}}$, put
\begin{align}
 J_PA&:=PJ_\partial A,
 \label{eq:det-target-complex}\\
 \Omega_P(A,B)&:=G_P(J_PA,B).
 \label{eq:det-target-symplectic}
\end{align}
The identities $J_\partial^*=-J_\partial$ and
$PJ_\partial P=J_\partial$ give
\begin{gather*}
 (PJ_\partial A)^*=-AJ_\partial P=PJ_\partial A,\\
 \shortintertext{and}
 (PJ_\partial A)J_\partial P+PJ_\partial(PJ_\partial A)
 =P J_\partial A J_\partial P-A=0.
\end{gather*}
For the last equality, multiply $AJ_\partial P=-PJ_\partial A$ by
$PJ_\partial$ on the left.  Thus $J_P$ preserves the tangent space.
Moreover,
\[
 J_P^2A=PJ_\partial PJ_\partial A=-A,\qquad
 J_\partial A J_\partial=P^{-1}AP^{-1},
\]
and hence
\begin{align*}
 G_P(J_PA,J_PB)
 &=\frac32\operatorname{Re}\Tr(J_\partial A J_\partial B)
 =G_P(A,B),\\
 \shortintertext{and}
 G_P(J_PA,B)
 &=G_P(J_P^2A,J_PB)=-G_P(A,J_PB).
\end{align*}
Together these identities give
\begin{equation}\label{eq:det-target-Kahler-identities}
 \begin{gathered}
 J_P^2=-\one,\qquad G_P(J_PA,J_PB)=G_P(A,B),\\[3pt]
 \Omega_P(A,B)=-\Omega_P(B,A).
 \end{gathered}
\end{equation}
Transport $J_0$ from \eqref{eq:intro-complex-structure} by right translation:
$J_0X_{\varphi,v}:=X_{\varphi,J_0v}$.  On the source tangent space, set
\begin{equation}\label{eq:det-source-symplectic}
 \omega_{\mathrm{WP},[\varphi]}(X,Y)
 :=g_{\mathrm{WP},[\varphi]}(J_0X,Y).
\end{equation}

\begin{proposition}
\label{prop:det-Kahler-response}
For every $\varphi\in\operatorname{WP}(\mathbb S^1)$ and all
Weil--Petersson tangent vectors $X,Y$ at $[\varphi]$,
\begin{align}
 (dP)_\varphi(J_0X)
 &=J_{P_\varphi}(dP)_\varphi(X),
 \label{eq:det-complex-intertwining}\\
 g_{\mathrm{WP},[\varphi]}(X,Y)
 &=G_{P_\varphi}((dP)_\varphi X,(dP)_\varphi Y),
 \label{eq:det-metric-pullback}\\
 \shortintertext{and}
 \omega_{\mathrm{WP},[\varphi]}(X,Y)
 &=\Omega_{P_\varphi}((dP)_\varphi X,(dP)_\varphi Y).
 \label{eq:det-symplectic-pullback}
\end{align}
\end{proposition}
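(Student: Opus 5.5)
The plan is to prove the three identities first for smooth horizontal fields $v,w$, with $X=X_{\varphi,v}$ and $Y=X_{\varphi,w}$, and then pass to the full tangent Hilbert space by continuity. Identity \eqref{eq:det-metric-pullback} is already contained in \eqref{eq:global-WP-P} of Proposition~\ref{prop:det-global-response} on smooth vectors. Everything therefore reduces to the complex intertwining \eqref{eq:det-complex-intertwining}; the symplectic identity then follows formally.

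\emph{Round case.} First I would check that $\dot\Lambda_{J_0v}=J_\partial\dot\Lambda_v$, using the block formula \eqref{eq:block-response-main}.
\begin{itemize}
\item In the Fourier identification, $J_\partial=\operatorname{diag}(i\one,-i\one)$ on $\ell^2(\N)\oplus\ell^2(\N)$.
\item Hence $J_\partial\dot\Lambda_v$ has off-diagonal blocks $i\dot N_v$ and $-i\overline{\dot N_v}$.
\item Since $k+\ell\geq2$ in \eqref{eq:N-dot}, we have $(J_0v)_{k+\ell}=iv_{k+\ell}$. Thus $\dot N_{J_0v}=i\dot N_v$ and $\overline{\dot N_{J_0v}}=-i\overline{\dot N_v}$, which gives the claim.
\end{itemize}
In particular $J_\partial\dot\Lambda_v$ is self-adjoint, because $\dot N_v$ is a symmetric matrix.

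\emph{Transport to $\varphi$.} By \eqref{eq:global-P-response} and \eqref{eq:P-def},
\[
 J_{P_\varphi}(dP)_\varphi X
 =C_\varphi C_\varphi^*J_\partial\bigl(-2C_\varphi\dot\Lambda_vC_\varphi^*\bigr)
 =-2C_\varphi J_\partial\dot\Lambda_vC_\varphi^*,
\]
where the second equality uses the symplecticity \eqref{eq:det-composition-symplectic}, $C_\varphi^*J_\partial C_\varphi=J_\partial$. By the round step and \eqref{eq:global-P-response} again, the right-hand side equals $-2C_\varphi\dot\Lambda_{J_0v}C_\varphi^*=(dP)_\varphi(X_{\varphi,J_0v})=(dP)_\varphi(J_0X)$. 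This proves \eqref{eq:det-complex-intertwining} on smooth vectors.

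It also shows that $(dP)_\varphi X$ lies in $T_{P_\varphi}\mathcal P_2^{\mathrm{sp}}$, so that $J_{P_\varphi}$ and $\Omega_{P_\varphi}$ are being applied within their domain. The constraint $AJ_\partial P+PJ_\partial A=0$ holds for $A=-2C\dot\Lambda_vC^*$ with $P=CC^*$. Using $C^*J_\partial C=J_\partial$ and $CJ_\partial C^*=J_\partial$, it reduces to $\dot\Lambda_vJ_\partial+J_\partial\dot\Lambda_v=0$. This anticommutation is immediate from the off-diagonal block form of $\dot\Lambda_v$.

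\emph{Symplectic identity and extension.} For smooth $X,Y$,
\[
 \Omega_{P_\varphi}\bigl((dP)_\varphi X,(dP)_\varphi Y\bigr)
 =G_{P_\varphi}\bigl((dP)_\varphi J_0X,(dP)_\varphi Y\bigr)
 =g_{\mathrm{WP}}(J_0X,Y)
 =\omega_{\mathrm{WP}}(X,Y),
\]
using the intertwining and then \eqref{eq:det-metric-pullback}. For general $X,Y$, approximate by smooth horizontal vectors and pass to the limit using three facts:
\begin{itemize}
\item $(dP)_\varphi=-2(d\Lambda)_\varphi$ extends continuously into $S_2$ by Proposition~\ref{prop:det-global-response};
\item $J_0$ is a Weil--Petersson isometry, since $|(J_0v)_n|=|v_n|$;
\item $G_{P_\varphi}$ is continuous on $S_2\times S_2$ and $J_{P_\varphi}$ is bounded on $S_2$, because $P_\varphi$ and $P_\varphi^{-1}$ are bounded.
\end{itemize}
The main point requiring care is the sign and real-form convention in the symplecticity $C_\varphi^*J_\partial C_\varphi=J_\partial$, together with the check that $J_\partial=\operatorname{diag}(i,-i)$ matches $J_0$ and not $-J_0$ in the block ordering of \eqref{eq:Lambda-block}. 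I would verify this first on $\varphi=\id$ and on a single mode $v=\cos(n\theta)$ before carrying out the general argument.
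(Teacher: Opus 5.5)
Your proposal is correct and follows the paper's proof essentially step for step. Both arguments use the round identity $\dot P_{J_0v}=J_\partial\dot P_v$ obtained from the block formula, transport it to $\varphi$ via $C_\varphi^*J_\partial C_\varphi=J_\partial$, take the metric identity from \eqref{eq:global-WP-P}, derive the symplectic identity formally, and extend to the completion by continuity. Your extra check that $(dP)_\varphi X$ satisfies the constraint defining $T_{P_\varphi}\mathcal P_2^{\mathrm{sp}}$ is a point the paper leaves implicit; it confirms that $J_{P_\varphi}$ and $\Omega_{P_\varphi}$ are applied on their stated domain.
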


\begin{proof}
For a smooth horizontal field $v$, put $\dot P_v=-2\dot\Lambda_v$.
Since $(J_0v)_n=iv_n$ for $n>0$, the block response
\eqref{eq:block-response-main} gives
\[
 \dot N_{J_0v}=i\dot N_v,\qquad
 J_\partial=\begin{pmatrix}i\one&0\\0&-i\one\end{pmatrix}.
\]
Therefore
\begin{equation}\label{eq:det-round-complex-response}
 \dot P_{J_0v}
 =-2\begin{pmatrix}0&i\dot N_v\\-i\overline{\dot N_v}&0\end{pmatrix}
 =J_\partial\dot P_v.
\end{equation}
Equations \eqref{eq:global-P-response},
\eqref{eq:det-composition-symplectic}, and
\eqref{eq:det-round-complex-response} yield
\begin{align}
 (dP)_\varphi(J_0X_{\varphi,v})
 &=C_\varphi J_\partial\dot P_vC_\varphi^*\notag\\
 &=C_\varphi C_\varphi^*J_\partial
   C_\varphi\dot P_vC_\varphi^*
 =J_{P_\varphi}(dP)_\varphi(X_{\varphi,v}).
 \label{eq:det-complex-response-proof}
\end{align}
The metric identity is \eqref{eq:global-WP-P}.  Thus, for smooth $X,Y$,
\begin{align*}
 \Omega_{P_\varphi}((dP)_\varphi X,(dP)_\varphi Y)
 &=G_{P_\varphi}(J_{P_\varphi}(dP)_\varphi X,(dP)_\varphi Y)\\
 &=G_{P_\varphi}((dP)_\varphi(J_0X),(dP)_\varphi Y)\\
 &=g_{\mathrm{WP},[\varphi]}(J_0X,Y)
 =\omega_{\mathrm{WP},[\varphi]}(X,Y).
\end{align*}
Proposition~\ref{prop:det-global-response}, the isometry of $J_0$, and
\[
 \|J_PA\|_{\mathrm{HS}}\leq\|P\|\,\|A\|_{\mathrm{HS}}
\]
extend all three identities to the tangent completion.
\end{proof}

Let $H_\partial^+$ and $H_\partial^-$ be the closed subspaces spanned by
the positive and negative Fourier modes, respectively.  For any bounded,
positive, invertible $P$ satisfying $PJ_\partial P=J_\partial$, define
the Cayley operator
\begin{equation}\label{eq:det-Cayley-operator}
 Y_P:=(P-\one)(P+\one)^{-1}.
\end{equation}
The spectral theorem and $J_\partial P=P^{-1}J_\partial$ give
\[
 Y_P^*=Y_P,\qquad
 \|Y_P\|=\max_{\lambda\in\operatorname{spec}(P)}
 \left|\frac{\lambda-1}{\lambda+1}\right|<1,
\]
and
\[
 J_\partial Y_P
 =(P^{-1}-\one)(P^{-1}+\one)^{-1}J_\partial
 =-Y_PJ_\partial.
\]
Thus $Y_P$ has the block form
\begin{equation}\label{eq:det-Cayley-block}
 Y_P=
 \begin{pmatrix}0&Z_P\\ Z_P^*&0\end{pmatrix}
 \quad\text{on }H_\partial^+\oplus H_\partial^-,
 \qquad Z_P:H_\partial^-\longrightarrow H_\partial^+.
\end{equation}
Since $P-\one=2Y_P(\one-Y_P)^{-1}$, the Hilbert--Schmidt ideal property gives
\begin{gather*}
 P-\one\in S_2
 \quad\Longleftrightarrow\quad Y_P\in S_2
 \quad\Longleftrightarrow\quad Z_P\in S_2,\\
 \shortintertext{with}
 \|Y_P\|_{\mathrm{HS}}^2=2\|Z_P\|_{\mathrm{HS}}^2.
\end{gather*}
For $\varphi\in\operatorname{WP}(\mathbb S^1)$, the block $Z_{P_\varphi}$
is the restricted period coordinate of Kirillov--Yuriev--Nag--Sullivan
\cite{Nag92,NS95,TT06}.  Let $D_{\mathrm{FVW}}$ be the nonnegative diagonal
operator on $\ell^2(\mathbb N)$ whose entries are the reciprocals of the positive Fredholm
eigenvalues of the welded curve, as in \cite[Theorem~1.6]{FVW26}.
The symmetric-welding hypothesis of that theorem follows here from
$\Lambda_\varphi\in S_2$: this operator is compact, so
\cite[Theorem~1.5]{FVW26} implies that $\varphi$ is symmetric.  That is,
an increasing lift $a$ of $\varphi$ satisfies
\[
 \lim_{t\downarrow0}\sup_{\theta\in\mathbb R}
 \left|\frac{a(\theta+t)-a(\theta)}
 {a(\theta)-a(\theta-t)}-1\right|=0.
\]
The Weil--Petersson case of their Theorem~1.6 gives
$D_{\mathrm{FVW}}\in S_2$ and $\|D_{\mathrm{FVW}}\|<1$.
Writing $\simeq$ for unitary equivalence, it follows from
$P_\varphi=2(\one-\Lambda_\varphi)-\one$ that
\begin{align}
 P_\varphi
 &\simeq
 \begin{pmatrix}
  (\one+D_{\mathrm{FVW}})(\one-D_{\mathrm{FVW}})^{-1}&0\\
  0&(\one-D_{\mathrm{FVW}})(\one+D_{\mathrm{FVW}})^{-1}
 \end{pmatrix},
 \label{eq:det-FVW-P-diagonalization}\\
 \shortintertext{and hence}
 Y_{P_\varphi}
 &\simeq
 \begin{pmatrix}D_{\mathrm{FVW}}&0\\0&-D_{\mathrm{FVW}}\end{pmatrix}.
 \label{eq:det-FVW-Cayley-diagonalization}
\end{align}
On the other hand,
\[
 Y_{P_\varphi}^2
 =\begin{pmatrix}
 Z_{P_\varphi}Z_{P_\varphi}^*&0\\
 0&Z_{P_\varphi}^*Z_{P_\varphi}
 \end{pmatrix}.
\]
For every integer $m\geq1$, cyclicity of the trace now yields
\[
 \Tr\bigl((Z_{P_\varphi}Z_{P_\varphi}^*)^m\bigr)
 =\frac12\Tr(Y_{P_\varphi}^{2m})
 =\Tr(D_{\mathrm{FVW}}^{2m}).
\]
Write $I^L(\varphi)$ for the Loewner energy of the welded curve, and
$\det$ for the Fredholm determinant of a trace-class perturbation of the
identity.  Since $Z_{P_\varphi}\in S_2$ and $\|Z_{P_\varphi}\|<1$,
\begin{align*}
 -\log\det(\one-Z_{P_\varphi}Z_{P_\varphi}^*)
 &=\sum_{m=1}^\infty\frac1m
       \Tr\bigl((Z_{P_\varphi}Z_{P_\varphi}^*)^m\bigr)\\
 &=\sum_{m=1}^\infty\frac1m\Tr(D_{\mathrm{FVW}}^{2m})
 =-\log\det(\one-D_{\mathrm{FVW}}^2).
\end{align*}
The energy formula in \cite[Theorem~1.6]{FVW26} therefore gives
\begin{equation}\label{eq:det-action-potential}
 I^L(\varphi)
 =-12\log\det(\one-Z_{P_\varphi}Z_{P_\varphi}^*).
\end{equation}
For an arbitrary quasisymmetric welding, the finite-energy characterization
of \cite{Wang19}, \cite[Theorem~1.5]{FVW26}, and the Cayley identities above
give
\begin{equation}\label{eq:det-finite-energy-equivalence}
 I^L(\varphi)<\infty
 \Longleftrightarrow\varphi\in\operatorname{WP}(\mathbb S^1)
 \Longleftrightarrow P_\varphi-\one\in S_2
 \Longleftrightarrow Z_{P_\varphi}\in S_2.
\end{equation}

Equip the real-valued subspace of $H_\partial$ with
$(u,w)_{H_\partial}:=\operatorname{Re}\langle u,w\rangle_{H_\partial}$.
On an auxiliary probability space $(\Omega_G,\mathcal F_G,\mathbb P_G)$,
let $W$ be an isonormal Gaussian field, viewed as a real-linear map from
this Hilbert space into $L^2(\mathbb P_G)$.  Its finite collections are
jointly Gaussian with mean zero and, with expectation under $\mathbb P_G$,
\[
 \mathbb E[W(u)W(w)]=(u,w)_{H_\partial}.
\]
Choose a real orthonormal basis $(u_j)_{j\geq1}$, write $W_j:=W(u_j)$,
and let $\delta_{ij}$ be the Kronecker delta.
For a self-adjoint Hilbert--Schmidt operator $K$ on this real space,
let $K^{(N)}$ be its compression to $\operatorname{span}\{u_1,\ldots,u_N\}$,
extended by zero on the orthogonal complement, and set
\[
 I_{2,N}(K):=\sum_{i,j=1}^N
 (Ku_i,u_j)_{H_\partial}(W_iW_j-\delta_{ij}).
\]
For another such operator $L$, the Gaussian fourth-moment identity
\[
 \mathbb E[(W_iW_j-\delta_{ij})(W_kW_l-\delta_{kl})]
 =\delta_{ik}\delta_{jl}+\delta_{il}\delta_{jk}
\]
gives
\begin{align*}
 \mathbb E[I_{2,N}(K)I_{2,M}(L)]
 &=2\Tr(K^{(N)}L^{(M)}),\\
 \shortintertext{and hence}
 \|I_{2,N}(K)-I_{2,M}(K)\|_{L^2(\mathbb P_G)}^2
 &=2\|K^{(N)}-K^{(M)}\|_{\mathrm{HS}}^2\longrightarrow0
 \quad(N,M\to\infty).
\end{align*}
Consequently the second Wiener integral is well defined by
\[
 I_2(K):=\lim_{N\to\infty}I_{2,N}(K)
 \quad\text{in }L^2(\mathbb P_G).
\]
Its first two moments satisfy
\[
 \mathbb E[I_2(K)]=0,\qquad
 \mathbb E[I_2(K)I_2(L)]=2\Tr(KL).
\]
To check basis independence, fix a real-valued $a\in H_\partial$, take
$Ku=(u,a)_{H_\partial}a$, and put
$a_N=\sum_{j=1}^N(a,u_j)_{H_\partial}u_j$.  Then
\[
 I_{2,N}(K)=W(a_N)^2-\|a_N\|_{H_\partial}^2
 \longrightarrow W(a)^2-\|a\|_{H_\partial}^2
 \quad\text{in }L^2(\mathbb P_G),
\]
because the squared $L^2$ distance is
\[
 2\bigl(\|a_N\|_{H_\partial}^4+\|a\|_{H_\partial}^4
       -2(a_N,a)_{H_\partial}^2\bigr)\longrightarrow0.
\]
The spectral decomposition gives basis independence for finite-rank
self-adjoint operators.  Their density in $S_2^{\mathrm{sa}}$, together with
\[
 \|I_2(K)-I_2(L)\|_{L^2(\mathbb P_G)}
 =\sqrt2\,\|K-L\|_{\mathrm{HS}},
\]
gives the same conclusion for every self-adjoint Hilbert--Schmidt $K$.

For $P\in\mathcal P_2^{\mathrm{sp}}$ and $A\in T_P\mathcal P_2^{\mathrm{sp}}$,
put
\begin{equation}\label{eq:det-covariance-score}
 S_{P,A}^{\mathrm{cov}}:=\frac12 I_2(P^{-1/2}AP^{-1/2}).
\end{equation}
For $A,B\in T_P\mathcal P_2^{\mathrm{sp}}$, the covariance identity and
cyclicity of the trace give
\begin{align}
 \mathbb E[S_{P,A}^{\mathrm{cov}}S_{P,B}^{\mathrm{cov}}]
 &=\frac12\operatorname{Re}\Tr\bigl(
       (P^{-1/2}AP^{-1/2})(P^{-1/2}BP^{-1/2})\bigr)\notag\\
 &=\frac12\operatorname{Re}\Tr(P^{-1}AP^{-1}B)
 =\frac13G_P(A,B).
 \label{eq:det-Gaussian-Fisher}
\end{align}
For $\varphi\in\operatorname{WP}(\mathbb S^1)$ and
$X,Y\in T_{[\varphi]}T_0(1)$, combining \eqref{eq:det-Gaussian-Fisher}
with Proposition~\ref{prop:det-Kahler-response} yields
\begin{align}
 g_{\mathrm{WP},[\varphi]}(X,Y)
 &=3\mathbb E[
 S_{P_\varphi,(dP)_\varphi X}^{\mathrm{cov}}
 S_{P_\varphi,(dP)_\varphi Y}^{\mathrm{cov}}],
 \label{eq:det-Fisher-pullback}\\
 \shortintertext{and}
 \omega_{\mathrm{WP},[\varphi]}(X,Y)
 &=3\mathbb E[
 S_{P_\varphi,J_{P_\varphi}(dP)_\varphi X}^{\mathrm{cov}}
 S_{P_\varphi,(dP)_\varphi Y}^{\mathrm{cov}}].
 \label{eq:det-Fisher-symplectic}
\end{align}

\section{A pointwise response core on rough SLE weldings}
\label{sec:sle-closed}

\subsection{The welding law and horizontal flows}

For deterministic continuity statements, use on $\mathcal W$ the
Carath\'eodory topology of joint local uniform convergence of
$(f_h,g_h)$.  Left and right
composition by a fixed quasisymmetric homeomorphism are continuous in
this topology \cite[Proposition~4.3]{FS25}.
Rotation invariance of the loop shape and the independent uniform
exterior mark make $\widetilde\nu_\kappa$ invariant under left and right
rotations.

The real Hilbert space $H_{\mathrm{WP}}$ has the orthonormal Fourier basis
\begin{equation}\label{eq:B1-WP-basis}
 e_{n,c}(\theta)=\sqrt{\frac{2}{n^3-n}}\cos(n\theta),\qquad
 e_{n,s}(\theta)=\sqrt{\frac{2}{n^3-n}}\sin(n\theta),
 \qquad n\ge2.
\end{equation}
Its finite real span is $H_{\mathrm{WP}}^{\mathrm{fin}}$.

Let $\varphi$ be a quasisymmetric circle homeomorphism, and let
$v_1,\ldots,v_\ell\in H_{\mathrm{WP}}^{\mathrm{fin}}$ and
$t_1,\ldots,t_\ell\in\mathbb R$, with $\ell\geq1$.

\begin{lemma}\label{lem:B1-unique-welding-flow-stability}
For every $h\in\mathcal W$,
\begin{equation}\label{eq:B1-welding-stability}
 h\circ\varphi\in\mathcal W,
\end{equation}
and, in particular,
\begin{equation}\label{eq:B1-finite-composition-stability}
 h\circ\psi_{t_1}^{v_1}\circ\cdots\circ\psi_{t_\ell}^{v_\ell}
 \in\mathcal W.
\end{equation}
\end{lemma}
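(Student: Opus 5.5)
The plan is to realize $h\circ\varphi$ as the welding of an explicit Jordan curve obtained by quasiconformally deforming the curve $\gamma$ of $h$, and then to transfer conformal removability through that deformation. Let $\gamma$ have normalized welding pair $(f,g)$ with $h=g^{-1}\circ f$ on $\mathbb S^1$, and let $U_\pm$ be its complementary components. By Beurling--Ahlfors (or Douady--Earle) extend $\varphi$ to a quasiconformal self-map $\widehat\varphi$ of $\mathbb D$. Define a Beltrami coefficient $\mu$ on $\widehat{\mathbb C}$ by $\mu=0$ on $U_-$ and $\mu=$ the pushforward of $\mu_{\widehat\varphi^{-1}}$ under $f$ on $U_+$; this amounts to pulling back the standard structure by $\widehat\varphi\circ f^{-1}$. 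Then $\|\mu\|_\infty<1$, since $\gamma$ has measure zero by removability. Let $F$ solve the Beltrami equation on $\widehat{\mathbb C}$, normalized to fix $\infty$.

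Next I check the welding of the image curve $F(\gamma)$. On $U_-$ the map $F$ is conformal, so $\tilde g:=F\circ g$ is conformal from $\mathbb D^*$ onto the exterior component. On $U_+$ the map $F\circ f\circ\widehat\varphi^{-1}$ has Beltrami coefficient zero, hence is conformal; call it $\tilde f$. Its domain is $\mathbb D$ and its image is the interior component. The boundary values give
\[
\tilde g^{-1}\circ\tilde f=g^{-1}\circ f\circ\varphi^{-1}=h\circ\varphi^{-1}.
\]
Replacing $\varphi$ by $\varphi^{-1}$, which is also quasisymmetric, yields a Jordan curve with welding $h\circ\varphi$, up to the M\"obius normalization, which leaves the welding class unchanged.

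The remaining and main point is that $F(\gamma)$ is conformally removable. Suppose $\Psi$ is a homeomorphism of $\widehat{\mathbb C}$ that is conformal off $F(\gamma)$. Then $F^{-1}\circ\Psi\circ F$ is a homeomorphism, conformal on $U_-$. On $U_+$ it is a composition of a quasiconformal map, a conformal map and a quasiconformal map, with the same Beltrami data pulled back and pushed forward, so it is conformal there as well. Removability of $\gamma$ then gives only that this conjugate is a homeomorphism conformal off $\gamma$, hence M\"obius. Composing back shows $\Psi=F\circ m\circ F^{-1}$, which is quasiconformal with Beltrami coefficient zero off $F(\gamma)$.

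I expect the obstacle to be concluding that such a $\Psi$ is globally conformal. The cleanest route is to use the standard fact that conformal removability is equivalent to quasiconformal removability: homeomorphisms quasiconformal off the set are quasiconformal everywhere, and this property is invariant under quasiconformal images (as in Younsi's survey). I would cite this fact rather than reprove it. Alternatively, I would cite \cite[Lemma~4.1 and Proposition~4.3]{FS25}, which already assert stability of $\mathcal W$ under quasisymmetric composition. Finally, \eqref{eq:B1-finite-composition-stability} follows, since each flow $\psi_{t_j}^{v_j}$ is a smooth diffeomorphism and hence quasisymmetric, and so is their composition.
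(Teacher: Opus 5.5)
Your fallback of citing \cite[Lemma~4.1]{FS25}, then noting that smooth flows and their finite compositions are quasisymmetric, is exactly the paper's proof. Your main route is different and more self-contained: it reproves the cited lemma. Realizing $h\circ\varphi^{-1}$ as the welding of the global quasiconformal image $F(\gamma)$ is correct, up to a slip. To make $F\circ f\circ\widehat\varphi^{-1}$ conformal you need $\mu_F=f_*\mu_{\widehat\varphi}=\mu_{\widehat\varphi\circ f^{-1}}$ on $U_+$, as your parenthetical says, not $f_*\mu_{\widehat\varphi^{-1}}$. Also, $\|\mu\|_\infty<1$ has nothing to do with the area of $\gamma$. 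Compared with the one-line citation, your route gives an explicit reason why removability transfers: the new curve is a global quasiconformal image of the old one.

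The removability paragraph, however, contains a false step. A homeomorphism $\Psi$ that is conformal off $F(\gamma)$ need not map $F(U_\pm)$ to themselves. So the Beltrami coefficients of $F$ and $F^{-1}$ do not cancel, and $F^{-1}\circ\Psi\circ F$ is in general not conformal on $U_+$ or $U_-$; it is only quasiconformal off $\gamma$.

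Had your claim been true, you would already be done. $F\circ m\circ F^{-1}$ is a global quasiconformal map with vanishing Beltrami coefficient off the null set $F(\gamma)$, hence M\"obius. So the real obstacle is the step before the one you flag.

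The repair is short and needs no survey citation. Given $\Psi$, let $G$ be the quasiconformal homeomorphism of $\widehat{\mathbb C}$ with $\mu_G=\mu_{F^{-1}\circ\Psi^{-1}}$ on $\widehat{\mathbb C}\setminus\Psi F(\gamma)$ and $\mu_G=0$ on $\Psi F(\gamma)$. This coefficient has norm at most $\|\mu_F\|_\infty<1$, because $\Psi^{-1}$ is conformal there.

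Then $G\circ\Psi\circ F$ is a homeomorphism of $\widehat{\mathbb C}$ that is conformal off $\gamma$, hence a M\"obius map $m$ by removability of $\gamma$. Therefore $\Psi=G^{-1}\circ m\circ F^{-1}$ is globally quasiconformal. Since $\gamma$ has zero area and quasiconformal maps preserve null sets, $\mu_\Psi=0$ almost everywhere, and $\Psi$ is M\"obius.

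This is exactly the quasiconformal invariance of conformal removability that you intended to cite. With it in place of the conjugation paragraph, your argument is complete and independent of \cite{FS25}.
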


\begin{proof}
The first assertion is \cite[Lemma~4.1]{FS25}.  Each $v_j$ is smooth,
so its flow is a smooth circle diffeomorphism and hence quasisymmetric.
Their finite composition is quasisymmetric; applying the same lemma
proves \eqref{eq:B1-finite-composition-stability}.
\end{proof}

For adjoint identities we use the complexifications of the real Hilbert
spaces, with the same notation and the pairing
$\langle F,G\rangle=\int F\overline G\,d\widetilde\nu_\kappa$.

\subsection{A bounded finite-energy core on rough weldings}

For $J_{a,b}$ from \eqref{eq:intro-averaged-observable}, fix
$v\in C^\infty(\T;\mathbb R)$.  All suprema in this subsection
are over $h\in\mathcal W$.  We use normalized angular measure
$d\theta/(2\pi)$ in the $L^2$ and $H^1$ norms on $\T$, and set
\[
 C_{a,b}:=\|b\|_\infty
 \bigl(\|a'\|_\infty^2+\|a\|_\infty^2\bigr)^{1/2}.
\]

\begin{lemma}
\label{lem:B1-J-response}
For every $h\in\mathcal W$,
\begin{equation}\label{eq:B1-J-response}
 D_vJ_{a,b}(h)
 =-\frac{1}{2\pi}\int_0^{2\pi}
 (av)'(\theta)b(h(e^{i\theta}))\,d\theta.
\end{equation}
Moreover,
\begin{equation}\label{eq:B1-uniform-dq}
 \lim_{t\to0}\sup_h
 \left|
 \frac{J_{a,b}(T_t^v h)-J_{a,b}(h)}{t}-D_vJ_{a,b}(h)
 \right|=0.
\end{equation}
For horizontal $v$,
\begin{equation}\label{eq:B1-J-WP-bound}
 \sup_h|D_vJ_{a,b}(h)|\le C_{a,b}\|v\|_{\mathrm{WP}}.
\end{equation}
\end{lemma}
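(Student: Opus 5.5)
The plan is to change variables so that the time dependence sits entirely in the smooth weight $a$ and never in the rough homeomorphism $h$. Put $\Psi_t$ for the lift of $\psi_t^v$ as in \eqref{eq:wp-flow}. Then
\[
 J_{a,b}(T_t^vh)=\frac1{2\pi}\int_0^{2\pi}a(\theta)\,b\bigl(h(e^{i\Psi_t(\theta)})\bigr)\,d\theta .
\]
Substitute $\phi=\Psi_t(\theta)$, so $\theta=\Psi_{-t}(\phi)$ and $d\theta=\Psi_{-t}'(\phi)\,d\phi$. This gives
\[
 J_{a,b}(T_t^vh)=\frac1{2\pi}\int_0^{2\pi}a_t(\phi)\,b(h(e^{i\phi}))\,d\phi,
 \qquad a_t:=(a\circ\Psi_{-t})\,\Psi_{-t}' .
\]
The function $b\circ h$ is bounded by $\|b\|_\infty$ and does not depend on $t$. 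It is only measurable in general, but it is continuous here because $h$ is a homeomorphism.

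Next I expand $a_t$ in $t$. From \eqref{eq:wp-flow-Cm-expansion} with $t$ replaced by $-t$ we have $\Psi_{-t}=\id-tv+O(t^2)$ in $C^1$. Differentiating in $t$ at $0$ gives
\[
 \partial_t a_t|_{t=0}=-a'v-av'=-(av)' .
\]
Indeed $a_t$ is the pushforward density of $a\,d\theta$ under the flow, and its derivative is minus the divergence of $av$. Since $(t,\phi)\mapsto a_t(\phi)$ is smooth, Taylor's formula gives
\[
 \|a_t-a+t(av)'\|_{L^1(\T)}\le\|a_t-a+t(av)'\|_\infty\le C\,t^2 .
\]
It follows that
\[
 \left|\frac{J_{a,b}(T_t^vh)-J_{a,b}(h)}{t}+\frac1{2\pi}\int(av)'\,b\circ h\right|\le C|t|\,\|b\|_\infty .
\]
The right side does not depend on $h$. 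This proves both \eqref{eq:B1-J-response} and the uniform convergence \eqref{eq:B1-uniform-dq}.

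For \eqref{eq:B1-J-WP-bound}, Cauchy--Schwarz in the normalized measure gives
\[
 |D_vJ_{a,b}(h)|\le\|b\|_\infty\|(av)'\|_{L^2}\le\|b\|_\infty\bigl(\|a'\|_\infty\|v\|_{L^2}+\|a\|_\infty\|v'\|_{L^2}\bigr).
\]
Applying Cauchy--Schwarz once more in $\mathbb R^2$ bounds the last factor by $(\|a'\|_\infty^2+\|a\|_\infty^2)^{1/2}(\|v\|_{L^2}^2+\|v'\|_{L^2}^2)^{1/2}$.

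It remains to compare with the WP norm for horizontal $v$. Parseval gives $\|v\|_{L^2}^2+\|v'\|_{L^2}^2=2\sum_{n\ge2}(1+n^2)|v_n|^2$. Since $1+n^2\le n^3-n$ for $n\ge2$ (at $n=2$ this reads $5\le6$, and the gap grows with $n$), this is at most $\|v\|_{\mathrm{WP}}^2$, which yields the constant $C_{a,b}$.

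The main point to get right is the change of variables, which moves every derivative onto the smooth data and away from the possibly very rough $h$. The remaining care is in the sign convention for the flow, namely checking that the derivative is $-(av)'$ rather than $+(av)'$. Everything else is routine.
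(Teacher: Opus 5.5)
Your proposal is correct and follows essentially the paper's own argument. The paper likewise substitutes $u=\Psi_t(\theta)$ to move the time dependence onto the smooth weight $a(\Psi_t^{-1}(u))(\Psi_t^{-1})'(u)=a(u)-t(av)'(u)+O(t^2)$, uniformly in $h$. It then derives the Weil--Petersson bound from the $H^1$ estimate, Parseval, and $1+n^2\le n^3-n$ for $n\ge2$, exactly as you do.
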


\begin{proof}
For the lift $\Psi_t$ in \eqref{eq:wp-flow},
\eqref{eq:wp-flow-Cm-expansion} gives
\begin{equation}\label{eq:B1-flow-expansion}
 \Psi_t(\theta)=\theta+t v(\theta)+O(t^2)
 \quad\text{in }C^1(\T),
\end{equation}
and therefore
\begin{equation}\label{eq:B1-inverse-expansion}
 \begin{aligned}
 \Psi_t^{-1}(u)&=u-tv(u)+O(t^2),\\[3pt]
 (\Psi_t^{-1})'(u)&=1-tv'(u)+O(t^2),
 \end{aligned}
\end{equation}
uniformly in $u$.  Changing variables $u=\Psi_t(\theta)$ gives
\begin{align}
 J_{a,b}(T_t^v h)
 &=\frac{1}{2\pi}\int_0^{2\pi}
 a(\Psi_t^{-1}(u))(\Psi_t^{-1})'(u)
 b(h(e^{iu}))\,du.\label{eq:B1-change-var}
\end{align}
Taylor expansion of the smooth factor yields
\begin{equation}\label{eq:B1-weight-expansion}
 \begin{aligned}
 a(\Psi_t^{-1}(u))(\Psi_t^{-1})'(u)
 &=a(u)-t\{a'(u)v(u)+a(u)v'(u)\}+O(t^2)\\
 &=a(u)-t(av)'(u)+O(t^2)
 \end{aligned}
\end{equation}
uniformly in $u$.  Thus, for a constant $C_{a,v}$ and all sufficiently
small $t$,
\[
 \sup_{h\in\mathcal W}
 \left|J_{a,b}(T_t^vh)-J_{a,b}(h)
       +\frac{t}{2\pi}\int_0^{2\pi}(av)'(u)b(h(e^{iu}))\,du\right|
 \leq C_{a,v}\|b\|_\infty t^2.
\]
Dividing by $t$ proves \eqref{eq:B1-J-response} and
\eqref{eq:B1-uniform-dq}.

For the norm estimate,
\begin{align}
 |D_vJ_{a,b}(h)|
 &\le \|b\|_\infty
 \bigl(\|a'\|_\infty\|v\|_{L^2}
      +\|a\|_\infty\|v'\|_{L^2}\bigr)\notag\\
 &\le C_{a,b}\|v\|_{H^1}.\label{eq:B1-H1-bound}
\end{align}
For a horizontal real field $v$, Parseval's identity and
$1+n^2\le n^3-n$ for $n\ge2$ give
\begin{align}
 \|v\|_{H^1}^2
 &=2\sum_{n\ge2}(1+n^2)|v_n|^2\notag\\
 &\le2\sum_{n\ge2}(n^3-n)|v_n|^2
 =\|v\|_{\mathrm{WP}}^2.\label{eq:B1-H1-WP}
\end{align}
Combining \eqref{eq:B1-H1-bound} and \eqref{eq:B1-H1-WP} proves
\eqref{eq:B1-J-WP-bound}.
\end{proof}

Since $J_{a,b}$ is bounded, a smooth cutoff equal to the identity on
its range gives $J_{a,b}\in\mathcal C$.  For a cylinder $F$ represented
as in \eqref{eq:intro-core}, write $J_r=J_{a^{(r)},b^{(r)}}$ and set
\[
 C_F:=\sum_{r=1}^m\|\partial_r\phi\|_\infty C_{a^{(r)},b^{(r)}}.
\]
The chain rule and Lemma~\ref{lem:B1-J-response} give
\begin{equation}\label{eq:B1-cylinder-response}
 D_vF(h)
 =\sum_{r=1}^m
 \partial_r\phi(J_1(h),\ldots,J_m(h))D_vJ_r(h),
\end{equation}
and, for every smooth horizontal $v$,
\begin{equation}\label{eq:B1-cylinder-WP-bound}
 \sup_h|D_vF(h)|\le C_F\|v\|_{\mathrm{WP}}.
\end{equation}
Riesz representation applied to \eqref{eq:B1-cylinder-WP-bound}
gives the differential \eqref{eq:intro-gradient-identity}, with
\begin{equation}\label{eq:B1-gradient-def}
 \begin{aligned}
 D_vF(h)&=(DF(h),v)_{\mathrm{WP}},\\[3pt]
 \|DF(h)\|_{\mathrm{WP}}&\le C_F.
 \end{aligned}
\end{equation}
Enumerate the basis \eqref{eq:B1-WP-basis} as $(e_j)_{j\geq1}$.
Each $D_{e_j}F$ is measurable by \eqref{eq:B1-cylinder-response}, and
\[
 \sum_{j=1}^N D_{e_j}F(h)e_j\longrightarrow DF(h)
 \quad\text{in }H_{\mathrm{WP}},\qquad h\in\mathcal W.
\]
The finite sums are strongly measurable, so their pointwise limit is
strongly measurable.  The bound in \eqref{eq:B1-gradient-def} gives
\begin{equation}\label{eq:B1-gradient-L2}
 DF\in L^2(\widetilde\nu_\kappa;H_{\mathrm{WP}}).
\end{equation}

\begin{lemma}
\label{lem:B1-density}
\begin{equation}\label{eq:B1-core-L2-density}
 \overline{\mathcal C}^{\,L^2(\widetilde\nu_\kappa)}
 =L^2(\widetilde\nu_\kappa).
\end{equation}
\end{lemma}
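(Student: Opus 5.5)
We plan to argue via the functional monotone class theorem, showing that the functions $J_{a,b}$ generate the $\sigma$-algebra on $\mathcal W$. Then bounded $\sigma(\mathcal C)$-measurable functions are $L^2$-limits of elements of $\mathcal C$, and density follows.

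\textbf{Step 1: closure properties of $\mathcal C$.} The algebra $\mathcal C$ is closed under multiplication, because products of functions of the form \eqref{eq:intro-core} are again of that form after concatenating the arguments. It contains the constants, by taking $\phi$ constant. It is closed under composition with $C_b^\infty$ functions. Let $\mathcal H$ be the set of bounded measurable $F$ lying in the $L^2(\widetilde\nu_\kappa)$-closure of $\mathcal C$. This is a vector space containing $\mathcal C$. It is closed under bounded monotone limits by dominated convergence, since $\widetilde\nu_\kappa$ is a probability measure. The multiplicative monotone class theorem then gives that $\mathcal H$ contains all bounded $\sigma(\mathcal C)$-measurable functions. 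Bounded functions are dense in $L^2$, and every function is equal a.e.\ to a measurable function for the original $\sigma$-algebra before completion. So it suffices to prove $\sigma(\mathcal C)$ equals the trace on $\mathcal W$ of the Borel $\sigma$-algebra of $\operatorname{Homeo}_+(\mathbb S^1)$ with the uniform topology.

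\textbf{Step 2: the $J_{a,b}$ determine the point evaluations.} Each $J_{a,b}$ is continuous in the uniform topology, so $\sigma(\mathcal C)\subset$ Borel. For the converse, take $b$ to be the real and imaginary parts of $z\mapsto z$ on $\mathbb S^1$, and take $a$ to be a smooth approximate identity $a_\epsilon(\cdot-\theta_0)$. Then $J_{a_\epsilon,b}(h)\to \operatorname{Re}h(e^{i\theta_0})$ and $\operatorname{Im}h(e^{i\theta_0})$ pointwise in $h$ as $\epsilon\to0$, by continuity of $h$. Hence every evaluation $h\mapsto h(e^{i\theta_0})$ is $\sigma(\mathcal C)$-measurable.

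\textbf{Step 3: evaluations generate the Borel sets.} This is the main obstacle. The space $C(\mathbb S^1,\mathbb S^1)$ with the uniform metric is Polish. Its Borel $\sigma$-algebra is generated by evaluations at a countable dense set of angles, since uniform balls are countable intersections of conditions at rational angles, using continuity. The traces on the subspace $\mathcal W$ therefore coincide, and measurability of the evaluations from Step 2 gives that the Borel trace is contained in $\sigma(\mathcal C)$. I would also note that passing to the completion affects nothing, since $L^2$ classes are unchanged. If the paper's topology on $\mathcal W$ were instead the Carath\'eodory topology, one would additionally need that the boundary map determines $(f_h,g_h)$ measurably. The statement, however, uses the uniform topology, so Step 3 suffices.
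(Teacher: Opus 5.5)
Your proof is correct, but it takes a different route from the paper.

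The paper's proof runs as follows:
\begin{itemize}
\item It shows that the $J_{a,b}$ separate points of $\operatorname{Homeo}_+(\mathbb S^1)$, using a bump $a$ on an arc where $b\circ h_1-b\circ h_2$ has a fixed sign.
\item It extracts a countable separating subfamily $(J_k)$ with $a,b$ trigonometric polynomials with rational coefficients.
\item It applies Lusin--Souslin on the standard Borel space $\operatorname{Homeo}_+(\mathbb S^1)$ to the injective map $h\mapsto(J_k(h))_k$, which gives $\sigma(J_k:k\geq1)=\mathcal B$.
\item Density then comes from $L^2$ martingale convergence of $\mathbb E[F\mid\mathcal F_m]$ with $\mathcal F_m=\sigma(J_1,\ldots,J_m)$.
\item Finally, each $f(J_1,\ldots,J_m)$ is approximated in $L^2(\mu_m)$ by $C_c^\infty$ functions, using regularity of $\mu_m$, Urysohn's lemma, and mollification.
\end{itemize}

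You instead recover the point evaluations explicitly, as limits of $J_{a_\epsilon(\cdot-\theta_0),b}$ with $b=\operatorname{Re}z$ and $b=\operatorname{Im}z$. You then use separability of $C(\mathbb S^1,\mathbb S^1)$: closed balls are countable intersections of evaluation conditions, and open sets are countable unions of closed balls. So evaluations at a countable dense set generate the Borel sets, and this passes to the trace on $\mathcal W$. You close with the multiplicative monotone class theorem. It applies because $\mathcal C$ is a unital algebra, and your class $\mathcal H$ is a vector space stable under bounded monotone limits.

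Your route is more elementary. It needs neither standardness of $\operatorname{Homeo}_+(\mathbb S^1)$ nor Lusin--Souslin, and one monotone class argument replaces both the martingale step and the regularity/Urysohn/mollifier step.

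The paper's route gives more explicit information. Its approximants have the form $\phi_m(J_1,\ldots,J_m)$ with $\phi_m\in C_c^\infty(\mathbb R^m)$, taken along one fixed countable family. It also relies only on an abstract separation principle, so it never has to identify which functionals recover point values.
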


\begin{proof}
We first show that $\{J_{a,b}\}$ separates points of
$\operatorname{Homeo}_+(\mathbb S^1)$.  If $h_1\ne h_2$, choose $\theta_0$
with $h_1(e^{i\theta_0})\ne h_2(e^{i\theta_0})$.  Choose a real smooth $b$
with
\[
 b(h_1(e^{i\theta_0}))\ne b(h_2(e^{i\theta_0})).
\]
By continuity, $b\circ h_1-b\circ h_2$ has a fixed nonzero sign on a small
arc $I$ about $\theta_0$.  Choosing $0\le a\in C_c^\infty(I)$,
$a\not\equiv0$, gives
\[
 J_{a,b}(h_1)-J_{a,b}(h_2)
 =\frac1{2\pi}\int_I a(\theta)
 \{b(h_1(e^{i\theta}))-b(h_2(e^{i\theta}))\}\,d\theta\ne0.
\]

Choose countable sets $A_{\mathrm{tr}}$ and $B_{\mathrm{sm}}$ of trigonometric
polynomials with rational coefficients, dense in the uniform norm in the
corresponding smooth real functions.  Because $(a,b)\mapsto J_{a,b}(h)$ is
continuous in the two uniform norms, the countable family
\[
 \{J_{a,b}:a\in A_{\mathrm{tr}},\ b\in B_{\mathrm{sm}}\}
\]
still separates points.  Enumerate it by $(J_k)_{k\ge1}$ and define
\[
 \begin{gathered}
 \operatorname{Obs}:\operatorname{Homeo}_+(\mathbb S^1)\to\mathbb R^{\mathbb N},\\[3pt]
 \operatorname{Obs}(h)=(J_k(h))_{k\ge1}.
 \end{gathered}
\]
The map $\operatorname{Obs}$ is measurable and injective.  Since
$\operatorname{Homeo}_+(\mathbb S^1)$ is a standard Borel space,
Lusin--Souslin implies that its image under $\operatorname{Obs}$ is a Borel set
and that the inverse on the image is measurable.  Consequently
\begin{equation}\label{eq:B1-generated-sigma}
 \sigma(J_k:k\ge1)=\mathcal B(\operatorname{Homeo}_+(\mathbb S^1)).
\end{equation}
On $\mathcal W$, let $\mathcal F_m$ be the completion of
$\sigma(J_1,\ldots,J_m)$ under $\widetilde\nu_\kappa$.  By
\eqref{eq:B1-generated-sigma}, these sigma-algebras increase to the
completed sigma-algebra modulo null sets.  The $L^2$ martingale
convergence theorem gives, for every $F\in L^2(\widetilde\nu_\kappa)$,
\begin{equation}\label{eq:B1-martingale-density}
 \mathbb E[F\mid\mathcal F_m]\longrightarrow F
 \quad\text{in }L^2(\widetilde\nu_\kappa).
\end{equation}
Every $\mathcal F_m$-measurable square-integrable function equals
$f(J_1,\ldots,J_m)$ almost surely for a measurable $f$ on $\mathbb R^m$.  Put
$\mu_m=(J_1,\ldots,J_m)_\#\widetilde\nu_\kappa$.  Since $\mu_m$ is a finite Borel measure on
$\mathbb R^m$, it is regular.  If $E\subset\mathbb R^m$ is a Borel set and
$\varepsilon>0$, inner regularity first gives a compact $K\subset E$ with
$\mu_m(E\setminus K)<\varepsilon$.  Outer regularity, applied to $K$, then
gives a bounded open $O$ such that
\begin{equation}\label{eq:B1-regular-sets}
 K\subset O,
 \qquad
 \mu_m(E\setminus K)<\varepsilon,
 \qquad
 \mu_m(O\setminus K)<\varepsilon.
\end{equation}
By Urysohn's lemma there is $\phi\in C_c(\mathbb R^m)$ with
$0\le\phi\le1$, $\phi=1$ on $K$, and $\operatorname{supp}\phi\subset O$.
Consequently
\begin{equation}\label{eq:B1-indicator-continuous}
 \|1_E-\phi\|_{L^2(\mu_m)}^2
 \le \mu_m(E\setminus K)+\mu_m(O\setminus K)<2\varepsilon.
\end{equation}
Finite linear combinations of indicators are dense in $L^2(\mu_m)$, so
$C_c(\mathbb R^m)$ is dense there.  If $\zeta_\delta$ is a standard smooth
mollifier, then for $\phi\in C_c(\mathbb R^m)$,
$\phi*\zeta_\delta\in C_c^\infty(\mathbb R^m)$ for small $\delta$ and
\begin{equation}\label{eq:B1-mollify-finite}
 \|\phi*\zeta_\delta-\phi\|_{L^2(\mu_m)}
 \le \mu_m(\mathbb R^m)^{1/2}
 \|\phi*\zeta_\delta-\phi\|_\infty
 \longrightarrow0.
\end{equation}
Thus
\begin{equation}\label{eq:B1-Ccinfty-density}
 \overline{C_c^\infty(\mathbb R^m)}^{L^2(\mu_m)}=L^2(\mu_m).
\end{equation}
Choose $\phi_m\in C_c^\infty(\mathbb R^m)$ so that
\[
 \|\phi_m(J_1,\ldots,J_m)-\mathbb E[F\mid\mathcal F_m]\|_{L^2}
 <m^{-1}.
\]
Then $\phi_m(J_1,\ldots,J_m)\in\mathcal C$ and
\[
 \|F-\phi_m(J_1,\ldots,J_m)\|_{L^2}
 \leq\|F-\mathbb E[F\mid\mathcal F_m]\|_{L^2}+m^{-1}
 \longrightarrow0.
\]
This proves \eqref{eq:B1-core-L2-density}.
\end{proof}

\section{Coefficient response and right integration by parts}
\label{sec:direct-right}

The right integration-by-parts formula below identifies the stress score
as the divergence needed for the closure and covariance arguments.
The proofs are given in Appendices~\ref{app:coefficient-response}--\ref{app:right-IBP}.

\subsection{Coefficient response}

Write $t_n(f)=t_n(h)$ when only the interior map is relevant, and set
\begin{equation}\label{eq:DR-mode-constant}
 C_{\kappa,n}:=2n+\frac{c_{\mathrm L}}{12}(n^3-n),\qquad n\geq1.
\end{equation}
On the polynomial algebra $\mathcal P_{\mathrm{hol}}:=\mathbb C[a_1,a_2,\ldots]$,
define the coefficient derivations
\begin{equation}\label{eq:DR-positive-Kirillov}
 \mathcal L_n
 :=\frac{\partial}{\partial a_n}
  +\sum_{m\geq1}(m+1)a_m
    \frac{\partial}{\partial a_{m+n}},\qquad n\geq1.
\end{equation}
The sum is finite on each polynomial.  The opposite modes are determined by
\begin{equation}\label{eq:DR-negative-Kirillov}
 (\mathcal L_{-n}f)(z)
 :=\frac{f(z)^2}{2\pi i}
 \oint_{|w|=r}
 \frac{f'(w)^2w^{1-n}}
 {f(w)^2\{f(w)-f(z)\}}\,dw,
 \qquad |z|<r<1,
\end{equation}
through $\mathcal L_{-n}a_m=[z^{m+1}](\mathcal L_{-n}f)(z)$ and the
Leibniz rule.  The residue theorem makes this definition independent of $r$.
These are the coefficient vector fields of \cite{AN08}.

In the next lemma, the supremum is over normalized univalent maps
$f:\mathbb D\to\mathbb C$, with $f(0)=0$ and $f'(0)=1$;
$j_1,\ldots,j_n$ are fixed nonnegative integers.

\begin{lemma}\label{lem:DR-stress-polynomial}
For every $n\geq1$,
\begin{align}
 t_n&\in\mathbb C[a_1,\ldots,a_n],
 \label{eq:DR-tn-polynomial}\\
 \mathcal L_nt_n&=C_{\kappa,n},
 \label{eq:DR-level-one}\\
 \shortintertext{and}
 \sup_f\left|
 \partial_{a_1}^{j_1}\cdots\partial_{a_n}^{j_n}t_n(f)
 \right|&<\infty.
 \label{eq:DR-stress-polynomial-uniform-bound}
\end{align}
\end{lemma}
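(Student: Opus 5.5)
The plan is to write $q_f$ through logarithmic derivatives, so that the dependence of each Laurent coefficient on the $a_m$ becomes triangular. Then all three assertions reduce to bookkeeping plus the classical coefficient bound for univalent maps. Write
\[
 f(z)=z\,u(z),\qquad u(z)=1+\sum_{m\ge1}a_mz^m,\qquad
 \frac{f'}{f}-\frac1z=\frac{u'}{u}=\sum_{k\ge1}c_kz^{k-1}.
\]
Expanding $u'/u=(\log u)'$ as a power series, $c_k$ is a polynomial in $a_1,\ldots,a_k$ of the form $c_k=k\,a_k+(\text{polynomial in }a_1,\ldots,a_{k-1})$. Then
\[
 \frac{f'^2}{f^2}-\frac1{z^2}=\frac{2}{z}\frac{u'}{u}+\Bigl(\frac{u'}{u}\Bigr)^2,
\]
whose coefficient of $z^{n-2}$ is $2c_n+\sum_{j+k=n,\ j,k\ge1}c_jc_k$. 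This lies in $\mathbb C[a_1,\ldots,a_n]$, and it depends on $a_n$ only through $2c_n$, i.e.\ through the linear term $2n\,a_n$.

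For the Schwarzian part, I will expand $f'''/f'$ and $(f''/f')^2$ as power series. The series $1/f'$ has constant term $1$, and its coefficient of $z^k$ involves only $a_1,\ldots,a_k$. The coefficient of $z^{j}$ in $f''$ is $(j+2)(j+1)a_{j+1}$, and in $f'''$ it is $(j+3)(j+2)(j+1)a_{j+2}$. Hence the $z^{n-2}$ coefficient of $f'''/f'$ is $(n+1)n(n-1)a_n$ plus a polynomial in $a_1,\ldots,a_{n-1}$. The $z^{n-2}$ coefficient of $(f''/f')^2$ involves only coefficients of $f''/f'$ up to order $z^{n-2}$, and these involve only $a_1,\ldots,a_{n-1}$. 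This proves \eqref{eq:DR-tn-polynomial}. It also shows that $t_n$ is affine in $a_n$ with
\[
 \frac{\partial t_n}{\partial a_n}=\frac{c_{\mathrm L}}{12}(n^3-n)+2n=C_{\kappa,n}.
\]

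For \eqref{eq:DR-level-one}, I observe that in \eqref{eq:DR-positive-Kirillov} every term $(m+1)a_m\,\partial/\partial a_{m+n}$ with $m\ge1$ differentiates in a variable $a_{m+n}$ with $m+n>n$. Since $t_n$ depends only on $a_1,\ldots,a_n$, these terms vanish on $t_n$, and so $\mathcal L_nt_n=\partial t_n/\partial a_n=C_{\kappa,n}$.

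For \eqref{eq:DR-stress-polynomial-uniform-bound}, any mixed partial derivative of $t_n$ is again a polynomial in $a_1,\ldots,a_n$. The classical coefficient bound for normalized univalent maps gives $|[z^{m+1}]f|=|a_m|\le m+1$ (de Branges, or Littlewood's elementary bound $|a_m|<e(m+1)$, which suffices). So $(a_1(f),\ldots,a_n(f))$ ranges in a fixed compact polydisc, on which any polynomial is bounded.

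I expect no real obstacle. The only step needing care is the index bookkeeping showing that $a_n$ enters exactly linearly, through the two leading terms, and that no $a_n$ hides in the quadratic terms $(u'/u)^2$ and $(f''/f')^2$ or in the expansion of $1/f'$.
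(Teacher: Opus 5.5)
Your proof is correct, and its outer structure matches the paper's. Both arguments read off the polynomial dependence on $a_1,\ldots,a_n$ from the logarithmic-derivative and Schwarzian expressions. Both kill the higher terms of $\mathcal L_n$ because $\partial_{a_{m+n}}t_n=0$, and both bound polynomials on the polydisc supplied by de Branges' theorem.

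The difference lies in the key step $t_n-C_{\kappa,n}a_n\in\mathbb C[a_1,\ldots,a_{n-1}]$. The paper does not track the Cauchy products. Instead it uses the radial scaling $q_{f^{(r)}}(z)=r^2q_f(rz)$, hence $t_n(f^{(r)})=r^nt_n(f)$. This shows that every monomial of $t_n$ has weight $n$ when $a_m$ is given weight $m$, so $a_n$ can occur only as the bare monomial $a_n$. The coefficient $C_{\kappa,n}$ is then computed from the first variation at $f_\varepsilon(z)=z+\varepsilon a_nz^{n+1}$, which gives $2na_nz^{n-2}$ and $(n^3-n)a_nz^{n-2}$ for the two pieces of $q_f$.

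The grading argument is shorter and rules out nonlinear occurrences of $a_n$ structurally, without inspecting $(u'/u)^2$, $(f''/f')^2$, or $1/f'$. Your index bookkeeping is entirely explicit and produces the slope $C_{\kappa,n}$ within the same computation.
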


Write $D_{\mathrm{rot}}:=D_1$, $D_{n,c}:=D_{\cos(n\cdot)}$, and
$D_{n,s}:=D_{\sin(n\cdot)}$, and put
\begin{equation}\label{eq:DR-complex-response}
 D_n^+:=D_{n,c}+iD_{n,s},\qquad n\geq1.
\end{equation}

\begin{lemma}\label{lem:DR-exact-change-frame}
For $n\geq1$ and $P\in\mathcal P_{\mathrm{hol}}$, at every $h\in\mathcal W$,
\begin{align}
 D_n^+P&=i\mathcal L_nP,
 \label{eq:DR-positive-change-frame}\\
 \shortintertext{and}
 (D_{n,c}-iD_{n,s})P&=i\mathcal L_{-n}P.
 \label{eq:DR-negative-change-frame}
\end{align}
Equivalently,
\begin{align}
 D_{n,c}P&=\frac i2(\mathcal L_n+\mathcal L_{-n})P,
 \label{eq:DR-real-cos-Kirillov}\\
 \shortintertext{and}
 D_{n,s}P&=\frac12(\mathcal L_n-\mathcal L_{-n})P.
 \label{eq:DR-real-sin-Kirillov}
\end{align}
\end{lemma}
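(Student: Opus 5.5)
The plan is to compute the first variation of the normalized interior map $f_h$ under $h\mapsto h\circ\psi_t^v$ with a quasiconformal deformation supported inside $U_+$. After pulling back by $f$, every integral then lives on $\mathbb D$, and the roughness of $\gamma$ enters only through conformal removability. Since each $a_m$ is a coefficient of $f$ and $\mathcal P_{\mathrm{hol}}$ is generated by the $a_m$, the Leibniz rule reduces \eqref{eq:DR-positive-change-frame}--\eqref{eq:DR-negative-change-frame} to $P=a_m$. The real forms \eqref{eq:DR-real-cos-Kirillov}--\eqref{eq:DR-real-sin-Kirillov} then follow by adding and subtracting, because $D_{n,c},D_{n,s}$ are real derivations and $D_n^+$, $D_{n,c}-iD_{n,s}$ are their complex combinations. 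So the core claim is: for a real smooth $v$ with Fourier coefficients $v_n$, the derivative $\dot f:=\frac{d}{dt}|_{0}f_{h\circ\psi_t^v}$ exists coefficientwise and equals $\sum_{n\ge1}\bigl(v_{-n}\,i\mathcal L_nf+v_{n}\,i\mathcal L_{-n}f\bigr)$. Here $\mathcal L_nf(z):=\sum_m(\mathcal L_na_m)z^{m+1}$, and the modes $0,\pm1$ act by normalization. The coefficient bookkeeping is set so that $D_n^+=D_{e^{in\cdot}}$.

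Step 1 is the exact construction. Extend $\psi_t^{v}$ to a quasiconformal self-map $\widehat\psi_t$ of $\mathbb D$, for instance by radial extension of the lift $\Psi_t$ cut off near $0$ so that it is the identity near $0$. Its Beltrami coefficient is $\mu_t=t\dot\mu+O(t^2)$ in $L^\infty$, with $\dot\mu=\bar\partial X$ for the extension $X$ of the boundary field $i\zeta v(\theta)$. Let $G_t$ solve the Beltrami equation on $\widehat{\mathbb C}$ with coefficient $f_*(\mu_t)$ on $U_+$ and $0$ on $U_-$, normalized at $\infty$. Then
\[
 f_t:=G_t\circ f\circ\widehat\psi_t,\qquad g_t:=G_t\circ g
\]
are conformal on $\mathbb D$ and $\mathbb D^*$ respectively. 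They weld along $G_t(\gamma)$, and $g_t^{-1}\circ f_t=h\circ\psi_t^v$ on $\mathbb S^1$. The curve $G_t(\gamma)$ is a quasiconformal image of a removable curve, hence removable, so by uniqueness of the welding pair this is the welding of $h\circ\psi_t^v$ up to Möbius postcomposition. Fixing $f_t(0)=0$, $f_t'(0)=1$, $g_t(\infty)=\infty$ is an explicit affine correction, which accounts for the normalization terms.

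Step 2 is the variation. The Ahlfors--Bers holomorphic dependence gives
\[
 \dot G(\zeta)=-\frac1\pi\int_{U_+}\frac{(f_*\dot\mu)(w)\,\zeta^2}{w^2(w-\zeta)}\,dA(w),
\]
with the kernel chosen to fix $0,\infty$ and normalized at $\infty$. The variation converges locally uniformly on $U_+$, which gives coefficientwise differentiability, uniformly enough to match the two-sided $t\to0$ limit defining $D_v$. Differentiating $f_t=G_t\circ f\circ\widehat\psi_t$ and changing variables $w=f(\omega)$ gives an integral over $\mathbb D$ of $\bar\partial X(\omega)$ against $f'(\omega)^2 f(z)^2/\{f(\omega)^2(f(\omega)-f(z))\}$, plus $f'(z)X(z)$. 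Stokes' theorem on $\{|\omega|<r\}$ with a small disc about $z$ removed converts this into
\[
 \dot f(z)=\frac{f(z)^2}{2\pi i}\oint_{|\omega|=r}\frac{f'(\omega)^2\,X(\omega)}{f(\omega)^2\{f(\omega)-f(z)\}}\,d\omega,
\]
where the term $f'(z)X(z)$ is cancelled by the residue at $\omega=z$. Letting $r\uparrow1$ on smooth data is legitimate because $X$ may be chosen equal to $i\omega\,v$ extended by the trigonometric polynomial modes (a harmonic-type extension), so no boundary values of $f$ are needed.

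Step 3 identifies the modes. For $X=i\omega^{1-n}$, i.e.\ the mode $e^{-in\theta}$ coming from $v=\cos n\theta-i\sin n\theta$, the contour integral is literally $i\mathcal L_{-n}f$ by \eqref{eq:DR-negative-Kirillov}, which gives \eqref{eq:DR-negative-change-frame}. For $X=i\omega^{n+1}$ the integrand is holomorphic in $\omega$ except at $\omega=z$, so the residue gives $\dot f(z)=i z^{n+1}f'(z)$. The expansion $z^{n+1}f'(z)=z^{n+1}+\sum_m(m+1)a_mz^{m+n+1}$ shows $\dot a_n=i$ and $\dot a_{m+n}=i(m+1)a_m$, which is $i\mathcal L_na_k$ by \eqref{eq:DR-positive-Kirillov}; this gives \eqref{eq:DR-positive-change-frame}. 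No normalization correction is needed here because $z^{n+1}f'=O(z^2)$ for $n\ge1$.

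I expect Step 2 to be the main obstacle: the Stokes argument and the sign and orientation conventions, in particular matching $i\zeta v$ and the direction of the right flow with the $i$ in $i\mathcal L_{\pm n}$, together with uniformity of the difference quotient over rough $h$. I would first verify the signs on the trivial welding $f=\id$, where $\mathcal L_{-n}$ should vanish on all $a_m$ up to normalization.
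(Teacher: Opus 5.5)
Your argument is correct. It shares the paper's architecture and differs mainly in how the linearized variation is computed.

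\textbf{Shared architecture.} Both proofs extend the circle flow into $\mathbb D$ by a flow whose Beltrami coefficient is compactly supported in $\mathbb D\setminus\{0\}$. Both compensate by a Beltrami solution on the sphere, identify the compensated pair with the normalized pair of $h\circ\psi_t^v$ through removability, and differentiate with Ahlfors--Bers.

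\textbf{The paper's route.} It uses extensions $\widehat v_{n,c},\widehat v_{n,s}$ that are holomorphic but nonzero near $0$ and leave the reflected part $V_n^\#=\frac i2(z^{n+1}-z)$ uncut, so that $\bar\partial\widehat v_{n,s}=i\,\bar\partial\widehat v_{n,c}$. For $D_n^+$ the two physical compensators then cancel by complex linearity without ever being computed, and the affine normalization and the rotation $i(zf'-f)$ are restored by hand. For $D_{n,c}-iD_{n,s}$ the velocity is identified as the unique solution of a normalized jump problem with jump $if'(w)w^{1-n}$.

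\textbf{Your route.} You cut off the whole Laurent field near $0$, so no normalization or rotation terms arise. You write the compensator with the kernel $\zeta^2/\{w^2(w-\zeta)\}$, which builds in $\dot G(0)=\dot G'(0)=0$. Stokes' theorem, where the residue at $\omega=z$ cancels $f'X$, then produces the single formula
\[
 \dot f(z)=\frac{f(z)^2}{2\pi i}\oint_{|\omega|=r}
 \frac{f'(\omega)^2X(\omega)}{f(\omega)^2\{f(\omega)-f(z)\}}\,d\omega ,
\]
with the contour in the collar where $X$ is holomorphic. Both modes become residue computations, and the same formula even returns $D_{\mathrm{rot}}f=i(zf'-f)$ through the residue at $\omega=0$.

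\textbf{Trade-off.} Your route is shorter and shows directly that only the germ of $X$ at $\mathbb S^1$ matters. The paper's route keeps the computation attached to the representatives $\mu_{n,c}$ that it reuses in \eqref{eq:DR-real-cos-response} and \eqref{eq:DR-Fourier-Stokes}.

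\textbf{Slips to repair.}
\begin{enumerate}
\item Your summary formula interchanges $v_n$ and $v_{-n}$. It should read $\sum_{n\geq1}(v_n\,i\mathcal L_nf+v_{-n}\,i\mathcal L_{-n}f)$, as your Step~3 itself shows.
\item The radial extension in Step~1 is not the one Step~2 needs. Take $\widehat\psi_t$ to be the flow of $\chi(\omega)\,i\omega\sum_k v_k\omega^k$, with $\chi=0$ near $0$ and $\chi=1$ on a collar. Then no limit $r\uparrow1$ is required.
\item The coefficient of $G_t$ must be that of $(f\circ\widehat\psi_t)^{-1}$, whose first-order term is $\dot\beta(f(\omega))=-\bar\partial X(\omega)f'(\omega)/\overline{f'(\omega)}$. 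This sign is what makes the residue cancel $f'X$.
\item Rather than asserting that quasiconformal images of removable curves are removable, use Lemma~\ref{lem:B1-unique-welding-flow-stability}: it gives $h\circ\psi_t^v\in\mathcal W$. Removability of one curve with a given welding then makes every Jordan curve with that welding a M\"obius image of it.
\item No uniformity in $h$ is needed, since the lemma is pointwise.
\end{enumerate}
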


\subsection{Right integration by parts}

For the normalized exterior map, define the rotation characters
\begin{equation}\label{eq:DR-exterior-rotation-characters}
 \chi_j(h):=\left(\frac{g'(\infty)}{|g'(\infty)|}\right)^j,
 \qquad j\in\mathbb Z.
\end{equation}
Let $\mathcal A_0$ be the unital complex algebra generated by
$a_m,\overline{a_m}$, the characters $\chi_j$, and the cylinder algebra
$\mathcal C$ from \eqref{eq:intro-core}.
Let $\Phi$ range over the finite compositions
\eqref{eq:intro-finite-composition} and the identity.  Define
\begin{equation}\label{eq:DR-flow-saturated-core}
 \mathcal A_{\mathrm{fl}}
 :=\operatorname{alg}\{F\circ R_\Phi:F\in\mathcal A_0,\ \Phi\text{ as above}\}.
\end{equation}
Lemma~\ref{lem:B1-unique-welding-flow-stability} makes $R_\Phi$ well defined
on $\mathcal W$.  On normalized welding pairs, the generators of
$\mathcal A_0$ are measurable by Appendix~\ref{sec:DR-frame-spaces}, and
$R_\Phi$ is continuous in the Carath\'eodory topology by
Section~\ref{sec:sle-closed}.  Thus $F\circ R_\Phi$ is measurable on the
frame space for $F\in\mathcal A_0$.  The identification of completed
probability spaces in that appendix makes these functions measurable on
$\mathcal W$.
De Branges' bound $|a_m|\leq m+1$ and $|\chi_j|=1$ make them bounded.
The algebra $\mathcal A_{\mathrm{fl}}$ is stable under conjugation and
these right compositions, and
\begin{equation}\label{eq:DR-averaged-core-in-flow-core}
 \mathcal C\subset\mathcal A_0\subset\mathcal A_{\mathrm{fl}}.
\end{equation}
Thus \eqref{eq:B1-generated-sigma} makes it measure determining.

\begin{lemma}\label{lem:DR-flow-core-uniform-differentiability}
For a real trigonometric polynomial $v$ and $F\in\mathcal A_{\mathrm{fl}}$,
\begin{equation}\label{eq:DR-flow-core-uniform-differentiability}
 \begin{gathered}
 \lim_{t\to0}\left\|
 \frac{F\circ T_t^v-F}{t}-D_vF
 \right\|_{\sup}=0,\\[3pt]
 \|D_vF\|_{\sup}<\infty.
 \end{gathered}
\end{equation}
\end{lemma}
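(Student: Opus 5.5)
The proof will reduce to the generators of the flow-saturated core and then close up under algebra operations. The set of bounded $F$ satisfying \eqref{eq:DR-flow-core-uniform-differentiability} is a unital algebra stable under conjugation. Indeed, if $F,G$ have uniform difference quotients converging uniformly to bounded limits, then
\[
 \frac{(FG)\circ T_t^v-FG}{t}
 =\frac{F\circ T_t^v-F}{t}\,G\circ T_t^v+F\,\frac{G\circ T_t^v-G}{t}.
\]
Moreover $\|G\circ T_t^v-G\|_{\sup}\leq|t|\,\bigl(\|D_vG\|_{\sup}+o(1)\bigr)\to0$, so the product rule holds uniformly. It therefore suffices to treat single generators $F\circ R_\Phi$ with $F\in\mathcal A_0$ a generator: $J_{a,b}$ (or a cylinder function), $a_m$, $\overline{a_m}$, or $\chi_j$.

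Next I will handle the right composition $R_\Phi$. Write $F\circ R_\Phi\circ T_t^v(h)=F(h\circ\psi_t^v\circ\Phi)=F\bigl((h\circ\Phi)\circ(\Phi^{-1}\circ\psi_t^v\circ\Phi)\bigr)$. The conjugated family $a_t:=\Phi^{-1}\circ\psi_t^v\circ\Phi$ is the flow of the smooth pushed-forward field $w:=(\Phi^{-1})_*v=(v\circ\Phi)/\Phi'$. This field is smooth but not a trigonometric polynomial, and it may have Möbius modes. Since $h\mapsto h\circ\Phi$ maps $\mathcal W$ into $\mathcal W$ by Lemma~\ref{lem:B1-unique-welding-flow-stability}, suprema over $h\circ\Phi$ are dominated by suprema over $\mathcal W$. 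The claim thus reduces to uniform differentiability of the generators $F\in\mathcal A_0$ along flows of arbitrary smooth real fields $w$, with $D_v(F\circ R_\Phi)=(D_wF)\circ R_\Phi$.

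For $J_{a,b}$ and cylinder functions this is exactly \eqref{eq:B1-uniform-dq} of Lemma~\ref{lem:B1-J-response}, which holds for every smooth $v$, combined with the chain rule for $\phi\in C_b^\infty$. For rotations ($w$ constant) the response of $a_m$ is explicit: $a_m\mapsto e^{imt}a_m$, and similarly for $\chi_j$. The coefficient functionals $a_m$, $\overline{a_m}$, $\chi_j$ along a general smooth $w$ are the substantive case. Here I would use the conformal-welding variational formula underlying Lemma~\ref{lem:DR-exact-change-frame}. The infinitesimal change of $f$ under $h\mapsto h\circ\psi_t^w$ is given by the contour integral in \eqref{eq:DR-negative-Kirillov}, summed against the Fourier modes of $w$; for a trigonometric polynomial this is a finite combination of $\mathcal L_{\pm n}a_m$. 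I would prove uniform differentiability by an explicit first-order expansion. The perturbed pair is $f_t=(\id+t\,\eta_w+O(t^2))\circ f$, where $\eta_w$ is the holomorphic vector field on $\widehat{\mathbb C}\setminus\gamma$ solving the Riemann--Hilbert jump for $w$ through $f$ and $g$. The coefficients are then Cauchy integrals over $|w|=r<1$, and the remainder is bounded using Koebe distortion on $|z|\leq r$, uniformly over all normalized univalent $f$ (compactness of class $S$). The factor $g'(\infty)$ is handled the same way at $\infty$, again with a uniformly controlled remainder.

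The main obstacle is this uniformity of the remainder for the coefficient generators. The welding curve is rough and the first-order field is defined only through a conformal welding problem, so a pointwise derivative is not enough. My first attempt would be to realise the flow at time $t$ as the welding of $h$ with a smooth circle diffeomorphism close to the identity. I would build an explicit quasiconformal extension with Beltrami coefficient $O(t)$ supported near $\mathbb S^1$, inside $\mathbb D^*$ in the $g$-coordinate. The perturbed maps are then $F_t\circ f$ with $F_t$ normalized and conformal off $g(\{1<|z|<1+\delta\})$. Expanding $F_t$ by the Ahlfors--Bers measurable Riemann mapping theorem gives an $O(t^2)$ remainder in sup norm on compacta that depends only on $\|\mu_t\|_\infty$ and the area of the support. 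Conformal removability guarantees that this welding is the right one. Uniformity over $h$ then comes from the Koebe bounds and $|a_m|\leq m+1$.
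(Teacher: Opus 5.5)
Your algebraic reduction (uniform product rule, generators $F_0\circ R_\Phi$) is sound, and so is your treatment of $J_{a,b}$ and cylinder functions. Conjugating by $\Phi$ is also a legitimate substitute for the paper's direct treatment of $\Xi_t=\psi_t^v\circ\Phi$. The gap is in the only nontrivial case, the coefficient and character generators.

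Your construction puts the Beltrami coefficient in $\mathbb D^*$, in the $g$-coordinate, and keeps the interior map in the form $F_t\circ f$ with $F_t$ conformal off $g(\{1<|z|<1+\delta\})$. Let $\widehat\chi_t$ be the quasiconformal self-map of $\mathbb D^*$ carrying that coefficient. Then the new exterior map is $F_t\circ g\circ\widehat\chi_t$, and the new welding is $(F_t\circ g\circ\widehat\chi_t)^{-1}\circ F_t\circ f=\widehat\chi_t^{-1}\circ h$ on $\mathbb S^1$. That is a \emph{left} composition by a smooth map. To obtain $h\circ\psi_t$ from the exterior side, $\widehat\chi_t$ would have to extend $h\circ\psi_t^{-1}\circ h^{-1}$. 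For a rough welding this conjugate is in general not even quasisymmetric, and it has no quasiconformal extension with dilatation $O(t)$ uniformly in $h$. Conformal removability only gives uniqueness of the normalized pair; it cannot make your pair weld to $h\circ\psi_t$.

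Right composition acts on the source disk of $f$, so the deformation must be placed there. Extend the circle maps to disk maps $\widehat\Xi_t$, push $\mu_{\widehat\Xi_t}$ forward through $f$ to a physical coefficient $\beta_{f,t}$ supported in $U_+$, and let $\Psi_{f,t}$ be the principal solution. The new pair is $\operatorname{Norm}(\Psi_{f,t}\circ f\circ\widehat\Xi_t)$ together with the correspondingly normalized $\Psi_{f,t}\circ g$. This is the paper's construction, and its interior map is not of the form $F_t\circ f$ with $F_t$ conformal off an exterior collar.

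Your uniformity mechanism also does not work as stated. A remainder controlled by $\|\mu_t\|_\infty$ and the area of the physical support is not uniform over $\mathcal W$. A collar adjacent to $\mathbb S^1$ is mapped onto a region whose area is not bounded by Koebe's theorems or by $|a_m|\leq m+1$. On the exterior side the area is at least of order $e^{2k(h)}$, and $k$ is unbounded on $\mathcal W$. On the interior side the area of $f(\{1-\delta<|z|<1\})$ is unbounded over normalized univalent maps.

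The paper's device is to keep the coefficients away from both $0$ and $\mathbb S^1$:
\begin{itemize}
\item A trigonometric polynomial is the boundary value of the Laurent-polynomial field $i\sum v_nz^{n+1}$. After a radial cutoff and a holomorphic correction near $0$, the disk flows are conformal near $0$ and near $\mathbb S^1$.
\item Hence $\mu_{\widehat\Xi_t}$ lives in a fixed $K\Subset\mathbb D\setminus\{0\}$, with $C^2$ dependence on $t$ and $\|\mu_{\widehat\Xi_t}\|_\infty\leq q_0<1$.
\item Koebe distortion on a neighborhood of $K$ places $\operatorname{supp}\beta_{f,t}$ in a fixed annulus $\{\delta\leq|w|\leq R\}$. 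It also bounds $\sum_{j\leq2}\|\partial_t^j\beta_{f,t}\|_\infty$ uniformly in $f$.
\item The Ahlfors--Bers parameter theorem and Cauchy's formula on the coefficient-free disk $B(0,\delta)$ bound $\partial_t^2a_m^{\mathrm{out}}(f,t)$ and $\partial_t^jH_{f,t}'(0)$ uniformly, with $|H_{f,t}'(0)|$ bounded below. Taylor's formula then gives the uniform difference quotients.
\item The principal solution has derivative $1$ at $\infty$, so the characters follow from $\chi_m(h\circ\Xi_t)=\chi_m(h)\bigl(|H_{f,t}'(0)|/H_{f,t}'(0)\bigr)^m$.
\end{itemize}

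Finally, your reduction to an arbitrary smooth field $w$ discards exactly the analyticity that permits this compact support. It is recoverable, because $w=(v\circ\Phi)/\Phi'$ is real-analytic when $\Phi$ is a composition of flows of trigonometric polynomials, but the argument must use it.
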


For a derivation $X$ and a measure $\nu$, our divergence convention is
\begin{equation}\label{eq:DR-divergence-convention}
 \int XF\,d\nu=\int F\,\operatorname{Div}_\nu X\,d\nu
\end{equation}
on the specified test algebra.

\begin{proposition}\label{prop:DR-right-IBP}
For $v\in H_{\mathrm{WP}}^{\mathrm{fin}}$ and $F\in\mathcal A_{\mathrm{fl}}$,
\begin{equation}\label{eq:DR-hyp-IBP}
 \int D_vF\,d\widetilde\nu_\kappa
 =\int F\rho_v\,d\widetilde\nu_\kappa.
\end{equation}
\end{proposition}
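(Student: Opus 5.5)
By linearity in $v$ it suffices to prove \eqref{eq:DR-hyp-IBP} for $v=\cos(n\cdot)$ and $v=\sin(n\cdot)$ with $n\geq2$. Equivalently, after complexification, we prove the pair of identities
\[
 \int D_n^+F\,d\widetilde\nu_\kappa=i\int F\,\overline{t_n}\,d\widetilde\nu_\kappa,
 \qquad
 \int (D_{n,c}-iD_{n,s})F\,d\widetilde\nu_\kappa=i\int F\,t_n\,d\widetilde\nu_\kappa .
\]
To see that these are the right pair, note that \eqref{eq:intro-score} gives $\rho_{\cos(n\cdot)}=\operatorname{Im}t_n$ and $\rho_{\sin(n\cdot)}=\operatorname{Re}t_n$. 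Hence $\rho_{\cos}+i\rho_{\sin}=i\overline{t_n}$, and the second identity is the complex conjugate of the first applied to $\overline F$. The plan is to pass to the normalized welding-pair realization of Appendix~\ref{sec:DR-frame-spaces}, where $\widetilde\nu_\kappa$ is the normalized SLE loop law times an independent uniform exterior mark. We then use the infinitesimal conformal restriction/Virasoro divergence formula of \cite{GQW25} for the SLE loop measure. In the coefficient frame, that formula says the coefficient vector fields $\mathcal L_{\pm n}$ of \cite{AN08} have divergence given by the stress modes. Concretely, the target statement is
\[
 \int\mathcal L_{-n}P\,d\widetilde\nu_\kappa=\int P\,t_n\,d\widetilde\nu_\kappa
 \quad\text{for }P\in\mathcal P_{\mathrm{hol}},
\]
and its conjugate version for $\overline{\mathcal L_{-n}}$. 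The normalization of $t_n$ in \eqref{eq:intro-stress} with $c_{\mathrm L}=1+6Q_\kappa^2$ is designed to match this. Lemma~\ref{lem:DR-stress-polynomial}, in particular $\mathcal L_nt_n=C_{\kappa,n}$, serves as the consistency check on constants against the covariance formula.

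The steps, in order, are as follows.

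First, for polynomial test functions $P$ in $a_m,\overline{a_m}$ and the characters $\chi_j$, use Lemma~\ref{lem:DR-exact-change-frame} to rewrite $D_n^\pm$ as $i\mathcal L_{\pm n}$ acting on the holomorphic coefficients. The characters $\chi_j$ are unaffected by horizontal modes $n\geq2$, since $g'(\infty)$ moves only through the rotation and M\"obius directions; this needs a short check. We then invoke the loop-measure divergence identity. Both the interior normalization $f(0)=0$, $f'(0)=1$ and the exterior rotation mark must be tracked. The rotation invariance from Section~\ref{sec:sle-closed} disposes of the mark.

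Second, extend from this polynomial algebra to all of $\mathcal A_{\mathrm{fl}}$. Cylinder functions $J_{a,b}$ are not polynomials in $a_m$. The natural route is to express each $b(h(e^{i\theta}))$ as a limit of bounded polynomial functionals of the welding pair, for instance via the radial approximations $f^{(r)}$. Alternatively, one can prove the identity directly at the level of the flow, showing
\[
 \frac{d}{dt}\Big|_{t=0}\int F\circ T_t^v\,d\widetilde\nu_\kappa=\int F\rho_v\,d\widetilde\nu_\kappa
\]
for a measure-determining class and then using density. The uniform differentiability in Lemma~\ref{lem:DR-flow-core-uniform-differentiability} allows exchanging $d/dt$ with the integral. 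Boundedness of $t_n$, from de Branges' bound and Lemma~\ref{lem:DR-stress-polynomial}, gives dominated convergence.

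Third, handle $F\circ R_\Phi$. I would use quasi-invariance of $\widetilde\nu_\kappa$ under right composition by smooth $\Phi$ \cite{FS25}, together with the flow property $T_t^v R_\Phi=R_\Phi T_t^{\mathrm{Ad}\,v}$, to reduce back to the base algebra. This may instead require establishing the identity first for all smooth fields via the same divergence formula.

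The main obstacle is the second step: transferring the coefficient-level divergence identity, which is naturally a statement about polynomials in $a_m$ on the loop measure, to general bounded test functions of the boundary homeomorphism $h$. The difficulty is that right composition acts on $h$ rather than on $f$ alone. I would first try a closability/approximation argument. Polynomials in $a_m,\overline{a_m},\chi_j$ are dense in $L^2$, since they determine the welding pair. The difference
\[
 \int D_vF-\int F\rho_v
\]
is a bounded linear functional in $F$ that is continuous under the uniform differentiability topology. Approximating $F$ together with $D_vF$ boundedly almost everywhere should then conclude the argument.
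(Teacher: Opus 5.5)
Your Step~1 asserts the proposition rather than proving it. What \cite{GQW25} supplies, and what the paper actually uses from it, is a divergence for \emph{physical} deformations of the unnormalized, M\"obius-invariant spherical loop measure along $-\xi^{1-p}\partial_\xi$. That divergence is $-\frac{c_{\mathrm m}}{12}\pi_p$, with the matter charge $c_{\mathrm m}=26-c_{\mathrm L}$ and the Neretin coefficients of $f^{-1}$ (Proposition~\ref{prop:DR-external-localized-Ward}). This input involves neither $c_{\mathrm L}$ nor the capacity term, so it cannot by itself give divergence $t_n$ to the right-composition fields $\mathcal L_{\pm n}$ under $\widetilde\nu_\kappa$. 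Saying that ``the normalization of $t_n$ is designed to match'' is not a derivation. Three mechanisms are missing.

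\emph{First}, right composition by a fixed source field is a physical deformation with loop-dependent coefficients, $R_\mu^{\rm raw}=\sum_j m_jL^C_{-j-2}$. By the multiplier rule its divergence therefore acquires the trace $-\sum_jL^C_{-j-2}\widehat m_j=-\frac{26}{12}(\mathcal Sf,\mu)$. This ghost term is what converts $c_{\mathrm m}$ into $c_{\mathrm L}$; see \eqref{eq:DR-algebraic-central-balance}.

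\emph{Second}, passing from the frame measure to the normalized slice is the disintegration of Lemma~\ref{lem:DR-doubly-marked-slice}, whose slice factor is $e^{2k}\widetilde\nu_\kappa$. The identity first holds for this tilted measure, with divergence $\frac{c_{\mathrm L}}{12}\sigma_h(\mu)$ alone (Lemma~\ref{lem:DR-direct-Laurent-descent}). Removing the tilt produces $2R_\mu k=\tau_h(\mu)$, which is the entire $f'^2/f^2-z^{-2}$ part of $q_f$. So the normalization is not disposed of by rotation invariance: its scale part generates the capacity part of the stress.

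\emph{Third}, the source- and target-frame anomalies of the Schiffer lift must be shown to integrate to zero. This uses unimodularity (Lemmas~\ref{lem:DR-frame-covariant-contracted-trace} and~\ref{lem:DR-Mobius-normalization-anomaly}).

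For the same reason, your claim that the characters $\chi_j$ are unaffected by horizontal modes is false. The compensated pair is renormalized by $H_{f,t}'(0)$, so both $|g'(\infty)|$ (this is the response $2R_\mu k=\tau_h(\mu)$) and its argument move; cf.\ \eqref{eq:DR-composition-exterior-character}. Your signs are also off: $\rho_{\cos(n\cdot)}-i\rho_{\sin(n\cdot)}=-it_n$. Hence $\int(D_{n,c}-iD_{n,s})F\,d\widetilde\nu_\kappa=-i\int Ft_n\,d\widetilde\nu_\kappa$ and $\int\mathcal L_{-n}P\,d\widetilde\nu_\kappa=-\int Pt_n\,d\widetilde\nu_\kappa$, as Corollary~\ref{cor:DR-AM-identity} with $\phi=1$ confirms.

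Steps~2 and~3 do not close either. $L^2$-density of polynomials in $a_m,\overline{a_m},\chi_j$ gives no control of $D_vP_k$. The identity therefore cannot be pushed to $J_{a,b}$ without an approximation in the graph norm of $D_v$, and nothing in your plan provides one. The radial route does not help, since $h\mapsto h^{(r)}$ does not commute with right composition.

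Step~3 would need the identity for $\operatorname{Ad}$-transformed fields, which are neither finite-mode nor horizontal. It would also need the derivative of the Radon--Nikodym density of $R_\Phi$, which in this paper (Proposition~\ref{prop:DR-finite-RN}) is a consequence of the statement being proved. Moreover, \cite{FS25} covers only $\kappa\in(0,4)$, so $\kappa=4$ would be lost.

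The paper needs no density step. The frame-space integration by parts (Lemma~\ref{lem:DR-raw-physical-IBP}) holds for every localized bounded test that is uniformly differentiable along the lifted physical flows and has bounded derivatives along the anomaly fields. Lemma~\ref{lem:DR-localized-test-regularity} verifies this directly for all generators of $\mathcal A_{\mathrm{fl}}$: averaged observables, coefficients, characters, and their compositions with $R_\Phi$ through fixed disk extensions. The geometric-rate Laurent truncation (Lemma~\ref{lem:DR-localized-Laurent-estimates}), the descent, and the untilting by $e^{-2k}$ then give the identity on the whole algebra at once.
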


\section{Closed response and Fisher--K\"ahler geometry}
\label{sec:ward-fisher}

\subsection{Rotation response and scalar closability}

For $s\in\mathbb R$, let $r_s(e^{i\theta})=e^{i(\theta+s)}$ and define
\begin{equation}\label{eq:B1-rotation-unitary}
 U_s^{\mathrm{rot}}F(h):=F(h\circ r_s).
\end{equation}
\Needspace{8\baselineskip}
Right-rotation invariance of $\widetilde\nu_\kappa$ gives, on the
complexification of $L^2(\widetilde\nu_\kappa)$,
\begin{gather*}
 U_{s+t}^{\mathrm{rot}}=U_s^{\mathrm{rot}}U_t^{\mathrm{rot}},\qquad
 U_0^{\mathrm{rot}}=\one,\\
 \shortintertext{and}
 \|U_s^{\mathrm{rot}}F\|_{L^2}=\|F\|_{L^2}.
\end{gather*}
Lemma~\ref{lem:B1-J-response} and the cylinder chain rule, applied to the
constant field $1$, give
\begin{equation}\label{eq:B1-rotation-core-continuity}
 \lim_{s\to0}\left\|
 \frac{U_s^{\mathrm{rot}}F-F}{s}-D_1F\right\|_{\sup}=0,
 \qquad F\in\mathcal C+i\mathcal C.
\end{equation}
For $F\in L^2(\widetilde\nu_\kappa;\mathbb C)$ and
$G\in\mathcal C+i\mathcal C$, unitarity gives
\[
 \|U_s^{\mathrm{rot}}F-F\|_{L^2}
 \leq2\|F-G\|_{L^2}+\|U_s^{\mathrm{rot}}G-G\|_{L^2}.
\]
First let $s\to0$, then approximate $F$ by $G\in\mathcal C+i\mathcal C$ using
Lemma~\ref{lem:B1-density}.  Thus $(U_s^{\mathrm{rot}})_{s\in\mathbb R}$ is strongly
continuous.  Its generator, with domain consisting of the functions for
which the following $L^2$ limit exists, extends the pointwise rotation
response:
\begin{equation}\label{eq:B1-rotation-generator}
 D_{\mathrm{rot}}F:=\lim_{s\to0}\frac{U_s^{\mathrm{rot}}F-F}{s}
 \quad\text{in }L^2(\widetilde\nu_\kappa).
\end{equation}
Stone's theorem applies to this strongly continuous unitary group.
Together with \eqref{eq:B1-rotation-core-continuity}, it gives
\begin{equation}\label{eq:B1-rotation-skew-adjoint}
 \begin{gathered}
 D_{\mathrm{rot}}^*=-D_{\mathrm{rot}},\\[3pt]
 \mathcal C\subset\operatorname{Dom}(D_{\mathrm{rot}}),\qquad
 D_{\mathrm{rot}}F=D_1F\quad(F\in\mathcal C).
 \end{gathered}
\end{equation}

Fix $v\in H_{\mathrm{WP}}^{\mathrm{fin}}$.  On the cylinder core we consider
\begin{equation}\label{eq:B1-scalar-map}
 D_v:\mathcal C\subset L^2(\widetilde\nu_\kappa)
 \longrightarrow L^2(\widetilde\nu_\kappa).
\end{equation}
Let $(F_m)_{m\geq1}\subset\mathcal C$ and
$Y\in L^2(\widetilde\nu_\kappa)$.

\begin{lemma}
\label{lem:B1-scalar-close}
The operator $D_v$ is well defined on $L^2$ equivalence classes and closable:
\begin{equation}\label{eq:B1-scalar-close-criterion}
 F_m\to0\text{ in }L^2,
 \qquad D_vF_m\to Y\text{ in }L^2
 \quad\Longrightarrow\quad Y=0.
\end{equation}
\end{lemma}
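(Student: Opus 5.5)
We use the standard closability criterion for an operator that has a densely defined formal adjoint. The formal adjoint is supplied by the right integration-by-parts formula, Proposition~\ref{prop:DR-right-IBP}.

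\emph{Well-definedness.} Suppose $F,G\in\mathcal C$ agree $\widetilde\nu_\kappa$-a.e.; we need $D_vF=D_vG$ a.e. Put $H=F-G\in\mathcal C$, so $H=0$ a.e. For any $K\in\mathcal C$ the product $HK$ lies in $\mathcal C\subset\mathcal A_{\mathrm{fl}}$, and the Leibniz rule gives $D_v(HK)=KD_vH+HD_vK$. By \eqref{eq:DR-hyp-IBP},
\[
 \int K\,D_vH\,d\widetilde\nu_\kappa
 =\int D_v(HK)\,d\widetilde\nu_\kappa-\int H\,D_vK\,d\widetilde\nu_\kappa
 =\int HK\rho_v\,d\widetilde\nu_\kappa-\int H\,D_vK\,d\widetilde\nu_\kappa .
\]
Both terms on the right vanish because $H=0$ a.e. and $\rho_v$, $D_vK$ are bounded (Lemma~\ref{lem:DR-flow-core-uniform-differentiability}). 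Hence $D_vH$ is orthogonal in $L^2$ to all of $\mathcal C$. By Lemma~\ref{lem:B1-density}, $\mathcal C$ is dense, so $D_vH=0$ a.e.

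\emph{Closability.} Take $F_m\to0$ and $D_vF_m\to Y$ in $L^2$, and fix $K\in\mathcal C$. Applying the same identity to $F_mK$ gives
\[
 \int K\,D_vF_m\,d\widetilde\nu_\kappa
 =\int F_m\bigl(K\rho_v-D_vK\bigr)\,d\widetilde\nu_\kappa .
\]
The function $K\rho_v-D_vK$ is bounded, so the right side tends to $0$. The left side tends to $\int KY\,d\widetilde\nu_\kappa$. Thus $Y$ is orthogonal to $\mathcal C$, and density (Lemma~\ref{lem:B1-density}) forces $Y=0$. This is exactly \eqref{eq:B1-scalar-close-criterion}, and it is the usual statement that $D_v\subset(-D_v+\rho_v)^*$ on $\mathcal C$.

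\emph{Main obstacle.} The only real input is that \eqref{eq:DR-hyp-IBP} may be applied to products $FK$ of cylinder functions. This holds because $\mathcal C$ is an algebra contained in $\mathcal A_{\mathrm{fl}}$ by \eqref{eq:DR-averaged-core-in-flow-core}. We also need the pointwise Leibniz rule for $D_v$ on $\mathcal C$, which follows from the chain rule \eqref{eq:B1-cylinder-response}. Finally, $\rho_v$ must be bounded; this follows from Lemma~\ref{lem:DR-stress-polynomial} together with the de Branges bound.
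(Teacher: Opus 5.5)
Your proposal is correct and follows essentially the same route as the paper. The paper also applies Proposition~\ref{prop:DR-right-IBP} to products of core functions to get $\langle D_vF,G\rangle=\langle F,\rho_vG-D_vG\rangle$, uses boundedness of $\rho_v$ and $D_vG$, and invokes the density of $\mathcal C$ (Lemma~\ref{lem:B1-density}) for both well-definedness and closability.
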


\begin{proof}
For $F,G\in\mathcal C$, Proposition~\ref{prop:DR-right-IBP} applied to
$F\overline G$ and the Leibniz rule give
\begin{equation}\label{eq:B1-scalar-adjoint-pairing}
 \langle D_vF,G\rangle_{L^2}
 =\langle F,\rho_vG-D_vG\rangle_{L^2}.
\end{equation}
The score and the core response are bounded, so
\[
 \|\rho_vG-D_vG\|_{L^2}
 \leq\|\rho_v\|_{\sup}\|G\|_{L^2}+\|D_vG\|_{\sup}<\infty.
\]
If $F=0$ in $L^2$, the pairing in
\eqref{eq:B1-scalar-adjoint-pairing} vanishes for every $G\in\mathcal C$.
Density of $\mathcal C$ implies $D_vF=0$ in $L^2$, proving that $D_v$
is well defined on equivalence classes.

Under the two convergence hypotheses in
\eqref{eq:B1-scalar-close-criterion}, for every $G\in\mathcal C$,
\begin{align*}
 |\langle Y,G\rangle_{L^2}|
 &\leq\|Y-D_vF_m\|_{L^2}\|G\|_{L^2}
       +|\langle F_m,\rho_vG-D_vG\rangle_{L^2}|\\
 &\leq\|Y-D_vF_m\|_{L^2}\|G\|_{L^2}
       +\|F_m\|_{L^2}\|\rho_vG-D_vG\|_{L^2}
 \longrightarrow0.
\end{align*}
Density again gives $Y=0$.
\end{proof}

\subsection{Final proof of Theorem~\ref{thm:B1-main}}
\label{sec:proof-closed-response}

The initial domain is dense by Lemma~\ref{lem:B1-density}.  Retain the
enumeration $(e_j)_{j\geq1}$ of \eqref{eq:B1-WP-basis} used in
Section~\ref{sec:sle-closed}.  Since every $D_{e_j}$ is well defined on
$L^2$ equivalence classes by Lemma~\ref{lem:B1-scalar-close}, so is $D$.

Let $(F_m)_{m\geq1}\subset\mathcal C$ and
$U\in L^2(\widetilde\nu_\kappa;H_{\mathrm{WP}})$ satisfy
\begin{equation}\label{eq:B1-closability-criterion}
 \begin{aligned}
 F_m&\longrightarrow0\quad\text{in }L^2(\widetilde\nu_\kappa),\\[3pt]
 DF_m&\longrightarrow U\quad\text{in }L^2(\widetilde\nu_\kappa;H_{\mathrm{WP}}).
 \end{aligned}
\end{equation}
For each $j$,
\begin{equation}\label{eq:B1-coordinate-convergence}
 \|D_{e_j}F_m-(U,e_j)_{\mathrm{WP}}\|_{L^2}
 \leq\|DF_m-U\|_{L^2(H_{\mathrm{WP}})}\longrightarrow0.
\end{equation}
Each $e_j$ lies in $H_{\mathrm{WP}}^{\mathrm{fin}}$, so
Lemma~\ref{lem:B1-scalar-close} and $F_m\to0$ imply
\begin{equation}\label{eq:B1-coordinate-zero}
 (U,e_j)_{\mathrm{WP}}=0
 \quad\text{in }L^2(\widetilde\nu_\kappa),\qquad j\geq1.
\end{equation}
Parseval's identity and Tonelli's theorem now give
\[
 \|U\|_{L^2(H_{\mathrm{WP}})}^2
 =\int\sum_{j\geq1}|(U,e_j)_{\mathrm{WP}}|^2\,d\widetilde\nu_\kappa
 =\sum_{j\geq1}\|(U,e_j)_{\mathrm{WP}}\|_{L^2}^2=0.
\]
Thus $D$ is closable.

Let $D$ now denote the closure.  Its graph
\begin{equation}\label{eq:B1-graph}
 \{(F,DF):F\in\operatorname{Dom}(D)\}
\end{equation}
is a closed linear subspace of
$L^2(\widetilde\nu_\kappa)\oplus
 L^2(\widetilde\nu_\kappa;H_{\mathrm{WP}})$.
Thus $\operatorname{Dom}(D)$ is complete for the graph norm
\begin{equation}\label{eq:B1-graph-norm}
 \|F\|_{\operatorname{Dom}(D)}^2
 :=\|F\|_{L^2}^2+\|DF\|_{L^2(H_{\mathrm{WP}})}^2
 =\|F\|_{L^2}^2+\mathcal E(F,F)
\end{equation}
and \eqref{eq:intro-Dirichlet-form} is a densely defined, closed,
symmetric nonnegative form.

Let $\eta\in C^\infty(\mathbb R)$ have bounded derivative.  For
$F\in\mathcal C$, choose a compactly supported smooth function that
agrees with $\eta$ on a neighborhood of the bounded range of $F$.
The cylinder chain rule gives
\begin{equation}\label{eq:B1-chain-core}
 \eta(F)\in\mathcal C,\qquad D(\eta(F))=\eta'(F)DF.
\end{equation}
Now fix $F\in\operatorname{Dom}(D)$ and choose $F_m\in\mathcal C$ with
\begin{equation}\label{eq:B1-core-graph-approx}
 F_m\to F\quad\text{in }L^2,
 \qquad
 DF_m\to DF\quad\text{in }L^2(H_{\mathrm{WP}}).
\end{equation}
Since $\widetilde\nu_\kappa$ is a probability measure,
\begin{align*}
 \|\eta(F)\|_{L^2}
 &\leq|\eta(0)|+\|\eta'\|_\infty\|F\|_{L^2}<\infty,\\
 \shortintertext{and}
 \|\eta(F_m)-\eta(F)\|_{L^2}
 &\leq\|\eta'\|_\infty\|F_m-F\|_{L^2}\longrightarrow0.
\end{align*}
For the gradients,
\begin{align}
 &\|\eta'(F_m)DF_m-\eta'(F)DF\|_{L^2(H_{\mathrm{WP}})}\notag\\
 &\quad\le \|\eta'\|_\infty\|DF_m-DF\|_{L^2(H_{\mathrm{WP}})}
 +\|(\eta'(F_m)-\eta'(F))DF\|_{L^2(H_{\mathrm{WP}})}.
 \label{eq:B1-chain-limit-bound}
\end{align}
To control the second term, put $M=2\|\eta'\|_\infty$ and, for $\delta>0$,
\[
 A_{m,\delta}:=\{h\in\mathcal W:
 |\eta'(F_m(h))-\eta'(F(h))|>\delta\}.
\]
Continuity of $\eta'$ and $F_m\to F$ in measure give
$\widetilde\nu_\kappa(A_{m,\delta})\to0$.  Also,
\begin{equation}\label{eq:B1-chain-tail}
 \lim_{R\to\infty}
 \int_{\{\|DF\|_{\mathrm{WP}}>R\}}
 \|DF\|_{\mathrm{WP}}^2\,d\widetilde\nu_\kappa=0.
\end{equation}
For $R,\delta>0$, splitting over the gradient tail, $A_{m,\delta}$,
and its complement yields the estimate
\begin{align}
 &\|(\eta'(F_m)-\eta'(F))DF\|_{L^2(H_{\mathrm{WP}})}^2\notag\\
 &\quad\leq
 M^2\int_{\{\|DF\|_{\mathrm{WP}}>R\}}
       \|DF\|_{\mathrm{WP}}^2\,d\widetilde\nu_\kappa\notag\\
 &\qquad+M^2R^2\widetilde\nu_\kappa(A_{m,\delta})
 +\delta^2\|DF\|_{L^2(H_{\mathrm{WP}})}^2.
 \label{eq:B1-chain-multiplier-limit}
\end{align}
Letting $m\to\infty$, then $R\to\infty$ and $\delta\downarrow0$, proves
convergence to zero.  Equations \eqref{eq:B1-chain-core} and
\eqref{eq:B1-chain-limit-bound}, together with closedness of $D$, imply
\begin{equation}\label{eq:B1-chain-closed}
 \eta(F)\in\operatorname{Dom}(D),
 \qquad
 D(\eta(F))=\eta'(F)DF.
\end{equation}
If $|\eta'|\le1$, then
\begin{equation}\label{eq:B1-smooth-contraction}
 \mathcal E(\eta(F),\eta(F))
 =\int|\eta'(F)|^2\|DF\|_{\mathrm{WP}}^2\,d\widetilde\nu_\kappa
 \le\mathcal E(F,F).
\end{equation}

Now let $\eta$ be an arbitrary normal contraction.
Choose an even nonnegative $\zeta\in C_c^\infty(\mathbb R)$ with
$\int\zeta=1$ and $\operatorname{supp}\zeta\subset[-1,1]$, and put
\begin{equation}\label{eq:B1-contraction-mollifier}
 \zeta_m(x)=m\zeta(mx),
 \qquad
 \eta_m(x)=(\eta*\zeta_m)(x)-(\eta*\zeta_m)(0).
\end{equation}
For $x,y\in\mathbb R$,
\[
 |\eta_m(x)-\eta_m(y)|
 \leq\int|\eta(x-u)-\eta(y-u)|\zeta_m(u)\,du
 \leq|x-y|.
\]
Consequently,
\begin{equation}\label{eq:B1-mollifier-lipschitz}
 \eta_m(0)=0,
 \qquad
 \|\eta_m'\|_\infty\le1.
\end{equation}
Also
\begin{equation}\label{eq:B1-mollifier-uniform}
 \sup_x|\eta_m(x)-\eta(x)|
 \leq2\int|u|\zeta_m(u)\,du\leq\frac2m.
\end{equation}
Thus $\eta_m(F)\to \eta(F)$ in $L^2$.  By
\eqref{eq:B1-chain-closed}--\eqref{eq:B1-smooth-contraction},
\begin{equation}\label{eq:B1-uniform-energy-contraction}
 \sup_m\|D(\eta_m(F))\|_{L^2(H_{\mathrm{WP}})}
 \le\|DF\|_{L^2(H_{\mathrm{WP}})}.
\end{equation}
Boundedness in this Hilbert space yields a subsequence indexed by
$m_k\to\infty$ and
$V\in L^2(\widetilde\nu_\kappa;H_{\mathrm{WP}})$ such that
$D(\eta_{m_k}(F))\rightharpoonup V$.  Together with the strong convergence
of $\eta_{m_k}(F)$, this gives
\begin{equation}\label{eq:B1-product-weak}
 (\eta_{m_k}(F),D(\eta_{m_k}(F)))
 \rightharpoonup(\eta(F),V)
\end{equation}
in the product Hilbert space.  The graph \eqref{eq:B1-graph} is a closed
linear subspace and therefore weakly closed.  Hence
\begin{equation}\label{eq:B1-contraction-domain}
 \eta(F)\in\operatorname{Dom}(D),
 \qquad D(\eta(F))=V.
\end{equation}
Weak lower semicontinuity gives
\begin{align}
 \mathcal E(\eta(F),\eta(F))
 &=\|V\|_{L^2(H_{\mathrm{WP}})}^2\notag\\
 &\le\liminf_{k\to\infty}\|D(\eta_{m_k}(F))\|_{L^2(H_{\mathrm{WP}})}^2
 \le\mathcal E(F,F),
 \label{eq:B1-contraction-energy}
\end{align}
which proves the Markov property in Theorem~\ref{thm:B1-main}.

\subsection{Fourier scores and the Ward identity}

Fix $n\geq2$ and $F,G\in\mathcal A_{\mathrm{fl}}$.

\begin{corollary}
\label{cor:DR-real-divergence}
The mode divergences satisfy
\begin{align}
 \operatorname{Div}_{\widetilde\nu_\kappa}D_{n,c}
 &=\operatorname{Im}t_n,
 \label{eq:DR-cos-divergence}\\
 \operatorname{Div}_{\widetilde\nu_\kappa}D_{n,s}
 &=\operatorname{Re}t_n,
 \label{eq:DR-sin-divergence}\\
 \shortintertext{and}
 \operatorname{Div}_{\widetilde\nu_\kappa}D_n^+
 &=i\overline{t_n}.
 \label{eq:DR-complex-divergence}
\end{align}
Their adjoint pairings are
\begin{align}
 \langle D_{n,c}F,G\rangle_{L^2}
 &=\langle F,(\operatorname{Im}t_n)G-D_{n,c}G\rangle_{L^2},
 \label{eq:DR-cos-adjoint}\\
 \shortintertext{and}
 \langle D_{n,s}F,G\rangle_{L^2}
 &=\langle F,(\operatorname{Re}t_n)G-D_{n,s}G\rangle_{L^2}.
 \label{eq:DR-sin-adjoint}
\end{align}
\end{corollary}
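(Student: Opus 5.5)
This corollary is a specialization of Proposition~\ref{prop:DR-right-IBP} to single Fourier modes, followed by the Leibniz rule. First, for fixed $n\geq2$, the fields $\cos(n\cdot)$ and $\sin(n\cdot)$ lie in $H_{\mathrm{WP}}^{\mathrm{fin}}$. In the representation \eqref{eq:intro-real-finite-field}:
\begin{itemize}
\item for $v=\cos(n\cdot)$ we have $x_n=1$ and all other coefficients zero;
\item for $v=\sin(n\cdot)$ we have $y_n=1$ and all other coefficients zero.
\end{itemize}
Hence \eqref{eq:intro-score} gives $\rho_{\cos(n\cdot)}=\operatorname{Im}t_n$ and $\rho_{\sin(n\cdot)}=\operatorname{Re}t_n$. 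Substituting into \eqref{eq:DR-hyp-IBP} with $F\in\mathcal A_{\mathrm{fl}}$ yields $\int D_{n,c}F\,d\widetilde\nu_\kappa=\int F\operatorname{Im}t_n\,d\widetilde\nu_\kappa$, and the analogous identity for $D_{n,s}$. By the convention \eqref{eq:DR-divergence-convention}, these are exactly \eqref{eq:DR-cos-divergence} and \eqref{eq:DR-sin-divergence}. To apply \eqref{eq:DR-hyp-IBP} to complex-valued $F\in\mathcal A_{\mathrm{fl}}$, I would split $F$ into real and imaginary parts. Since $\mathcal A_{\mathrm{fl}}$ is conjugation-stable, both parts remain in the algebra, and both $D_v$ and the integral are real-linear.

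For the complex divergence, I would use linearity: $\int D_n^+F=\int F(\operatorname{Im}t_n+i\operatorname{Re}t_n)$. Now $\operatorname{Im}t_n+i\operatorname{Re}t_n=i(\operatorname{Re}t_n-i\operatorname{Im}t_n)=i\overline{t_n}$, which gives \eqref{eq:DR-complex-divergence}.

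For the adjoint pairings, I would apply the divergence identity to the product $F\overline G$. This product lies in $\mathcal A_{\mathrm{fl}}$ because the algebra is closed under products and conjugation. Two facts are needed:
\begin{itemize}
\item $D_{n,c}$ satisfies the Leibniz rule on $\mathcal A_{\mathrm{fl}}$. This follows from the uniform difference-quotient convergence in Lemma~\ref{lem:DR-flow-core-uniform-differentiability}: the product of uniformly converging bounded quotients converges uniformly.
\item $D_{n,c}\overline G=\overline{D_{n,c}G}$, because the field is real.
\end{itemize}
These give $\int D_{n,c}F\,\overline G+\int F\,\overline{D_{n,c}G}=\int F\overline G\operatorname{Im}t_n$. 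Since $\operatorname{Im}t_n$ is real, this rearranges to \eqref{eq:DR-cos-adjoint}, and \eqref{eq:DR-sin-adjoint} follows in the same way.

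All the functions involved are bounded: $t_n$ by Lemma~\ref{lem:DR-stress-polynomial}, and the core and its responses by Lemma~\ref{lem:DR-flow-core-uniform-differentiability}. So every integral is finite. The only point needing care is the Leibniz rule on $\mathcal A_{\mathrm{fl}}$, and the uniform estimate settles it.
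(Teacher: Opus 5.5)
Your proposal is correct and follows the paper's own proof: specialize Proposition~\ref{prop:DR-right-IBP} to $v=\cos(n\cdot)$ and $v=\sin(n\cdot)$ using \eqref{eq:intro-score}, combine the two identities linearly to get the divergence of $D_n^+$, and apply the identity to $F\overline G$ with the Leibniz rule to obtain the adjoint pairings. The two facts you add are the ones the paper uses without comment: the Leibniz rule follows from the uniform difference quotients of Lemma~\ref{lem:DR-flow-core-uniform-differentiability}, and real responses commute with conjugation.
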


\begin{proof}
 Proposition~\ref{prop:DR-right-IBP} and
 \eqref{eq:intro-score} imply
\begin{align*}
 \operatorname{Div}_{\widetilde\nu_\kappa}D_{n,c}
 &=\rho_{\cos(n\cdot)}=\operatorname{Im}t_n,\\
 \operatorname{Div}_{\widetilde\nu_\kappa}D_{n,s}
 &=\rho_{\sin(n\cdot)}=\operatorname{Re}t_n.
\end{align*}
Since $\operatorname{Im}t_n+i\operatorname{Re}t_n=i\overline{t_n}$,
complex linearity gives \eqref{eq:DR-complex-divergence}.  For
$v=\cos(n\cdot)$ or $\sin(n\cdot)$, the Leibniz rule gives
\[
 \int(D_vF)\overline G\,d\widetilde\nu_\kappa
 =\int F\overline G\rho_v\,d\widetilde\nu_\kappa
  -\int F\overline{D_vG}\,d\widetilde\nu_\kappa,
\]
which is \eqref{eq:DR-cos-adjoint} or \eqref{eq:DR-sin-adjoint}.
\end{proof}

We next derive the coefficient adjoint relation considered in
\cite{AM01}, with the convention \eqref{eq:DR-negative-Kirillov}.
We take the Hilbert-space adjoint of $\mathcal L_n$ in
$\overline{\mathcal P_{\mathrm{hol}}}^{\,L^2(\widetilde\nu_\kappa;\mathbb C)}$,
with initial domain $\mathcal P_{\mathrm{hol}}$.
Let $\phi,\psi\in\mathcal P_{\mathrm{hol}}$.

\begin{corollary}
\label{cor:DR-AM-identity}
For $n\geq2$,
\begin{equation}\label{eq:DR-AM-identity}
 \langle\mathcal L_n\phi,\psi\rangle_{L^2}
 =\langle\phi,\mathcal L_{-n}\psi+t_n\psi\rangle_{L^2}.
\end{equation}
In particular,
\begin{equation}\label{eq:DR-Ln-adjoint}
 \psi\in\operatorname{Dom}(\mathcal L_n^*),\qquad
 \mathcal L_n^*\psi=\mathcal L_{-n}\psi+t_n\psi.
\end{equation}
\end{corollary}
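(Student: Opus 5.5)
We derive the identity from the complex divergence \eqref{eq:DR-complex-divergence} together with the change-of-frame formulas of Lemma~\ref{lem:DR-exact-change-frame}. Polynomials in the $a_m$ belong to $\mathcal A_0\subset\mathcal A_{\mathrm{fl}}$, and so do their products with conjugates. Hence Proposition~\ref{prop:DR-right-IBP} applies to $F=\phi\overline\psi$. It also applies to $F=\overline{\phi\psi}$ if needed, since the algebra is conjugation-stable.

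\textbf{Step 1.} Write $D_n^+=D_{n,c}+iD_{n,s}$, and note that each $D_v$ is a real derivation. Proposition~\ref{prop:DR-right-IBP} applied to $F=\phi\overline\psi$ gives
\[
 \int D_n^+(\phi\overline\psi)\,d\widetilde\nu_\kappa
 =\int \phi\overline\psi\,(\rho_{\cos(n\cdot)}+i\rho_{\sin(n\cdot)})\,d\widetilde\nu_\kappa
 =\int\phi\overline\psi\, i\overline{t_n}\,d\widetilde\nu_\kappa,
\]
by \eqref{eq:DR-complex-divergence}.

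\textbf{Step 2.} Expand the left side with the Leibniz rule: $D_n^+(\phi\overline\psi)=(D_n^+\phi)\overline\psi+\phi\,D_n^+\overline\psi$. By \eqref{eq:DR-positive-change-frame}, $D_n^+\phi=i\mathcal L_n\phi$. Because $D_{n,c}$ and $D_{n,s}$ are real, $D_n^+\overline\psi=\overline{(D_{n,c}-iD_{n,s})\psi}$, and \eqref{eq:DR-negative-change-frame} turns this into $\overline{i\mathcal L_{-n}\psi}=-i\overline{\mathcal L_{-n}\psi}$. Substituting and dividing by $i$ yields
\[
 \int(\mathcal L_n\phi)\overline\psi
 -\int\phi\,\overline{\mathcal L_{-n}\psi}
 =\int\phi\overline\psi\,\overline{t_n},
\]
which is \eqref{eq:DR-AM-identity} after regrouping, since $\overline\psi\,\overline{t_n}=\overline{t_n\psi}$.

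\textbf{Step 3.} For \eqref{eq:DR-Ln-adjoint}, we need $\mathcal L_{-n}\psi+t_n\psi$ to lie in $L^2$. We also need it in the closed subspace $\overline{\mathcal P_{\mathrm{hol}}}^{\,L^2}$, or we at least need the functional $\phi\mapsto\langle\mathcal L_n\phi,\psi\rangle$ to be bounded in $\|\phi\|_{L^2}$. Boundedness follows from \eqref{eq:DR-AM-identity} and Cauchy--Schwarz, since $t_n$ is bounded by Lemma~\ref{lem:DR-stress-polynomial} and $\mathcal L_{-n}\psi$ is bounded. The latter holds because $\mathcal L_{-n}a_m$ are coefficients of a contour integral controlled by de Branges-type bounds; alternatively, $\mathcal L_{-n}\psi=-i(D_{n,c}-iD_{n,s})\psi$, which is bounded by Lemma~\ref{lem:DR-flow-core-uniform-differentiability}. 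This gives $\psi\in\operatorname{Dom}(\mathcal L_n^*)$. The formula for $\mathcal L_n^*\psi$ is then the orthogonal projection onto the closed span. The statement identifies it with $\mathcal L_{-n}\psi+t_n\psi$; this requires that element to lie in the span, which holds because $t_n$ is a polynomial in the $a_j$ by \eqref{eq:DR-tn-polynomial} and $\mathcal L_{-n}$ maps $\mathcal P_{\mathrm{hol}}$ into holomorphic power series in the $a_j$ that are $L^2$ limits of polynomials.

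\textbf{Main obstacle.} The main obstacle is this last membership and boundedness issue for $\mathcal L_{-n}\psi$. The definition \eqref{eq:DR-negative-Kirillov} produces an infinite series in the $a_j$, not a polynomial. I would handle it by truncating the series and controlling the tail uniformly, using the bound $|a_m|\le m+1$ and the $r$-independence of the contour integral. Failing that, I would interpret $\mathcal L_n^*$ on the ambient space via the bounded functional above.
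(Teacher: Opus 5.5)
Your Steps 1--2 are correct and coincide with the paper's argument. You apply Proposition~\ref{prop:DR-right-IBP} to $\phi\overline\psi$, use $\operatorname{Div}_{\widetilde\nu_\kappa}D_n^+=i\overline{t_n}$, and expand $D_n^+(\phi\overline\psi)=i(\mathcal L_n\phi)\overline\psi-i\phi\,\overline{\mathcal L_{-n}\psi}$ with Lemma~\ref{lem:DR-exact-change-frame}. This proves \eqref{eq:DR-AM-identity}.

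The gap is in the adjoint statement \eqref{eq:DR-Ln-adjoint}. Boundedness of $\mathcal L_{-n}\psi$ is not the issue; your route through Lemma~\ref{lem:DR-flow-core-uniform-differentiability} works. What you need is that $\mathcal L_{-n}\psi+t_n\psi$ lies in $\overline{\mathcal P_{\mathrm{hol}}}^{\,L^2}$. Otherwise the identity only shows that $\mathcal L_n^*\psi$ is the orthogonal projection of that function. You leave this unproved: you assert that $\mathcal L_{-n}\psi$ is an $L^2$ limit of polynomials, but also that it is a genuinely infinite series needing a tail estimate. Your fallback of passing to the ambient space would prove a different statement.

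The missing observation is that $\mathcal L_{-n}$ maps polynomials to polynomials. For $z$ near $0$, expand $1/(f(w)-f(z))$ in powers of $f(z)/f(w)$ in \eqref{eq:DR-negative-Kirillov} and integrate termwise. This gives
\[
 \mathcal L_{-n}a_m=\sum_{j=0}^{m-1}\bigl([z^{m+1}]f(z)^{j+2}\bigr)
 \Bigl([w^{n+j+1}]f'(w)^2\bigl(w/f(w)\bigr)^{j+3}\Bigr),
\]
because $f(z)^{j+2}=O(z^{j+2})$ contributes nothing to $[z^{m+1}]$ once $j\geq m$. Each factor is a polynomial in $a_1,\ldots,a_{m+n}$. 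By the Leibniz rule, $\mathcal L_{-n}\mathcal P_{\mathrm{hol}}\subset\mathcal P_{\mathrm{hol}}$; this is \eqref{eq:DR-negative-mode-polynomial}, which the paper proves exactly this way. Hence $\mathcal L_{-n}\psi+t_n\psi$ is a coefficient polynomial, bounded by de Branges and automatically in the closed span. Cauchy--Schwarz then gives $\psi\in\operatorname{Dom}(\mathcal L_n^*)$ with the stated value.

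You also omit a point the paper records: $\mathcal L_n$ must be well defined on $L^2$ equivalence classes before its adjoint makes sense. This follows from \eqref{eq:DR-AM-identity} and density of $\mathcal P_{\mathrm{hol}}$ in its closed span. If $\phi=0$ a.e., the identity makes $\mathcal L_n\phi$ orthogonal to $\mathcal P_{\mathrm{hol}}$. Since $\mathcal L_n\phi$ is itself a polynomial, it vanishes in $L^2$.
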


\begin{proof}
\renewcommand{\qedsymbol}{}
For $F=\phi\overline\psi\in\mathcal A_0\subset\mathcal A_{\mathrm{fl}}$,
Lemma~\ref{lem:DR-exact-change-frame} and
commutation of the real responses with conjugation give
\begin{equation}\label{eq:DR-mixed-mode-action}
 D_n^+(\phi\overline\psi)
 =i(\mathcal L_n\phi)\overline\psi
  -i\phi\,\overline{\mathcal L_{-n}\psi}.
\end{equation}
Integrating and using \eqref{eq:DR-complex-divergence} gives
\begin{align}
 &i\int(\mathcal L_n\phi)\overline\psi\,d\widetilde\nu_\kappa
 -i\int\phi\,\overline{\mathcal L_{-n}\psi}
   \,d\widetilde\nu_\kappa\notag\\
 &\qquad=i\int\phi\overline\psi\,\overline{t_n}
   \,d\widetilde\nu_\kappa.
 \label{eq:DR-AM-from-divergence}
\end{align}
Dividing by $i$ and moving the second term to the right proves
\eqref{eq:DR-AM-identity}.

To verify the adjoint domain, expand \eqref{eq:DR-negative-Kirillov}
in powers of $f(z)/f(w)$ for $z$ sufficiently close to zero.  For $m\geq1$,
\begin{equation}\label{eq:DR-negative-mode-polynomial}
 \begin{aligned}
 \mathcal L_{-n}a_m
 &=\sum_{j=0}^{m-1}
   \bigl([z^{m+1}]f(z)^{j+2}\bigr)
   \biggl([w^{n+j+1}]f'(w)^2
                   \left(\frac{w}{f(w)}\right)^{j+3}\biggr)\\
 &\in\mathbb C[a_1,\ldots,a_{m+n}].
 \end{aligned}
\end{equation}
Thus $\mathcal L_{\pm n}\mathcal P_{\mathrm{hol}}\subset\mathcal P_{\mathrm{hol}}$,
and $\mathcal L_{-n}\psi+t_n\psi$ is a bounded coefficient polynomial.
Equation \eqref{eq:DR-AM-identity} yields
\[
 |\langle\mathcal L_n\phi,\psi\rangle_{L^2}|
 \leq\|\phi\|_{L^2}\|\mathcal L_{-n}\psi+t_n\psi\|_{L^2}.
\]
Density of $\mathcal P_{\mathrm{hol}}$ in its closed span first makes
$\mathcal L_n$ well defined on $L^2$ equivalence classes, and the same
bound gives \eqref{eq:DR-Ln-adjoint}.
\end{proof}

All moments below are finite by Lemma~\ref{lem:DR-stress-polynomial}.

\begin{proposition}
\label{prop:DR-Fourier-Ward}
For all $m,n\geq2$,
\begin{equation}\label{eq:DR-stress-mean-zero}
 \int t_n\,d\widetilde\nu_\kappa=0.
\end{equation}
The second moments satisfy
\begin{align}
 \int t_nt_m\,d\widetilde\nu_\kappa&=0,
 \label{eq:DR-holomorphic-mode-orthogonality}\\
 \shortintertext{and}
 \int t_n\overline{t_m}\,d\widetilde\nu_\kappa
 &=\delta_{nm}C_{\kappa,n}.
 \label{eq:DR-Hermitian-mode-orthogonality}
\end{align}
In real coordinates,
\begin{align}
 \int\operatorname{Re}t_n\operatorname{Re}t_m\,d\widetilde\nu_\kappa
 &=\int\operatorname{Im}t_n\operatorname{Im}t_m\,d\widetilde\nu_\kappa
 =\frac{\delta_{nm}}2C_{\kappa,n},
 \label{eq:DR-real-mode-variance}\\
 \shortintertext{and}
 \int\operatorname{Re}t_n\operatorname{Im}t_m
 \,d\widetilde\nu_\kappa&=0.
 \label{eq:DR-real-mode-cross}
\end{align}
\end{proposition}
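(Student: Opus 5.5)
The plan is to obtain all moment identities from the adjoint relation of Corollary~\ref{cor:DR-AM-identity}, applied with the polynomial test functions $\phi,\psi\in\{1,t_m\}$, together with Lemma~\ref{lem:DR-stress-polynomial}. Since $t_n\in\mathbb C[a_1,\ldots,a_n]$ and $t_n$ is bounded on $\mathcal W$, every pairing below is finite.

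\textbf{Mean zero.} We apply \eqref{eq:DR-AM-identity} with $\phi=1$ and $\psi=1$. The derivation $\mathcal L_n$ kills constants. The contour formula \eqref{eq:DR-negative-Kirillov} acts through the Leibniz rule on coefficients, so it also gives $\mathcal L_{-n}1=0$. Hence the identity reads
\[
0=\langle 1,t_n\rangle_{L^2}=\int\overline{t_n}\,d\widetilde\nu_\kappa,
\]
which proves \eqref{eq:DR-stress-mean-zero}.

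\textbf{Hermitian moments.} We take $\phi=t_m$ and $\psi=1$, so that
\[
\langle\mathcal L_n t_m,1\rangle=\langle t_m,t_n\rangle=\int t_m\overline{t_n}\,d\widetilde\nu_\kappa.
\]
Everything therefore reduces to computing $\mathcal L_n t_m$. Two facts are needed.
\begin{enumerate}
\item By \eqref{eq:DR-positive-Kirillov}, $\mathcal L_n$ raises the coefficient index by at least $n$. Since $t_m\in\mathbb C[a_1,\ldots,a_m]$, this gives $\mathcal L_n t_m=0$ for $n>m$.
\item For $n=m$, \eqref{eq:DR-level-one} gives $\mathcal L_n t_n=C_{\kappa,n}$.
\end{enumerate}
Both rest on the grading: $t_m$ is weighted-homogeneous of degree $m-1$, where $a_k$ has weight $k$, and $\mathcal L_n$ lowers the weight by $n$. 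I would verify the weight claim from \eqref{eq:intro-stress} under the scaling $f(z)\mapsto\lambda^{-1}f(\lambda z)$. Under this scaling $a_k\mapsto\lambda^k a_k$, while $q_f(z)\mapsto\lambda^2q_f(\lambda z)$, so $t_m\mapsto\lambda^{m}t_m$.

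This shows the weight is actually $m$, not $m-1$. It is still enough: $\mathcal L_n t_m$ has weight $m-n$, so it vanishes when $n>m$ and is a constant when $n=m$.

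The case $n<m$ is handled by symmetry. Swapping the roles of $n$ and $m$ gives $\int t_n\overline{t_m}=0$ for $m>n$. Complex conjugation then gives the vanishing for $n>m$ in the other slot. Together with the diagonal value, this yields \eqref{eq:DR-Hermitian-mode-orthogonality}.

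\textbf{Holomorphic moments.} This is the step I expect to be the main obstacle, since $\mathcal L_{-n}$ is not graded in a way that makes it obviously trivial. Taking $\phi=1$ and $\psi=\overline{\,\cdot\,}$ is not available, because the test functions must be holomorphic polynomials. I would therefore use the real-response route instead. I apply Proposition~\ref{prop:DR-right-IBP} to $F=t_m$, which lies in $\mathcal A_0$ since it is a polynomial in the $a_k$. Combined with \eqref{eq:DR-positive-change-frame} and the divergence \eqref{eq:DR-complex-divergence} of $D_n^+$, this gives
\[
\int i\,\mathcal L_n t_m\,d\widetilde\nu_\kappa=\int t_m\, i\,\overline{t_n}\,d\widetilde\nu_\kappa,
\]
which reproduces only the Hermitian case.

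For $\int t_nt_m$ I would instead use the opposite combination $D_{n,c}-iD_{n,s}$. Its divergence is $\operatorname{Im}t_n-i\operatorname{Re}t_n=-it_n$, and \eqref{eq:DR-negative-change-frame} gives
\[
\int i\,\mathcal L_{-n}t_m\,d\widetilde\nu_\kappa=-i\int t_m t_n\,d\widetilde\nu_\kappa.
\]
It then suffices to show $\int\mathcal L_{-n}t_m\,d\widetilde\nu_\kappa=0$. My first approach is rotation invariance. Right rotation by $r_s$ multiplies $a_k$ by a phase: since $f_{h\circ r_s}(z)=e^{-is}f(e^{is}z)$, we get $a_k\mapsto e^{iks}a_k$. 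Hence any polynomial of weight $w\neq0$ has zero mean under the rotation-invariant law. Because $\mathcal L_{-n}$ raises weight by $n$, the function $\mathcal L_{-n}t_m$ has weight $m+n\ge4$ and so integrates to zero. The same argument gives an alternative proof of the mean-zero statement.

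\textbf{Real coordinates.} Writing $\operatorname{Re}t_n=(t_n+\overline{t_n})/2$ and $\operatorname{Im}t_n=(t_n-\overline{t_n})/(2i)$, the identities \eqref{eq:DR-real-mode-variance} and \eqref{eq:DR-real-mode-cross} follow by expanding the products. One uses \eqref{eq:DR-holomorphic-mode-orthogonality}, its complex conjugate, and \eqref{eq:DR-Hermitian-mode-orthogonality}, noting that $C_{\kappa,n}$ is real.
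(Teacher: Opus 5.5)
Your argument is correct, but for the vanishing moments it takes a different route from the paper.

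\emph{What the paper does.} It uses the adjoint identity of Corollary~\ref{cor:DR-AM-identity} only once, with $\phi=t_n$ and $\psi=1$, to obtain the diagonal value $\int|t_n|^2\,d\widetilde\nu_\kappa=C_{\kappa,n}$. Every vanishing statement then follows from the single rotation phase $t_n(h\circ r_\alpha)=e^{in\alpha}t_n(h)$ together with right-rotation invariance. The functions $t_n$, $t_nt_m$, and $t_n\overline{t_m}$ (for $n\neq m$) carry the nonzero characters $n$, $n+m$, and $n-m$, so their integrals vanish.

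\emph{What you do differently.} You get $\int t_n\,d\widetilde\nu_\kappa=0$ from the identity with $\phi=\psi=1$. You get the off-diagonal Hermitian orthogonality from the grading: $\mathcal L_m t_n=0$ for $m>n$ because $t_n\in\mathbb C[a_1,\ldots,a_n]$, followed by complex conjugation. This buys something genuine. It shows that Hermitian orthogonality is forced by the highest-weight structure of the $t_n$ and the adjoint relation alone, without using the rotation symmetry of the law.

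\emph{The holomorphic moments.} Here your detour is unnecessary. You end up applying rotation invariance to the weight-$(m+n)$ polynomial $\mathcal L_{-n}t_m$. But $t_nt_m$ itself already has weight $n+m\neq0$, so the same invariance kills $\int t_nt_m\,d\widetilde\nu_\kappa$ directly. This is exactly the paper's step, and it also spares you checking that $\mathcal L_{-n}$ raises weight by $n$. Note also that the reduction $\int t_nt_m\,d\widetilde\nu_\kappa=-\int\mathcal L_{-n}t_m\,d\widetilde\nu_\kappa$ already follows from the adjoint identity with $\phi=1$ and $\psi=t_m$, without passing to $D_{n,c}-iD_{n,s}$.

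\emph{A write-up fix.} State the grading cleanly: $t_m$ has weight $m$, as your own scaling computation shows, not $m-1$.
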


\begin{proof}
Take $\phi=t_n$ and $\psi=1$ in
\eqref{eq:DR-AM-identity}.  Since $\mathcal L_{-n}1=0$, Lemma
\ref{lem:DR-stress-polynomial} gives
\begin{equation}\label{eq:DR-Ward-proof}
 C_{\kappa,n}
 =\int\mathcal L_nt_n\,d\widetilde\nu_\kappa
 =\int t_n\overline{t_n}\,d\widetilde\nu_\kappa.
\end{equation}
For $\alpha\in\mathbb R$, uniqueness of the normalized welding pair and
the definition \eqref{eq:intro-stress} give
\begin{align*}
 f_{h\circ r_\alpha}(z)&=e^{-i\alpha}f_h(e^{i\alpha}z),\\[3pt]
 a_m(h\circ r_\alpha)&=e^{im\alpha}a_m(h),\\[3pt]
 q_{f_{h\circ r_\alpha}}(z)&=e^{2i\alpha}q_{f_h}(e^{i\alpha}z).
\end{align*}
Taking coefficients yields
\begin{equation}\label{eq:DR-stress-rotation-phase}
 t_n(h\circ r_\alpha)=e^{in\alpha}t_n(h).
\end{equation}
Right-rotation invariance gives, for every $\alpha\in\mathbb R$,
\begin{align*}
 (1-e^{in\alpha})\int t_n\,d\widetilde\nu_\kappa&=0,\\
 (1-e^{i(n+m)\alpha})\int t_nt_m\,d\widetilde\nu_\kappa&=0,\\
 (1-e^{i(n-m)\alpha})\int t_n\overline{t_m}\,
 d\widetilde\nu_\kappa&=0.
\end{align*}
Taking $\alpha=\pi/n$, $\pi/(n+m)$, and, when $n\ne m$,
$\pi/(n-m)$, respectively, proves all vanishing moments.
The diagonal Hermitian moment is \eqref{eq:DR-Ward-proof}.  Expanding
real and imaginary parts now gives
\begin{align*}
 \int\operatorname{Re}t_n\operatorname{Re}t_m\,d\widetilde\nu_\kappa
 &=\int\operatorname{Im}t_n\operatorname{Im}t_m\,d\widetilde\nu_\kappa\\
 &=\frac14\int(t_n\overline{t_m}+\overline{t_n}t_m)
                    \,d\widetilde\nu_\kappa
 =\frac{\delta_{nm}}2C_{\kappa,n},\\
 \shortintertext{and}
 \int\operatorname{Re}t_n\operatorname{Im}t_m\,d\widetilde\nu_\kappa
 &=\frac1{4i}\int(\overline{t_n}t_m-t_n\overline{t_m})
                    \,d\widetilde\nu_\kappa=0.\qedhere
\end{align*}
\end{proof}

\subsection{Final proof of Theorem~\ref{thm:intro-Fisher-Kahler}}
\label{sec:proof-stress-covariance}

For $v\in H_{\mathrm{WP}}^{\mathrm{fin}}$ written as in
\eqref{eq:intro-real-finite-field}, the Fourier coefficients and the
score satisfy
\begin{equation}\label{eq:DR-score-complex-field}
 \rho_v=2\operatorname{Im}\sum_{n=2}^N\overline{v_n}t_n,\qquad
 v_n=\frac{x_n-iy_n}{2}\quad(2\leq n\leq N).
\end{equation}
Let $v,w\in H_{\mathrm{WP}}^{\mathrm{fin}}$, and choose $N\geq2$ such that
$v_n=w_n=0$ for $|n|>N$.  The real covariance identities in
Proposition~\ref{prop:DR-Fourier-Ward} give
\begin{align}
 \int\rho_v\rho_w\,d\widetilde\nu_\kappa
 &=2\sum_{n=2}^N C_{\kappa,n}
   \{\operatorname{Re}v_n\operatorname{Re}w_n
    +\operatorname{Im}v_n\operatorname{Im}w_n\}\notag\\
 &=2\operatorname{Re}\sum_{n=2}^N C_{\kappa,n}v_n\overline{w_n}\notag\\
 &=\frac{c_{\mathrm L}}6\operatorname{Re}\sum_{n=2}^N
       (n^3-n)v_n\overline{w_n}
   +4\operatorname{Re}\sum_{n=2}^N n v_n\overline{w_n}\notag\\
 &=G_\kappa(v,w).
 \label{eq:DR-finite-Fisher-computation}
\end{align}
For $n\geq2$,
\begin{equation}\label{eq:DR-VK-controlled-by-WP}
 n\leq\frac13(n^3-n),
\end{equation}
so the forms defined in \eqref{eq:intro-VK-form} and
\eqref{eq:intro-geometric-form} satisfy, for every $v\in H_{\mathrm{WP}}$,
\begin{equation}\label{eq:DR-Fisher-equivalent-WP}
 \frac{c_{\mathrm L}}{12}\|v\|_{\mathrm{WP}}^2
 \leq G_\kappa(v,v)
 \leq\left(\frac{c_{\mathrm L}}{12}+\frac23\right)
      \|v\|_{\mathrm{WP}}^2.
\end{equation}

For $v\in H_{\mathrm{WP}}$ and an integer $N\geq2$, define its Fourier
truncation by
\[
 v^{(N)}(\theta):=\sum_{2\leq|n|\leq N}v_ne^{in\theta}.
\]
Then
\[
 \|v-v^{(N)}\|_{\mathrm{WP}}^2
 =2\sum_{n>N}(n^3-n)|v_n|^2\longrightarrow0.
\]
Equations \eqref{eq:DR-finite-Fisher-computation} and
\eqref{eq:DR-Fisher-equivalent-WP} yield, for $M,N\geq2$,
\begin{align}
 \|\rho_{v^{(N)}}-\rho_{v^{(M)}}\|_{L^2}^2
 &=G_\kappa(v^{(N)}-v^{(M)},v^{(N)}-v^{(M)})\notag\\
 &\leq\left(\frac{c_{\mathrm L}}{12}+\frac23\right)
      \|v^{(N)}-v^{(M)}\|_{\mathrm{WP}}^2.
 \label{eq:DR-score-Cauchy}
\end{align}
We therefore define
\begin{equation}\label{eq:DR-score-L2-limit}
 \rho_v:=\lim_{N\to\infty}\rho_{v^{(N)}}
 \quad\text{in }L^2(\widetilde\nu_\kappa;\mathbb R).
\end{equation}
The same estimate applied to any finite-mode approximation of $v$
shows that the limit is independent of the approximation.  It defines
the unique bounded real-linear extension of $\rho$.  Since
$\int\rho_{v^{(N)}}\,d\widetilde\nu_\kappa=0$,
\[
 \left|\int\rho_v\,d\widetilde\nu_\kappa\right|
 \leq\|\rho_v-\rho_{v^{(N)}}\|_{L^2}\longrightarrow0.
\]
For $v,w\in H_{\mathrm{WP}}$,
\begin{align*}
 &\left|\int\rho_{v^{(N)}}\rho_{w^{(N)}}\,d\widetilde\nu_\kappa
             -\int\rho_v\rho_w\,d\widetilde\nu_\kappa\right|\\
 &\quad\leq\|\rho_{v^{(N)}}-\rho_v\|_{L^2}
             \|\rho_{w^{(N)}}\|_{L^2}
       +\|\rho_v\|_{L^2}\|\rho_{w^{(N)}}-\rho_w\|_{L^2}
 \longrightarrow0.
\end{align*}
Together with continuity of $G_\kappa$, this extends
\eqref{eq:DR-finite-Fisher-computation} to
\eqref{eq:intro-Fisher-form}.

For $F\in\mathcal C$ and $v\in H_{\mathrm{WP}}^{\mathrm{fin}}$,
\eqref{eq:intro-gradient-identity} and
Proposition~\ref{prop:DR-right-IBP} give
\begin{equation}\label{eq:DR-adjoint-finite-v}
 \int(DF,v)_{\mathrm{WP}}\,d\widetilde\nu_\kappa
 =\int F\rho_v\,d\widetilde\nu_\kappa.
\end{equation}
For $v\in H_{\mathrm{WP}}$, apply this identity to $v^{(N)}$ and use
\begin{align*}
 &\left|\int(DF,v)_{\mathrm{WP}}\,d\widetilde\nu_\kappa
             -\int F\rho_v\,d\widetilde\nu_\kappa\right|\\
 &\quad\leq\|DF\|_{L^2(H_{\mathrm{WP}})}
             \|v-v^{(N)}\|_{\mathrm{WP}}
       +\|F\|_{L^2}\|\rho_v-\rho_{v^{(N)}}\|_{L^2}
 \longrightarrow0.
\end{align*}
Now let $F\in\operatorname{Dom}(D)$ and choose $F_m\in\mathcal C$ with
$F_m\to F$ in the graph norm \eqref{eq:B1-graph-norm}.  The identity
just proved for $F_m$ gives
\begin{align*}
 &\left|\int(DF,v)_{\mathrm{WP}}\,d\widetilde\nu_\kappa
             -\int F\rho_v\,d\widetilde\nu_\kappa\right|\\
 &\quad\leq\|DF-DF_m\|_{L^2(H_{\mathrm{WP}})}\|v\|_{\mathrm{WP}}
       +\|F-F_m\|_{L^2}\|\rho_v\|_{L^2}
 \longrightarrow0.
\end{align*}
Thus the constant vector field $h\mapsto v$ satisfies
\[
 \left|\int(DF,v)_{\mathrm{WP}}\,d\widetilde\nu_\kappa\right|
 =|\langle F,\rho_v\rangle_{L^2}|
 \leq\|\rho_v\|_{L^2}\|F\|_{L^2},
 \qquad F\in\operatorname{Dom}(D).
\]
By the definition of the Hilbert-space adjoint,
\begin{equation}\label{eq:DR-gradient-adjoint-score}
 v\in\operatorname{Dom}(D^*),\qquad D^*v=\rho_v,
 \qquad v\in H_{\mathrm{WP}}.
\end{equation}

The covariance also realizes the complex and symplectic structures
defined in the Introduction.  For $v,w\in H_{\mathrm{WP}}$, the Fourier
formula \eqref{eq:intro-complex-structure} gives
\begin{equation}\label{eq:DR-Fisher-J-invariant}
 J_0^2v=-v,\qquad G_\kappa(J_0v,J_0w)=G_\kappa(v,w).
\end{equation}
Consequently,
\begin{align}
 \omega_{\kappa,0}(v,w)
 &=\frac{c_{\mathrm L}}{12}(J_0v,w)_{\mathrm{WP}}
   +4(J_0v,w)_{\mathrm{VK}}\notag\\
 &=\int\rho_{J_0v}\rho_w\,d\widetilde\nu_\kappa,
 \label{eq:DR-symplectic-WP-VK}\\
 \shortintertext{and}
 \omega_{\kappa,0}(w,v)&=-\omega_{\kappa,0}(v,w).\notag
\end{align}
The norm equivalence \eqref{eq:DR-Fisher-equivalent-WP}, Riesz
representation for $G_\kappa$, and $J_0^{-1}=-J_0$ show that
$v\mapsto\omega_{\kappa,0}(v,\cdot)$ is a bounded isomorphism from
$H_{\mathrm{WP}}$ onto its continuous dual.  Thus $\omega_{\kappa,0}$
is strong.  It is constant, so $d\omega_{\kappa,0}=0$.
The linear coordinates
\[
 v\longmapsto\bigl(\sqrt{2(n^3-n)}\,v_n\bigr)_{n\geq2}
 \in\ell^2(\{2,3,\ldots\};\mathbb C)
\]
identify $J_0$ with multiplication by $i$.  They are global complex
coordinates, proving that $(G_\kappa,J_0,\omega_{\kappa,0})$ is a
strong K\"ahler structure.

Finally, \eqref{eq:DR-Fisher-equivalent-WP} and
\eqref{eq:intro-Fisher-form} give
\[
 \|v-w\|_{\mathrm{WP}}^2
 \leq\frac{12}{c_{\mathrm L}}\|\rho_v-\rho_w\|_{L^2}^2,
 \qquad v,w\in H_{\mathrm{WP}}.
\]
Hence $\rho$ is injective, and every convergent sequence in its image
comes from a Cauchy sequence in $H_{\mathrm{WP}}$; its image is therefore
closed.  Define on this image
\[
 J_\rho(\rho_v):=\rho_{J_0v},\qquad v\in H_{\mathrm{WP}}.
\]
The covariance identity and \eqref{eq:DR-Fisher-J-invariant} imply
\begin{gather*}
 J_\rho^2\rho_v=-\rho_v,\\
 \shortintertext{and}
 \|J_\rho\rho_v\|_{L^2}^2
 =G_\kappa(J_0v,J_0v)=G_\kappa(v,v)=\|\rho_v\|_{L^2}^2.
\end{gather*}
In particular, $J_0\cos(n\cdot)=-\sin(n\cdot)$ and
$J_0\sin(n\cdot)=\cos(n\cdot)$ give
\begin{equation}\label{eq:DR-score-quarter-turn}
 J_\rho\rho_{\cos(n\cdot)}=-\rho_{\sin(n\cdot)},\qquad
 J_\rho\rho_{\sin(n\cdot)}=\rho_{\cos(n\cdot)},\qquad n\geq2.
\end{equation}

\section{Integrated response and the radial rough action}
\label{sec:integrated-response}

\subsection{Integration of the right divergence}

For $u\in H_{\mathrm{WP}}^{\mathrm{fin}}$ and $t\in\mathbb R$, define
the composition operator on bounded functions by
\begin{equation}\label{eq:DR-flow-graph-Koopman}
 U_t^uF:=F\circ T_t^u.
\end{equation}
The algebra $\mathcal A_{\mathrm{fl}}$ is invariant under $U_t^u$ by
\eqref{eq:DR-flow-saturated-core}.  Moreover,
\begin{equation}\label{eq:DR-flow-uniform-isometry}
 \|U_t^uF\|_{\sup}=\|F\|_{\sup},\qquad
 D_uU_t^uF=U_t^uD_uF,
 \quad F\in\mathcal A_{\mathrm{fl}}.
\end{equation}
The first identity follows from bijectivity of $T_t^u$; the second follows
by differentiating $U_s^uU_t^uF=U_{s+t}^uF$ in the uniform norm.
Lemma~\ref{lem:DR-stress-polynomial}, the coefficient response formulas,
and \eqref{eq:DR-negative-mode-polynomial} give
\begin{equation}\label{eq:DR-bv-iterated-core}
 \rho_v,\quad D_{u_1}\rho_v,\quad D_{u_1}D_{u_2}\rho_v\in\mathcal A_0,
 \qquad u_1,u_2,v\in H_{\mathrm{WP}}^{\mathrm{fin}}.
\end{equation}
Lemma~\ref{lem:DR-flow-core-uniform-differentiability} therefore makes
$t\mapsto U_t^u\rho_v$ continuously differentiable in the uniform norm,
with derivative $U_t^uD_u\rho_v$.  In particular,
\begin{equation}\label{eq:DR-score-time-Lipschitz}
 \|U_t^u\rho_v-U_s^u\rho_v\|_{\sup}
 \leq|t-s|\,\|D_u\rho_v\|_{\sup},
 \qquad s,t\in\mathbb R.
\end{equation}

For a fixed $u$, abbreviate the increment
\eqref{eq:intro-integrated-response} and define its target density by
\begin{align}
 B_t&:=B_{\psi_t^u}=\int_0^tU_s^u\rho_u\,ds,
 \label{eq:DR-abstract-source-action}\\
 \shortintertext{and}
 Z_t(y)&:=\exp\{B_t(T_{-t}^uy)\},\qquad A_t(y):=-\log Z_t(y).
 \label{eq:DR-one-flow-target-definition}
\end{align}
The integral exists in the uniform norm by
\eqref{eq:DR-score-time-Lipschitz}.  Let $F$ be a bounded measurable
function on $\mathcal W$.

\begin{lemma}\label{lem:DR-integrated-divergence}
For $t\in\mathbb R$,
\begin{align}
 \int F(T_t^uh)e^{-B_t(h)}\,d\widetilde\nu_\kappa(h)
 &=\int F\,d\widetilde\nu_\kappa,
 \label{eq:DR-abstract-source-identity}\\
 \shortintertext{and}
 \frac{d(T_t^u)_\#\widetilde\nu_\kappa}{d\widetilde\nu_\kappa}(y)
 &=Z_t(y)=\exp\left\{\int_{-t}^0U_s^u\rho_u(y)\,ds\right\},
 \label{eq:DR-abstract-target-density}\\
 \shortintertext{with}
 e^{-|t|\|\rho_u\|_{\sup}}&\leq Z_t(y)\leq e^{|t|\|\rho_u\|_{\sup}}.
 \label{eq:DR-abstract-two-sided-bound}
\end{align}
The derivatives of $A_t$ exist in the uniform norm and satisfy
\begin{equation}\label{eq:DR-abstract-action-derivatives}
 \partial_tA_t=-U_{-t}^u\rho_u,\qquad
 \partial_t^2A_t=U_{-t}^uD_u\rho_u.
\end{equation}
\end{lemma}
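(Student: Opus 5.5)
The plan is to prove \eqref{eq:DR-abstract-source-identity} first for $F\in\mathcal A_{\mathrm{fl}}$ by a Gr\"onwall-type differentiation in $t$, then extend it to bounded measurable $F$. The other three claims then follow by a change of variables and direct differentiation.

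\textbf{Step 1: the integrated identity on the core.} For $F\in\mathcal A_{\mathrm{fl}}$ set
\[
 \Gamma(t):=\int U_t^uF\,e^{-B_t}\,d\widetilde\nu_\kappa .
\]
By \eqref{eq:DR-flow-uniform-isometry} and Lemma~\ref{lem:DR-flow-core-uniform-differentiability}, $t\mapsto U_t^uF$ is differentiable in the uniform norm with derivative $D_uU_t^uF$. By \eqref{eq:DR-score-time-Lipschitz}, $B_t$ is uniformly differentiable with derivative $U_t^u\rho_u$. Hence
\[
 \Gamma'(t)=\int\bigl\{D_u(U_t^uF)-(U_t^uF)(U_t^u\rho_u)\bigr\}e^{-B_t}\,d\widetilde\nu_\kappa .
\]
To show $\Gamma'(t)=0$, I would apply Proposition~\ref{prop:DR-right-IBP} to the product $G_t:=(U_t^uF)e^{-B_t}$, using the Leibniz rule. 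This requires $D_ue^{-B_t}=-e^{-B_t}D_uB_t$ together with the cocycle identity
\[
 D_uB_t=\int_0^tU_s^uD_u\rho_u\,ds=U_t^u\rho_u-\rho_u .
\]
With these, $\int D_uG_t\,d\widetilde\nu_\kappa=\int G_t\rho_u\,d\widetilde\nu_\kappa$ becomes exactly $\Gamma'(t)=0$.

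\textbf{Main obstacle.} $G_t$ is not literally in $\mathcal A_{\mathrm{fl}}$, because $e^{-B_t}$ is an exponential of a time integral. I would first approximate $B_t$ by Riemann sums $\frac tK\sum_kU_{s_k}^u\rho_u$. Each sum lies in $\mathcal A_{\mathrm{fl}}$ by \eqref{eq:DR-bv-iterated-core} and invariance of the algebra under $U^u_s$. I would then replace the exponential by Taylor polynomials, which are again in the algebra. The resulting approximants converge to $G_t$ in sup norm, and their $D_u$-derivatives converge uniformly, using the uniform bounds on $D_u\rho_u$ and $D_uD_u\rho_u$. Since both sides of \eqref{eq:DR-hyp-IBP} are continuous under such convergence, the identity passes to $G_t$. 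From $\Gamma(t)=\Gamma(0)$ we get \eqref{eq:DR-abstract-source-identity} on $\mathcal A_{\mathrm{fl}}$, and in particular on $\mathcal C$.

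\textbf{Step 2: extension to bounded measurable $F$.} Both sides of \eqref{eq:DR-abstract-source-identity} are integrals of $F$ against finite positive measures: $\widetilde\nu_\kappa$, and the pushforward of $e^{-B_t}\widetilde\nu_\kappa$ under $T_t^u$. Since $\mathcal C$ is measure determining by Lemma~\ref{lem:B1-density} and \eqref{eq:B1-generated-sigma}, the two measures coincide. Concretely, $\mathcal C$ is a multiplicative class generating the sigma-algebra, so a monotone-class argument shows they agree on all bounded measurable $F$.

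\textbf{Step 3: the density, the bounds, and the derivatives.} Substitute $F=H\circ T_{-t}^u\cdot e^{B_t\circ T_{-t}^u}$, which is bounded by \eqref{eq:DR-abstract-two-sided-bound}; this uses the group property $T_{-t}^uT_t^u=\id$. The result is
\[
 \int H\circ T_t^u\,d\widetilde\nu_\kappa=\int H\,Z_t\,d\widetilde\nu_\kappa ,
\]
which is \eqref{eq:DR-abstract-target-density}. The exponential formula comes from the change of variables
\[
 B_t(T_{-t}^uy)=\int_0^tU_{s-t}^u\rho_u(y)\,ds=\int_{-t}^0U_s^u\rho_u(y)\,ds .
\]
The two-sided bound \eqref{eq:DR-abstract-two-sided-bound} is immediate from $\|U_s^u\rho_u\|_{\sup}=\|\rho_u\|_{\sup}$. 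Finally,
\[
 A_t=-\int_{-t}^0U_s^u\rho_u\,ds,
\]
so the fundamental theorem of calculus in the uniform norm gives $\partial_tA_t=-U_{-t}^u\rho_u$. Differentiating once more, using the uniform $C^1$ property of $s\mapsto U_s^u\rho_u$ with derivative $U_s^uD_u\rho_u$ and the chain rule in $-t$, gives $\partial_t^2A_t=U_{-t}^uD_u\rho_u$, which is \eqref{eq:DR-abstract-action-derivatives}.
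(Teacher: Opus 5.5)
Your proposal is correct and follows the paper's proof essentially step for step. The paper also shows $\frac{d}{dt}\int (U_t^uF)e^{-B_t}\,d\widetilde\nu_\kappa=0$ on $\mathcal A_{\mathrm{fl}}$ by applying Proposition~\ref{prop:DR-right-IBP} to the approximants $(U_t^uF)p_K(B_{t,N})$, built from Riemann sums and Taylor polynomials. It then extends by a monotone class argument, changes variables $y=T_t^uh$, and differentiates $A_t=-\int_{-t}^0U_s^u\rho_u\,ds$ in the uniform norm.

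Two small points. First, the substitution that literally gives your displayed target identity is $F=H\,Z_t$; yours gives the equivalent form $\int H\,d\widetilde\nu_\kappa=\int (H\circ T_{-t}^u)Z_t\,d\widetilde\nu_\kappa$. Second, the paper also records the measurability of $T_{\pm t}^u$ and $B_t$, and passes to the completed sigma-algebra using the equivalence of the two measures.
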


\begin{proof}
We first take $F\in\mathcal A_{\mathrm{fl}}$.
For an integer $N\geq1$, the Riemann sum
\begin{equation}\label{eq:DR-integrated-score-Riemann-sum}
 B_{t,N}:=\frac{t}{N}\sum_{j=0}^{N-1}U_{jt/N}^u\rho_u
 \in\mathcal A_{\mathrm{fl}}
\end{equation}
satisfies, by \eqref{eq:DR-bv-iterated-core} and the uniform fundamental
theorem of calculus,
\begin{align}
 \|B_{t,N}-B_t\|_{\sup}
 &\leq\frac{t^2}{2N}\|D_u\rho_u\|_{\sup},
 \label{eq:DR-integrated-score-Riemann-bound}\\
 \shortintertext{and}
 \|D_uB_{t,N}-(U_t^u\rho_u-\rho_u)\|_{\sup}
 &\leq\frac{t^2}{2N}\|D_u^2\rho_u\|_{\sup}.
 \label{eq:DR-integrated-score-derivative-bound}
\end{align}
For the second estimate we used
\begin{align*}
 D_uB_{t,N}&=\frac{t}{N}\sum_{j=0}^{N-1}U_{jt/N}^uD_u\rho_u,\\[3pt]
 \int_0^tU_s^uD_u\rho_u\,ds&=U_t^u\rho_u-\rho_u.
\end{align*}
Also $\|B_{t,N}\|_{\sup},\|B_t\|_{\sup}\leq|t|\|\rho_u\|_{\sup}$.
Choose $M>|t|\|\rho_u\|_{\sup}$ and set
\[
 p_K(x):=\sum_{k=0}^{K}\frac{(-x)^k}{k!},\qquad K\geq1.
\]
These polynomials satisfy
\begin{equation}\label{eq:DR-exponential-polynomial-approximation}
 \sup_{|x|\leq M}
 \{|p_K(x)-e^{-x}|+|p_K'(x)+e^{-x}|\}
 \longrightarrow0.
\end{equation}
The tests $F_{N,K}:=(U_t^uF)p_K(B_{t,N})$ belong to
$\mathcal A_{\mathrm{fl}}$.  The product rule and
Proposition~\ref{prop:DR-right-IBP} give
\begin{align}
 &\int\{(U_t^uD_uF)p_K(B_{t,N})
       +(U_t^uF)p_K'(B_{t,N})D_uB_{t,N}\}\,d\widetilde\nu_\kappa\notag\\
 &\qquad=\int(U_t^uF)p_K(B_{t,N})\rho_u\,d\widetilde\nu_\kappa.
 \label{eq:DR-weighted-core-IBP}
\end{align}
First let $K\to\infty$, then $N\to\infty$.  The three uniform estimates
above justify both limits and yield
\begin{equation}\label{eq:DR-weighted-IBP-limit}
 \int e^{-B_t}\{U_t^uD_uF-(U_t^uF)U_t^u\rho_u\}
       \,d\widetilde\nu_\kappa=0.
\end{equation}
Since $\partial_tB_t=U_t^u\rho_u$ in the uniform norm,
\begin{equation}\label{eq:DR-pointwise-weighted-constant}
 \frac d{dt}\int(U_t^uF)e^{-B_t}\,d\widetilde\nu_\kappa
 =\int e^{-B_t}\{U_t^uD_uF-(U_t^uF)U_t^u\rho_u\}
       \,d\widetilde\nu_\kappa=0.
\end{equation}
Evaluation at $t=0$ proves \eqref{eq:DR-abstract-source-identity} on
$\mathcal A_{\mathrm{fl}}$.

Invariance of $\mathcal A_{\mathrm{fl}}$ under $U_{\pm t}^u$ makes
$T_t^u$ and its inverse measurable for $\sigma(\mathcal A_{\mathrm{fl}})$.
The sums \eqref{eq:DR-integrated-score-Riemann-sum} and their uniform
limits are measurable for this sigma-algebra as well.
The two sides of \eqref{eq:DR-abstract-source-identity} are integrals
against finite measures.  The monotone class theorem therefore extends
the identity to all bounded $\sigma(\mathcal A_{\mathrm{fl}})$-measurable
functions.  In particular, it holds on the trace Borel sigma-algebra
generated by $\mathcal C$, by \eqref{eq:B1-generated-sigma}.

Changing variables $y=T_t^uh$ gives
\begin{equation}\label{eq:DR-pointwise-density-derivation}
 e^{-B_t(T_{-t}^uy)}\,d(T_t^u)_\#\widetilde\nu_\kappa(y)
 =d\widetilde\nu_\kappa(y).
\end{equation}
The flow property yields
\begin{equation}\label{eq:DR-pointwise-moving-interval}
 B_t(T_{-t}^uy)=\int_0^tU_{s-t}^u\rho_u(y)\,ds
 =\int_{-t}^0U_s^u\rho_u(y)\,ds.
\end{equation}
This proves the density formula and its two-sided bound.  The measures
are equivalent, so their completions agree.  Applying the same argument
to $-t$ extends both $T_t^u$ and its inverse measurably to the completed
space; the source identity follows for every bounded measurable $F$.
Finally,
\begin{equation}\label{eq:DR-abstract-target-action}
 A_t=-\int_{-t}^0U_s^u\rho_u\,ds.
\end{equation}
Lemma~\ref{lem:DR-flow-core-uniform-differentiability} gives both uniform
derivatives in \eqref{eq:DR-abstract-action-derivatives}.
\end{proof}

\subsection{Finite compositions and the density Hessian}

Retain $\Phi$, its partial compositions $\Phi_j$, and $B_\Phi$ from
\eqref{eq:intro-finite-composition}--\eqref{eq:intro-integrated-response}.
Define
\begin{equation}\label{eq:DR-composition-target-action}
 Z_\Phi(y):=\exp\{B_\Phi(y\circ\Phi^{-1})\},\qquad
 A_\Phi(y):=-\log Z_\Phi(y).
\end{equation}
For another chosen finite composition $\Psi$, use the concatenated
sequence of flows to define $B_{\Phi\circ\Psi}$.
Let $F$ be a bounded measurable function on $\mathcal W$.

\begin{proposition}\label{prop:DR-finite-RN}
We have
\begin{align}
 \int F(h\circ\Phi)e^{-B_\Phi(h)}\,d\widetilde\nu_\kappa(h)
 &=\int F\,d\widetilde\nu_\kappa,
 \label{eq:DR-composition-source-identity}\\
 \shortintertext{and}
 \frac{d(R_\Phi)_\#\widetilde\nu_\kappa}{d\widetilde\nu_\kappa}(y)
 &=Z_\Phi(y).
 \label{eq:DR-composition-target-density}
\end{align}
The measures are equivalent, with
\begin{equation}\label{eq:DR-finite-RN-uniform-bound}
 \max\{\|Z_\Phi\|_{\sup},\|Z_\Phi^{-1}\|_{\sup}\}
 \leq\exp\left\{\sum_{j=1}^{\ell}|t_j|\|\rho_{v_j}\|_{\sup}\right\}.
\end{equation}
For $h,y\in\mathcal W$,
\begin{align}
 B_{\Phi\circ\Psi}(h)&=B_\Phi(h)+B_\Psi(h\circ\Phi),
 \label{eq:DR-source-cocycle}\\
 \shortintertext{and}
 Z_{\Phi\circ\Psi}(y)&=Z_\Phi(y\circ\Psi^{-1})Z_\Psi(y).
 \label{eq:DR-target-cocycle}
\end{align}
\end{proposition}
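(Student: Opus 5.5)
The plan is to reduce everything to the one-flow Lemma~\ref{lem:DR-integrated-divergence} by induction on the number $\ell$ of flows, and to prove the cocycle identities directly from the definition \eqref{eq:intro-integrated-response}.

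\emph{Step 1: source cocycle.} I would check \eqref{eq:DR-source-cocycle} pointwise first, since everything else uses it. Write the concatenated sequence for $\Phi\circ\Psi$ as the $\ell$ flows of $\Phi$ followed by those of $\Psi$. The partial compositions of the concatenation are $\Phi_j$ for $j\le\ell$, and $\Phi\circ\Psi_k$ afterwards. The first $\ell$ summands of $B_{\Phi\circ\Psi}(h)$ are therefore exactly $B_\Phi(h)$. The remaining summands read $\int_0^{s_k}\rho_{w_k}(h\circ\Phi\circ\Psi_{k-1}\circ\psi_s^{w_k})\,ds$, which is $B_\Psi(h\circ\Phi)$ term by term. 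The only requirement is that $h\circ\Phi\circ\cdots\in\mathcal W$, which is Lemma~\ref{lem:B1-unique-welding-flow-stability}.

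\emph{Step 2: source identity by induction.} For $\ell=1$, \eqref{eq:DR-composition-source-identity} is \eqref{eq:DR-abstract-source-identity} with $u=v_1$, $t=t_1$, since $B_{\psi_t^u}=B_t$. For the inductive step, write $\Phi=\psi_{t_1}^{v_1}\circ\Phi'$, where $\Phi'$ is the composition of the remaining $\ell-1$ flows. Step 1 gives
\[
 e^{-B_\Phi(h)}=e^{-B_{\psi_{t_1}^{v_1}}(h)}\,e^{-B_{\Phi'}(h\circ\psi_{t_1}^{v_1})}.
\]
Set $G(x):=F(x\circ\Phi')e^{-B_{\Phi'}(x)}$. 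This function is bounded, by the uniform bound $|B_{\Phi'}|\le\sum_{j\ge2}|t_j|\|\rho_{v_j}\|_{\sup}$. It is measurable, because $B_{\Phi'}$ is a uniform limit of Riemann sums lying in $\mathcal A_{\mathrm{fl}}$, as in \eqref{eq:DR-integrated-score-Riemann-sum}, and $R_{\Phi'}$ is measurable on the completion, as established in the proof of Lemma~\ref{lem:DR-integrated-divergence}. The one-flow identity applied to $G$ then gives
\[
 \int F(h\circ\Phi)e^{-B_\Phi(h)}\,d\widetilde\nu_\kappa
 =\int G\,d\widetilde\nu_\kappa,
\]
and the induction hypothesis gives $\int G\,d\widetilde\nu_\kappa=\int F\,d\widetilde\nu_\kappa$.

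\emph{Step 3: density, bounds, target cocycle.} The bound $|B_\Phi|\le\sum_j|t_j|\|\rho_{v_j}\|_{\sup}$ is immediate from \eqref{eq:intro-integrated-response} and gives \eqref{eq:DR-finite-RN-uniform-bound}. Substitute $F\mapsto F\,e^{B_\Phi(\cdot\circ\Phi^{-1})}$ in the source identity; this substitution is legitimate because the new function is bounded and measurable. The result is
\[
 \int F(h\circ\Phi)\,d\widetilde\nu_\kappa(h)=\int F Z_\Phi\,d\widetilde\nu_\kappa,
\]
which is \eqref{eq:DR-composition-target-density}. Since $Z_\Phi$ is bounded above and below by positive constants, the two measures are equivalent. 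Finally, evaluate \eqref{eq:DR-source-cocycle} at $h=y\circ\Psi^{-1}\circ\Phi^{-1}$:
\[
 B_{\Phi\circ\Psi}\bigl(y\circ(\Phi\circ\Psi)^{-1}\bigr)
 =B_\Phi\bigl((y\circ\Psi^{-1})\circ\Phi^{-1}\bigr)+B_\Psi(y\circ\Psi^{-1}).
\]
Exponentiating gives \eqref{eq:DR-target-cocycle}.

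\emph{Main obstacle.} The main difficulty is the measurability bookkeeping in Step 2: showing that $G$ is measurable on the completed space, so that the one-flow lemma, stated for bounded measurable $F$, applies. I would handle this by first proving the identity for $F\in\mathcal A_{\mathrm{fl}}$. In that case $G$ is a uniform limit of elements of $\mathcal A_{\mathrm{fl}}$, using the stability of $\mathcal A_{\mathrm{fl}}$ under $R_\Phi$ together with polynomial approximation of the exponential as in \eqref{eq:DR-exponential-polynomial-approximation}. I would then extend to all bounded measurable $F$ by the monotone class theorem together with equivalence of the measures, exactly as in the one-flow proof.
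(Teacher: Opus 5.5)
Your proposal is correct and follows essentially the paper's argument: induction on $\ell$ via the one-flow Lemma~\ref{lem:DR-integrated-divergence}, with the density obtained by changing variables, the bound from $|B_\Phi|\leq\sum_j|t_j|\|\rho_{v_j}\|_{\sup}$, and both cocycles read off from the definition of $B_\Phi$. The only difference is the direction of the induction. The paper peels off the last flow and applies the induction hypothesis to $y\mapsto F(y\circ\psi_{t_j}^{v_j})e^{-B_{\psi_{t_j}^{v_j}}(y)}$, whose measurability the one-flow lemma supplies directly. You peel off the first flow, which instead needs measurability of $R_{\Phi'}$ and $B_{\Phi'}$ on the completed space, and you correctly supply it.
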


\begin{proof}
For $0\leq j\leq\ell$, write $h_j=h\circ\Phi_j$, and put $B_{\Phi_0}=0$.
The definition of the increment gives
\begin{equation}\label{eq:DR-composition-orbit-recursion}
 \begin{aligned}
 h_j&=h_{j-1}\circ\psi_{t_j}^{v_j},\\[3pt]
 B_{\Phi_j}(h)&=B_{\Phi_{j-1}}(h)+B_{\psi_{t_j}^{v_j}}(h_{j-1}).
 \end{aligned}
\end{equation}
Assume the source identity holds for $\Phi_{j-1}$ and apply it to
\[
 y\longmapsto F(y\circ\psi_{t_j}^{v_j})
               e^{-B_{\psi_{t_j}^{v_j}}(y)}.
\]
This function is bounded and measurable by
Lemma~\ref{lem:DR-integrated-divergence}, which also gives
\begin{align*}
 \int F(h\circ\Phi_j)e^{-B_{\Phi_j}(h)}\,d\widetilde\nu_\kappa(h)
 &=\int F(y\circ\psi_{t_j}^{v_j})
          e^{-B_{\psi_{t_j}^{v_j}}(y)}\,d\widetilde\nu_\kappa(y)\\
 &=\int F\,d\widetilde\nu_\kappa.
\end{align*}
Induction proves \eqref{eq:DR-composition-source-identity}.
Changing variables $y=h\circ\Phi$ gives
\[
 e^{-B_\Phi(y\circ\Phi^{-1})}\,d(R_\Phi)_\#\widetilde\nu_\kappa(y)
 =d\widetilde\nu_\kappa(y),
\]
which is \eqref{eq:DR-composition-target-density}.
The uniform bound follows from
\begin{equation}\label{eq:DR-composition-density-bound}
 |B_\Phi(h)|\leq\sum_{j=1}^{\ell}|t_j|\|\rho_{v_j}\|_{\sup}.
\end{equation}
Splitting the defining sum at $h\circ\Phi$ proves
\eqref{eq:DR-source-cocycle}.  Since
$(\Phi\circ\Psi)^{-1}=\Psi^{-1}\circ\Phi^{-1}$,
\begin{align*}
 \log Z_{\Phi\circ\Psi}(y)
 &=B_\Phi(y\circ\Psi^{-1}\circ\Phi^{-1})
   +B_\Psi(y\circ\Psi^{-1})\\
 &=\log Z_\Phi(y\circ\Psi^{-1})+\log Z_\Psi(y),
\end{align*}
proving \eqref{eq:DR-target-cocycle}.
\end{proof}

For $v,w\in H_{\mathrm{WP}}^{\mathrm{fin}}$, use $\Phi_{s,t}$ from
the introduction and set
\begin{equation}\label{eq:DR-canonical-two-flow}
 A_{s,t}:=A_{\Phi_{s,t}}=-\log Z_{\Phi_{s,t}}.
\end{equation}
All derivatives and remainders below are taken in the uniform norm on
$\mathcal W$.

\begin{corollary}\label{cor:DR-finite-Hessian}
As $(s,t)\to(0,0)$,
\begin{align}
 A_{s,t}
 &=-s\rho_v-t\rho_w+\frac{s^2}{2}D_v\rho_v\notag\\
 &\quad+stD_w\rho_v+\frac{t^2}{2}D_w\rho_w+o(s^2+t^2).
 \label{eq:DR-canonical-action-expansion}
\end{align}
The ordered mixed derivative satisfies
\begin{align}
 \left.\partial_t\left(
   \left.\partial_sA_{s,t}\right|_{s=0}\right)\right|_{t=0}
 &=D_w\rho_v,
 \label{eq:DR-canonical-mixed-Hessian}\\
 \shortintertext{and}
 \int\left.\partial_t\left(
   \left.\partial_sA_{s,t}\right|_{s=0}\right)\right|_{t=0}
      d\widetilde\nu_\kappa
 &=G_\kappa(v,w).
 \label{eq:DR-averaged-Hessian}
\end{align}
\end{corollary}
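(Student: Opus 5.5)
The plan is to write $A_{s,t}$ explicitly through the cocycle and then Taylor expand each piece in the uniform norm. By \eqref{eq:DR-target-cocycle} with $\Phi=\psi_s^v$ and $\Psi=\psi_t^w$,
\[
 A_{s,t}(y)=A^v_s(y\circ\psi_{-t}^w)+A^w_t(y),
\]
where $A^v_s$ and $A^w_t$ are the one-flow target actions of Lemma~\ref{lem:DR-integrated-divergence}. Here $y\circ\psi_{-t}^w=T_{-t}^wy$, so $A_{s,t}=U_{-t}^wA^v_s+A^w_t$. By \eqref{eq:DR-abstract-target-action},
\[
 A^v_s=-\int_{-s}^0U_r^v\rho_v\,dr,
\]
and \eqref{eq:DR-abstract-action-derivatives} gives $\partial_sA^v_s|_{s=0}=-\rho_v$ and $\partial_s^2A^v_s|_{s=0}=D_v\rho_v$, with uniformly continuous second derivative. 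Hence $A^v_s=-s\rho_v+\tfrac{s^2}{2}D_v\rho_v+o(s^2)$ uniformly, and the same holds for $A^w_t$.

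For the cross term, I will apply $U_{-t}^w$, which is a uniform isometry by \eqref{eq:DR-flow-uniform-isometry}. This gives
\[
 U_{-t}^wA^v_s=-sU_{-t}^w\rho_v+\tfrac{s^2}{2}U_{-t}^wD_v\rho_v+o(s^2),
\]
where the remainder stays uniform because $U_{-t}^w$ preserves sup norms. Next I expand $U_{-t}^w\rho_v=\rho_v-tD_w\rho_v+O(t^2)$ using \eqref{eq:DR-score-time-Lipschitz} applied to $D_w\rho_v$. That step needs $D_w^2\rho_v$ bounded, which \eqref{eq:DR-bv-iterated-core} together with Lemma~\ref{lem:DR-flow-core-uniform-differentiability} provide. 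I also use $U_{-t}^wD_v\rho_v=D_v\rho_v+O(t)$.

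Collecting terms yields the expansion up to the coefficient of $st$. That coefficient comes out as $+D_w\rho_v$, since $-s\cdot(-tD_w\rho_v)=stD_w\rho_v$. This matches \eqref{eq:DR-canonical-action-expansion}, and the remaining error terms $O(st^2)$ and $O(s^2t)$ are $o(s^2+t^2)$.

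For the ordered mixed derivative I do not rely on the expansion; I compute directly. For fixed $t$, $\partial_sA_{s,t}|_{s=0}=U_{-t}^w(\partial_sA^v_s|_{0})=-U_{-t}^w\rho_v$, using uniform differentiability in $s$ and the fact that $U_{-t}^w$ is an isometry. Then $\partial_t(-U_{-t}^w\rho_v)|_{t=0}=D_w\rho_v$ uniformly, by Lemma~\ref{lem:DR-flow-core-uniform-differentiability}, since $\rho_v\in\mathcal A_0\subset\mathcal A_{\mathrm{fl}}$. This proves \eqref{eq:DR-canonical-mixed-Hessian}.

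For \eqref{eq:DR-averaged-Hessian}, I apply Proposition~\ref{prop:DR-right-IBP} with $F=\rho_v\in\mathcal A_{\mathrm{fl}}$ and direction $w$, which gives $\int D_w\rho_v\,d\widetilde\nu_\kappa=\int\rho_v\rho_w\,d\widetilde\nu_\kappa$. Theorem~\ref{thm:intro-Fisher-Kahler}, specifically \eqref{eq:DR-finite-Fisher-computation}, identifies the right-hand side with $G_\kappa(v,w)$.

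The main obstacle is bookkeeping rather than analysis: keeping every remainder uniform on $\mathcal W$ requires knowing that $\rho_v$, $D_w\rho_v$ and $D_w^2\rho_v$ lie in $\mathcal A_{\mathrm{fl}}$ with bounded sup norms, which \eqref{eq:DR-bv-iterated-core} supplies. A second point to watch is the sign and ordering convention in $y\circ\Phi_{s,t}^{-1}=y\circ\psi_{-t}^w\circ\psi_{-s}^v$, which is what makes $A^v_s$ the term composed with the $w$-flow.
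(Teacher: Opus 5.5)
Your proposal is correct and follows the paper's proof essentially step for step. You use the target cocycle $A_{s,t}=U_{-t}^wA_{\psi_s^v}+A_{\psi_t^w}$ and second-order uniform expansions of the one-flow actions and of the shifts $U_{-t}^w\rho_v$, $U_{-t}^wD_v\rho_v$, with the bounds on $D_v^2\rho_v$, $D_w^2\rho_v$, $D_wD_v\rho_v$, $D_w^2\rho_w$ supplied by \eqref{eq:DR-bv-iterated-core}; you then differentiate the exact cocycle for the ordered mixed derivative and apply Proposition~\ref{prop:DR-right-IBP} with $F=\rho_v$ together with the covariance identity. The only cosmetic difference is in the one-flow expansion: you get it from Taylor's theorem applied to \eqref{eq:DR-abstract-action-derivatives} in the sup-norm Banach space, while the paper integrates the Taylor expansion of $U_q^u\rho_u$ to obtain an explicit cubic remainder bound.
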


\begin{proof}
For $u\in H_{\mathrm{WP}}^{\mathrm{fin}}$ and $\tau\in\mathbb R$,
\eqref{eq:DR-abstract-target-action} gives
\begin{equation}\label{eq:DR-one-flow-action}
 A_{\psi_\tau^u}=-\int_{-\tau}^0U_q^u\rho_u\,dq.
\end{equation}
By Lemma~\ref{lem:DR-flow-core-uniform-differentiability} and
\eqref{eq:DR-bv-iterated-core},
\begin{align}
 U_q^u\rho_u-\rho_u-qD_u\rho_u
 &=\int_0^q(q-r)U_r^uD_u^2\rho_u\,dr,\notag\\
 \shortintertext{and hence}
 \|U_q^u\rho_u-\rho_u-qD_u\rho_u\|_{\sup}
 &\leq\frac{q^2}{2}\|D_u^2\rho_u\|_{\sup}.
 \label{eq:DR-base-core-uniform-second-bound}
\end{align}
With the remainder
\begin{equation}
 r_\tau^u
 :=-\int_{-\tau}^0
       (U_q^u\rho_u-\rho_u-qD_u\rho_u)\,dq,
\end{equation}
we obtain
\begin{align}
 A_{\psi_\tau^u}
 &=-\tau\rho_u+\frac{\tau^2}{2}D_u\rho_u+r_\tau^u,
 \label{eq:DR-one-flow-second-order}\\
 \shortintertext{with}
 \|r_\tau^u\|_{\sup}
 &\leq\frac{|\tau|^3}{6}\|D_u^2\rho_u\|_{\sup}.
 \label{eq:DR-one-flow-third-remainder}
\end{align}
The target cocycle \eqref{eq:DR-target-cocycle} gives
\begin{equation}\label{eq:DR-canonical-target-cocycle}
 A_{s,t}=U_{-t}^wA_{\psi_s^v}+A_{\psi_t^w}.
\end{equation}
Substitution of these expansions into \eqref{eq:DR-canonical-target-cocycle}
gives the quadratic expression in \eqref{eq:DR-canonical-action-expansion}
with remainder
\begin{align}
 r_{s,t}
 ={}&-s\{U_{-t}^w\rho_v-\rho_v+tD_w\rho_v\}\notag\\
 &+\frac{s^2}{2}\{U_{-t}^wD_v\rho_v-D_v\rho_v\}
   +U_{-t}^wr_s^v+r_t^w.
 \label{eq:DR-two-flow-remainder-identity}
\end{align}
\begin{samepage}
The uniform fundamental theorem of calculus gives
\begin{align}
 \|U_{-t}^w\rho_v-\rho_v+tD_w\rho_v\|_{\sup}
 &\leq\frac{t^2}{2}\|D_w^2\rho_v\|_{\sup},
 \label{eq:DR-shift-bv-second}\\
 \shortintertext{and}
 \|U_{-t}^wD_v\rho_v-D_v\rho_v\|_{\sup}
 &\leq|t|\|D_wD_v\rho_v\|_{\sup}.
 \label{eq:DR-shift-Dvbv-first}
\end{align}
\end{samepage}
Since $U_{-t}^w$ preserves the uniform norm,
\begin{align}
 \|r_{s,t}\|_{\sup}
 \leq{}&\frac{|s|t^2}{2}\|D_w^2\rho_v\|_{\sup}
       +\frac{s^2|t|}{2}\|D_wD_v\rho_v\|_{\sup}\notag\\
 &+\frac{|s|^3}{6}\|D_v^2\rho_v\|_{\sup}
       +\frac{|t|^3}{6}\|D_w^2\rho_w\|_{\sup}.
 \label{eq:DR-canonical-explicit-remainder}
\end{align}
Each term divided by $s^2+t^2$ tends to zero, proving
\eqref{eq:DR-canonical-action-expansion}.

To establish the ordered derivative, differentiate the exact identity
\eqref{eq:DR-canonical-target-cocycle}:
\begin{align}
 \left.\partial_sA_{s,t}\right|_{s=0}&=-U_{-t}^w\rho_v,
 \label{eq:DR-canonical-first-then-shift}\\
 \shortintertext{and hence}
 \left.\partial_t\left(
   \left.\partial_sA_{s,t}\right|_{s=0}\right)\right|_{t=0}
 &=D_w\rho_v.
 \label{eq:DR-canonical-ordered-action-response}
\end{align}
Finally, $\rho_v\in\mathcal A_0$, so
Proposition~\ref{prop:DR-right-IBP} and
Theorem~\ref{thm:intro-Fisher-Kahler} give
\begin{equation}\label{eq:DR-Hessian-IBP}
 \begin{aligned}
 \int D_w\rho_v\,d\widetilde\nu_\kappa
 &=\int\rho_v\rho_w\,d\widetilde\nu_\kappa\\
 &=G_\kappa(v,w)
 =\frac{c_{\mathrm L}}{12}(v,w)_{\mathrm{WP}}+4(v,w)_{\mathrm{VK}}.
 \end{aligned}
\end{equation}
\end{proof}

\subsection{Smooth variation of the Liouville--capacity action}

Fix $\mu\in L^\infty(\mathbb D)$ with
$\operatorname{supp}\mu\Subset\mathbb D\setminus\{0\}$.
Let $\widetilde\Phi_\zeta$ be the circle-preserving Ahlfors--Bers family
with interior coefficient $\zeta\mu$, normalized to fix $0,1,\infty$
after reflection across $\mathbb S^1$, for $|\zeta|\|\mu\|_\infty<1$.
When the expansion exists, define $R_\mu$ and $\bar R_\mu$ by
\[
 F(h\circ\widetilde\Phi_\zeta)
 =F(h)+\zeta R_\mu F(h)+\bar\zeta\,\bar R_\mu F(h)+o(|\zeta|).
\]
For a holomorphic function $p$ on $\mathbb D\setminus\{0\}$, write
\begin{equation}\label{eq:DR-holomorphic-pairing}
 (p,\mu):=\frac1\pi\int_{\mathbb D}p(z)\mu(z)\,dA(z).
\end{equation}
\begin{samepage}
For $h=g^{-1}\circ f\in\mathcal W$, put
\begin{align}
 \sigma_h(\mu)&:=-(\mathcal Sf,\mu),
 \label{eq:DR-sigma-pairing}\\
 \tau_h(\mu)&:=-\frac1\pi\int_{\mathbb D}
 \left(\frac{f'(z)^2}{f(z)^2}-\frac1{z^2}\right)\mu(z)\,dA(z),
 \label{eq:DR-tau-pairing}\\
 \shortintertext{and}
 \rho_h(\mu)&:=\frac{c_{\mathrm L}}{12}\sigma_h(\mu)+\tau_h(\mu)
 =-(q_f,\mu).
 \label{eq:DR-rho-pairing}
\end{align}
\end{samepage}

\Needspace{6\baselineskip}
\begin{samepage}
Fix a smooth circle diffeomorphism $h\in\mathcal W$ with normalized pair
$(f,g)$, and write
$\Gamma=f(\mathbb S^1)$ and $\Omega=f(\mathbb D)$.

\begin{lemma}\label{lem:DR-smooth-complex-variation}
\begin{equation}\label{eq:DR-smooth-complex-variation-identities}
 R_\mu S_1(h)=2\pi\sigma_h(\mu),\qquad
 2R_\mu k(h)=\tau_h(\mu).
\end{equation}
\end{lemma}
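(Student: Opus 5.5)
We propose to derive both identities from the classical first-variation formulas for the universal Liouville action and for conformal radius under a quasiconformal deformation. The deformation $h\mapsto h\circ\widetilde\Phi_\zeta$ is a right composition by a circle homeomorphism whose extension to $\mathbb D$ has Beltrami coefficient $\zeta\mu+O(|\zeta|^2)$. The first step is to realize the new welding. Let $w^{\zeta}$ solve the Beltrami equation on $\widehat{\mathbb C}$ with coefficient equal to the push-forward $(f_*\zeta\mu)$ on $\Omega$ and zero on $\widehat{\mathbb C}\setminus\overline\Omega$, normalized by $w^\zeta(0)=0$, $(w^\zeta)'(0)=1$ in the appropriate sense, and $w^\zeta(\infty)=\infty$. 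Then $f^\zeta:=w^\zeta\circ f\circ\widetilde\Phi_\zeta^{-1}$, extended conformally, and $g^\zeta:=w^\zeta\circ g$ form a welding pair for $h\circ\widetilde\Phi_\zeta$. Since $\operatorname{supp}\mu$ is compact in $\mathbb D\setminus\{0\}$, $w^\zeta$ is conformal near $0$, near $\infty$, and in a neighbourhood of $\Gamma$. Hence smoothness is preserved and the normalization is achieved by a M\"obius correction of order $O(\zeta)$.

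The second step computes $R_\mu k$. The standard first-variation formula gives
\[
 \dot w(z)=-\frac1\pi\int_\Omega\nu(\xi)\Bigl(\frac{1}{\xi-z}+\cdots\Bigr)dA(\xi),
\]
with $\nu=f_*\mu$ and the kernel chosen so that the normalizations at $0$ and $\infty$ hold. The only contributions to $\log g'(\infty)$ come from the coefficient of $z$ at infinity, relative to the derivative at $0$. Pulling back by $f$ converts $\nu(\xi)\,dA(\xi)$ against the kernel $1/\xi^2$ into $(f'/f)^2\mu\,dA$. The renormalizing rotation and translation contribute the subtraction of $1/z^2$, because $\widetilde\Phi_\zeta$ itself moves the origin data through $\mu$ paired with $1/z^2$. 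Taking the $\partial_\zeta$ part, only the holomorphic kernel survives, and this yields $2R_\mu k=\tau_h(\mu)$. We will check the constant by testing on a dilation-type $\mu$, where both sides are explicit.

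The third step computes $R_\mu S_1$ from the Takhtajan--Teo theorem. The action $S_1$ is a K\"ahler potential on $T_0(1)$, and its $\partial$-derivative along the Beltrami direction is
\[
 \partial S_1=-2\pi\,(\mathcal Sf-\mathcal S g\ \text{term}),
\]
paired against harmonic Beltrami differentials; see \cite[Ch.~II, Theorem~2.3]{TT06}. Our deformation acts on the interior side only, and $\mu$ is supported in $\mathbb D$. So only the interior Schwarzian pairs, giving $2\pi\sigma_h(\mu)=-2\pi(\mathcal Sf,\mu)$. Two points need care. First, \cite{TT06} uses harmonic $\mu$ and left composition, so we pass to arbitrary compactly supported $\mu$ by noting that the variation depends only on the class of $\mu$ modulo $\bar\partial$-trivial ones. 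Pairing with the holomorphic $\mathcal Sf$ kills these by Stokes' theorem, since the support is compact. Second, $\widetilde\Phi_\zeta$ fixes $0,1,\infty$, whereas $(f,g)$ carries a different normalization, so the $-4\pi k$ term of $S_1$ and the M\"obius correction must be tracked consistently with the second step.

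The main obstacle is these normalization bookkeeping issues: matching the $\partial$-variation of \cite{TT06}, stated in their coordinates, to our right composition with the $\zeta$-linear part, and verifying that the capacity term $-4\pi k$ in $S_1$ is exactly what removes the non-Schwarzian contributions. We would first try a direct computation: differentiate the two Dirichlet integrals under the $w^\zeta$ variation, using $\partial_\zeta(f^\zeta)''/(f^\zeta)'=(\dot w\circ f)''$-type formulas and integrating by parts to the boundary $\Gamma$. The boundary terms cancel between the interior and exterior integrals except for a residue at infinity, which is absorbed by $-4\pi R_\mu k$. The interior bulk term then reduces to the Schwarzian pairing.
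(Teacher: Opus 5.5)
Your treatment of $k$ follows the paper's route, which is the first variation of the compensating map at the two marked points. However, the setup underneath it is wrong, and there is a genuine gap in the $S_1$ identity.

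\textbf{The setup reverses both signs.} With $f^\zeta=w^\zeta\circ f\circ\widetilde\Phi_\zeta^{-1}$ and $g^\zeta=w^\zeta\circ g$ you get $(g^\zeta)^{-1}\circ f^\zeta=h\circ\widetilde\Phi_\zeta^{-1}$, not $h\circ\widetilde\Phi_\zeta$. Your compensating coefficient $+f_*(\zeta\mu)$ is the one adapted to this inverse. For the actual deformation you must use $F=f\circ\widetilde\Phi_\zeta$. Conformality of the compensated interior map then forces a physical coefficient with $\beta_\zeta\circ F=-\zeta\mu\,F_z/\overline{F_z}$, so $\beta_\zeta=-\zeta f_*\mu+O(|\zeta|^2)$, as in \eqref{eq:DR-physical-pushforward}. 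The first-order part of $\widetilde\Phi_\zeta^{-1}$ is minus that of $\widetilde\Phi_\zeta$. Hence your construction computes $-R_\mu$ and reverses the sign of both identities, which are exactly signed statements.

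\textbf{Corrections to the capacity computation.} Writing $[\zeta]$ for the $\zeta$-Wirtinger coefficient at $0$: the $-z^{-2}$ term of $\tau_h$ does not come from a renormalizing rotation or translation, since neither changes $k$. It is the source contribution $[\zeta]\log|\widetilde\Phi_\zeta'(0)|=\tfrac12C_\mu'(0)$, with $C_\mu'(0)=-\frac1\pi\int_{\mathbb D}\mu z^{-2}\,dA$. You also need the behaviour at $\infty$: a principal compensator $\Psi_\zeta(\xi)=\xi+O(\xi^{-1})$ leaves the exterior derivative unchanged to first order. Then $[\zeta]\log|F_\zeta'(0)|=-\tfrac12\tau_h(\mu)$ gives $2R_\mu k=\tau_h(\mu)$.

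\textbf{The $S_1$ identity is not derived.} Your displayed formula for $\partial S_1$ is not a precise statement, and the convention matching you postpone is the whole content. The result of \cite{TT06} concerns the $(1,0)$-part of $dS_1$ for the complex structure of $T_0(1)$ that is invariant under right translation. Its holomorphic directions at $[h]$ are tangents of post-compositions $\psi_t\circ h$, as in \eqref{eq:right-tangent}. $R_\mu$ instead differentiates the pre-composition $h\circ\widetilde\Phi_\zeta$, and the two paths have the same tangent only at $h=\id$. At a non-M\"obius $h$, the push-forward by $h$ of the source field of $R_\mu$ (negative modes only) is in general not an eigenvector of the Hilbert transform modulo $\mathfrak{sl}_2$. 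So $\zeta\mapsto[h\circ\widetilde\Phi_\zeta]$ is neither holomorphic nor antiholomorphic to first order, and ``only the interior Schwarzian pairs'' does not follow. Passing to a harmonic representative of $\mu$, the only adjustment you propose, does not address this mismatch. One repair is to pass to the curve $\{1/\xi:\xi\in\Gamma\}$, whose welding $c\circ h^{-1}\circ c$ with $c(z)=\bar z$ turns pre-composition into post-composition; that bookkeeping is absent from your proposal. Your fallback direct computation, with its claimed cancellation of boundary terms against $-4\pi R_\mu k$, is asserted rather than carried out.

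\textbf{What the paper uses instead.} It needs no complex structure. Since $\beta=-f_*\mu$ is compactly supported in $\Omega$ away from the smooth curve $\Gamma$, \cite[Theorem~1.4]{SW24} gives the real first variation $\pi\frac{d}{ds}I^L(\Psi_s^\beta(\Gamma))=-4\operatorname{Re}J_\mu$, where $J_\mu=\int_\Omega\beta\,\mathcal S(f^{-1})\,dA$. Applying this to $\beta$ and $i\beta$ and using $S_1=\pi I^L$ gives $R_\mu S_1=-2J_\mu$. The substitution $w=f(z)$, together with $(\mathcal S f^{-1})\circ f\cdot f'^2=-\mathcal Sf$, gives $J_\mu=\int_{\mathbb D}\mathcal Sf\,\mu\,dA$. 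Hence $R_\mu S_1=2\pi\sigma_h(\mu)$.
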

\end{samepage}

\begin{proof}
The physical compensating map $\Psi_\zeta$ is chosen so that
$\Psi_\zeta\circ f\circ\widetilde\Phi_\zeta$ is conformal on $\mathbb D$
and $\Psi_\zeta\circ g$ is conformal on $\mathbb D^*$.
By \eqref{eq:DR-physical-pushforward}, its infinitesimal Beltrami
differential is
\begin{equation}\label{eq:DR-SW-physical-Beltrami}
 \begin{aligned}
 \beta(f(z))&=-\mu(z)\frac{f'(z)}{\overline{f'(z)}},
 \quad\text{for a.e. }z\in\mathbb D,\\[3pt]
 \beta&=0\quad\text{on }\mathbb C\setminus\Omega.
 \end{aligned}
\end{equation}
The compensated pair has welding $h\circ\widetilde\Phi_\zeta$ and curve
$\Psi_\zeta(\Gamma)$.  Define
\begin{equation}\label{eq:DR-SW-external-variation-data}
 J_\mu:=\int_\Omega\beta(w)\mathcal S(f^{-1})(w)\,dA(w).
\end{equation}
Let $\Psi_s^\beta$ solve the Beltrami equation with coefficient $s\beta$
and fix $0,1,\infty$, where $s\in\mathbb R$ and
$|s|\|\beta\|_\infty<1$.
The support of $\beta$ is compactly contained in $\Omega$, away from
$\Gamma$, and the smooth curve $\Gamma$ has finite Loewner energy.
Thus the first-variation formula of
\cite[Theorem~1.4]{SW24} applies and yields
\begin{equation}\label{eq:DR-SW-real-S1-variation}
 \pi\left.\frac d{ds}\right|_{s=0}I^L(\Psi_s^\beta(\Gamma))
 =-4\operatorname{Re}J_\mu.
\end{equation}
The two families have the same infinitesimal curve deformation.
Using $S_1=\pi I^L$ from \cite[Theorem~1.2]{SW24} and applying the
formula to $\beta$ and $i\beta$ gives
\begin{align}
 (R_\mu+\bar R_\mu)S_1&=-4\operatorname{Re}J_\mu,
 \label{eq:DR-SW-polarization-real}\\
 \shortintertext{and}
 i(R_\mu-\bar R_\mu)S_1&=4\operatorname{Im}J_\mu.
 \label{eq:DR-SW-polarization-imag}
\end{align}
Hence
\begin{equation}\label{eq:DR-SW-complex-S1-variation}
 R_\mu S_1=-2J_\mu.
\end{equation}
The inverse-Schwarzian identity and the area formula give
\begin{align}
 J_\mu
 &=\int_{\mathbb D}
 \left(-\mu\frac{f'}{\overline{f'}}\right)
 \left(-\frac{\mathcal Sf}{f'^2}\right)|f'|^2\,dA\notag\\
 &=\int_{\mathbb D}\mathcal Sf\,\mu\,dA,
 \label{eq:DR-SW-pullback}
\end{align}
so
\begin{equation}\label{eq:DR-TT-potential-variation}
 R_\mu S_1(h)=-2\pi(\mathcal Sf,\mu)=2\pi\sigma_h(\mu).
\end{equation}
The second identity is the capacity response
\eqref{eq:DR-capacity-complex-variation}.
\end{proof}

\begin{samepage}
Let $n\geq2$ and $v\in H_{\mathrm{WP}}^{\mathrm{fin}}$.

\begin{lemma}\label{lem:DR-smooth-stress-variation}
At every smooth circle diffeomorphism in $\mathcal W$,
\begin{equation}\label{eq:DR-smooth-mode-variations}
 D_{n,c}H_\kappa=\operatorname{Im}t_n,\qquad
 D_{n,s}H_\kappa=\operatorname{Re}t_n,
\end{equation}
and consequently
\begin{equation}\label{eq:DR-smooth-general-variation}
 D_vH_\kappa=\rho_v.
\end{equation}
\end{lemma}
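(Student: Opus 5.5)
The plan is to reduce the right response $D_v$ along $h\mapsto h\circ\psi_t^v$ to an Ahlfors--Bers variation. Then Lemma~\ref{lem:DR-smooth-complex-variation} supplies the first variation of $H_\kappa=\frac{c_{\mathrm L}}{24\pi}S_1+2k$.

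By that lemma, at a smooth $h$,
\[
 R_\mu H_\kappa=\frac{c_{\mathrm L}}{24\pi}\,2\pi\sigma_h(\mu)+\tau_h(\mu)
 =\frac{c_{\mathrm L}}{12}\sigma_h(\mu)+\tau_h(\mu)=\rho_h(\mu)=-(q_f,\mu).
\]
Since $H_\kappa$ is real, $\bar R_\mu H_\kappa=\overline{R_\mu H_\kappa}$. The real derivative along a real one-parameter family $\zeta=s\zeta_0$ is therefore $2\operatorname{Re}(\zeta_0 R_\mu H_\kappa)$.

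Next I represent the circle flow $\psi_t^v$ to first order by a quasiconformal extension. The vector field $v(\theta)\partial_\theta$, that is $iv\,z\partial_z$ on $\mathbb S^1$, extends into $\mathbb D$ as a smooth vector field $V$. I choose $V$ so that it vanishes near $0$ and is holomorphic near a thin collar only up to its $\bar\partial$-part. Its Beltrami derivative $\mu_V=\bar\partial V$ can then be taken compactly supported in $\mathbb D\setminus\{0\}$, after absorbing the boundary collar. The difference between $\psi_t^v$ and the Ahlfors--Bers map with coefficient $t\mu_V$ is a circle map whose extension is conformal up to $O(t^2)$. It is therefore a M\"obius map plus $O(t^2)$, since Ahlfors--Bers is normalized. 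Left M\"obius and rotation ambiguities do not change $H_\kappa$ to first order: the action is invariant under the normalization, and mode~$\pm1,0$ components are killed as in Proposition~\ref{prop:det-round-response}. Hence
\[
 D_vH_\kappa=2\operatorname{Re}\bigl(R_{\mu_V}H_\kappa\bigr)=-2\operatorname{Re}(q_f,\bar\partial V).
\]
I then apply Stokes' theorem. Because $q_f$ is holomorphic on $\mathbb D\setminus\{0\}$ with at most a simple pole, and $V$ vanishes near $0$,
\[
 \frac1\pi\int_{\mathbb D}q_f\,\bar\partial V\,dA=\frac1{2\pi i}\oint_{\mathbb S^1}q_f(z)V(z)\,dz.
\]
This boundary pairing needs $q_f$ to extend continuously to $\mathbb S^1$, which holds for smooth $h$ because $f$ is then smooth up to the boundary. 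With $V=iz v(\theta)$ and $dz=iz\,d\theta$, the integral becomes
\[
 -\frac1{2\pi i}\int_0^{2\pi}q_f(e^{i\theta})e^{2i\theta}v(\theta)\,d\theta
 =-\frac1{2\pi i}\sum_n t_n\int e^{in\theta}v\,d\theta
 =-\frac{1}{i}\sum_{n\ge1}t_nv_{-n}.
\]
Taking $-2\operatorname{Re}$ of this quantity and specializing to $v=\cos(n\cdot)$ and $v=\sin(n\cdot)$ should give $\operatorname{Im}t_n$ and $\operatorname{Re}t_n$. Signs and the factor $2$ are to be matched against the convention fixed by \eqref{eq:DR-score-complex-field}. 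Linearity then gives $D_vH_\kappa=\rho_v$.

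\textbf{Main obstacle.} The hardest step is justifying the identification of $D_v$ with $2\operatorname{Re}R_{\mu_V}$, because $\mu$ in Lemma~\ref{lem:DR-smooth-complex-variation} must be compactly supported away from $\mathbb S^1$. My first approach is to exploit the freedom in the quasiconformal extension. Any two extensions of the same boundary vector field differ by a $\bar\partial$ of a field vanishing on $\mathbb S^1$, and the pairing with holomorphic $q_f$ is then unchanged. So a compactly supported $\mu$ with the same boundary action exists modulo the infinitesimal M\"obius directions. This can be checked on modes: $\mu=\bar\partial(\chi(|z|)iz^{1-n})$-type fields reproduce $e^{-in\theta}$ at the boundary.

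Differentiability in $t$ of $H_\kappa(h\circ\psi_t^v)$ at smooth $h$ also has to be verified. I would obtain it from smooth dependence of the welding solution, via Ahlfors--Bers holomorphic dependence.
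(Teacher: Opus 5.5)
Your overall route is the paper's. You obtain $R_\mu H_\kappa=\rho_h(\mu)$ from Lemma~\ref{lem:DR-smooth-complex-variation}, take the real derivative as $2\operatorname{Re}$, use the compactly supported representative $\bar\partial V$ with $V=\chi(|z|)\,i\sum_{|n|\geq2}v_nz^{n+1}$, and apply Stokes. (The paper uses $V_n=\tfrac i2(z^{1-n}-z)$; the difference pairs to zero with $q_f$.)

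The gap is in how you discard the first-order M\"obius discrepancy between $\psi_t^v$ and the normalized Ahlfors--Bers family. You claim $H_\kappa$ is insensitive to M\"obius ambiguities and that mode $\pm1,0$ components are killed ``as in Proposition~\ref{prop:det-round-response}''. That proposition concerns $\Lambda$, which is left-M\"obius invariant; it says nothing about $H_\kappa$. The discrepancy enters by \emph{right} composition, and $H_\kappa$ is invariant under right composition only for rotations, because the capacity term $2k$ responds to modes $\pm1$. For example, for $v=\cos\theta$ the flow consists of disk automorphisms with $\psi_t(0)=it/2+O(t^2)$ and $\psi_t'(0)=1+O(t^2)$. $S_1$ is unchanged by M\"obius invariance of the Loewner energy, but renormalizing the pair gives $D_{\cos\theta}H_\kappa=2D_{\cos\theta}k=2\operatorname{Im}a_1=2\operatorname{Im}(t_1\overline{v_1})$. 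This is nonzero in general. An argument that drops all M\"obius discrepancies would therefore lose exactly such terms.

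The repair uses something you already have. Since $V$ vanishes near $0$, its reflected flow and the normalized Ahlfors--Bers map both fix $0$ and $\infty$. Their $O(t^2)$-conformal, circle-preserving discrepancy is therefore a rotation to first order.

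Equivalently, compute the boundary velocity as the paper does, from $V_n-z^2\overline{V_n}=iz(\cos n\theta-1)$. This gives $D_{n,c}=R_{\mu_{n,c}}+\bar R_{\mu_{n,c}}+D_{\mathrm{rot}}$ and $D_{n,s}=R_{\mu_{n,s}}+\bar R_{\mu_{n,s}}$. Then check $D_{\mathrm{rot}}H_\kappa=0$ directly: the normalized pair of $h\circ r_\alpha$ is $(e^{-i\alpha}f(e^{i\alpha}\,\cdot\,),e^{-i\alpha}g)$, which changes neither $S_1$ nor $k$.

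Two smaller points.
\begin{itemize}
\item Since $\bar\partial V$ is supported in a compact annulus, apply Stokes on a circle $|z|=r<1$ outside it. The pairing becomes $\operatorname{Res}_{z=0}(q_fV)$, so you need neither boundary regularity of $q_f$ nor convergence of its series on $\mathbb S^1$.
\item Your deferred sign check does close. One gets $-2\operatorname{Re}(q_f,\bar\partial V)=2\operatorname{Im}\sum_{n}t_n\overline{v_n}$, which is $\rho_v$ by \eqref{eq:DR-score-complex-field}. This yields $\operatorname{Im}t_n$ for $\cos(n\cdot)$ and $\operatorname{Re}t_n$ for $\sin(n\cdot)$.
\end{itemize}
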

\end{samepage}

\begin{proof}
By \eqref{eq:intro-smooth-action} and
Lemma~\ref{lem:DR-smooth-complex-variation},
\begin{align}
 R_\mu H_\kappa
 &=\frac{c_{\mathrm L}}{24\pi}R_\mu S_1+2R_\mu k
 =\rho_h(\mu),
 \label{eq:DR-complex-potential-variation}\\
 \shortintertext{and therefore}
 (R_\mu+\bar R_\mu)H_\kappa
 &=2\operatorname{Re}\rho_h(\mu),
 \label{eq:DR-real-potential-variation}
\end{align}
since $H_\kappa$ is real-valued.
The Fourier representatives from
\eqref{eq:DR-Fourier-Cauchy-vector} satisfy
\eqref{eq:DR-real-cos-response}--\eqref{eq:DR-real-sin-response}.
For the rotation $r_\alpha$, the normalized pair of $h\circ r_\alpha$ is
\[
 f_\alpha(z)=e^{-i\alpha}f(e^{i\alpha}z),\qquad
 g_\alpha(z)=e^{-i\alpha}g(z).
\]
The definitions of $S_1$ and $k$ therefore give
$D_{\mathrm{rot}}H_\kappa=0$.
The pairings \eqref{eq:DR-Fourier-A-values-proof} now give
\begin{align*}
 D_{n,c}H_\kappa&=2\operatorname{Re}(-it_n/2)=\operatorname{Im}t_n,\\
 D_{n,s}H_\kappa&=2\operatorname{Re}(t_n/2)=\operatorname{Re}t_n.
\end{align*}
Real linearity and \eqref{eq:intro-score} prove
\eqref{eq:DR-smooth-general-variation}.
\end{proof}

At the round welding, \eqref{eq:DR-negative-Kirillov} gives
$\mathcal L_{-n}f|_{f(z)=z}=0$.  Lemma~\ref{lem:DR-stress-polynomial}
and \eqref{eq:DR-real-cos-Kirillov}--\eqref{eq:DR-real-sin-Kirillov}
then give
\begin{equation}\label{eq:DR-round-stress-first-jets}
 \left.D_{n,c}t_n\right|_{\id}=\frac i2C_{\kappa,n},\qquad
 \left.D_{n,s}t_n\right|_{\id}=\frac12C_{\kappa,n}.
\end{equation}
In particular, the smooth action has the positive second variations
\begin{equation}\label{eq:DR-smooth-potential-round-Hessian}
 \left.D_{n,c}^2H_\kappa\right|_{\id}
 =\left.D_{n,s}^2H_\kappa\right|_{\id}
 =\frac12C_{\kappa,n}>0.
\end{equation}
\subsection{Coefficient continuity along finite flows}

Let $h_j,h\in\mathcal W$ have normalized pairs $(f_j,g_j)$ and $(f,g)$,
respectively, with
\begin{equation}\label{eq:DR-coefficient-convergence}
 a_m(h_j)\longrightarrow a_m(h)\qquad(m\geq1).
\end{equation}
Fix $v\in H_{\mathrm{WP}}^{\mathrm{fin}}$ and a compact interval $I$.
For $v_1,\ldots,v_\ell\in H_{\mathrm{WP}}^{\mathrm{fin}}$ and compact
intervals $I_1,\ldots,I_\ell$, write
\[
 \Phi_{s_1,\ldots,s_\ell}
 :=\psi_{s_1}^{v_1}\circ\cdots\circ\psi_{s_\ell}^{v_\ell},
 \qquad (s_1,\ldots,s_\ell)\in I_1\times\cdots\times I_\ell.
\]

\begin{lemma}\label{lem:DR-finite-mode-continuity}
For every $m\geq1$,
\begin{align}
 \sup_{s\in I}|a_m(T_s^vh_j)-a_m(T_s^vh)|&\longrightarrow0,
 \label{eq:DR-flow-coefficient-continuity}\\
 \shortintertext{and}
 \sup_{\substack{s_r\in I_r\\1\leq r\leq\ell}}
 |a_m(h_j\circ\Phi_{s_1,\ldots,s_\ell})
       -a_m(h\circ\Phi_{s_1,\ldots,s_\ell})|&\longrightarrow0.
 \label{eq:DR-composition-coefficient-continuity}
\end{align}
\end{lemma}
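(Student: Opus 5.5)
The plan is to show that the normalized interior map of $h\circ\psi$ depends only on $f=f_h$ and on $\psi$, and that it depends continuously on the pair, jointly in $f$ (locally uniform topology) and in $\psi$ (uniform $C^1$ topology). Uniformity over the compact parameter sets then comes from joint continuity and compactness.

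\emph{Reduction to Schlicht convergence.} By de Branges' bound $|a_m|\le m+1$, convergence \eqref{eq:DR-coefficient-convergence} of every coefficient is equivalent to $f_j\to f$ locally uniformly on $\mathbb D$. The exterior maps $g_j$ play no role, since the statement only concerns interior coefficients.

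\emph{Step 1: the new interior map is a function of $f$ and $\psi$.} For a smooth circle diffeomorphism $\psi$, fix its Douady--Earle extension $\widehat\psi:\mathbb D\to\mathbb D$. Its Beltrami coefficient $\mu_\psi$ depends continuously, in $L^\infty$ with norm $<k<1$ and pointwise a.e., on $\psi$ ranging over the compact family $\{\Phi_{s_1,\ldots,s_\ell}\}$ (respectively $\{\psi_s^v:s\in I\}$). Push $\mu_\psi$ forward to $\Omega=f(\mathbb D)$ by $f$ to obtain $\beta_{f,\psi}$, and set $\beta_{f,\psi}=0$ off $\Omega$. Let $\Psi$ solve the Beltrami equation on $\widehat{\mathbb C}$ with coefficient $\beta_{f,\psi}$. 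Then $\Psi\circ f\circ\widehat\psi$ is conformal on $\mathbb D$ and $\Psi\circ g$ is conformal on $\mathbb D^*$. This uses that $\Gamma=f(\mathbb S^1)$ has zero area, which I will take from conformal removability, or else argue via the welding uniqueness as in \cite[Lemma~4.1]{FS25}. By uniqueness of the normalized welding pair, the interior map of $h\circ\psi$ is $F(z)=\bigl(\Psi(f(\widehat\psi(z)))-\Psi(0)\bigr)/c$ after the normalization at $0$, with $c$ chosen so that $F'(0)=1$. Hence $a_m(h\circ\psi)=A_m(f,\psi)$, and only $f$ enters.

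\emph{Step 2: continuity.} Let $f_j\to f$ and $\psi_j\to\psi$ within the compact family. The main point is to show $\beta_{f_j,\psi_j}\to\beta_{f,\psi}$ a.e. (or in measure) on $\mathbb C$, with uniform bound $k$. Then the standard continuity of normalized Beltrami solutions gives $\Psi_j\to\Psi$ locally uniformly. Composing with $f_j\circ\widehat\psi_j\to f\circ\widehat\psi$ and renormalizing yields local uniform convergence of the interior maps, and Cauchy estimates give convergence of each $a_m$. For the Beltrami coefficients I split the disc. On $f(\{|z|\le r\})$, Hurwitz and local uniform convergence of $f_j^{-1}$ on compacta of $\Omega$ give pointwise convergence. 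The collar $f_j(\{r<|z|<1\})$ must have area small uniformly in $j$, and I will bound it via the area theorem applied to the boundary part of the Schlicht maps, or via Lebesgue measure of kernel convergence. Since the coefficients are bounded by $k$, this gives convergence in $L^1_{\mathrm{loc}}$, which suffices.

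\emph{Step 3: uniformity.} The map $(f,s)\mapsto a_m(h\circ\psi_s^v)$ is jointly continuous on the compact set $\{f_j\}\cup\{f\}$ times $I$ (and likewise on $I_1\times\cdots\times I_\ell$). Joint continuity on a compact product gives uniform convergence in the parameters, which proves \eqref{eq:DR-flow-coefficient-continuity} and \eqref{eq:DR-composition-coefficient-continuity}.

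I expect the main obstacle to be the collar estimate in Step 2, namely controlling the pushed-forward Beltrami coefficient near the rough boundary curve uniformly in $j$. A second delicate point is justifying that the Beltrami solution with coefficient supported in $\Omega$ really produces the new welding pair for a rough removable curve. For this I would invoke the argument behind \cite[Lemma~4.1, Proposition~4.3]{FS25}, whose Carath\'eodory continuity statement may in fact already yield Step 2 directly.
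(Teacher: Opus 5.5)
There is a genuine gap in Step~2, at the collar estimate you flagged. The hypothesis \eqref{eq:DR-coefficient-convergence} gives only local uniform convergence $f_j\to f$, that is, Carath\'eodory kernel convergence of the interior domains. This does \emph{not} make $\operatorname{Area}f_j(\{r<|z|<1\})$ small uniformly in $j$.

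For example, take $h=\id$. Let $\Omega_j$ be the unit disk joined by a smooth channel of width $1/j$ along $[1,3]$ to the disk $B(4,1)$, rescaled to conformal radius one at $0$. These are smooth, hence removable, Jordan domains, and the kernel theorem gives $f_j\to\id$ locally uniformly. Yet $\operatorname{Area}f_j(\{r<|z|<1\})\geq\pi-o(1)$ for every fixed $r<1$. No area-theorem argument can change this: the area theorem concerns exterior maps, and you have no control of $g_j$.

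With a Douady--Earle extension, the pushed-forward coefficient $\beta_{f_j,\psi_j}$ lives on all of $f_j(\mathbb D)$. So ``bounded by $k$ on a set of small area'' is unavailable, and your $L^1_{\mathrm{loc}}$ convergence is unproved. The fallback to \cite[Proposition~4.3]{FS25} does not close the gap either. That continuity statement assumes joint convergence of $(f_j,g_j)$, and in the example $g_j$ does not converge to $g_h$.

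The missing idea is to keep the pushed-forward coefficient away from the rough curve altogether. The paper does this through its choice of extension. Since $v$ is a trigonometric polynomial, $izv(\arg z)$ is the restriction to $\mathbb S^1$ of the holomorphic field $V(z)=i\sum_{|n|\geq2}v_nz^{n+1}$ on $\mathbb C^*$. Multiplying by a radial cutoff vanishing near $0$ gives a disk flow $\widehat\psi_s^v$ that is conformal on a fixed boundary collar. Hence $\operatorname{supp}\mu_{\widehat\psi_s^v}\subset K\Subset\mathbb D$ and $\|\mu_{\widehat\psi_s^v}\|_\infty\leq q_0<1$, uniformly in $s\in I$.

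Every $\beta_{j,s}$ is then supported in $f_j(K_1)$ for one compact $K_1\Subset\mathbb D$. Three facts give a.e.\ convergence:
\begin{itemize}
\item pointwise convergence on $\operatorname{int}f(K_1)$;
\item eventual vanishing of both coefficients off $f(K_1)$;
\item null area of the analytic curve $f(\partial K_1)$.
\end{itemize}
Then \cite[Lemma~18]{AB60} yields $\Psi_{j_k,s_k}\to\Psi_{\infty,s}$.

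If you prefer to keep Douady--Earle, the repair is asymptotic conformality rather than area. Suppose $\sup_{|z|>r}|\mu_{\widehat\psi}|\to0$ as $r\uparrow1$, uniformly over your compact family of diffeomorphisms. Then $\beta_{f_j,\psi_j}$ is uniformly small on the image of the collar whatever its area, and $L^1_{\mathrm{loc}}$ convergence follows. That uniform estimate would have to be proved.

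The rest of your outline is sound: identifying the new interior map via welding uniqueness, renormalizing, using Cauchy estimates, and passing to uniformity by compactness of the parameter sets.
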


\begin{proof}
All the transformed weldings lie in $\mathcal W$ by
Lemma~\ref{lem:B1-unique-welding-flow-stability}.
The coefficient bound $|a_m|\leq m+1$ gives, for $0<r<1$ and $M\geq1$,
\begin{align}
 \sup_{|z|\leq r}|f_j(z)-f(z)|
 &\leq\sum_{m=1}^M r^{m+1}|a_m(h_j)-a_m(h)|
       +2\sum_{m>M}(m+1)r^{m+1}.
 \label{eq:DR-coefficients-to-local-uniform}
\end{align}
Letting $j\to\infty$ and then $M\to\infty$ proves local uniform
convergence on $\mathbb D$; Cauchy's formula also gives $f_j'\to f'$.

The finite Laurent polynomial
\[
 V(z):=i\sum_{|n|\geq2}v_nz^{n+1}
\]
is holomorphic near $\mathbb S^1$ and equals $izv(\arg z)$ on the circle.
Multiply it by a smooth radial cutoff equal to one near $\mathbb S^1$
and zero near $0$, and let $\widehat\psi_s^v$ be the resulting smooth
flow on $\overline{\mathbb D}$.  For $s\in I$, a sufficiently small
boundary collar stays in the region where the vector field is
holomorphic.  Thus, for a compact set $K\Subset\mathbb D$ and $q_0<1$,
\begin{align}
 \widehat\psi_s^v|_{\mathbb S^1}&=\psi_s^v,
 \label{eq:DR-exact-finite-flow-extension}\\
 \shortintertext{with}
 \operatorname{supp}\mu_{\widehat\psi_s^v}&\subset K,\qquad
 \sup_{s\in I}\|\mu_{\widehat\psi_s^v}\|_\infty\leq q_0.
 \label{eq:DR-finite-flow-common-support}
\end{align}
Smooth dependence on the compact set $I\times\overline{\mathbb D}$ and
positivity of the flow Jacobian give
\[
 \inf_{\substack{s\in I\\z\in\overline{\mathbb D}}}
 \left\{|(\widehat\psi_s^v)_z(z)|^2
       -|(\widehat\psi_s^v)_{\bar z}(z)|^2\right\}>0.
\]
Consequently, the continuous ratio
$|(\widehat\psi_s^v)_{\bar z}|/|(\widehat\psi_s^v)_z|$ has a maximum
strictly below one, which gives the uniform constant $q_0$ above.

Put
\begin{equation}\label{eq:DR-finite-continuity-qjs}
 f_{j,s}^{\rm src}:=f_j\circ\widehat\psi_s^v
\end{equation}
and define a Beltrami coefficient by
\begin{equation}\label{eq:DR-finite-continuity-betajs}
 \beta_{j,s}(f_{j,s}^{\rm src}(z))
 :=-\mu_{f_{j,s}^{\rm src}}(z)
 \frac{(f_{j,s}^{\rm src})_z(z)}
 {\overline{(f_{j,s}^{\rm src})_z(z)}},\qquad z\in\mathbb D,
\end{equation}
extended by zero outside $f_j(\mathbb D)$.
Let $\Psi_{j,s}$ solve the Beltrami equation with coefficient
$\beta_{j,s}$ and fix $0,1,\infty$.  Put
$F_{j,s}:=\Psi_{j,s}\circ f_{j,s}^{\rm src}$.
The Beltrami composition formula gives
\begin{equation}\label{eq:DR-finite-continuity-cancellation}
 \mu_{F_{j,s}}=0.
\end{equation}
Hence $F_{j,s}$ is conformal, and we may define
\begin{equation}\label{eq:DR-finite-continuity-output}
 f_{j,s}^{\rm out}(z):=\frac{F_{j,s}(z)-F_{j,s}(0)}{F_{j,s}'(0)}.
\end{equation}
Use the subscript $\infty$ for these constructions with $f$ in place
of $f_j$.
The exterior map $\Psi_{j,s}\circ g_j$ is conformal as well.
Applying the same affine normalization to both maps preserves their
welding, so $f_{j,s}^{\rm out}$ is the normalized interior map of
$T_s^vh_j$.

Let $j_k\to\infty$ and $s_k\to s\in I$.
Choose a closed disk $K_1\Subset\mathbb D$ whose interior contains
$K$ and $\widehat\psi_q^v(K)$ for every $q\in I$.  Then
\begin{equation}\label{eq:DR-map-and-derivative-convergence}
 f_{j_k}\longrightarrow f,\qquad f_{j_k}'\longrightarrow f'
 \quad\text{uniformly on }K_1.
\end{equation}
For $w\in\operatorname{int}f(K_1)$, convergence of the univalent maps
and smooth dependence of the flow give
\[
 (\widehat\psi_{s_k}^v)^{-1}(f_{j_k}^{-1}(w))
 \longrightarrow(\widehat\psi_s^v)^{-1}(f^{-1}(w)).
\]
Substitution in \eqref{eq:DR-finite-continuity-betajs} yields
$\beta_{j_k,s_k}(w)\to\beta_{\infty,s}(w)$.
For $w\notin f(K_1)$, uniform convergence on a neighborhood of $K_1$
puts $w$ outside $f_{j_k}(K_1)$ eventually, and both coefficients
vanish.  The exceptional curve $f(\partial K_1)$ is analytic and has
planar area zero.  Therefore
\begin{equation}\label{eq:DR-beta-ae-stability}
 \begin{gathered}
 \beta_{j_k,s_k}\longrightarrow\beta_{\infty,s}
 \quad\text{almost everywhere with respect to }dA,\\[3pt]
 \sup_k\|\beta_{j_k,s_k}\|_\infty\leq q_0<1.
 \end{gathered}
\end{equation}
These are the hypotheses of \cite[Lemma~18]{AB60} for solutions fixing
$0,1,\infty$.  Consequently,
\begin{equation}\label{eq:DR-normalized-solution-stability}
 \Psi_{j_k,s_k}\longrightarrow\Psi_{\infty,s}
 \quad\text{locally uniformly on }\mathbb C.
\end{equation}
It follows that $F_{j_k,s_k}\to F_{\infty,s}$ locally uniformly on
$\mathbb D$.  These maps are conformal, so Cauchy's formula gives
$F_{j_k,s_k}'(0)\to F_{\infty,s}'(0)\ne0$.  Thus
\begin{equation}\label{eq:DR-normalized-interior-convergence}
 f_{j_k,s_k}^{\rm out}\longrightarrow f_{\infty,s}^{\rm out}
 \quad\text{locally uniformly on }\mathbb D.
\end{equation}
For any fixed $0<r<1$,
\begin{align}
 |a_m(T_{s_k}^vh_{j_k})-a_m(T_s^vh)|
 &\leq r^{-m-1}\sup_{|z|=r}
 |f_{j_k,s_k}^{\rm out}(z)-f_{\infty,s}^{\rm out}(z)|
 \longrightarrow0.
 \label{eq:DR-sequential-coefficient-stability}
\end{align}
The same argument with the constant sequence $f_j=f$ gives
$a_m(T_{s_k}^vh)\to a_m(T_s^vh)$.  Hence
\begin{align*}
 |a_m(T_{s_k}^vh_{j_k})-a_m(T_{s_k}^vh)|
 &\leq|a_m(T_{s_k}^vh_{j_k})-a_m(T_s^vh)|\\
 &\quad+|a_m(T_s^vh)-a_m(T_{s_k}^vh)|\longrightarrow0.
\end{align*}
Compactness of $I$ proves \eqref{eq:DR-flow-coefficient-continuity}.

For a convergent sequence of parameters
$(s_{1,k},\ldots,s_{\ell,k})\to(s_1,\ldots,s_\ell)$, apply
\eqref{eq:DR-sequential-coefficient-stability} successively.  At the
$p$th step, for every $m\geq1$,
\begin{equation}\label{eq:DR-composition-sequential-stability}
 a_m(h_{j_k}\circ\psi_{s_{1,k}}^{v_1}\circ\cdots\circ\psi_{s_{p,k}}^{v_p})
 \longrightarrow
 a_m(h\circ\psi_{s_1}^{v_1}\circ\cdots\circ\psi_{s_p}^{v_p}),
 \quad 1\leq p\leq\ell.
\end{equation}
Applying this also to the constant sequence $h_j=h$ and using compactness
of $I_1\times\cdots\times I_\ell$ proves
\eqref{eq:DR-composition-coefficient-continuity}.
\end{proof}

\begin{samepage}
For each $v\in H_{\mathrm{WP}}^{\mathrm{fin}}$, the score $\rho_v$ is a
polynomial in finitely many $a_m,\overline{a_m}$ by
Lemma~\ref{lem:DR-stress-polynomial}.  The bounds $|a_m|\leq m+1$
therefore give an integer $N_v$ and a constant $C_v<\infty$ such that
\begin{equation}\label{eq:DR-score-coefficient-continuity-bound}
 |\rho_v(h_1)-\rho_v(h_2)|
 \leq C_v\sum_{m=1}^{N_v}|a_m(h_1)-a_m(h_2)|,
 \qquad h_1,h_2\in\mathcal W.
\end{equation}
Thus both uniform convergence statements in
Lemma~\ref{lem:DR-finite-mode-continuity} hold with $a_m$ replaced by
$\rho_v$.
\par
\end{samepage}

\begin{samepage}
\subsection{Final proof of Theorem~\ref{thm:DR-radial-closure}}
\label{sec:proof-radial-action}

Fix $h\in\mathcal W$, a composition $\Phi$ as in
\eqref{eq:intro-finite-composition}, and a sequence $(\widehat h_N)$
specified before the theorem.  Let $I_j$ be the closed interval with
endpoints $0$ and $t_j$.
For each $N\geq1$ and $1\leq j\leq\ell$, the maps
$\widehat h_N\circ\Phi_{j-1}\circ\psi_s^{v_j}$ are smooth circle
diffeomorphisms in $\mathcal W$ for $s\in I_j$, by
Lemma~\ref{lem:B1-unique-welding-flow-stability}.  Applying
\eqref{eq:DR-smooth-general-variation} at these maps gives
\[
 \frac d{ds}H_\kappa(\widehat h_N\circ\Phi_{j-1}\circ\psi_s^{v_j})
 =\rho_{v_j}(\widehat h_N\circ\Phi_{j-1}\circ\psi_s^{v_j}).
\]
The fundamental theorem of calculus therefore gives
\begin{align}
 &H_\kappa(\widehat h_N\circ\Phi)-H_\kappa(\widehat h_N)\notag\\
 &\quad=\sum_{j=1}^{\ell}
 \{H_\kappa(\widehat h_N\circ\Phi_j)
       -H_\kappa(\widehat h_N\circ\Phi_{j-1})\}\notag\\
 &\quad=\sum_{j=1}^{\ell}\int_0^{t_j}
 \rho_{v_j}(\widehat h_N\circ\Phi_{j-1}\circ\psi_s^{v_j})\,ds.
 \label{eq:DR-radial-composition-telescope}
\end{align}
\end{samepage}
The assumed coefficient convergence satisfies the hypotheses of
Lemma~\ref{lem:DR-finite-mode-continuity}.  Together with
\eqref{eq:DR-score-coefficient-continuity-bound}, this gives, for every $j$,
\begin{align}
 &\sup_{s\in I_j}
 |\rho_{v_j}(\widehat h_N\circ\Phi_{j-1}\circ\psi_s^{v_j})
       -\rho_{v_j}(h\circ\Phi_{j-1}\circ\psi_s^{v_j})|\notag\\
 &\quad\leq C_{v_j}\sum_{m=1}^{N_{v_j}}\sup_{s\in I_j}
 |a_m(\widehat h_N\circ\Phi_{j-1}\circ\psi_s^{v_j})
       -a_m(h\circ\Phi_{j-1}\circ\psi_s^{v_j})|
 \longrightarrow0.
 \label{eq:DR-radial-intermediate-stress}
\end{align}
Consequently,
\begin{align}
 &|H_\kappa(\widehat h_N\circ\Phi)-H_\kappa(\widehat h_N)-B_\Phi(h)|
 \notag\\
 &\quad\leq\sum_{j=1}^{\ell}|t_j|\sup_{s\in I_j}
 |\rho_{v_j}(\widehat h_N\circ\Phi_{j-1}\circ\psi_s^{v_j})
       -\rho_{v_j}(h\circ\Phi_{j-1}\circ\psi_s^{v_j})|
 \longrightarrow0.
 \label{eq:DR-analytic-increment-error}
\end{align}
This proves \eqref{eq:intro-analytic-approximation}.
The radial weldings $h^{(r)}$ are real-analytic circle diffeomorphisms
in $\mathcal W$, and \eqref{eq:intro-radial-map} gives
\begin{equation}\label{eq:DR-radial-coefficient-limit}
 a_m(h^{(r)})=r^ma_m(h)\longrightarrow a_m(h),\qquad m\geq1.
\end{equation}
The same estimate, applied as $r\uparrow1$, proves
\begin{equation}\label{eq:DR-radial-composition-limit}
 B_\Phi^{\mathrm{rad}}(h)
 =\sum_{j=1}^{\ell}\int_0^{t_j}
 \rho_{v_j}(h\circ\Phi_{j-1}\circ\psi_s^{v_j})\,ds
 =B_\Phi(h).
\end{equation}

Suppose $\Phi$ and $\Psi$ are two finite compositions representing the
same circle diffeomorphism.  On the radial weldings,
\begin{align}
 B_\Phi(h^{(r)})
 &=H_\kappa(h^{(r)}\circ\Phi)-H_\kappa(h^{(r)})\notag\\
 &=H_\kappa(h^{(r)}\circ\Psi)-H_\kappa(h^{(r)})
 =B_\Psi(h^{(r)}).
 \label{eq:DR-composition-independence-before-limit}
\end{align}
Letting $r\uparrow1$ gives $B_\Phi(h)=B_\Psi(h)$.

Finally, if $(f,g)$ is the normalized pair of $h$, then
$(f,g\circ r_{-\alpha})$ is the normalized pair of $r_\alpha\circ h$.
Thus, for all $h\in\mathcal W$,
\begin{equation}\label{eq:DR-exterior-mark-coefficients}
 a_m(r_\alpha\circ h)=a_m(h),\qquad
 \rho_v(r_\alpha\circ h)=\rho_v(h).
\end{equation}
For smooth circle diffeomorphisms $h\in\mathcal W$, the definitions of
$S_1$ and $k$ also give
\begin{equation}\label{eq:DR-exterior-mark-invariance}
 H_\kappa(r_\alpha\circ h)=H_\kappa(h).
\end{equation}
Since left and right composition commute,
\begin{align}
 B_\Phi(r_\alpha\circ h)
 &=\sum_{j=1}^{\ell}\int_0^{t_j}
 \rho_{v_j}\bigl(r_\alpha\circ(h\circ\Phi_{j-1}\circ\psi_s^{v_j})\bigr)
 \,ds\notag\\
 &=B_\Phi(h).
 \label{eq:DR-source-action-mark-invariance}
\end{align}
Changing an exterior rotational mark therefore changes neither a smooth
increment nor its limit.  This completes the proof of
Theorem~\ref{thm:DR-radial-closure}.

\subsection{Radial likelihood and covariance}

Let $\Phi$ be as in \eqref{eq:intro-finite-composition}, and let $F$ be
a bounded measurable function on $\mathcal W$.

\begin{corollary}\label{cor:DR-radial-likelihood}
\begin{align}
 \int F(h\circ\Phi)e^{-B_\Phi^{\mathrm{rad}}(h)}
 \,d\widetilde\nu_\kappa(h)
 &=\int F\,d\widetilde\nu_\kappa,
 \label{eq:DR-radial-source-RN}\\
 \shortintertext{and}
 \frac{d(R_\Phi)_\#\widetilde\nu_\kappa}{d\widetilde\nu_\kappa}(y)
 &=\exp\{B_\Phi^{\mathrm{rad}}(y\circ\Phi^{-1})\}.
 \label{eq:DR-radial-target-RN}
\end{align}
\end{corollary}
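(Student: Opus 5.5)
The plan is to reduce Corollary~\ref{cor:DR-radial-likelihood} to Proposition~\ref{prop:DR-finite-RN} through the pointwise identification of Theorem~\ref{thm:DR-radial-closure}. Nothing new has to be proved about the welding law; we only need to replace $B_\Phi$ by $B_\Phi^{\mathrm{rad}}$ everywhere.

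First, fix $\Phi$ as in \eqref{eq:intro-finite-composition}. By \eqref{eq:intro-radial-equality}, for \emph{every} $h\in\mathcal W$ the radial limit defining $B_\Phi^{\mathrm{rad}}(h)$ exists and equals $B_\Phi(h)$. The identity therefore holds pointwise on $\mathcal W$, not merely almost surely. Two consequences follow at once.
\begin{itemize}
\item $B_\Phi^{\mathrm{rad}}$ inherits the measurability of $B_\Phi$. That function is a uniform limit of Riemann sums in $\mathcal A_{\mathrm{fl}}$, by the argument of Lemma~\ref{lem:DR-integrated-divergence} applied to each flow segment.
\item It inherits the uniform bound \eqref{eq:DR-composition-density-bound}.
\end{itemize}
Hence $F(h\circ\Phi)e^{-B_\Phi^{\mathrm{rad}}(h)}$ is bounded and measurable, and \eqref{eq:DR-radial-source-RN} is literally \eqref{eq:DR-composition-source-identity}.

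For the target density, use that $R_\Phi$ is a bijection of $\mathcal W$ with inverse $R_{\Phi^{-1}}$, by Lemma~\ref{lem:B1-unique-welding-flow-stability} applied to $\Phi^{-1}$. Consequently $y\circ\Phi^{-1}\in\mathcal W$ for every $y\in\mathcal W$, and the pointwise identity gives
\[
 B_\Phi^{\mathrm{rad}}(y\circ\Phi^{-1})=B_\Phi(y\circ\Phi^{-1}).
\]
Thus $\exp\{B_\Phi^{\mathrm{rad}}(y\circ\Phi^{-1})\}=Z_\Phi(y)$ for every $y$. Then \eqref{eq:DR-radial-target-RN} is \eqref{eq:DR-composition-target-density}. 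Alternatively, one can rederive it by the change of variables $y=h\circ\Phi$ in \eqref{eq:DR-radial-source-RN}, exactly as in the proof of Proposition~\ref{prop:DR-finite-RN}.

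The only point requiring care is measurability of $y\mapsto B_\Phi(y\circ\Phi^{-1})$ on the completed space. I would handle it as in Lemma~\ref{lem:DR-integrated-divergence}. Since $\Phi^{-1}$ is itself a finite composition of flows with reversed times and order, $R_{\Phi^{-1}}$ preserves $\mathcal A_{\mathrm{fl}}$, so $B_\Phi\circ R_{\Phi^{-1}}$ is a uniform limit of elements of $\mathcal A_{\mathrm{fl}}$. Equivalence of the measures, which follows from the two-sided bound \eqref{eq:DR-finite-RN-uniform-bound}, then makes the identities insensitive to completion.
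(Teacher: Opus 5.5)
Your proposal is correct and matches the paper's proof, which substitutes the everywhere-valid identity $B_\Phi^{\mathrm{rad}}=B_\Phi$ from \eqref{eq:intro-radial-equality} into \eqref{eq:DR-composition-source-identity} and \eqref{eq:DR-composition-target-density}. Your remarks on the bijectivity of $R_\Phi$ on $\mathcal W$ and on the measurability of $B_\Phi\circ R_{\Phi^{-1}}$ are sound and spell out details the paper leaves implicit.
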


\begin{proof}
Substitute \eqref{eq:intro-radial-equality} into
\eqref{eq:DR-composition-source-identity} and
\eqref{eq:DR-composition-target-density}.
\end{proof}

For $v,w\in H_{\mathrm{WP}}^{\mathrm{fin}}$, retain
$\Phi_{s,t}$ and $A_{s,t}^{\mathrm{rad}}$ from the introduction.

\begin{corollary}\label{cor:DR-radial-Kahler-Hessian}
The ordered mixed derivative exists in the uniform norm and satisfies
\begin{align}
 \left.\partial_t\left(
 \left.\partial_s A_{s,t}^{\mathrm{rad}}\right|_{s=0}
 \right)\right|_{t=0}
 &=D_w\rho_v,
 \label{eq:DR-radial-ordered-derivative}\\
 \shortintertext{and}
 \int\left.\partial_t\left(
 \left.\partial_s A_{s,t}^{\mathrm{rad}}\right|_{s=0}
 \right)\right|_{t=0}d\widetilde\nu_\kappa
 &=G_\kappa(v,w).
 \label{eq:DR-radial-Kahler-Hessian}
\end{align}
\end{corollary}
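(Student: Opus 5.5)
The plan is to reduce Corollary~\ref{cor:DR-radial-Kahler-Hessian} to Corollary~\ref{cor:DR-finite-Hessian} by showing that the radial action coincides with the finite-composition action pointwise. By Theorem~\ref{thm:DR-radial-closure}, for every $y\in\mathcal W$ and every $(s,t)$ we have $B_{\Phi_{s,t}}^{\mathrm{rad}}(y\circ\Phi_{s,t}^{-1})=B_{\Phi_{s,t}}(y\circ\Phi_{s,t}^{-1})$. Here $B_{\Phi_{s,t}}$ is the integrated response attached to the two-flow sequence $(\psi_s^v,\psi_t^w)$. Note that $y\circ\Phi_{s,t}^{-1}\in\mathcal W$ by Lemma~\ref{lem:B1-unique-welding-flow-stability}. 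Comparing with \eqref{eq:DR-composition-target-action} and \eqref{eq:DR-canonical-two-flow}, we obtain the identity of functions on $\mathcal W$
\[
 A_{s,t}^{\mathrm{rad}}=-B_{\Phi_{s,t}}(\,\cdot\circ\Phi_{s,t}^{-1})=A_{s,t},
\]
valid for all $s,t\in\mathbb R$, with no exceptional set. The theorem also says this value does not depend on the representation of $\Phi_{s,t}$ or on the exterior marks, so nothing depends on conventions.

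Once $A_{s,t}^{\mathrm{rad}}$ and $A_{s,t}$ are identified as the same family of bounded functions, every derivative taken in the uniform norm transfers verbatim. First, \eqref{eq:DR-canonical-first-then-shift} gives $\partial_sA_{s,t}^{\mathrm{rad}}|_{s=0}=-U_{-t}^w\rho_v$ in the uniform norm. This uses the exact cocycle \eqref{eq:DR-canonical-target-cocycle} together with the one-flow derivative formula \eqref{eq:DR-abstract-action-derivatives} at $\tau=0$. Next, Lemma~\ref{lem:DR-flow-core-uniform-differentiability} applies, since $\rho_v\in\mathcal A_0\subset\mathcal A_{\mathrm{fl}}$, and gives $\partial_t(-U_{-t}^w\rho_v)|_{t=0}=D_w\rho_v$ uniformly. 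This proves \eqref{eq:DR-radial-ordered-derivative}.

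For \eqref{eq:DR-radial-Kahler-Hessian}, I integrate $D_w\rho_v$. Proposition~\ref{prop:DR-right-IBP} with $F=\rho_v\in\mathcal A_{\mathrm{fl}}$ gives $\int D_w\rho_v\,d\widetilde\nu_\kappa=\int\rho_v\rho_w\,d\widetilde\nu_\kappa$. Then \eqref{eq:intro-Fisher-form} of Theorem~\ref{thm:intro-Fisher-Kahler} gives $\int\rho_v\rho_w\,d\widetilde\nu_\kappa=G_\kappa(v,w)$; this is exactly \eqref{eq:DR-Hessian-IBP}. Uniform convergence of the difference quotients makes the integral of the derivative equal to the derivative of the integrals, although this is not even needed since the statement integrates the limit function itself.

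The only point requiring care is the first step. The radial limit defining $B^{\mathrm{rad}}$ must exist at every point $y\circ\Phi_{s,t}^{-1}$ simultaneously for all $(s,t)$, not just almost everywhere. Otherwise the uniform-norm derivatives would not make sense for $A^{\mathrm{rad}}$. Theorem~\ref{thm:DR-radial-closure} is stated for every $h\in\mathcal W$ and every $\Phi$, so this holds. I would explicitly check that $\Phi_{s,t}$ falls under \eqref{eq:intro-finite-composition} for every real $s,t$, including $s=0$ or $t=0$. If one of the times vanishes, the corresponding flow is the identity and contributes zero to the sum \eqref{eq:intro-integrated-response}.
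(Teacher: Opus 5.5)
Your proof is correct and takes the same route as the paper. The paper also uses Theorem~\ref{thm:DR-radial-closure} to identify $A_{s,t}^{\mathrm{rad}}=A_{s,t}$ pointwise on $\mathcal W$, and then reads off both identities from Corollary~\ref{cor:DR-finite-Hessian}; the cocycle, uniform-differentiability and integration-by-parts steps you write out inline are exactly that corollary's proof.
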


\begin{proof}
By Theorem~\ref{thm:DR-radial-closure},
\[
 A_{s,t}^{\mathrm{rad}}(y)
 =-B_{\Phi_{s,t}}(y\circ\Phi_{s,t}^{-1})=A_{s,t}(y).
\]
Both identities follow from Corollary~\ref{cor:DR-finite-Hessian}.
\end{proof}

\clearpage
\appendix
\part*{Appendix}
\addcontentsline{toc}{section}{Appendix}
\section{Proofs of the right response identities}
\label{app:coefficient-response}

The Cauchy kernels displayed in \cite[Lemma~2.3]{BJ24} and
\cite[Eq.~(5.5)]{BJ25} have opposite signs.
We fix the convention $\bar\partial C_\mu=\mu$ and derive the response
from the actual right composition $h\mapsto h\circ\psi_t^v$.

\subsection{Cauchy representatives and normalized right derivatives}
\label{app:BJ-convention}

Retain the complex responses $R_\mu,\bar R_\mu$ and the pairings
\eqref{eq:DR-holomorphic-pairing}--\eqref{eq:DR-rho-pairing} from the
smooth-variation setup in Section~\ref{sec:integrated-response}.
For $\mu\in L^\infty(\mathbb D)$ with
$\operatorname{supp}\mu\Subset\mathbb D\setminus\{0\}$, extended by zero
outside $\mathbb D$, define
\begin{align}
 C_\mu(\xi)&:=\frac1\pi\int_{\mathbb D}
 \frac{\mu(z)}{\xi-z}\,dA(z),
 \label{eq:DR-Cauchy-convention-below}\\
 C_\mu^{\rm nor}(\xi)&:=C_\mu(\xi)-C_\mu(0)-\xi C_\mu'(0).
 \label{eq:DR-normalized-Cauchy-vector}
\end{align}
Let $\delta_0$ denote the Dirac mass at zero.  The distributional
identity $\bar\partial(1/(\pi z))=\delta_0$ gives
$\bar\partial C_\mu=\mu$.
Choose radii $0<r_-<r_+<1$ with
\[
 \frac{r_+}{(1-r_+)^2}<\frac14,
\]
and a smooth radial cutoff $\chi_0$ equal to zero on $|z|\leq r_-$ and
one on $|z|\geq r_+$.  For $n\geq1$, put
\begin{equation}\label{eq:DR-Fourier-Cauchy-vector}
 V_n(z):=\frac i2(z^{1-n}-z),\qquad
 \mu_{n,c}:=\bar\partial(\chi_0 V_n),\qquad
 \mu_{n,s}:=i\mu_{n,c}.
\end{equation}
The product $\chi_0 V_n$ is smooth at zero and vanishes there with its
first derivative; $\mu_{n,c}$ is supported in the annulus
$r_-\leq|z|\leq r_+$.  Since
\[
 \bar\partial(\chi_0V_n-C_{\mu_{n,c}})=0,
 \qquad \chi_0V_n-C_{\mu_{n,c}}=O(1+|z|),
\]
the difference is affine.  Its value and derivative at zero give
\[
 C_{\mu_{n,c}}^{\rm nor}=\chi_0 V_n,
 \qquad C_{\mu_{n,c}}^{\rm nor}=V_n\quad\text{near }\mathbb S^1.
\]
The normalized real boundary fields are $\cos(n\theta)-1$ and
$\sin(n\theta)$, respectively.  On observables admitting these responses,
\begin{align}
 D_{n,c}&=R_{\mu_{n,c}}+\bar R_{\mu_{n,c}}+D_{\mathrm{rot}},
 \label{eq:DR-real-cos-response}\\
 \shortintertext{and}
 D_{n,s}&=R_{\mu_{n,s}}+\bar R_{\mu_{n,s}}.
 \label{eq:DR-real-sin-response}
\end{align}

The pairings in \eqref{eq:DR-sigma-pairing}--\eqref{eq:DR-rho-pairing}
depend only on the boundary trace
$C_\mu^{\rm nor}|_{\mathbb S^1}$.
Let $\delta\mu$ be the difference of two representatives with the same
normalized boundary trace.  Then $C_{\delta\mu}^{\rm nor}=0$ on
$\mathbb S^1$, so
\[
 C_{\delta\mu}(\xi)=C_{\delta\mu}(0)+\xi C_{\delta\mu}'(0)
\]
on the outer component.  Analytic continuation to infinity and
\(C_{\delta\mu}(\xi)=O(\xi^{-1})\) force all three terms to vanish there.
Thus $C_{\delta\mu}=O(z^2)$ near zero and vanishes on the outer
component.

For $p=\mathcal Sf$ and
$p=f'^2/f^2-z^{-2}$, Stokes' formula on the annulus between zero and a
circle enclosing $\operatorname{supp}\delta\mu$ gives
\[
 (p,\delta\mu)
 =-\operatorname*{Res}_{z=0}\{p(z)C_{\delta\mu}(z)\}=0.
\]
Here the first function $p$ is holomorphic at zero and the second
has at most a simple pole there.

For $n\geq2$, choose $r_+<r<1$.  The cutoff $\chi_0$ vanishes near zero
and equals one on $|z|=r$, so Stokes' formula gives
\begin{align}
 \frac1\pi\int_{\mathbb D}q_f\mu_{n,c}\,dA
 &=\frac1{2\pi i}\oint_{|z|=r}q_f(z)V_n(z)\,dz\notag\\
 &=\operatorname*{Res}_{z=0}\{q_f(z)V_n(z)\}
 =\frac i2t_n.
 \label{eq:DR-Fourier-Stokes}
\end{align}
Indeed, the term $-iz/2$ in $V_n$ has zero residue because $q_f$ has no
$z^{-2}$ coefficient.  Since $\rho_h(\mu)=-(q_f,\mu)$, we obtain
\begin{equation}\label{eq:DR-Fourier-A-values-proof}
 \rho_h(\mu_{n,c})=-\frac i2t_n,
 \qquad
 \rho_h(i\mu_{n,c})=\frac12t_n.
\end{equation}

For $v$ as in \eqref{eq:intro-real-finite-field}, put
\begin{equation}\label{eq:B1-normalized-vector-field}
 u_v(\theta):=v(\theta)-v(0),
\end{equation}
and choose the compactly supported representative
\begin{equation}\label{eq:DR-finite-Fourier-Beltrami}
 \mu_v:=\sum_{n=2}^N(x_n\mu_{n,c}+y_n\mu_{n,s}).
\end{equation}
\begin{lemma}\label{lem:B1-BJ-interface}
\begin{align}
 \mathcal C
 &\subset\operatorname{Dom}(R_{\mu_v})
  \cap\operatorname{Dom}(\bar R_{\mu_v})
  \cap\operatorname{Dom}(D_{\mathrm{rot}}),
 \label{eq:B1-core-in-BJ-domain}\\
 \shortintertext{and}
 D_vF&=(R_{\mu_v}+\bar R_{\mu_v})F+v(0)D_{\mathrm{rot}}F,
 \qquad F\in\mathcal C.
 \label{eq:B1-core-BJ-identification}
\end{align}
\end{lemma}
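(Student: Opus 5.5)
The plan is to reduce the lemma to a first-order expansion of the Ahlfors--Bers family on the circle, and then repeat the change-of-variables computation of Lemma~\ref{lem:B1-J-response} with complex parameters.

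\textbf{Step 1 (first-order lift expansion).} First I establish the following. Let $\widetilde\Phi_\zeta$ be the circle-preserving Ahlfors--Bers family with coefficient $\zeta\mu_v$. I will show it admits lifts $a_\zeta$ with
\[
 \|a_\zeta-\id-\zeta W-\bar\zeta\,\overline W\|_{C^1(\T)}=O(|\zeta|^2),
\]
for a smooth complex field $W$, and that for real $\zeta=s$ the real field $W+\overline W$ equals $u_v=v-v(0)$.

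To see this, note that $\mu_v$ is smooth and compactly supported in the annulus $r_-\le|z|\le r_+$. Hence the reflected coefficient vanishes on a neighborhood $\{r_+<|z|<1/r_+\}$ of $\mathbb S^1$. There $\widetilde\Phi_\zeta$ is conformal, and by Ahlfors--Bers it depends real-analytically on $(\zeta,\bar\zeta)$, locally uniformly. Cauchy estimates on this annulus then upgrade the locally uniform expansion to the $C^1$ expansion on $\mathbb S^1$.

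The tangent of the symmetric family is the standard one:
\[
 \partial_\zeta\widetilde\Phi_\zeta|_{0}=C_{\mu_v}+(\text{reflected term})-(\text{quadratic polynomial fixing }0,1,\infty).
\]
On $\mathbb S^1$ this is the real field whose coefficient is obtained from $C^{\rm nor}_{\mu_v}$ by subtracting its value at $\theta=0$. This subtraction is forced because the family fixes $1$.

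By the Stokes computation already recorded, $C^{\rm nor}_{\mu_{n,c}}=V_n$ near $\mathbb S^1$. On the circle, $V_n$ gives the boundary field $\cos(n\theta)-1$, and $iV_n$ gives $\sin(n\theta)$. Summing with the coefficients $x_n,y_n$ from \eqref{eq:DR-finite-Fourier-Beltrami} yields $u_v=\sum_n\{x_n(\cos n\theta-1)+y_n\sin n\theta\}$, since $v(0)=\sum_n x_n$.

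\textbf{Step 2 (domains).} Next I prove \eqref{eq:B1-core-in-BJ-domain}. Take $J_{a,b}$ and repeat the change of variables \eqref{eq:B1-change-var} with $\Psi_t$ replaced by $a_\zeta$. The $C^1$ expansion of Step~1 gives, uniformly in $h\in\mathcal W$,
\begin{align*}
 J_{a,b}(h\circ\widetilde\Phi_\zeta)
 ={}&J_{a,b}(h)
 -\frac{\zeta}{2\pi}\int(aW)'\,b\circ h
 -\frac{\bar\zeta}{2\pi}\int(a\overline W)'\,b\circ h\\
 &+O(|\zeta|^2).
\end{align*}
So $J_{a,b}$ lies in the domains of $R_{\mu_v}$ and $\bar R_{\mu_v}$. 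Summing the two first-order terms gives
\[
 (R_{\mu_v}+\bar R_{\mu_v})J_{a,b}=-\frac1{2\pi}\int(au_v)'\,b\circ h=D_{u_v}J_{a,b}.
\]
For a cylinder $F=\phi(J_1,\dots,J_m)$, the chain rule with $\phi\in C_b^\infty$ preserves an $o(|\zeta|)$ expansion. Membership in $\operatorname{Dom}(D_{\mathrm{rot}})$ is already \eqref{eq:B1-rotation-skew-adjoint}.

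\textbf{Step 3 (identity).} Formula \eqref{eq:B1-J-response} is real-linear in the field. Since $v=u_v+v(0)\cdot1$, this gives $D_vJ_{a,b}=D_{u_v}J_{a,b}+v(0)D_1J_{a,b}$. The chain rule \eqref{eq:B1-cylinder-response} transfers this to every $F\in\mathcal C$. Finally $D_1F=D_{\mathrm{rot}}F$ on $\mathcal C$, which gives \eqref{eq:B1-core-BJ-identification}.

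\textbf{Main obstacle.} I expect the main difficulty to be Step~1, specifically the sign and normalization bookkeeping. The Cauchy convention $\bar\partial C_\mu=\mu$ must be matched with the tangent of the normalized solution. One has to check that the reflected term combines with $C_{\mu_v}$ into the real field built from $C^{\rm nor}$. One also has to verify that fixing $0,1,\infty$ rather than normalizing the derivative at $0$ only changes the field by the constant $-v(0)$; the rotation part of the quadratic polynomial is exactly what produces it. I would first check all of this on a single mode $\mu_{n,c}$ by explicit residues, and then pass to $\mu_v$ by real linearity.
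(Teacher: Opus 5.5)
Your proposal is correct and follows the paper's proof essentially step for step. The paper also (i) derives a $C^1(\T)$ expansion of the circle lift of the $0,1,\infty$-normalized Ahlfors--Bers family from the parameter theorem and Cauchy estimates on the coefficient-free annulus, (ii) identifies the complex angular velocity via $C^{\rm nor}_{\mu_{n,c}}=V_n$, so that its real part is $u_v=v-v(0)$, (iii) repeats the change of variables of Lemma~\ref{lem:B1-J-response} with a second-order Taylor bound for cylinders, and (iv) splits $v=u_v+v(0)$. The normalization issue you flag is milder than you expect: $V_n(1)=0$ already, so fixing $1$ requires no rotational correction for these representatives.
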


\begin{proof}
On $z=e^{i\theta}$, reflection across the circle gives
\begin{equation}\label{eq:B1-BJ-an-bn}
 V_n(z)-z^2\overline{V_n(z)}
 =iz\{\cos(n\theta)-1\},
\end{equation}
and
\begin{equation}\label{eq:B1-BJ-cos-sin}
 iV_n(z)-z^2\overline{iV_n(z)}=iz\sin(n\theta).
\end{equation}
Both fields vanish at $z=1$.  For the family
$\widetilde\Phi_t$ normalized to fix $0,1,\infty$, write
\[
 \widetilde\Phi_t(e^{i\theta})=e^{i\widetilde\Psi_t(\theta)}.
\]
The normalized Cauchy vector in
\eqref{eq:DR-normalized-Cauchy-vector} determines its complex angular
velocity:
\begin{equation}\label{eq:B1-normalized-Fourier-generators}
 \xi_v(\theta)
 :=\frac{C_{\mu_v}^{\rm nor}(e^{i\theta})}{ie^{i\theta}}
 =\frac12\sum_{n=2}^N(x_n+iy_n)(e^{-in\theta}-1).
\end{equation}
The support of $\mu_v$ is separated from $\mathbb S^1$.
For $t$ near zero, the reflected coefficients lie in a fixed
$L^\infty$ ball of radius below one.  The Ahlfors--Bers parameter
theorem \cite[Theorem~11]{AB60} and Cauchy estimates on a fixed annulus give
\begin{equation}\label{eq:B1-BJ-boundary-expansion}
 \widetilde\Psi_t(\theta)
 =\theta+t\xi_v(\theta)+\bar t\,\overline{\xi_v(\theta)}
  +O(|t|^2)
 \quad\text{in }C^1(\mathbb T).
\end{equation}
In particular,
\begin{equation}\label{eq:B1-BJ-real-infinitesimal}
 \xi_v+\overline{\xi_v}
 =\sum_{n=2}^N\{x_n(\cos(n\theta)-1)+y_n\sin(n\theta)\}
 =u_v.
\end{equation}
The change-of-variables calculation in Lemma~\ref{lem:B1-J-response}, now
using \eqref{eq:B1-BJ-boundary-expansion}, gives uniformly in $h$
\begin{align}
 &J_{a,b}(h\circ\widetilde\Phi_t)-J_{a,b}(h)\notag\\
 &\quad=-\frac{t}{2\pi}\int_0^{2\pi}
   (a\xi_v)'(\theta)b(h(e^{i\theta}))\,d\theta\notag\\
 &\qquad\phantom{=}
  -\frac{\bar t}{2\pi}\int_0^{2\pi}
   (a\overline{\xi_v})'(\theta)b(h(e^{i\theta}))\,d\theta
  +O_{a,b}(|t|^2).
 \label{eq:B1-BJ-J-complex-expansion}
\end{align}
The defining expansion of $R_\mu,\bar R_\mu$ therefore gives
\begin{align}
 R_{\mu_v}J_{a,b}
 &=-\frac1{2\pi}\int_0^{2\pi}
   (a\xi_v)'(\theta)b(h(e^{i\theta}))\,d\theta,
 \label{eq:B1-BJ-R-on-J}\\
 \shortintertext{and}
 \bar R_{\mu_v}J_{a,b}
 &=-\frac1{2\pi}\int_0^{2\pi}
   (a\overline{\xi_v})'(\theta)b(h(e^{i\theta}))\,d\theta.
 \label{eq:B1-BJ-Rbar-on-J}
\end{align}
Equations \eqref{eq:B1-BJ-real-infinitesimal} and
\eqref{eq:B1-J-response} show that
\begin{equation}\label{eq:B1-BJ-real-sum-on-J}
 (R_{\mu_v}+\bar R_{\mu_v})J_{a,b}=D_{u_v}J_{a,b}.
\end{equation}
Write $J_j=J_{a^{(j)},b^{(j)}}$ and $J=(J_1,\ldots,J_m)$ for the
observables defining $F=\phi(J)\in\mathcal C$.  Their values range in
a fixed compact set.  Put $\Delta J=J(h\circ\widetilde\Phi_t)-J(h)$.
Taylor's formula on a compact neighborhood gives
\[
 |\phi(J+\Delta J)-\phi(J)-d\phi(J)[\Delta J]|
 \leq\frac12\sup\|d^2\phi\|\,|\Delta J|^2
 =O_F(|t|^2).
\]
Together with \eqref{eq:B1-BJ-J-complex-expansion}, this yields
\begin{equation}\label{eq:B1-core-BJ-expansion}
 F(h\circ\widetilde\Phi_t)
 =F(h)+tR_{\mu_v}F(h)+\bar t\,\bar R_{\mu_v}F(h)
  +O_F(|t|^2),
\end{equation}
uniformly in $h$.  Thus
\[
 \mathcal C\subset\operatorname{Dom}(R_{\mu_v})
 \cap\operatorname{Dom}(\bar R_{\mu_v}),
 \qquad
 (R_{\mu_v}+\bar R_{\mu_v})F=D_{u_v}F.
\]
Lemma~\ref{lem:B1-J-response} and the cylinder chain rule also give
$\mathcal C\subset\operatorname{Dom}(D_{\mathrm{rot}})$.
Since $v=u_v+v(0)$, real linearity proves
\eqref{eq:B1-core-BJ-identification}.
\end{proof}

\subsection{Proof of Lemma~\ref{lem:DR-stress-polynomial}}

The identities
\begin{align}
 \frac{f'}f
 &=\frac1z+\frac d{dz}\log\left(1+\sum_{m\geq1}a_mz^m\right),
 \label{eq:DR-log-derivative}\\
 \mathcal Sf&=\frac{f'''}{f'}-\frac32\left(\frac{f''}{f'}\right)^2
 \label{eq:DR-Schwarzian-algebra}
\end{align}
show that $[z^{n-2}]q_f$ depends only on $a_1,\ldots,a_n$.
For the radial maps $f^{(r)}$ in \eqref{eq:intro-radial-map}, the chain
rules give
\begin{equation}\label{eq:DR-stress-scaling}
 q_{f^{(r)}}(z)=r^2q_f(rz),
 \qquad t_n(f^{(r)})=r^nt_n(f).
\end{equation}
Give $a_m$ weight $m$.  Equation \eqref{eq:DR-stress-scaling} shows
that every monomial of $t_n$ has weight $n$.  The only monomial of weight $n$ containing
$a_n$ is $a_n$.  At
$f_\varepsilon(z)=z+\varepsilon a_nz^{n+1}$,
\begin{align}
 \left.\partial_\varepsilon\right|_0
 \left(\frac{f_\varepsilon'^2}{f_\varepsilon^2}-\frac1{z^2}\right)
 &=2na_nz^{n-2},
 \label{eq:DR-weight-linear}\\
 \shortintertext{and}
 \left.\partial_\varepsilon\right|_0\mathcal Sf_\varepsilon
 &=(n^3-n)a_nz^{n-2}.
 \label{eq:DR-central-linear}
\end{align}
Therefore
\begin{equation}\label{eq:DR-tn-triangular}
 t_n-C_{\kappa,n}a_n\in\mathbb C[a_1,\ldots,a_{n-1}].
\end{equation}
Since $\partial_{a_{m+n}}t_n=0$ for $m\geq1$,
\[
 \mathcal L_nt_n=\partial_{a_n}t_n=C_{\kappa,n}.
\]
For fixed nonnegative integers $j_1,\ldots,j_n$, write
$P=\partial_{a_1}^{j_1}\cdots\partial_{a_n}^{j_n}t_n$.
De Branges' theorem applies to every map in the stated normalized
univalent class and gives $|a_m|\leq m+1$ \cite{deBranges85}.  Hence
\[
 \sup_f|P(a_1(f),\ldots,a_n(f))|
 \leq\max_{|z_m|\leq m+1,\,1\leq m\leq n}
       |P(z_1,\ldots,z_n)|<\infty.
\]
This proves \eqref{eq:DR-level-one} and
\eqref{eq:DR-stress-polynomial-uniform-bound}.

\subsection{Proof of Lemma~\ref{lem:DR-exact-change-frame}}

\subsubsection{Construction of the deformed uniformizers}

Fix $n\geq1$ and $h\in\mathcal W$ with normalized pair $(f,g)$, and set
\[
 \psi_{c,t}:=\psi_t^{\cos(n\cdot)},
 \qquad
 \psi_{s,t}:=\psi_t^{\sin(n\cdot)}.
\]
Extend the two circle fields to $\mathbb D$ by
\[
 \widehat v_{n,c}(z)=\chi_0(z)V_n(z)+\frac i2(z^{n+1}-z)+iz,
 \qquad
 \widehat v_{n,s}(z)=i\chi_0(z)V_n(z)+\frac12(z^{n+1}-z).
\]
On $z=e^{i\theta}$, these satisfy
\[
 \widehat v_{n,c}(z)=iz\cos(n\theta),\qquad
 \widehat v_{n,s}(z)=iz\sin(n\theta),
 \qquad
 \bar\partial\widehat v_{n,a}=\mu_{n,a},\quad a\in\{c,s\}.
\]
Let $\widehat\psi_{a,t}$ be their flows for $|t|\leq\epsilon$.
They fix zero, preserve $\mathbb D$, and restrict to $\psi_{a,t}$ on
$\mathbb S^1$.  The fields are holomorphic near zero and the boundary.
After decreasing $\epsilon$, there is one
$K\Subset\mathbb D\setminus\{0\}$ such that
\begin{equation}\label{eq:DR-actual-extension-C1}
 \operatorname{supp}\mu_{\widehat\psi_{a,t}}\subset K,
 \qquad
 t\longmapsto\mu_{\widehat\psi_{a,t}}
 \quad\text{is $C^1$ in $L^\infty$}.
\end{equation}
The coefficients are smooth in $(t,z)$ and vanish on a fixed neighborhood
of $\partial K$.  A dot denotes differentiation in $t$; in particular,
$\dot\mu_{\widehat\psi_{a,0}}=\mu_{n,a}$.

Set \(f_{a,t}^{\rm src}=f\circ\widehat\psi_{a,t}\), extend
\(\beta_{a,t}\) by zero after defining
\begin{equation}\label{eq:DR-physical-coefficient}
 \beta_{a,t}(f_{a,t}^{\rm src}(z))
 :=-\mu_{f_{a,t}^{\rm src}}(z)
 \frac{(f_{a,t}^{\rm src})_z(z)}
      {\overline{(f_{a,t}^{\rm src})_z(z)}},
\end{equation}
and let $\Psi_{a,t}$ be the principal solution with coefficient
$\beta_{a,t}$.  Choose the compact set $K$ in
\eqref{eq:DR-actual-extension-C1} large enough that
\[
 \widehat\psi_{a,t}
 (\operatorname{supp}\mu_{\widehat\psi_{a,t}})\subset K
 \qquad(a\in\{c,s\},\ |t|\leq\epsilon).
\]
On this set, $f$ is holomorphic and $\inf_K|f'|>0$.
The change of variables and the fixed zero collar therefore give
\[
 t\longmapsto\beta_{a,t}
 \quad\text{in }C^1\bigl(({-}\epsilon,\epsilon);L^\infty(\mathbb C)\bigr),
 \qquad
 \dot\beta_{a,0}(f(z))
 =-\dot\mu_{\widehat\psi_{a,0}}(z)
   \frac{f'(z)}{\overline{f'(z)}}.
\]
In particular,
\[
 \operatorname{supp}\beta_{a,t}\subset f(K),\qquad
 \sup_{a,\,|t|\leq\epsilon}\|\beta_{a,t}\|_\infty<1.
\]
For a map $H$ conformal near zero with $H'(0)\ne0$, define
\[
 \operatorname{Norm}(H):=\frac{H-H(0)}{H'(0)}.
\]
The normalized interior map of
the deformed welding is
\begin{equation}\label{eq:DR-actual-normalized-family}
 F_{a,t}:=\operatorname{Norm}
  (\Psi_{a,t}\circ f_{a,t}^{\rm src}).
\end{equation}

The exact composition formula gives
\begin{equation}\label{eq:DR-exact-physical-cancellation}
 \mu_{\Psi_{a,t}\circ f_{a,t}^{\rm src}}=0,
 \qquad
 (\Psi_{a,t}\circ g)^{-1}\circ
 (\Psi_{a,t}\circ f_{a,t}^{\rm src})
 =h\circ\psi_{a,t}.
\end{equation}
The Ahlfors--Bers parameter theorem gives $C^1$ dependence of
$\Psi_{a,t}$ on $t$ in the coefficient-free regions.
The normalizing derivative in \eqref{eq:DR-actual-normalized-family}
is nonzero, so $F_{a,t}$ is $C^1$ in $t$ near zero and on the outer
analytic collar.  Cauchy's integral formula extends this regularity
throughout $\mathbb D$.  For $0<r<1$, it gives
\[
 a_m(F_{a,t})=\frac1{2\pi i}\oint_{|z|=r}
                 F_{a,t}(z)z^{-m-2}\,dz,
 \qquad
 \frac d{dt}a_m(F_{a,t})
 =\frac1{2\pi i}\oint_{|z|=r}
                 \dot F_{a,t}(z)z^{-m-2}\,dz.
\]
\subsubsection{The positive modes}

The reflected normalized vector is
\begin{equation}\label{eq:DR-reflected-vector}
 V_n^\#(z)=\frac i2(z^{n+1}-z).
\end{equation}
Put $B:=\dot\Psi_{c,0}$; complex linearity of the principal
linearized solution gives $\dot\Psi_{s,0}=iB$.
Subtract the source rotation $iz$ from $\widehat v_{n,c}$, and denote
the resulting unnormalized interior variations by $\delta_cf$ and
$\delta_sf$.  On the analytic collar,
\begin{samepage}
\begin{align}
 \delta_cf&=B\circ f+f'(V_n+V_n^\#),
 \label{eq:DR-raw-cos}\\
 \shortintertext{and}
 \delta_sf&=iB\circ f+f'(iV_n-iV_n^\#).
 \label{eq:DR-raw-sin}
\end{align}
\end{samepage}
Thus
\begin{equation}\label{eq:DR-positive-raw-cancellation}
 \delta_cf+i\delta_sf=i(z^{n+1}-z)f'.
\end{equation}
Moreover,
\begin{equation}\label{eq:DR-normalization-differential}
 (d\operatorname{Norm})_f[\delta H]
 =\delta H-\delta H(0)-f\delta H'(0).
\end{equation}
The right-rotation response is
\begin{equation}\label{eq:DR-A0-f}
 D_{\mathrm{rot}}f=i(zf'-f).
\end{equation}
The right side of \eqref{eq:DR-positive-raw-cancellation} is holomorphic
on $\mathbb D$, vanishes at zero, and has derivative $-i$ there.
Applying \eqref{eq:DR-normalization-differential} and restoring the
rotation gives
\begin{align}
 D_n^+f
 &=(d\operatorname{Norm})_f[i(z^{n+1}-z)f']+D_{\mathrm{rot}}f\notag\\
 &=i(z^{n+1}-z)f'+if+i(zf'-f)
 =iz^{n+1}f'.
 \label{eq:DR-positive-on-f}
\end{align}

\subsubsection{The negative modes}

On the analytic collar, the explicit source fields satisfy
\[
 \widehat v_{n,c}(w)-i\widehat v_{n,s}(w)
 =2V_n(w)+iw=iw^{1-n}.
\]
In the target coordinate $\xi=f(w)$, write this velocity as
\[
 u(f(w))=if'(w)w^{1-n}.
\]
Fix \(r<1\) inside the analytic collar so that
\(K\Subset\{|w|<r\}\), and orient \(|w|=r\) positively.
For $|z|<r$, define the normalized interior Cauchy projection by
\begin{align}
 Y_{\rm in}(f(z))
 &=\frac{1}{2\pi i}\oint_{|w|=r}
 u(f(w))f'(w)
 \left\{\frac1{f(w)-f(z)}-\frac1{f(w)}
 -\frac{f(z)}{f(w)^2}\right\}\,dw\notag\\
 &=\frac{i f(z)^2}{2\pi i}\oint_{|w|=r}
 \frac{f'(w)^2w^{1-n}}
 {f(w)^2\{f(w)-f(z)\}}\,dw.
 \label{eq:DR-negative-interior-projection}
\end{align}
Here we used
\[
 \frac1{\xi-x}-\frac1\xi-\frac{x}{\xi^2}
 =\frac{x^2}{\xi^2(\xi-x)}.
\]
On the analytic contour \(f(|w|=r)\), set
\(Y_{\rm out}:=Y_{\rm in}-u\).  The Sokhotski--Plemelj formula gives
holomorphic interior and exterior extensions with jump $u$.
For inversion and its action on vector fields, write
\[
 \iota(\xi)=\xi^{-1},
 \qquad
 (\iota_*U)(v):=-v^2U(v^{-1}).
\]
The three normalizations are
\begin{equation}\label{eq:DR-negative-projection-gauge}
 Y_{\rm in}(0)=Y_{\rm in}'(0)=0,
 \qquad (\iota_*Y_{\rm out})(0)=0.
\end{equation}
For two pairs with jump $u$ and normalizations
\eqref{eq:DR-negative-projection-gauge}, the difference glues across
the contour to a holomorphic vector field $A+B\xi+C\xi^2$ on the
Riemann sphere.  The normalization conditions give
\[
 A=0,\qquad B=0,\qquad
 (\iota_*(A+B\xi+C\xi^2))(0)=-C=0.
\]
For a fixed extension, differentiation of
\eqref{eq:DR-actual-normalized-family} gives
\begin{equation}\label{eq:DR-linearized-actual-family}
 \dot F_{a,0}
 =(d\operatorname{Norm})_f
 \left[\dot\Psi_{a,0}\circ f
       +f'\dot{\widehat\psi}_{a,0}\right],
\qquad
 \bar\partial\dot\Psi_{a,0}=\dot\beta_{a,0},
\end{equation}
where \(\dot\Psi_{a,0}\) is the unique principal linearized
Ahlfors--Bers solution.  Set
\[
 A_{a,t}(\xi):=
 \frac{\xi-(\Psi_{a,t}\circ f_{a,t}^{\rm src})(0)}
 {(\Psi_{a,t}\circ f_{a,t}^{\rm src})'(0)}.
\]
The accompanying exterior map is
$A_{a,t}\circ\Psi_{a,t}\circ g$.  Put
\[
 P_a(x):=\left.\partial_t\right|_{t=0}
          (A_{a,t}\circ\Psi_{a,t})(x).
\]
The support condition
$\operatorname{supp}\dot\beta_{a,0}\subset f(K)$ makes $P_a$ holomorphic
on $\widehat{\mathbb C}\setminus f(K)$, in particular across the
welding curve.  The two actual velocities satisfy
\begin{align*}
 \dot F_{a,0}(w)
 &=P_a(f(w))+f'(w)\widehat v_{n,a}(w),\\
 \left.\partial_t\right|_0(A_{a,t}\circ\Psi_{a,t}\circ g)(z)
 &=P_a(g(z)).
\end{align*}
Their $c-is$ combination has jump
\begin{align*}
 &(\dot F_{c,0}-i\dot F_{s,0})(w)
 -(P_c-iP_s)(f(w))\\
 &\qquad=f'(w)(\widehat v_{n,c}-i\widehat v_{n,s})(w)
 =u(f(w)).
\end{align*}
The affine normalization and the principal normalization at infinity give
\[
 \dot F_{a,0}(0)=\dot F_{a,0}'(0)=0,
 \qquad (\iota_*P_a)(0)=0.
\]
Thus the actual velocities have the same jump and the same three
normalizations as $Y_{\rm in},Y_{\rm out}$.  The uniqueness proved above
identifies their interior branch with
\eqref{eq:DR-negative-interior-projection}.  Consequently,
\begin{equation}\label{eq:DR-negative-on-f}
 (D_{n,c}-iD_{n,s})f=i\mathcal L_{-n}f.
\end{equation}
Two extensions in \eqref{eq:DR-actual-extension-C1} give normalized
pairs for the same $h\circ\psi_{a,t}\in\mathcal W$, by
Lemma~\ref{lem:B1-unique-welding-flow-stability}.
Conformal removability relates them by a common M\"obius map $M$ satisfying
\[
 M(0)=0,\qquad M'(0)=1,\qquad M(\infty)=\infty,
 \qquad\text{hence }M=\id.
\]

The coefficient expansion \eqref{eq:DR-negative-mode-polynomial}
shows that $\mathcal L_{-n}$ preserves $\mathcal P_{\mathrm{hol}}$.
Coefficient comparison in \eqref{eq:DR-positive-on-f} and
\eqref{eq:DR-negative-on-f}, followed by the Leibniz rule, proves
\eqref{eq:DR-positive-change-frame}--
\eqref{eq:DR-negative-change-frame}; adding and subtracting gives
\eqref{eq:DR-real-cos-Kirillov}--
\eqref{eq:DR-real-sin-Kirillov}.

\subsection{Proof of Lemma~\ref{lem:DR-flow-core-uniform-differentiability}}
\label{app:uniform-response}

It suffices to treat a generator $F=F_0\circ R_\Phi$ of
\eqref{eq:DR-flow-saturated-core}.  Put
\begin{equation}\label{eq:DR-flow-composition-family}
 \Xi_t:=\psi_t^v\circ\Phi.
\end{equation}
\subsubsection{Averaged observables}

Write $\Phi(e^{i\theta})=e^{i\phi(\theta)}$ and define
\begin{equation}\label{eq:DR-averaged-composition-weight}
 a_\Phi(u):=a(\phi^{-1}(u))(\phi^{-1})'(u).
\end{equation}
Changing variables $u=\phi(\theta)$ gives
\begin{equation}\label{eq:DR-averaged-composition-change-variable}
 J_{a,b}(h\circ\Phi)=J_{a_\Phi,b}(h).
\end{equation}
The same formula applies to $\Xi_t$.  Since
$t\mapsto a_{\Xi_t}$ is $C^1$ in every $C^r$ norm,
\[
 \begin{aligned}
 &\sup_h\left|
 \frac{J_{a,b}(h\circ\Xi_t)-J_{a,b}(h\circ\Xi_0)}{t}
 -\frac1{2\pi}\int_0^{2\pi}
       \left.\partial_ta_{\Xi_t}\right|_0(u)
       b(h(e^{iu}))\,du\right|\\
 &\qquad\leq\|b\|_\infty
 \left\|\frac{a_{\Xi_t}-a_{\Xi_0}}t
       -\left.\partial_ta_{\Xi_t}\right|_0\right\|_\infty
 \longrightarrow0.
 \end{aligned}
\]
The cylinder chain rule proves
\eqref{eq:DR-flow-core-uniform-differentiability} for the averaged
observables.

\subsubsection{Coefficient observables}

Extend each field in $\Xi_t$ by linear combinations of the disk fields
above and $iz$.  Let $\widehat\psi_t^v$ be the extended flow of $v$ and
$\widehat\Phi$ the corresponding composition.  Set
\[
 \widehat\Xi_t=\widehat\psi_t^v\circ\widehat\Phi.
\]
These maps fix zero and are conformal on fixed neighborhoods of zero
and the boundary.  Choose $K\Subset\mathbb D\setminus\{0\}$ so that
their Beltrami coefficients vanish outside $K$ and on a fixed neighborhood
of $\partial K$.
Smooth dependence of the disk flows and the Beltrami composition formula
give, after extension by zero,
$t\mapsto\mu_{\widehat\Xi_t}\in
 C^2([-\epsilon,\epsilon];W^{2,\infty}(\mathbb C))$ and
\begin{equation}\label{eq:DR-composition-extension-uniform-ball}
 \sup_{|t|\leq\epsilon}
 \sum_{j=0}^2
 \|\partial_t^j\mu_{\widehat\Xi_t}\|_{W^{2,\infty}}
 <\infty,
 \qquad
 \sup_{|t|\leq\epsilon}
 \|\mu_{\widehat\Xi_t}\|_\infty\leq q_0<1.
\end{equation}
For a normalized univalent map $f$, set
\begin{align}
 f_t^{\rm src}&:=f\circ\widehat\Xi_t,
 \label{eq:DR-composition-source-map}\\
 \beta_{f,t}(f_t^{\rm src}(z))
 &:=-\mu_{f_t^{\rm src}}(z)
   \frac{(f_t^{\rm src})_z(z)}{\overline{(f_t^{\rm src})_z(z)}},
 \qquad
 \beta_{f,t}=0\quad\text{off }f_t^{\rm src}(\mathbb D).
 \label{eq:DR-composition-physical-coefficient}
\end{align}
Choose a compact neighborhood $K_1\Subset\mathbb D\setminus\{0\}$ of
$\bigcup_{|t|\leq\epsilon}\widehat\Xi_t(K)$.
Koebe distortion and Cauchy estimates give constants $c,C>0$ such that
\[
 \inf_f\inf_{z\in K_1}|f'(z)|\geq c,
 \qquad
 \sup_f\sup_{z\in K_1}\sum_{j=1}^3|f^{(j)}(z)|\leq C.
\]
Together with \eqref{eq:DR-composition-extension-uniform-ball},
these bounds give $0<\delta<R<\infty$ with
\[
 \operatorname{supp}\beta_{f,t}\subset\{\delta\leq|w|\leq R\},
 \qquad \|\beta_{f,t}\|_\infty\leq q_0<1
 \quad\text{for all }f,t.
\]

To differentiate at a fixed physical point, let $\nabla$ denote the real
Jacobian, applied componentwise to complex functions, and put
\[
 b_{f,t}:=\beta_{f,t}\circ f_t^{\rm src},\qquad
 u_{f,t}:=(\nabla f_t^{\rm src})^{-1}\partial_t f_t^{\rm src}.
\]
The chain rule gives
\[
 (\partial_t\beta_{f,t})\circ f_t^{\rm src}
 =\partial_t b_{f,t}-u_{f,t}\cdot\nabla b_{f,t}.
\]
The fixed zero collar makes this formula valid after extension by zero.
A second differentiation, with all terms evaluated at $(f,t)$, yields
\begin{align*}
 (\partial_t^2\beta_{f,t})\circ f_t^{\rm src}
 &=(\partial_t-u_{f,t}\cdot\nabla)^2b_{f,t}\\
 &=\partial_t^2b_{f,t}
   -2u_{f,t}\cdot\nabla\partial_t b_{f,t}
   +\sum_{p,q=1}^2(u_{f,t})_p(u_{f,t})_q
       \partial_p\partial_q b_{f,t}\\
 &\quad+
 \bigl((u_{f,t}\cdot\nabla)u_{f,t}-\partial_tu_{f,t}\bigr)
       \cdot\nabla b_{f,t}.
\end{align*}
The distortion bounds above and the smooth source flows bound every term.
They also give joint continuity in $f$ and $t$, with $f$ in the
compact-open topology.  Thus
\begin{equation}\label{eq:DR-composition-beta-C1}
 \begin{gathered}
 \beta_{f,\cdot}\in C^2([-\epsilon,\epsilon];L^\infty(\mathbb C)),\\
 \sup_{f,\,|t|\leq\epsilon}
 \sum_{j=0}^2\|\partial_t^j\beta_{f,t}\|_\infty<\infty.
 \end{gathered}
\end{equation}

Let $\Psi_{f,t}$ be the principal solution with coefficient $\beta_{f,t}$
and put
\begin{equation}\label{eq:DR-composition-normalized-output}
 \begin{aligned}
 H_{f,t}&:=\Psi_{f,t}\circ f_t^{\rm src},\\
 f_{\Xi_t}^{\mathrm{out}}
 &:=\frac{H_{f,t}-H_{f,t}(0)}{H_{f,t}'(0)},\\
 g_{\Xi_t}^{\mathrm{out}}
 &:=\frac{\Psi_{f,t}\circ g-H_{f,t}(0)}{H_{f,t}'(0)}.
 \end{aligned}
\end{equation}
The Beltrami composition formula gives
\[
 \mu_{H_{f,t}}=0,
 \qquad
 (g_{\Xi_t}^{\mathrm{out}})^{-1}\circ f_{\Xi_t}^{\mathrm{out}}
 =h\circ\Xi_t.
\]
By Lemma~\ref{lem:B1-unique-welding-flow-stability} and uniqueness of
the normalized pair, these are the maps of $h\circ\Xi_t$.

Smooth dependence of the source maps on $t$ and the Koebe growth
bound give a radius $r_*\in(0,1)$ such that
\[
 \sup_{f,\,|t|\leq\epsilon}\sup_{|z|\leq r_*}
 |f_t^{\rm src}(z)|<\frac\delta2.
\]
On the coefficient-free disk $B(0,\delta)$, the Ahlfors--Bers
parameter theorem, \eqref{eq:DR-composition-beta-C1}, and Cauchy's formula
give joint continuity of the first two time derivatives of every interior jet.
Since the normalized univalent class is compact and $H_{f,t}'(0)\ne0$,
\[
 \inf_{f,\,|t|\leq\epsilon}|H_{f,t}'(0)|>0,
 \qquad
 \sup_{f,\,|t|\leq\epsilon}
 \sum_{j=0}^2|\partial_t^jH_{f,t}'(0)|<\infty.
\]
For each $m\geq1$, define
\begin{equation}\label{eq:DR-composition-output-coefficient}
 a_m^{\mathrm{out}}(f,t):=a_m(f_{\Xi_t}^{\mathrm{out}}).
\end{equation}
The quotient rule and Cauchy's formula now give
\[
 \sup_{f,\,|t|\leq\epsilon}
 \sum_{j=0}^2|\partial_t^ja_m^{\mathrm{out}}(f,t)|<\infty.
\]
Taylor's formula therefore yields
\begin{align}
 &\sup_f\left|
 \frac{a_m^{\mathrm{out}}(f,t)-a_m^{\mathrm{out}}(f,0)}{t}
 -\partial_ta_m^{\mathrm{out}}(f,0)
 \right|\notag\\
 &\qquad\leq\frac{|t|}{2}
 \sup_{f,\,|s|\leq|t|}
 |\partial_t^2a_m^{\mathrm{out}}(f,s)|
 \longrightarrow0.
 \label{eq:DR-composition-coefficient-uniform-difference}
\end{align}
\subsubsection{Exterior rotation characters}

For $m\in\mathbb Z$, retain the character $\chi_m$ from
\eqref{eq:DR-exterior-rotation-characters}.  The principal solution satisfies \(\Psi_{f,t}'(\infty)=1\).  Hence the common affine
normalization in \eqref{eq:DR-composition-normalized-output} changes the exterior
derivative to \(g'(\infty)/H_{f,t}'(0)\), and gives
\begin{equation}\label{eq:DR-composition-exterior-character}
 \chi_m(h\circ\Xi_t)
 =\chi_m(h)
  \left(\frac{|H_{f,t}'(0)|}{H_{f,t}'(0)}\right)^m.
\end{equation}
The lower bound and the first two derivative bounds for $H_{f,t}'(0)$
proved above imply
\[
 \sup_{h,\,|t|\leq\epsilon}
 \sum_{j=1}^2
 |\partial_t^j\chi_m(h\circ\Xi_t)|<\infty.
\]
Taylor's formula gives the required uniform expansion for each character.

For bounded $F,G$ with uniform responses,
\[
 \frac{(FG)\circ T_t^v-FG}{t}
 =(F\circ T_t^v)\frac{G\circ T_t^v-G}{t}
   +G\frac{F\circ T_t^v-F}{t}
 \longrightarrow F D_vG+G D_vF
\]
in the uniform norm.  Finite sums and complex conjugation preserve this
convergence as well, proving the assertion for $\mathcal A_{\mathrm{fl}}$.

\subsection{Sign of the right divergence}

Fix $n\geq2$ and use the divergence convention
\eqref{eq:DR-divergence-convention} for $\widetilde\nu_\kappa$.
Suppose that integration by parts on the algebra generated by
$a_m,\overline{a_m}$ gives, for one $\varepsilon\in\{-1,1\}$,
\begin{align}
 \operatorname{Div}_{\widetilde\nu_\kappa}D_{n,c}
 &=\varepsilon\operatorname{Im}t_n,
 \label{eq:DR-sign-cos-divergence}\\
 \operatorname{Div}_{\widetilde\nu_\kappa}D_{n,s}
 &=\varepsilon\operatorname{Re}t_n.
 \label{eq:DR-sign-sin-divergence}
\end{align}

\begin{proposition}\label{prop:DR-sign-forcing}
\begin{equation}\label{eq:DR-sign-fixed}
 \varepsilon=1.
\end{equation}
\end{proposition}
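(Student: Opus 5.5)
The plan is a positivity argument: I run the Ward-identity computation of Corollary~\ref{cor:DR-AM-identity} and Proposition~\ref{prop:DR-Fourier-Ward} with the unknown sign $\varepsilon$ carried along. The conclusion will be that $\varepsilon\int|t_n|^2\,d\widetilde\nu_\kappa$ equals the strictly positive constant $C_{\kappa,n}$. This forces $\varepsilon=1$, and it uses nothing from Proposition~\ref{prop:DR-right-IBP} beyond the hypotheses \eqref{eq:DR-sign-cos-divergence}--\eqref{eq:DR-sign-sin-divergence}.

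\emph{Step 1: the complex divergence.} Combining \eqref{eq:DR-sign-cos-divergence} and \eqref{eq:DR-sign-sin-divergence} by complex linearity, as in Corollary~\ref{cor:DR-real-divergence}, gives
\[
 \operatorname{Div}_{\widetilde\nu_\kappa}D_n^+
 =\varepsilon(\operatorname{Im}t_n+i\operatorname{Re}t_n)
 =i\varepsilon\overline{t_n}.
\]
This holds on the algebra generated by $a_m,\overline{a_m}$.

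\emph{Step 2: the twisted adjoint identity.} Take $\phi,\psi\in\mathcal P_{\mathrm{hol}}$. The product $\phi\overline\psi$ lies in the test algebra of the hypothesis. By Lemma~\ref{lem:DR-exact-change-frame} and the fact that the real responses commute with conjugation,
\[
 D_n^+(\phi\overline\psi)
 =i(\mathcal L_n\phi)\overline\psi-i\phi\,\overline{\mathcal L_{-n}\psi},
\]
exactly as in \eqref{eq:DR-mixed-mode-action}. Integrating and applying Step 1 gives
\[
 \langle\mathcal L_n\phi,\psi\rangle_{L^2}
 =\langle\phi,\mathcal L_{-n}\psi+\varepsilon t_n\psi\rangle_{L^2}.
\]
All integrands are bounded by de Branges' bound and Lemma~\ref{lem:DR-stress-polynomial}, so every integral is finite.

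\emph{Step 3: specialization.} Put $\phi=t_n\in\mathbb C[a_1,\ldots,a_n]$, which is allowed by \eqref{eq:DR-tn-polynomial}, and $\psi=1$. Then $\mathcal L_{-n}1=0$ because $\mathcal L_{-n}$ is a derivation, and $\mathcal L_nt_n=C_{\kappa,n}$ by \eqref{eq:DR-level-one}. Since $\widetilde\nu_\kappa$ is a probability measure, this yields
\[
 C_{\kappa,n}=\int\mathcal L_nt_n\,d\widetilde\nu_\kappa
 =\varepsilon\int|t_n|^2\,d\widetilde\nu_\kappa .
\]
For $n\geq2$ we have $C_{\kappa,n}=2n+\frac{c_{\mathrm L}}{12}(n^3-n)>0$. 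The integral on the right is nonnegative. It is also nonzero, for otherwise the left side would vanish. Hence $\varepsilon$ is positive, and since $\varepsilon\in\{-1,1\}$, we get $\varepsilon=1$.

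The step needing most care is Step 2. One must verify that the sign bookkeeping there follows $\varepsilon$ exactly. The key point is the identity $\operatorname{Im}t_n+i\operatorname{Re}t_n=i\overline{t_n}$: it converts the divergence of $D_n^+$ into a multiple of $\overline{t_n}$, which pairs with $\phi\overline\psi$ to give the Hermitian pairing against $t_n\psi$. One must also check that the negative-mode term produces $\overline{\mathcal L_{-n}\psi}$ with the sign shown. Both facts come from Lemma~\ref{lem:DR-exact-change-frame} and do not involve $\varepsilon$, so I expect no real obstruction.
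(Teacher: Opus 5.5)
Your argument is correct and is essentially the paper's proof. With $\psi=1$, Step 2 reduces to integrating $D_n^+t_n=iC_{\kappa,n}$ against the assumed divergence $i\varepsilon\overline{t_n}$, which is exactly how the paper gets $C_{\kappa,n}=\varepsilon\int|t_n|^2\,d\widetilde\nu_\kappa$; the general twisted adjoint identity for arbitrary $\phi,\psi$ is a harmless but unnecessary detour.
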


\begin{proof}
Lemmas~\ref{lem:DR-stress-polynomial} and
\ref{lem:DR-exact-change-frame} give
\[
 D_n^+t_n=i\mathcal L_nt_n=iC_{\kappa,n}.
\]
The assumed divergences imply
\begin{equation}\label{eq:DR-sign-complex-divergence}
 \operatorname{Div}_{\widetilde\nu_\kappa}D_n^+
 =\varepsilon(\operatorname{Im}t_n+i\operatorname{Re}t_n)
 =\varepsilon i\overline{t_n}.
\end{equation}
Since $t_n$ is a bounded coefficient polynomial, integration gives
\begin{equation}\label{eq:DR-sign-positive-forcing}
 0<C_{\kappa,n}
 =\frac1i\int D_n^+t_n\,d\widetilde\nu_\kappa
 =\varepsilon\int|t_n|^2\,d\widetilde\nu_\kappa.
\end{equation}
The last integral is nonnegative, so $\varepsilon=1$.
\end{proof}

\section{Physical Ward identities and frame measures}
\label{app:physical-ward}

Put
\begin{equation}\label{eq:DR-central-charges}
 c_{\mathrm m}:=26-c_{\mathrm L}
 =1-6\left(\frac{2}{\sqrt\kappa}-\frac{\sqrt\kappa}{2}\right)^2.
\end{equation}
For a measure $\nu$ and a derivation $X$ with divergence as in
\eqref{eq:DR-divergence-convention}, let $a$ be a multiplier such that
$aF$ remains an admissible test.  The product rule gives
\[
 \int aXF\,d\nu
 =\int X(aF)\,d\nu-\int F Xa\,d\nu
 =\int F\{a\operatorname{Div}_\nu X-Xa\}\,d\nu.
\]
Thus
\begin{equation}\label{eq:DR-multiplier-divergence}
 \operatorname{Div}_\nu(aX)
 =a\operatorname{Div}_\nu X-Xa.
\end{equation}

\subsection{The physical trace and capacity response}

Fix $h\in\mathcal W$ with normalized pair $(f,g)$.
Let $\mu\in L^\infty(\mathbb D)$ have
$\operatorname{supp}\mu\Subset\mathbb D\setminus\{0\}$.  Suppose that
radii $0<r_1<r_2<r_3$ satisfy
\[
 f(\operatorname{supp}\mu)\subset B(0,r_1),
 \qquad
 B(0,r_3)\subset f(\mathbb D).
\]
For the representatives \eqref{eq:DR-Fourier-Cauchy-vector}, Koebe's
distortion and one-quarter theorems allow
\(r_+/(1-r_+)^2<r_1<r_2<r_3<1/4\).

Write $\psi=f^{-1}$ and define
\begin{equation}\label{eq:DR-Neretin-P}
 w^2\mathcal S\psi(w)=\sum_{\ell\geq0}\pi_\ell(f)w^\ell,
 \qquad \pi_0=\pi_1=0.
\end{equation}
For $a\in\operatorname{Aut}(\mathbb D)$, the Schwarzian chain rule gives
$\mathcal S(a^{-1}\circ\psi)=\mathcal S\psi$.  Thus $\pi_\ell$ depends only
on the physical loop and its chosen interior component.
For $j\geq0$ and an unnormalized univalent map $\widehat f$, define
\begin{align}
 \widehat m_j(\widehat f)&:=\frac1\pi\int_{\mathbb D}
  \mu(z)\widehat f'(z)^2\widehat f(z)^j\,dA(z),
 \label{eq:DR-mj-raw-extension}\\
 m_j(f)&:=\widehat m_j(f).
 \label{eq:DR-mj}
\end{align}
Include the argument $\mu$ as $m_j(f;\mu)$ or
$\widehat m_j(\widehat f;\mu)$ when needed.  For $p\geq2$, set
\begin{equation}\label{eq:DR-two-real-physical-fields}
 u_{-p}^{(1)}(\xi):=-\xi^{1-p},
 \qquad
 u_{-p}^{(2)}(\xi):=-i\xi^{1-p}.
\end{equation}
Let \(\phi_{p,t}^{(a)}\) be the real local flow of \(u_{-p}^{(a)}\),
$a\in\{1,2\}$.  On differentiable loop observables define
\[
 L_{-p}^{(a)}F(\gamma):=\left.\frac d{dt}\right|_{t=0}
 F(\phi_{p,t}^{(a)}\gamma),\qquad
 L_{-p}^C:=\frac12\{L_{-p}^{(1)}-iL_{-p}^{(2)}\}.
\]

To fix the lift of $L_{-j-2}^C$ to unnormalized conformal maps, define
\begin{equation}\label{eq:DR-Schiffer-kernel}
 K_\psi(x,\zeta)
 :=\frac{\psi'(\zeta)^2}
 {\psi'(x)\{\psi(x)-\psi(\zeta)\}}-\frac1{x-\zeta}.
\end{equation}
Its interior Schiffer response is
\begin{equation}\label{eq:DR-Schiffer-variation}
 (L_{-j-2}^Cf)(z)
 =\frac1{2\pi i}\oint_{|\zeta|=r_2}
 K_\psi(f(z),\zeta)\zeta^{-j-1}\,d\zeta.
\end{equation}
The contour is counterclockwise; the formula extends analytically from
$f^{-1}(B(0,r_2))$ to $\mathbb D$.  The exterior response is the Cauchy
projection after inversion, with translation and dilation terms retained.
The conformal pushforward of $\mu$ and pullback of a physical
Beltrami coefficient $\beta$ are
\[
 (f_*\mu)(f(z)):=\mu(z)\frac{f'(z)}{\overline{f'(z)}},
 \qquad
 (f^*\beta)(z):=\beta(f(z))\frac{\overline{f'(z)}}{f'(z)}.
\]
Extend $f_*\mu$ by zero outside $f(\mathbb D)$.
For $t\in\mathbb C$ near zero, let $\Psi_t$ be the principal physical compensator for the source
deformation $\widetilde\Phi_t$, so that
$\Psi_t\circ f\circ\widetilde\Phi_t$ and $\Psi_t\circ g$ are conformal.
Write $\beta=\left.\partial_t\mu_{\Psi_t}\right|_{t=0}$ and define
\[
 V_{\mu,f}(\xi):=\frac1\pi\int_{\mathbb C}
             \frac{\beta(w)}{\xi-w}\,dA(w).
\]
Let $R_\mu^{\rm raw}$ be its Schiffer lift under
\eqref{eq:DR-Schiffer-variation}; series of such lifts are interpreted by
$C^2$ convergence of the physical vectors on compact subsets of
$\{|\xi|>r_1\}$.

\begin{lemma}\label{lem:DR-physical-trace}
\begin{equation}\label{eq:DR-physical-pushforward}
 \beta=-f_*\mu.
\end{equation}
For $|\xi|\geq r_2$, with uniform convergence,
\begin{align}
 V_{\mu,f}(\xi)
 &=-\frac1\pi\int_{\mathbb D}
  \frac{\mu(z)f'(z)^2}{\xi-f(z)}\,dA(z)
 =-\sum_{j\geq0}m_j(f)\xi^{-j-1},
 \label{eq:DR-physical-Laurent}
\end{align}
Consequently,
\begin{equation}\label{eq:DR-Rmu-variable-coefficients}
 R_\mu^{\rm raw}=\sum_{j\geq0}m_jL_{-j-2}^C.
\end{equation}
The contractions satisfy
\begin{align}
 \sum_{j\geq0}m_j\pi_{j+2}
 &=\sigma_h(\mu),
 \label{eq:DR-matter-contraction}\\
 \shortintertext{and}
 \left.\sum_{j\geq0}L_{-j-2}^C\widehat m_j
 \right|_{\widehat f=f}
 &=\frac{26}{12}(\mathcal Sf,\mu),
 \label{eq:DR-ghost-contraction}\\
 \shortintertext{hence}
 -\frac{c_{\mathrm m}}{12}\sum_{j\geq0}m_j\pi_{j+2}
 -\left.\sum_{j\geq0}L_{-j-2}^C\widehat m_j
 \right|_{\widehat f=f}
 &=\frac{c_{\mathrm L}}{12}\sigma_h(\mu).
 \label{eq:DR-algebraic-central-balance}
\end{align}
Finally, the complex capacity response is
\begin{equation}\label{eq:DR-capacity-complex-variation}
 2R_\mu k=\tau_h(\mu).
\end{equation}
\end{lemma}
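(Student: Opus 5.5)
We will prove the claims in the order stated. Each is a computation with Beltrami data that is compactly supported away from $\mathbb S^1$ and $0$.

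\emph{The physical coefficient \eqref{eq:DR-physical-pushforward}.} Differentiate the Beltrami composition formula for $\Psi_t\circ f\circ\widetilde\Phi_t$ at $t=0$. Conformality of $\Psi_t\circ f\circ\widetilde\Phi_t$ forces $\mu_{\Psi_t}\circ(f\circ\widetilde\Phi_t)$ to cancel the pulled-back coefficient $t\mu$ to first order. Writing the chain rule for Beltrami coefficients, with $f$ conformal, gives
\[
\beta(f(z))\,\frac{\overline{f'(z)}}{f'(z)}=-\mu(z),
\]
that is, $f^*\beta=-\mu$, or $\beta=-f_*\mu$. Outside $f(\mathbb D)$ we have $\beta=0$, because $\Psi_t\circ g$ is conformal there.

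\emph{The Laurent expansion \eqref{eq:DR-physical-Laurent}.} Substitute $\beta=-f_*\mu$ into $V_{\mu,f}$ and change variables $w=f(z)$, using $dA(w)=|f'|^2dA(z)$. This gives the integrand $-\mu f'^2/(\xi-f(z))$. For $|\xi|\ge r_2>r_1\ge|f(z)|$ on $\operatorname{supp}\mu$, expand $1/(\xi-f)=\sum_j f^j\xi^{-j-1}$; the series converges uniformly since $r_1/r_2<1$, and its coefficients are the $m_j(f)$.

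\emph{The expansion \eqref{eq:DR-Rmu-variable-coefficients}.} The physical vector field $-\xi^{-j-1}$ equals $u^{(1)}_{-j-2}$. The holomorphic (complex-$t$) part of the response is therefore $L^C_{-j-2}$, and the combination $\frac12(L^{(1)}-iL^{(2)})$ extracts the $t$-coefficient. The Schiffer lift is linear and continuous for $C^2$ convergence on $\{|\xi|>r_1\}$, and the Laurent series converges in that topology on compacta, so the lift can be taken term by term.

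\emph{The matter contraction \eqref{eq:DR-matter-contraction}.} Pair $w^2\mathcal S\psi(w)$ with the $m_j$. Then
\[
\sum_j m_j\pi_{j+2}=\frac1\pi\int\mu f'^2\,\mathcal S\psi(f)\,dA .
\]
The inverse Schwarzian identity $\mathcal S\psi(f)f'^2=-\mathcal Sf$ turns this into $-(\mathcal Sf,\mu)=\sigma_h(\mu)$. The interchange of sum and integral is justified because the series converges uniformly on $|w|\le r_1<r_3$.

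\emph{The ghost contraction \eqref{eq:DR-ghost-contraction}.} This is the main obstacle. Apply $L^C_{-j-2}$ to $\widehat m_j$ through the Schiffer variation \eqref{eq:DR-Schiffer-variation} with the variable $\widehat f$, differentiate $\widehat f'^2\widehat f^j$, and sum over $j$. The sum $\sum_j\zeta^{-j-1}\widehat f(z)^j$ collapses to a Cauchy kernel $1/(\zeta-f(z))$, and the contour integral then becomes a residue at $\zeta=f(z)$ of the kernel $K_\psi$ and of its derivatives. One has to expand $K_\psi(x,\zeta)$ near the diagonal to third order; the constant term is proportional to $\mathcal S\psi$. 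The weights should be $1$ from the $\widehat f^j$ variation, $2$ from $\widehat f'^2$, and the diagonal coefficient $1/6$ of the Schiffer kernel. We will check that they combine to $26/12$ times $-\mathcal S\psi\circ f\cdot f'^2=\mathcal Sf$, pairing with $\mu$ to give $\frac{26}{12}(\mathcal Sf,\mu)$. If the weights do not combine as expected, we will test the computation on $f(z)=z+\varepsilon z^{n+1}$ to fix the constant.

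\emph{The central balance \eqref{eq:DR-algebraic-central-balance}.} Combine the two contractions. By \eqref{eq:DR-matter-contraction} the first term is $-\frac{c_{\mathrm m}}{12}\sigma_h(\mu)$, and by \eqref{eq:DR-ghost-contraction} the second is $-\frac{26}{12}(\mathcal Sf,\mu)=\frac{26}{12}\sigma_h(\mu)$. Their sum is $\frac{26-c_{\mathrm m}}{12}\sigma_h(\mu)=\frac{c_{\mathrm L}}{12}\sigma_h(\mu)$.

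\emph{The capacity response \eqref{eq:DR-capacity-complex-variation}.} The principal solution satisfies $\Psi_t'(\infty)=1$, so the change in $g'(\infty)$ comes only from renormalizing $H_t'(0)$, where $H_t=\Psi_t\circ f\circ\widetilde\Phi_t$. Hence we need $-\partial_t\log H_t'(0)$. Compute it with the linearized principal solution $\dot\Psi=V_{\mu,f}$ near $0$:
\[
\partial_t\log H_t'(0)=V_{\mu,f}'(0)=-\frac1\pi\int\mu\frac{f'^2}{f^2}\,dA .
\]
Subtract the $1/z^2$ term; its pairing equals $\partial_t\log\widetilde\Phi_t'(0)$ by the Cauchy normalization. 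Taking the real part, since $k=\log|g'(\infty)|$, gives $2R_\mu k=\tau_h(\mu)$.
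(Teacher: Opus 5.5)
\textbf{Verdict: genuine gap.} The ghost contraction \eqref{eq:DR-ghost-contraction} is not proved. Your other steps match the paper's argument: $\beta=-f_*\mu$ from the linearized Beltrami composition, the change of variables and geometric expansion of $V_{\mu,f}$, the termwise lift to $\sum_j m_jL^C_{-j-2}$, the matter contraction via $(\mathcal S\psi\circ f)f'^2=-\mathcal Sf$, and the central balance. The capacity step also needs repair (second paragraph).

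\textbf{Ghost contraction.} This identity carries the real content of the lemma, the constant $26/12$, and you only announce it. The heuristic you give is also wrong for this kernel. The $K_\psi$ of \eqref{eq:DR-Schiffer-kernel} is a vector-field kernel. It is not the classical bidifferential $\psi'(x)\psi'(\zeta)/(\psi(x)-\psi(\zeta))^2-(x-\zeta)^{-2}$, whose diagonal value is $\tfrac16\mathcal S\psi$. Write $A=\psi''/\psi'$ and $B=\psi'''/\psi'$. A first-order expansion (not third-order) gives
\[
 K_\psi(\zeta+\varepsilon,\zeta)=-\tfrac32A(\zeta)+\bigl(\tfrac74A(\zeta)^2-\tfrac23B(\zeta)\bigr)\varepsilon+O(\varepsilon^2).
\]
So the diagonal value $-\tfrac32A$ is not Schwarzian, and neither is $\partial_xK_\psi$ or $\partial_\zeta K_\psi$ on the diagonal taken separately.

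The missing argument runs as follows.
\begin{enumerate}
\item Use both geometric sums, $\sum_jf^j\zeta^{-j-1}=(\zeta-f)^{-1}$ and $\sum_j jf^{j-1}\zeta^{-j-1}=(\zeta-f)^{-2}$.
\item Apply Cauchy's formula; the diagonal singularity of $K_\psi$ is removable. The $\widehat f'^2$-variation becomes $2\partial_xK_\psi$ and the $\widehat f^j$-variation becomes $\partial_\zeta K_\psi$, both at $x=\zeta=f(z)$.
\item Use $\partial_\zeta K_\psi|_{x=\zeta}=\frac{d}{d\zeta}K_\psi(\zeta,\zeta)-\partial_xK_\psi|_{x=\zeta}$ and $A'=B-A^2$ to get
\[
 (2\partial_x+\partial_\zeta)K_\psi\big|_{x=\zeta}=\tfrac{13}{4}A^2-\tfrac{13}{6}B=-\tfrac{13}{6}\mathcal S\psi .
\]
\item Then $(\mathcal S\psi\circ f)f'^2=-\mathcal Sf$ yields $\frac{26}{12}(\mathcal Sf,\mu)$.
\end{enumerate}
Only this weighted combination is Schwarzian, and that is exactly what must be shown.

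Your fallback cannot replace this. Calibrating on $z+\varepsilon z^{n+1}$ can at most fix a constant once the form $c\,\mathcal S\psi$ is already known. To first order in $\varepsilon$ it does not even detect the $A^2$ term, since $A=O(\varepsilon)$.

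\textbf{Capacity step.} You use the paper's idea: only the renormalization by $H_t'(0)$ matters, because $\Psi_t'(\infty)=1$. Two points need fixing.
\begin{itemize}
\item The sign of $V_{\mu,f}'(0)$. Differentiating $-(\xi-f(z))^{-1}$ gives $V_{\mu,f}'(0)=+\frac1\pi\int\mu f'^2f^{-2}\,dA$, while $C_\mu'(0)=-\frac1\pi\int\mu z^{-2}\,dA$.
\item The normalization at $1$. For $\widetilde\Phi_t$ fixing $0,1,\infty$, the holomorphic coefficient $[t]\log\widetilde\Phi_t'(0)$ is generally not $C_\mu'(0)$. The normalization composes with a rotation that depends on both $t$ and $\bar t$, and only $\log|\widetilde\Phi_t'(0)|$ is insensitive to it.
\end{itemize}
The paper avoids the second issue by using a family $\Phi_t^0$ fixing only $0$ and $\infty$. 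This is allowed because $k$ is invariant under right rotation. For this family $[t]\log(\Phi_t^0)'(0)=C_\mu'(0)$ and $[\bar t]\log(\Phi_t^0)'(0)=0$. Hence, for $F_t=\Psi_t\circ f\circ\Phi_t^0$,
\[
 [t]\log F_t'(0)=V_{\mu,f}'(0)+C_\mu'(0)=-\tau_h(\mu),
 \qquad
 R_\mu k=-[t]\log|F_t'(0)|=\tfrac12\tau_h(\mu).
\]
With the signs in your sketch, the stated conclusion $2R_\mu k=\tau_h(\mu)$ does not follow.
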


\begin{proof}
Let $K=\operatorname{supp}\mu$ and put
$M_{f,\mu}=\pi^{-1}\|\mu\|_\infty\int_K|f'|^2\,dA$.  Then
\[
 |m_j(f)|\leq M_{f,\mu}r_1^j.
\]
For $r=0,1,2$, termwise differentiation gives
\[
 \sup_{|\xi|\geq r_2}
 \left|\partial_\xi^r\sum_{j>N}m_j(f)\xi^{-j-1}\right|
 \leq M_{f,\mu}r_2^{-r-1}
       \sum_{j>N}(j+1)\cdots(j+r)(r_1/r_2)^j
 \longrightarrow0,
\]
with empty product $1$ when $r=0$.
For the fixed finite-mode representatives, Koebe distortion bounds
$M_{f,\mu}$ uniformly over normalized univalent maps.  The Beltrami composition formula gives
\[
 \left.\partial_t\right|_{0}
 \mu_{\Psi_t\circ f\circ\widetilde\Phi_t}
 =\mu+(\beta\circ f)\frac{\overline{f'}}{f'}=0.
\]
Thus $\beta=-f_*\mu$.  Expanding its Cauchy vector in the physical
fields $-\xi^{-j-1}\partial_\xi$ gives
\eqref{eq:DR-Rmu-variable-coefficients}.

The inverse-Schwarzian chain rule gives
\begin{align}
 \sum_{j\geq0}m_j\pi_{j+2}
 &=\frac1\pi\int_{\mathbb D}
   \mu f'^2(\mathcal S\psi)\circ f\,dA\notag\\
 &=-\frac1\pi\int_{\mathbb D}\mu\mathcal Sf\,dA
 =\sigma_h(\mu).
 \label{eq:DR-matter-contraction-proof}
\end{align}
For the trace, differentiate the raw moments
\eqref{eq:DR-mj-raw-extension}, substitute
\eqref{eq:DR-Schiffer-variation}, and use
\begin{align}
 \sum_{j\geq0}f^j\zeta^{-j-1}&=(\zeta-f)^{-1},
 \label{eq:DR-geometric-one}\\
 \sum_{j\geq1}jf^{j-1}\zeta^{-j-1}&=(\zeta-f)^{-2}.
 \label{eq:DR-geometric-two}
\end{align}
The separation $|f(z)|\leq r_1<r_2$ on $\operatorname{supp}\mu$
justifies termwise integration, giving
\begin{align*}
 \left.\sum_{j\geq0}L_{-j-2}^C\widehat m_j
 \right|_{\widehat f=f}
 &=\frac1\pi\int_{\mathbb D}\mu
 \sum_{j\geq0}\left\{
 2f'f^j(L_{-j-2}^Cf)'
 +jf'^2f^{j-1}L_{-j-2}^Cf\right\}\,dA\\
 &=\frac1\pi\int_{\mathbb D}\mu f'^2
 \frac1{2\pi i}\oint_{|\zeta|=r_2}
 \left\{
 \frac{2\partial_xK_\psi(f(z),\zeta)}{\zeta-f(z)}
 +\frac{K_\psi(f(z),\zeta)}{(\zeta-f(z))^2}
 \right\}\,d\zeta\,dA(z).
\end{align*}
The term multiplied by $j$ is zero when $j=0$.  The apparent diagonal
singularity of $K_\psi$ is removable.  Cauchy's formula in $\zeta$ therefore gives
\begin{equation}\label{eq:DR-variable-coefficient-trace}
 \left.\sum_{j\geq0}L_{-j-2}^C\widehat m_j
 \right|_{\widehat f=f}
 =\frac1\pi\int_{\mathbb D}\mu f'^2
 \left.(2\partial_x+\partial_\zeta)K_\psi(x,\zeta)
 \right|_{x=\zeta=f(z)}dA(z).
\end{equation}
To compute the diagonal, put $A(\zeta)=\psi''(\zeta)/\psi'(\zeta)$ and
$B(\zeta)=\psi^{(3)}(\zeta)/\psi'(\zeta)$.  Taylor expansion at $x=\zeta$
gives
\[
 K_\psi(\zeta+\varepsilon,\zeta)
 =-\frac32A(\zeta)
   +\left(\frac74A(\zeta)^2-\frac23B(\zeta)\right)\varepsilon
   +O(\varepsilon^2).
\]
Thus, differentiating in each variable separately,
\begin{align*}
 \left.\partial_xK_\psi\right|_{x=\zeta}
 &=\frac74A^2-\frac23B,\\
 \left.\partial_\zeta K_\psi\right|_{x=\zeta}
 &=-\frac32(B-A^2)-\left(\frac74A^2-\frac23B\right)
   =-\frac14A^2-\frac56B.
\end{align*}
Since $\mathcal S\psi=B-\tfrac32A^2$, this proves the diagonal identity
of \cite[Lemma~5.5]{BJ25} in our convention:
\begin{equation}
 \left.(2\partial_x+\partial_\zeta)K_\psi\right|_{x=\zeta}
 =-\frac{13}{6}\mathcal S\psi.
 \label{eq:DR-kernel-diagonal-identity}
\end{equation}
Using $(\mathcal S\psi)\circ f\,f'^2=-\mathcal Sf$ in
\eqref{eq:DR-variable-coefficient-trace} proves
\eqref{eq:DR-ghost-contraction}.  Equation
\eqref{eq:DR-algebraic-central-balance} follows from
$c_{\mathrm L}=26-c_{\mathrm m}$.
For the capacity response, differentiation at the interior base point gives
\begin{align}
 C_\mu'(0)&=-\frac1\pi\int_{\mathbb D}\frac{\mu(z)}{z^2}\,dA(z),
 \label{eq:DR-source-derivative-zero}\\
 \shortintertext{and}
 V_{\mu,f}'(0)&=\frac1\pi\int_{\mathbb D}
  \mu(z)\frac{f'(z)^2}{f(z)^2}\,dA(z).
 \label{eq:DR-physical-derivative-zero}
\end{align}
Write $[t]$ and $[\bar t]$ for the two Wirtinger coefficients at zero.
The capacity $k$ is unchanged by right rotation of the welding.  We may
therefore choose a circle-preserving source family $\Phi_t^0$ fixing
$0$ and $\infty$, with first variation
\[
 \Phi_t^0(z)
 =z+t\{C_\mu(z)-C_\mu(0)\}
 -\bar t z^2\overline{C_\mu(1/\bar z)-C_\mu(0)}+o(|t|).
\]
The reflected term is $O(z^2)$ at zero because
$C_\mu(\xi)=O(\xi^{-1})$ at infinity.  Consequently,
\[
 [t]\log(\Phi_t^0)'(0)=C_\mu'(0),
 \qquad [\bar t]\log(\Phi_t^0)'(0)=0.
\]
Using the principal compensator for this source family, set
$F_t=\Psi_t\circ f\circ\Phi_t^0$ and $G_t=\Psi_t\circ g$.
Since $\Psi_t=\mathrm{id}+tV_{\mu,f}+o(|t|)$,
$[\bar t]\log F_t'(0)=0$ and the chain rule gives
\begin{equation}\label{eq:DR-interior-radius-variation}
 [t]\log F_t'(0)
 =V_{\mu,f}'(0)+C_\mu'(0)=-\tau_h(\mu).
\end{equation}
In particular,
\[
 \log|F_t'(0)|
 =\operatorname{Re}\{t(V_{\mu,f}'(0)+C_\mu'(0))\}+o(|t|).
\]
Taking the Wirtinger coefficient gives
\begin{equation}\label{eq:DR-log-modulus-holomorphic-coefficient}
 [t]\log|F_t'(0)|
 =\frac12[t]\log F_t'(0)
 =-\frac12\tau_h(\mu).
\end{equation}
On the exterior component, $V_{\mu,f}(\xi)=O(\xi^{-1})$ gives
\begin{equation}\label{eq:DR-exterior-radius-zero-variation}
 [t]\log|G_t'(\infty)|=0.
\end{equation}
The ratio in \eqref{eq:DR-capacity-potential-definition} is unchanged by
the common affine normalization of \(F_t\) and \(G_t\).  Therefore
\begin{align}
 R_\mu k
 &=[t]\log\left|\frac{G_t'(\infty)}{F_t'(0)}\right|
 =\frac12\tau_h(\mu).
 \label{eq:DR-capacity-complex-variation-expanded}
\end{align}
\end{proof}

The compensation and capacity identities require only
$\operatorname{supp}\mu\Subset\mathbb D\setminus\{0\}$.
The contraction calculations also apply to unnormalized univalent maps
and $\mu$ compactly supported in $\mathbb D$, whenever the three-radius
separation holds.

The trace in \eqref{eq:DR-ghost-contraction} differentiates the raw
moments $\widehat m_j$ before restriction to $f(0)=0$, $f'(0)=1$.
For a raw variation $U$, its normalized projection is
\[
 U^{\rm nor}f=Uf-(Uf)(0)-f\,(Uf)'(0).
\]
At a normalized map, differentiating the moments in
\eqref{eq:DR-mj-raw-extension} gives
\begin{align*}
 U\widehat m_j-U^{\rm nor}m_j
 &=\frac1\pi\int_{\mathbb D}\mu
 \left\{2(Uf)'(0)f'^2f^j
 +j f'^2f^{j-1}\bigl((Uf)(0)+f(Uf)'(0)\bigr)\right\}\,dA\\
 &=(j+2)(Uf)'(0)m_j+j(Uf)(0)m_{j-1},\qquad j\geq1,\\
 U\widehat m_0-U^{\rm nor}m_0&=2(Uf)'(0)m_0.
\end{align*}
\subsection{Frame spaces and the normalized measure}
\label{sec:DR-frame-spaces}

\subsubsection{Measurable frame coordinates}

For a Jordan curve $\gamma\subset\widehat{\mathbb C}$, let $U_+,U_-$
be its two complementary components in a specified order.  Set
\begin{equation}\label{eq:DR-ordered-spherical-loops}
 \mathcal J^{\rm or}
 :=\{(\gamma,U_+,U_-):
       \widehat{\mathbb C}\setminus\gamma=U_+\sqcup U_-\}.
\end{equation}
Let $d_{\rm sph}$ denote spherical distance.  Give compact sets the
corresponding Hausdorff topology, record each component by its closed
complement, and give conformal maps the topology of locally uniform
spherical convergence.  Equip these spaces with the Borel sigma-algebras
of the stated topologies.

If
$(\gamma,U_+,U_-)\in\mathcal J^{\rm or}$, let
$\operatorname{Fr}^{+}(\gamma,U_+)$ and
$\operatorname{Fr}^{-}(\gamma,U_-)$ be the spaces of conformal bijections
\begin{equation}\label{eq:DR-frame-torsors}
 \widehat f:\mathbb D\longrightarrow U_+,
 \qquad
 \widehat g:\mathbb D^*\longrightarrow U_-.
\end{equation}

Precomposition by the respective automorphism groups acts freely and
transitively on these frame spaces.
The full frame space is
\begin{equation}\label{eq:DR-full-frame-bundle}
 \mathcal F_{\rm fr}
 :=\{(\gamma,U_+,U_-,\widehat f,\widehat g):
      (\gamma,U_+,U_-)\in\mathcal J^{\rm or},\ 
      \widehat f\in\operatorname{Fr}^+(\gamma,U_+),\ 
      \widehat g\in\operatorname{Fr}^-(\gamma,U_-)\}.
\end{equation}

\begin{lemma}
\label{lem:DR-standard-Borel-frame-model}
The spaces $\mathcal J^{\rm or}$ and $\mathcal F_{\rm fr}$, with the
measurable structures specified above, are standard Borel spaces.
There are measurable frame selections
\[
 (\gamma,U_+,U_-)\longmapsto
 (f_\gamma^\circ,g_\gamma^\circ)
 \in\operatorname{Fr}^+(\gamma,U_+)\times
    \operatorname{Fr}^-(\gamma,U_-).
\]
Finite-jet and boundary evaluation, frame composition, and simultaneous
M\"obius postcomposition are measurable maps.
\end{lemma}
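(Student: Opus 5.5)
The plan is to realize both $\mathcal J^{\rm or}$ and $\mathcal F_{\rm fr}$ as Borel subsets of Polish spaces, which makes them standard Borel. Measurable selections then come from explicit normalizations, not from abstract selection theorems.

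\emph{The loop space.} Write $\mathcal K(\widehat{\mathbb C})$ for the compact subsets of $\widehat{\mathbb C}$ with the Hausdorff metric induced by $d_{\rm sph}$; it is compact metric. Record $(\gamma,U_+,U_-)$ by the triple $(\gamma,\widehat{\mathbb C}\setminus U_+,\widehat{\mathbb C}\setminus U_-)\in\mathcal K(\widehat{\mathbb C})^3$. I will show the image is Borel (in fact $G_\delta$) in two steps:
\begin{itemize}
\item The Jordan curves form a Borel subset of $\mathcal K(\widehat{\mathbb C})$. They are the continuous images of $\mathbb S^1$ under injective maps, i.e.\ the image of the Polish space of embeddings $C_{\rm inj}(\mathbb S^1,\widehat{\mathbb C})$, which is $G_\delta$ in $C(\mathbb S^1,\widehat{\mathbb C})$. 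Modding out by $\operatorname{Homeo}(\mathbb S^1)$ turns this into an injective Borel image, and Lusin--Souslin then gives Borelness.
\item The conditions that $E_\pm\supset\gamma$, $E_+\cup E_-=\widehat{\mathbb C}$, $E_+\cap E_-=\gamma$, and that each $\widehat{\mathbb C}\setminus E_\pm$ is a nonempty connected component are Borel. I will express each as a countable intersection over rational balls of closed or open conditions.
\end{itemize}

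\emph{Measurable selection on the loop space.} On $\mathcal J^{\rm or}$ I select frames explicitly:
\begin{itemize}
\item Choose a point $p(\gamma,U_+)\in U_+$ measurably, for instance the first point of a fixed countable dense sequence in $\widehat{\mathbb C}$ with maximal spherical distance to $\gamma$ up to a rational tolerance. Let $q\in U_-$ be chosen the same way.
\item After a Möbius change sending $p\mapsto0$, define $f^\circ_\gamma$ as the Riemann map with $f^\circ(0)=p$ and $(f^\circ)'(0)>0$ in a chart. Define $g^\circ$ analogously with $g^\circ(\infty)=q$.
\end{itemize}
Measurability then follows from the Carathéodory kernel theorem: these normalized Riemann maps depend continuously, locally uniformly, on the domain in the kernel topology. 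On each piece of the countable partition determined by the choice of $p$, that kernel topology is controlled by Hausdorff convergence of $\gamma$ together with the base point staying at positive distance from $\gamma$.

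\emph{The full frame space.} Realize $\mathcal F_{\rm fr}$ as the image of $\mathcal J^{\rm or}\times\operatorname{Aut}(\mathbb D)\times\operatorname{Aut}(\mathbb D^*)$ under
\[
(\gamma,U_\pm,a,b)\mapsto\bigl(\gamma,U_\pm,f^\circ_\gamma\circ a,\,g^\circ_\gamma\circ b\bigr).
\]
This map is Borel, because composition is continuous for locally uniform convergence, and it is injective, because precomposition acts freely. Lusin--Souslin then shows that $\mathcal F_{\rm fr}$ is a Borel subset of a Polish space of tuples, with a Borel inverse; hence it is standard. The remaining maps are handled directly:
\begin{itemize}
\item Finite-jet evaluation is continuous, by Cauchy estimates under locally uniform convergence.
\item Boundary evaluation is a pointwise limit of the radial values $\widehat f(re^{i\theta})$ as $r\uparrow1$, and Carathéodory extension makes that limit exist; so it is Borel.
\item Frame composition and simultaneous Möbius postcomposition are continuous.
\end{itemize}

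The main obstacle is measurability of the frame selection. Uniform convergence of $\gamma$ does not by itself control the Riemann maps unless the base points are uniformly separated from the curve. Partitioning by rational separation thresholds for the chosen base point is what makes the kernel theorem applicable piecewise, and I expect this step to need the most care.
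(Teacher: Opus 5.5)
Your overall architecture matches the paper's. You use closed-complement coordinates in the hyperspace, base points taken as first points of a countable dense set $Q$, a M\"obius normalization, and Lusin--Souslin for the injective seed map $(\gamma,U_\pm,a,b)\mapsto(\gamma,U_\pm,f_\gamma^\circ\circ a,g_\gamma^\circ\circ b)$. Getting Borelness of $\mathcal F_{\rm fr}$ directly as the image of that map is slicker than the paper's separate range conditions.

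The real difference is the selection step. The paper shows that the graph of $U\mapsto f_U$ is Borel via countable range conditions, and then inverts the projection by Lusin--Souslin. You instead prove continuity of the normalized Riemann map on each Borel piece $P_{j,k}=\{q_j$ is the first point of $Q$ in $U_+$, $q_k$ the first in $U_-\}$ by the Carath\'eodory kernel theorem. This works and is more concrete, with two adjustments:
\begin{itemize}
\item Normalize with a M\"obius map sending $q_k\mapsto\infty$ as well as $q_j\mapsto0$. The kernel theorem needs a proper simply connected subdomain of $\mathbb C$.
\item Drop the ``maximal distance up to rational tolerance'' refinement; it is unnecessary. Continuity is only needed relative to $P_{j,k}$, so the limit triple also has $q_j\in U_+$. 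Then Hausdorff convergence $\gamma_n\to\gamma$ already gives kernel convergence. Any compact connected $K\ni q_j$ in $U_+$ eventually misses $\gamma_n$, hence lies in $U_{+,n}$. Any closed disk in the kernel eventually misses $\gamma_n$, so it cannot meet $\gamma$.
\end{itemize}

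The step that does not go through as written is Borelness of the Jordan compacta. Lusin--Souslin needs a standard Borel domain, and nothing shows that the orbit space $C_{\rm inj}(\mathbb S^1,\widehat{\mathbb C})/\operatorname{Homeo}(\mathbb S^1)$ is one. A priori, the image of the embeddings under $\phi\mapsto\phi(\mathbb S^1)$ is only analytic. You need a Borel transversal:
\begin{itemize}
\item The orbits are closed, being fibres of the continuous map $\phi\mapsto\phi(\mathbb S^1)$.
\item The saturation of an open set is open, being a union of its translates.
\item So the standard selection theorem for such equivalence relations gives a Borel selector (Kechris, \emph{Classical Descriptive Set Theory}; or Burgess' theorem for smooth Polish group actions). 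Lusin--Souslin then applies on the transversal.
\end{itemize}
Alternatively, cite the $F_{\sigma\delta}$ result, as the paper does via \cite{KO24}.

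Also remove the parenthetical ``in fact $G_\delta$''; it is false. Combs with fingers of height $1/m$ accumulating on a fixed finger continuously reduce $\{x\in2^{\mathbb N\times\mathbb N}:\forall m\ \forall^\infty n\ x(m,n)=0\}$ to the Jordan curves. So the Jordan curves are $\Pi^0_3$-hard. The same combs, with their complementary regions recorded, show that $\mathcal J^{\rm or}$ is not $G_\delta$ either.

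Finally, if ``frame composition'' is meant to cover the welding $g^{-1}\circ f|_{\mathbb S^1}$ (which the paper later derives from this lemma), it is not continuous for locally uniform convergence. Its measurability comes from Borel boundary evaluation together with Lusin--Souslin applied to the Borel graph $\{(g,w,\eta):g(\eta)=w\}$.
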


\begin{proof}
The hyperspace of nonempty compact subsets of
$\widehat{\mathbb C}$ is compact metrizable.  Simple closed curves form an
$F_{\sigma\delta}$ subset of the hyperspace of continua in $\mathbb R^3$
\cite[Section~1]{KO24}; restricting to the sphere shows that Jordan
compacta form a Borel subset here.

Fix a countable dense set $Q=\{q_1,q_2,\ldots\}$ in the sphere.
Two points of $Q$ lie in the same component exactly when a finite
concatenation of short geodesics with vertices in $Q$ joins them without
meeting $\gamma$.  This is a countable union of positive-distance
conditions, so the two component orders give measurable
closed-complement coordinates.
Choose the first point of $Q$ in each component.  These selections are
measurable because their level sets are
\[
 \{q_j\in U_\pm\}\cap\bigcap_{i<j}\{q_i\notin U_\pm\}.
\]

For the selected points $u\in U_+$ and $v\in U_-$, define
\[
 A_{u,v}(z)=
 \begin{cases}
 (z-u)/(z-v),&u,v\in\mathbb C,\\
 z-u,&u\in\mathbb C,\ v=\infty,\\
 1/(z-v),&u=\infty.
 \end{cases}
\]
It sends \((u,v)\) to \((0,\infty)\).  Put
\(U:=A_{u,v}(U_+)\).  In the Polish compact-open space of holomorphic maps,
consider the graph of the univalent map onto \(U\) that fixes
\(0\) and has positive derivative there.  Put
\[
 K_m:=\{|z|\leq1-m^{-1}\},
 \qquad
 U_n:=\{w\in U:d_{\rm sph}(w,\partial U)\geq n^{-1}\}.
\]
The range condition \(f(\mathbb D)=U\) is equivalent to
\[
 f(K_m)\subset U\quad(m\geq1),
 \qquad
 \forall n\ \exists m:\ U_n\subset f(K_m).
\]
Together with univalence and the normalization, these countable
conditions define a Borel graph with singleton fibers.  The projection
onto the domain coordinate is a measurable bijection between standard
Borel spaces; Lusin--Souslin gives its measurable inverse $U\mapsto f_U$.
Put $V:=\iota(A_{u,v}(U_-))$ and apply the same construction to $V$.
The two frame selections are
\[
 f_\gamma^\circ=A_{u,v}^{-1}\circ f_U,
 \qquad
 g_\gamma^\circ=A_{u,v}^{-1}\circ\iota\circ f_V\circ\iota.
\]

Cauchy's formula in local target coordinates gives measurable finite jets.
Carath\'eodory's theorem extends each frame continuously to the boundary, with
\[
 f(\zeta)=\lim_{m\to\infty}f((1-m^{-1})\zeta),
 \qquad \zeta\in\mathbb S^1.
\]
Thus boundary evaluation is jointly measurable in the frame and boundary
point; inversion gives the exterior assertion.  The same countable range
conditions used above show that $\mathcal F_{\rm fr}$ is a Borel subset
of the product of the loop and map spaces.  In the seed coordinates,
\[
 (\gamma,U_+,U_-,a,b)\longmapsto
 (\gamma,U_+,U_-,f_\gamma^\circ\circ a,g_\gamma^\circ\circ b)
\]
is a measurable bijection from
$\mathcal J^{\rm or}\times\operatorname{Aut}(\mathbb D)
\times\operatorname{Aut}(\mathbb D^*)$ onto $\mathcal F_{\rm fr}$.
Lusin--Souslin gives a measurable inverse.  Composition and simultaneous
M\"obius postcomposition are measurable in these coordinates.
\end{proof}

Let $G=\operatorname{PSL}_2(\mathbb C)$ act by
\begin{equation}\label{eq:DR-ordered-Mobius-action}
 M\cdot(\gamma,U_+,U_-):=(M\gamma,MU_+,MU_-),\qquad M\in G,
\end{equation}
and by simultaneous postcomposition on $\mathcal F_{\rm fr}$.
Let $\nu_\kappa^{\rm sph,or}$ be the symmetric two-sided lift to
$\mathcal J^{\rm or}$ of the M\"obius-invariant, $\sigma$-finite spherical
SLE loop measure \cite{Zhan21}, with multiplicative constant to be fixed below.

Compute derivatives and area in the chosen local coordinate, using
$\iota(z)=z^{-1}$ at infinity.  For a Jordan domain $U$ and $z\in U$,
choose a conformal bijection $f_z:\mathbb D\to U$ with $f_z(0)=z$ and define
\begin{equation}\label{eq:DR-conformal-area-measure}
 \operatorname{crad}_U(z):=|f_z'(0)|,
 \qquad \lambda_U(dz):=\frac{dA(z)}{\operatorname{crad}_U(z)^2}.
\end{equation}
The radius is independent of the rotation of $f_z$.  For a conformal bijection
$\Phi:U\to U'$, the chain rule gives
\begin{equation}\label{eq:DR-conformal-area-covariance}
 \operatorname{crad}_{U'}(\Phi(z))
 =|\Phi'(z)|\operatorname{crad}_U(z),
 \qquad dA(\Phi(z))=|\Phi'(z)|^2dA(z),
\end{equation}
so $\Phi_*\lambda_U=\lambda_{U'}$.

Fix $c_+,c_->0$.  On $\operatorname{Aut}(\mathbb D)$, take the Haar measure
\begin{equation}\label{eq:DR-disc-automorphism-Haar}
 dH_{\mathbb D}(a)
 =c_+\frac{dA(a(0))}{(1-|a(0)|^2)^2}
      \frac{d\arg a'(0)}{2\pi}.
\end{equation}
Let $H_{\mathbb D^*}$ be its image under $a\mapsto\iota\circ a\circ\iota$,
multiplied by $c_-/c_+$.  For the measurable seed $f_\gamma^\circ$,
\begin{equation}\label{eq:DR-crad-frame-coordinate}
 \operatorname{crad}_{U_+}(f_\gamma^\circ(a(0)))
 =|(f_\gamma^\circ)'(a(0))|(1-|a(0)|^2).
\end{equation}
Thus the pushforwards under composition with the seeds have the following
densities.  Put $z_+=\widehat f(0)$, $z_-=\widehat g(\infty)$, and let
$\alpha_\pm$ be the tangent arguments in the chosen local coordinates:
\begin{align}
 dH_\gamma^+(\widehat f)
 &=c_+\lambda_{U_+}(dz_+)\frac{d\alpha_+}{2\pi},
 \label{eq:DR-interior-frame-Haar}\\
 \shortintertext{and}
 dH_\gamma^-(\widehat g)
 &=c_-\lambda_{U_-}(dz_-)\frac{d\alpha_-}{2\pi}.
 \label{eq:DR-exterior-frame-Haar}
\end{align}
For every nonnegative measurable function $F_+$ on the space of interior frames,
\[
 \int F_+(\widehat f)\,dH_\gamma^+(\widehat f)
 =\int_{\operatorname{Aut}(\mathbb D)}
   F_+(f_\gamma^\circ\circ a)\,dH_{\mathbb D}(a).
\]
The right side is measurable in the ordered loop; the exterior formula
is identical with $g_\gamma^\circ$ and $H_{\mathbb D^*}$.
Changing a seed left-translates the group coordinate, so
\begin{equation}\label{eq:DR-frame-Haar-kernels}
 (\gamma,U_+,U_-)\longmapsto H_\gamma^\pm
\end{equation}
are measurable measure kernels independent of the seeds.

Define
\begin{equation}\label{eq:DR-doubly-marked-loop-measure}
 dM_\kappa^{\rm fr}(\gamma,U_+,U_-,\widehat f,\widehat g)
 :=d\nu_\kappa^{\rm sph,or}(\gamma,U_+,U_-)\,
    dH_\gamma^+(\widehat f)\,dH_\gamma^-(\widehat g).
\end{equation}
Let $E_m$ exhaust $\mathcal J^{\rm or}$ with finite
$\nu_\kappa^{\rm sph,or}$-measure, and let $K_n^\pm$ be compact
exhaustions of the two automorphism groups.  In the seed coordinates,
\[
 M_\kappa^{\rm fr}(E_m\times K_n^+\times K_\ell^-)
 =\nu_\kappa^{\rm sph,or}(E_m)
   H_{\mathbb D}(K_n^+)H_{\mathbb D^*}(K_\ell^-)<\infty.
\]
These sets cover $\mathcal F_{\rm fr}$, proving $\sigma$-finiteness.

\subsubsection{The normalized slice}

Define the normalized slice
\begin{equation}\label{eq:DR-Weil-slice}
 \Sigma:=\{(\gamma,U_+,U_-,f,g)\in\mathcal F_{\rm fr}:
 f(0)=0,\ f'(0)=1,\ g(\infty)=\infty\}.
\end{equation}
Let \(\Sigma_{\rm r}\subset\Sigma\) be the image of \(\mathcal W\) under
the normalized-frame map
\[
 h\longmapsto
 \bigl(f_h(\mathbb S^1),f_h(\mathbb D),g_h(\mathbb D^*),f_h,g_h\bigr).
\]
Use on \(\Sigma_{\rm r}\) the trace measurable structure inherited from the
ambient slice.  The map
\begin{equation}\label{eq:DR-ambient-welding-map}
 \operatorname{weld}:\Sigma\longrightarrow
 \operatorname{Homeo}_+(\mathbb S^1),
 \qquad
 \operatorname{weld}(\gamma,U_+,U_-,f,g):=g^{-1}\circ f,
\end{equation}
is measurable by Lemma~\ref{lem:DR-standard-Borel-frame-model}.
Conformal removability and the three normalizations make its restriction
to $\Sigma_{\rm r}$ a bijection onto $\mathcal W$.

Realize the normalized SLE frame and its independent uniform exterior
rotational mark on a standard Borel probability space, and denote their
law by $\nu_\kappa^{\rm nf}$.  Almost-sure removability
supplies a Borel conull event in the SLE sample space.  Its measurable image in
$\Sigma_{\rm r}$ is analytic and contains a Borel conull set $\Sigma_0$.
Since $\Sigma_0$ and $\operatorname{Homeo}_+(\mathbb S^1)$ are standard
Borel spaces, Lusin--Souslin applied to the injective restriction of
$\operatorname{weld}$ gives a Borel image and a measurable inverse:
\[
 H_0:=\operatorname{weld}(\Sigma_0),\qquad
 \operatorname{weld}^{-1}:H_0\longrightarrow\Sigma_0.
\]
By the definition of the welding law,
\[
 (\operatorname{weld})_\#\nu_\kappa^{\rm nf}=\widetilde\nu_\kappa,
 \qquad \widetilde\nu_\kappa(H_0)=1.
\]
Consequently, for measurable $F$ on $H_0$ and $1\leq p\leq\infty$,
\[
 \|F\circ\operatorname{weld}\|_{L^p(\nu_\kappa^{\rm nf})}
 =\|F\|_{L^p(\widetilde\nu_\kappa)}.
\]
We therefore identify these completed $L^p$ spaces and write
$\widetilde\nu_\kappa$ also for the normalized-frame law on $\Sigma$.
The defining formulas for $t_n$, $\sigma_h(\mu)$, $\tau_h(\mu)$, and
$\rho_h(\mu)$, evaluated on the normalized interior frame, give measurable
extensions from $\Sigma_{\rm r}$ to $\Sigma$.

\subsubsection{Disintegration of the frame measure}

Fix a Haar measure $H_G$ on $G$ and define
\[
 \Theta:G\times\Sigma\longrightarrow\mathcal F_{\rm fr},
 \qquad \Theta(M,h)=M\cdot h.
\]
Define the measure $\widehat\nu_\kappa$ on $\Sigma$ by
\begin{equation}\label{eq:DR-raw-frame-measure}
 d\widehat\nu_\kappa(h):=e^{2k(h)}d\widetilde\nu_\kappa(h).
\end{equation}
It is $\sigma$-finite, since $k$ is finite and measurable and
\[
 \widehat\nu_\kappa(\{h:|k(h)|\leq n\})\leq e^{2n},\qquad n\geq1.
\]
Fix the multiplicative constant of $\nu_\kappa^{\rm sph,or}$ by the
following product identity.

\begin{lemma}\label{lem:DR-doubly-marked-slice}
The map $\Theta$ is a measurable bijection with measurable inverse, and
\begin{equation}\label{eq:DR-full-frame-Weil-disintegration}
 M_*M_\kappa^{\rm fr}=M_\kappa^{\rm fr}\quad(M\in G),
 \qquad \Theta_*(H_G\otimes\widehat\nu_\kappa)=M_\kappa^{\rm fr}.
\end{equation}
\end{lemma}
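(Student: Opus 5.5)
Three things have to be shown: that $\Theta$ is a measurable bijection with measurable inverse, that $M_\kappa^{\rm fr}$ is invariant under each $M\in G$, and that $\Theta_*(H_G\otimes\widehat\nu_\kappa)=M_\kappa^{\rm fr}$ once the free constant of $\nu_\kappa^{\rm sph,or}$ is fixed.

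\emph{Bijectivity.} Given $(\gamma,U_+,U_-,\widehat f,\widehat g)\in\mathcal F_{\rm fr}$, I look for the unique $M\in G$ such that $M^{-1}\circ\widehat f$ satisfies $f(0)=0$, $f'(0)=1$, and $M^{-1}\circ\widehat g$ satisfies $g(\infty)=\infty$. The three constraints force $M^{-1}$ to send $\widehat f(0)\mapsto0$ and $\widehat g(\infty)\mapsto\infty$. The remaining freedom is $z\mapsto\lambda z$, and the derivative condition fixes $\lambda$. So $M$ is determined explicitly by the finite jet of $\widehat f$ at $0$ and the value $\widehat g(\infty)$; for instance $M^{-1}=\lambda A_{z_+,z_-}$ in the notation of Lemma~\ref{lem:DR-standard-Borel-frame-model}. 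Finite-jet evaluation is measurable by that lemma, hence so is $\Theta^{-1}$. Measurability of $\Theta$ itself is the statement there that simultaneous M\"obius postcomposition is measurable.

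\emph{Invariance.} $\nu_\kappa^{\rm sph,or}$ is M\"obius invariant by \cite{Zhan21}. The kernels $H_\gamma^\pm$ are equivariant: by \eqref{eq:DR-conformal-area-covariance}, $M_*\lambda_{U_\pm}=\lambda_{MU_\pm}$. Moreover the tangent arguments $\alpha_\pm$ are shifted by $\arg M'(z_\pm)$, which preserves the uniform measure $d\alpha_\pm/2\pi$. Equivalently, in seed coordinates $M\circ f_\gamma^\circ$ is a seed for $M\gamma$, and the seed independence of \eqref{eq:DR-frame-Haar-kernels} gives $M_*H_\gamma^\pm=H_{M\gamma}^\pm$. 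Fubini then gives $M_*M_\kappa^{\rm fr}=M_\kappa^{\rm fr}$.

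\emph{Product identity.} In the coordinates $(M,h)$, the pulled-back measure $\Theta^*M_\kappa^{\rm fr}$ is $\sigma$-finite and invariant under left multiplication in $G$. A standard uniqueness result for invariant measures on $G\times\Sigma$ with free transitive action on the first factor (disintegrate over $\Sigma$ and use uniqueness of Haar measure on fibers) gives
\[
 \Theta^*M_\kappa^{\rm fr}=H_G\otimes m
\]
for a unique $\sigma$-finite $m$ on $\Sigma$. It remains to identify $m=c\,\widehat\nu_\kappa$ and absorb $c$ into the constant of $\nu_\kappa^{\rm sph,or}$.

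For this I would compute the Jacobian of the frame coordinates $(z_+,\alpha_+,z_-,\alpha_-)$ against Haar measure on $G$, viewed in the parametrisation $M\mapsto(M(0),\arg M'(0)\text{-type data},M(\infty),|M'|)$. Writing $G$ near the identity via the images of $0$, $\infty$ and the complex derivative at $0$, Haar measure becomes
\[
 \frac{dA(z_+)\,dA(z_-)\,dA(\lambda)}{|z_+-z_-|^4\,|\lambda|^2}
\]
up to a constant. On the other side, $\lambda_{U_\pm}$ contributes the factors $\operatorname{crad}_{U_\pm}(z_\pm)^{-2}$, which after normalizing by $M$ become $|f'(0)|^{-2}=1$ and $|g'(\infty)|^{2}$-type factors. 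The spare angle matches the exterior rotational mark, which is uniform and independent as required by the definition of $\widetilde\nu_\kappa$. The residual density on $\Sigma$ should then be $e^{2k(h)}=|g'(\infty)|^2$, relative to the law of the normalized loop with its uniform exterior mark. Finally, the law of that normalized loop is, up to a constant, the restriction of $\nu_\kappa^{\rm sph,or}$ disintegrated along the $G$-orbits; this is exactly how the welding measure of \cite[Definition~1.1]{BJ25} is built from Zhan's loop measure.

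The main obstacle is this last Jacobian computation, specifically getting the exponent $2k$ (rather than, say, $k$ or $-2k$) from conformal radius factors and Haar density. I would check it first on the round circle under dilations $z\mapsto\lambda z$. Under such a dilation $k$ shifts by $\log|\lambda|$, while the exterior conformal radius of $z_-=\infty$ in the chart $\iota$ scales by $|\lambda|^{-1}$, which contributes the factor $|\lambda|^2$.
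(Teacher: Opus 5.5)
Your bijectivity and invariance steps are the paper's: your $M^{-1}=\lambda A_{z_+,z_-}$ is exactly $N_x$ of \eqref{eq:DR-explicit-slice-Mobius}. Getting the product form abstractly, from $\Theta(QM,h)=Q\cdot\Theta(M,h)$ and uniqueness of $\sigma$-finite left-invariant measures, is a legitimate alternative to the paper. The paper instead writes $(\Theta^{-1})_*M_\kappa^{\rm fr}$ in the chart $(u,v,\rho,\theta_G)$ and reads off the product form and the fiber measure together. Prove the uniqueness by unfolding against a cutoff, $m(B)=\int\chi(M(x))\mathbf{1}_B(s(x))\,dM_\kappa^{\rm fr}(x)$ as in \eqref{eq:DR-slice-measure-cutoff-definition}, not by disintegrating over $\Sigma$: the projection of $(\Theta^{-1})_*M_\kappa^{\rm fr}$ to $\Sigma$ is not $\sigma$-finite. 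Your route buys little, though, because computing $m$ needs the same coordinate computation, and that is where the proposal has a gap.

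The identification $m=c\,\widehat\nu_\kappa$ is the content of the lemma, and you stop at ``should then be $e^{2k}$''. Three things are missing.

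\begin{enumerate}
\item \emph{The scale disintegration.} You need $d\nu_{\kappa,0,\infty}^{\rm sph}=c_\kappa^{\rm sc}\,d\rho\,d\nu_\kappa^\#$ with $\rho=\log\operatorname{crad}_{U_+}(0)$ \cite[Remark~2.14]{GQW25}, together with rotation invariance of $\nu_\kappa^\#$. ``Disintegrating along $G$-orbits'' is not the right statement. The stabilizer of the marks $(0,\infty)$ is $\mathbb C^*$, and the unit-radius slice removes only its modulus. The leftover rotation of the loop comes from the uniform angle $\alpha_+=\theta_G$. It is absorbed into $\nu_\kappa^\#$ only because $\nu_\kappa^\#$ is rotation invariant.
\item \emph{The radii at general marks.} In the chart $\iota$, $\operatorname{crad}_{MU_+}(u)=|q|$ and $\operatorname{crad}_{MU_-}(M(\infty))=|1-uv|^2e^{-k}/|q|$. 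This gives density $e^{2k}|1-uv|^{-4}$ against $dA(u)\,dA(v)$.
\item \emph{The angle Jacobian.} You need that $(\theta_G,\alpha)\mapsto(\alpha_+,\alpha_-)$ has unit Jacobian.
\end{enumerate}

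Your proposed sanity check is also wrong. $k=\log|g'(\infty)/f'(0)|$ does not change under a common dilation $z\mapsto\lambda z$. What happens is that the interior radius at $0$ scales by $|\lambda|$ and the exterior radius at $\infty$ by $|\lambda|^{-1}$. The two factors $\operatorname{crad}^{-2}$ therefore give $|\lambda|^{-2}|\lambda|^{2}=1$, leaving $e^{2k}$ depending only on the shape, as in \eqref{eq:DR-two-mark-density}. This $\rho$-independence is what makes the $d\rho$ of the scale disintegration match the $d\rho$ in $H_G$. If $k$ shifted with $\lambda$ as you claim, the density would depend on the group coordinate and contradict the product form you had just derived.
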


\begin{proof}
M\"obius invariance of $\nu_\kappa^{\rm sph,or}$ and
\eqref{eq:DR-conformal-area-covariance} give the first identity.
For $x=(\gamma,U_+,U_-,\widehat f,\widehat g)$, use the marked points
$z_+,z_-$ above.
The normalizing map is
\begin{equation}\label{eq:DR-explicit-slice-Mobius}
 N_x(z):=
 \begin{cases}
 \displaystyle
 \frac{(z_+-z_-)(z-z_+)}{\widehat f'(0)(z-z_-)},
   &z_+,z_-\ne\infty,\\[7pt]
 \displaystyle\frac{z-z_+}{\widehat f'(0)},
   &z_+\ne\infty,\ z_-=\infty,\\[7pt]
 \displaystyle\frac1{(\iota\circ\widehat f)'(0)(z-z_-)},
   &z_+=\infty.
 \end{cases}
\end{equation}
It satisfies
\begin{equation}\label{eq:DR-unique-slice-Mobius}
 N_x(\widehat f(0))=0,
 \qquad N_x(\widehat g(\infty))=\infty,
 \qquad (N_x\circ\widehat f)'(0)=1.
\end{equation}
These conditions characterize $N_x$ uniquely.  By
Lemma~\ref{lem:DR-standard-Borel-frame-model}, it depends measurably on $x$,
and the inverse of $\Theta$ is
\begin{equation}\label{eq:DR-Weil-product-chart}
 s(x):=N_x\cdot x,\qquad M(x):=N_x^{-1},
 \qquad \Theta^{-1}(x)=(M(x),s(x)).
\end{equation}
Put
\[
 M_\kappa^{\rm prod}:=(\Theta^{-1})_*M_\kappa^{\rm fr}.
\]
Restrict the loop measure to the prescribed marks $0$ and $\infty$:
\begin{equation}\label{eq:DR-zero-infinity-restriction}
 d\nu_{\kappa,0,\infty}^{\rm sph}(\gamma)
 :=\mathbf 1_{\{0\in U_+,\,\infty\in U_-\}}
   d\nu_\kappa^{\rm sph,or}(\gamma,U_+,U_-).
\end{equation}
Let $\rho:=\log\operatorname{crad}_{U_+}(0)$ and let $\nu_\kappa^\#$
be the SLE shape law with unit interior conformal radius.  With a
normalization constant $c_\kappa^{\rm sc}>0$, the scale disintegration
of \cite[Remark~2.14]{GQW25} reads
\begin{equation}\label{eq:DR-loop-scale-disintegration}
 d\nu_{\kappa,0,\infty}^{\rm sph}
 =c_\kappa^{\rm sc}\,d\rho\,d\nu_\kappa^\#.
\end{equation}
Let $\alpha$ be the independent uniform relative exterior rotation.
In the normalization of \cite[Definition~1.1]{BJ25},
\begin{equation}\label{eq:DR-welding-law-shape-rotation}
 d\widetilde\nu_\kappa
 =d\nu_\kappa^\#\frac{d\alpha}{2\pi}.
\end{equation}
For a unit-radius loop $\eta$, let $U_+(\eta)$ and $U_-(\eta)$ contain
$0$ and $\infty$, respectively.  Choose Riemann maps $f,g_0$ with
$f(0)=0$, $f'(0)=1$, $g_0(\infty)=\infty$, and $g_0'(\infty)>0$, and set
\begin{equation}\label{eq:DR-relative-rotation-section}
 h_{\eta,\alpha}
 :=\bigl(\eta,U_+(\eta),U_-(\eta),f,g_0\circ r_\alpha\bigr).
\end{equation}
Its welding is $r_{-\alpha}\circ g_0^{-1}\circ f$, realizing
\eqref{eq:DR-welding-law-shape-rotation}.
Writing $g=g_0\circ r_\alpha$ and $k=k(h_{\eta,\alpha})$, scale the pair by
\begin{equation}\label{eq:DR-scaled-uniformizers}
 f_\rho=e^\rho f,
 \qquad
 g_\rho=e^\rho g,
 \qquad
 f'(0)=1,
 \qquad
 |g'(\infty)|=e^k.
\end{equation}
Then
\begin{align}
 \operatorname{crad}_{U_+}(0)
 &=f_\rho'(0)=e^\rho,
 \label{eq:DR-interior-crad-rho}\\
 \operatorname{crad}_{U_-}(\infty)
 &=|g_\rho'(\infty)|^{-1}=e^{-\rho-k}.
 \label{eq:DR-exterior-crad-rho}
\end{align}
The product of the two mark densities at $(0,\infty)$ is therefore
\begin{equation}\label{eq:DR-two-mark-density}
 e^{-2\rho}\,e^{2\rho+2k}=e^{2k}.
\end{equation}
For general marks, use the group chart
\[
 u=M(0)\in\mathbb C,\qquad v=\iota(M(\infty))\in\mathbb C,
 \qquad q=M'(0)\ne0,
 \qquad 1-uv\ne0.
\]
Set $M_{u,v}(z):=(u+z)/(1+vz)$.  The map with the prescribed derivative is
\[
 M(z)=M_{u,v}\left(\frac{qz}{1-uv}\right).
\]
With $\rho=\log|q|$ and $\theta_G=\arg q$, the group coordinates are
\begin{equation}\label{eq:DR-six-Haar-variables}
 M\longmapsto(u,v,\rho,\theta_G).
\end{equation}
Direct differentiation at the two marks yields
\[
 M'(0)=q,
 \qquad (\iota\circ M\circ\iota)'(0)=\frac{(1-uv)^2}{q}.
\]
For the frame $M\cdot h_{\eta,\alpha}$, conformal covariance therefore gives
\begin{align*}
 \operatorname{crad}_{MU_+(\eta)}(u)&=|q|,\\
 \operatorname{crad}_{MU_-(\eta)}(M(\infty))
 &=\frac{|1-uv|^2}{|q|}e^{-k(h_{\eta,\alpha})}.
\end{align*}
The second radius is computed in the coordinate $\iota$ at the exterior
mark.  Thus the product of the two frame densities relative to
$dA(u)dA(v)$ is
\begin{equation}\label{eq:DR-two-mark-general-density}
 \operatorname{crad}_{MU_+}(u)^{-2}
 \operatorname{crad}_{MU_-}(M(\infty))^{-2}
 =\frac{e^{2k(h_{\eta,\alpha})}}{|1-uv|^4}.
\end{equation}
For fixed $u,v$, pull the loop back by $M_{u,v}^{-1}$.  Its scale is
$\rho-\log|1-uv|$, so the scale differential remains $d\rho$.
Rotation invariance of the loop measure and uniqueness of
\eqref{eq:DR-loop-scale-disintegration} give
$(r_\theta)_*\nu_\kappa^\#=\nu_\kappa^\#$.  Thus the rotation
$\arg(q/(1-uv))$ is absorbed in the shape variable.

For fixed $u,v$, the tangent angles in these two target coordinates satisfy
\[
 \alpha_+=\theta_G,\qquad
 \alpha_-=2\arg(1-uv)-\theta_G-\alpha,\qquad
 \left|\det\frac{\partial(\alpha_+,\alpha_-)}
 {\partial(\theta_G,\alpha)}\right|=1.
\]
Let $C_\kappa>0$ collect the fixed measure normalizations.  Combining
\eqref{eq:DR-two-mark-general-density} with the scale and angle changes
gives, by Tonelli's theorem,
\begin{equation}\label{eq:DR-full-coordinate-density}
 dM_\kappa^{\rm prod}
 =C_\kappa\frac{dA(u)dA(v)d\rho d\theta_G}{|1-uv|^4}
 e^{2k(h_{\eta,\alpha})}
 d\nu_\kappa^\#(\eta)\frac{d\alpha}{2\pi}.
\end{equation}
In finite coordinates $z_+,z_-$, the invariant measure on ordered
distinct spherical points is
\[
 \frac{dA(z_+)dA(z_-)}{|z_+-z_-|^4}.
\]
For a M\"obius map $N$, the identity
\[
 |N(z_+)-N(z_-)|^2
 =|N'(z_+)N'(z_-)|\,|z_+-z_-|^2
\]
cancels the two area Jacobians.  Substitution of $z_+=u$, $z_-=1/v$ gives
\[
 \frac{dA(z_+)dA(z_-)}{|z_+-z_-|^4}
 =\frac{dA(u)|v|^{-4}dA(v)}{|u-v^{-1}|^4}
 =\frac{dA(u)dA(v)}{|1-uv|^4}.
\]
The stabilizer of the ordered pair is $\mathbb C^*$, with Haar measure
$d\rho d\theta_G$.  Hence, for a normalization constant $C_G>0$,
\begin{equation}\label{eq:DR-Haar-coordinate-density}
 dH_G=C_G\frac{dA(u)dA(v)d\rho d\theta_G}{|1-uv|^4}.
\end{equation}
Combining \eqref{eq:DR-full-coordinate-density},
\eqref{eq:DR-Haar-coordinate-density}, and
\eqref{eq:DR-welding-law-shape-rotation} gives
\begin{equation}\label{eq:DR-slice-density-computation}
 dM_\kappa^{\rm prod}
 =\frac{C_\kappa}{C_G}\,dH_G\,e^{2k(h_{\eta,\alpha})}
   d\nu_\kappa^\#(\eta)\frac{d\alpha}{2\pi}
 =\frac{C_\kappa}{C_G}\,dH_G\,d\widehat\nu_\kappa.
\end{equation}
Conformal covariance gives the same identity in every spherical mark
chart; a countable partition by such charts extends it to the full frame
space.  Choosing the normalization of $\nu_\kappa^{\rm sph,or}$ so that
$C_\kappa=C_G$ yields
\begin{equation}\label{eq:DR-Weil-product-measure}
 M_\kappa^{\rm prod}=H_G\otimes\widehat\nu_\kappa,
\end{equation}
which proves the second identity.
\end{proof}

Choose $\chi\in C_c(G)$ with
\begin{equation}\label{eq:DR-slice-cutoff-normalization}
 \chi\geq0,\qquad\int_G\chi\,dH_G=1.
\end{equation}
The product formula gives, for every nonnegative measurable $F$ on $\Sigma$,
\begin{equation}\label{eq:DR-slice-measure-cutoff-definition}
 \int_\Sigma F\,d\widehat\nu_\kappa
 =\int_{\mathcal F_{\rm fr}}\chi(M(x))F(s(x))\,dM_\kappa^{\rm fr}(x).
\end{equation}

\subsection{Changes of frame and their divergences}

\subsubsection{Changes of source frame}

Let $f:\mathbb D\to\mathbb C$ be an unnormalized univalent map,
with the contours chosen as in Lemma~\ref{lem:DR-physical-trace}.
For $j\geq0$, use the raw Schiffer response $L_{-j-2}^Cf$ from
\eqref{eq:DR-Schiffer-variation}, with $\psi=f^{-1}$.
Let $a\in\operatorname{Aut}(\mathbb D)$, put $f_a=f\circ a$, and choose
a matrix representative
\begin{equation}\label{eq:DR-source-Mobius-matrix}
 a^{-1}(u)=\frac{\alpha u+\beta}{cu+d},
 \qquad \alpha d-\beta c=1.
\end{equation}

For the contractions, let $\mu\in L^\infty(\mathbb D)$ have compact
support and satisfy the three-radius condition of
Lemma~\ref{lem:DR-physical-trace} for $(f_a,\mu)$.
Define
\begin{equation}\label{eq:DR-Beltrami-frame-pullback}
 \mu^a(w)
 :=\mu(a^{-1}(w))
   \frac{a'(a^{-1}(w))}{\overline{a'(a^{-1}(w))}}.
\end{equation}
The same three-radius condition holds for $(f,\mu^a)$, and
$\operatorname{supp}\mu^a=a(\operatorname{supp}\mu)\Subset\mathbb D$.
Define the lift equivariant under source precomposition by
\begin{equation}\label{eq:DR-equivariant-Schiffer-seed-extension}
 (L_{-j-2}^{\rm eq}f_a)(z):=(L_{-j-2}^Cf)(a(z)).
\end{equation}

The difference between the two lifts is described by
\begin{align}
 Q_{a,j}(w)
 &:=\frac{c}{2\pi i}\oint_{|\zeta|=r_2}\psi'(\zeta)^2
 \left\{
 \frac{(cw+d)^2}{(c\psi(\zeta)+d)^3}
 +\frac{cw+d}{(c\psi(\zeta)+d)^2}
 +\frac1{c\psi(\zeta)+d}
 \right\}\zeta^{-j-1}\,d\zeta,
 \label{eq:DR-source-frame-defect-polynomial}\\
 V_{f,j}(a)(z)&:=\frac{Q_{a,j}(a(z))}{a'(z)},
 \label{eq:DR-source-frame-vertical-field}\\
 V_{j,f}^{\rm src}(f_a)&:=L_{-j-2}^Cf_a-(L_{-j-2}^Cf)\circ a.
 \label{eq:DR-source-frame-anomaly-field}
\end{align}
For uniform estimates, let $a$ range over a fixed compact subset of
$\operatorname{Aut}(\mathbb D)$ on which the same three radii apply.
Set $q=r_1/r_2<1$; the constant $C$ below depends only on this
localization and $\mu$.

\begin{lemma}\label{lem:DR-frame-covariant-contracted-trace}
The source-frame defect is vertical, with
\begin{equation}\label{eq:DR-source-frame-anomaly-zero-divergence}
 V_{j,f}^{\rm src}(f_a)=f_a'V_{f,j}(a),
 \qquad \operatorname{Div}_{H_{\mathbb D}}V_{j,f}^{\rm src}=0.
\end{equation}
The exterior defect obtained by inversion has the same properties.

The contractions satisfy
\begin{align}
 \sum_{j\geq0}\widehat m_j(f_a;\mu)\pi_{j+2}(f_a)
 &=-(\mathcal S f_a,\mu),
 \label{eq:DR-frame-covariant-matter-contraction}\\
 \sum_{j\geq0}L_{-j-2}^{\rm eq}
     \widehat m_j(f_a;\mu)
 &=\frac{26}{12}(\mathcal S f_a,\mu).
 \label{eq:DR-frame-covariant-ghost-contraction}
\end{align}
For $N\geq0$,
\begin{equation}\label{eq:DR-frame-covariant-truncation-rate}
 \begin{aligned}
 \left|\sum_{j=0}^N \widehat m_j(f_a;\mu)\pi_{j+2}(f_a)
       +(\mathcal S f_a,\mu)\right|
 &\leq C(N+2)^2q^{N+1},\\
 \left|\sum_{j=0}^N L_{-j-2}^{\rm eq}\widehat m_j(f_a;\mu)
       -\frac{26}{12}(\mathcal S f_a,\mu)\right|
 &\leq C(N+2)^2q^{N+1}.
 \end{aligned}
\end{equation}
\end{lemma}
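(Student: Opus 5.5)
The proof has three independent parts: the verticality of the source-frame defect, the two contractions, and the truncation rates.

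\emph{Verticality of the defect.} I would compute $L_{-j-2}^Cf_a$ directly from \eqref{eq:DR-Schiffer-variation}, using $\psi_a:=f_a^{-1}=a^{-1}\circ\psi$. The Schiffer kernel $K_{\psi_a}$ then differs from $K_\psi$ through the M\"obius factor $a^{-1}$, and the two difference quotients are related by
\[
 \frac{a^{-1}(\psi(x))-a^{-1}(\psi(\zeta))}{\psi(x)-\psi(\zeta)}=\frac{1}{(c\psi(x)+d)(c\psi(\zeta)+d)},
\]
with $(a^{-1})'(u)=(cu+d)^{-2}$. Substituting these and expanding in $w=\psi(x)$, the difference $K_{\psi_a}-(\text{transported }K_\psi)$ should be a polynomial of degree at most two in $cw+d$, with coefficients integrated against $\psi'(\zeta)^2\zeta^{-j-1}$. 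Integrating in $\zeta$ gives $Q_{a,j}$ in \eqref{eq:DR-source-frame-defect-polynomial}, and hence the identity $V^{\rm src}_{j,f}(f_a)=f_a'\,V_{f,j}(a)$, where the factor $1/a'$ converts the vector field to the source coordinate.

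\emph{Zero divergence.} Since $Q_{a,j}$ is quadratic in $w$, the field $V_{f,j}(a)$ is an infinitesimal M\"obius field on the source. I expect it to be tangent to $\operatorname{Aut}(\mathbb D)$, i.e.\ a real-linear combination of generators of the left action. To check this I would use that $c\psi(\zeta)+d$ is conjugate-related to the disc structure via $|d|^2-|c|^2=1$; if the field fails to preserve the circle, I would instead split it into holomorphic and antiholomorphic Wirtinger parts, each of which is a left-invariant complexified generator. Divergence zero with respect to Haar $H_{\mathbb D}$ then follows from unimodularity of $\operatorname{Aut}(\mathbb D)$: left- and right-invariant fields have zero divergence for a bi-invariant Haar measure. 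The exterior case follows by conjugating with $\iota$.

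\emph{The contractions.} For \eqref{eq:DR-frame-covariant-matter-contraction}, I would apply Lemma~\ref{lem:DR-physical-trace}, \eqref{eq:DR-matter-contraction}, verbatim to the unnormalized map $f_a$; the remark after that lemma allows unnormalized maps under the three-radius condition. For \eqref{eq:DR-frame-covariant-ghost-contraction}, I would use the change of variables $z\mapsto a(z)$, under which $\widehat m_j(f_a;\mu)=\widehat m_j(f;\mu^a)$. The equivariant lift acts on $f_a$ exactly as $L^C$ acts on $f$, so the sum equals $\tfrac{26}{12}(\mathcal Sf,\mu^a)$ by \eqref{eq:DR-ghost-contraction}. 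Finally, the Schwarzian chain rule $\mathcal S(f\circ a)=(\mathcal Sf\circ a)a'^2$ together with the definition \eqref{eq:DR-Beltrami-frame-pullback} gives $(\mathcal Sf,\mu^a)=(\mathcal Sf_a,\mu)$.

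\emph{Truncation rates.} The estimates \eqref{eq:DR-frame-covariant-truncation-rate} are tail bounds. From the proof of Lemma~\ref{lem:DR-physical-trace} we have $|\widehat m_j|\le M r_1^j$. Cauchy estimates on $|\zeta|=r_2$ give $|\pi_{j+2}|\le C r_2^{-j}$, and similarly $|L^{\rm eq}_{-j-2}\widehat m_j|\le C(j+1)r_1^{j}r_2^{-j}$, the factor $j$ coming from differentiating $f^j$. Summing the tail $\sum_{j>N}(j+1)q^j\le C(N+2)^2q^{N+1}$ gives the stated rate, with constants uniform over the compact range of $a$ by Koebe distortion.

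The main obstacle is the first step: the exact algebra producing $Q_{a,j}$, and identifying $V_{f,j}(a)$ as a Haar-divergence-free group field rather than a merely M\"obius one.
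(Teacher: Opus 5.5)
The gap is in the zero-divergence step. The rest of your proposal follows the paper's route: the M\"obius difference-quotient algebra for verticality, the identity $\widehat m_j(f_a;\mu)=\widehat m_j(f;\mu^a)$ with $L^{\rm eq}_{-j-2}$ acting on $f_a$ as $L^C_{-j-2}$ acts on $f$, the relation $\mathcal S a=0$, and the geometric tail bounds.

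You are right that $V_{f,j}(a)$ lies, for each fixed $a$, in the complexified Lie algebra of $\operatorname{Aut}(\mathbb D)$. That only shows the field is vertical. On the fibre $a\mapsto f\circ a$, written in the left-invariant frame as $\sum_r v_r(a)E_r^L$, the coefficients genuinely depend on $a$. At $a=\mathrm{id}$ one has $c=0$, so $V_{f,j}(\mathrm{id})=0$. For $a_t=\exp(tX)$ one has $c=tx_2+O(t^2)$, which gives $\partial_t|_{0}V_{f,j}(a_t)=3x_2\ell_j(f)$ with $\ell_j(f)=\frac1{2\pi i}\oint\psi'(\zeta)^2\zeta^{-j-1}\,d\zeta$. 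This is nonzero in general; for example $\ell_0(f)=\psi'(0)^2$. So the field is neither left- nor right-invariant, and unimodularity does not apply to it directly. Unimodularity only gives $\operatorname{Div}_{H_{\mathbb D}}\bigl(\sum_r v_rE_r^L\bigr)=-\sum_r E_r^Lv_r$, which has no reason to vanish. Splitting into real and imaginary (Wirtinger) parts changes nothing, because each part still has $a$-dependent coefficients.

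What you are missing is a way to control how $V_{f,j}$ varies along the group. The paper gets this from a cocycle identity. Adding and subtracting $(L^C_{-j-2}(f\circ a))\circ b$ gives
\[
 V_{f,j}(a\circ b)=V_{f\circ a,j}(b)+b^*V_{f,j}(a),\qquad b^*X=\frac{X\circ b}{b'}.
\]
Differentiating at $b=\exp(tX)$ yields $X^L_a[V_{f,j}]=T_{f\circ a,j}X+[X,V_{f,j}(a)]$, where $T_{f,j}X=3x_2\ell_j(f)$. Hence
\[
 \sum_rE_r^Lv_r=\operatorname{tr}T_{f\circ a,j}-\operatorname{tr}\operatorname{ad}_{V_{f,j}(a)}.
\]
The second trace vanishes because $\mathfrak{sl}_2$ is unimodular. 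The first trace vanishes by explicit computation. In the basis $E_1=1-z^2$, $E_2=i(1+z^2)$, $E_3=iz$, the only output of $T_{f,j}$ is the constant field $1=\tfrac12(E_1-iE_2)$, so
\[
 [T_{f,j}]=\frac{3\ell_j(f)}2\begin{pmatrix}-1&i&0\\ i&1&0\\ 0&0&0\end{pmatrix},\qquad \operatorname{tr}T_{f,j}=0.
\]
This tracelessness is a specific property of this defect, not a formal consequence of verticality. It is exactly what your argument needs to supply before $\operatorname{Div}_{H_{\mathbb D}}V^{\rm src}_{j,f}=0$ is established, and the exterior case then follows by inversion as you say.
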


\begin{proof}
Put $w=\psi(x)$, $v=\psi(\zeta)$, and $A=a^{-1}$.  Then
\begin{equation}\label{eq:DR-source-Mobius-difference-identity}
 \frac{A'(v)^2}{A'(w)\{A(w)-A(v)\}}
 =\left(\frac{cw+d}{cv+d}\right)^3\frac1{w-v}.
\end{equation}
Substitution into \eqref{eq:DR-Schiffer-kernel} gives
\begin{align}
 K_{a^{-1}\circ\psi}(x,\zeta)
 &=\left(\frac{cw+d}{cv+d}\right)^3
   \left\{K_\psi(x,\zeta)+\frac1{x-\zeta}\right\}
   -\frac1{x-\zeta},
 \label{eq:DR-Schiffer-kernel-frame-change}\\
 \shortintertext{so}
 K_{a^{-1}\circ\psi}(x,\zeta)-K_\psi(x,\zeta)
 &=\frac{\psi'(\zeta)^2}{\psi'(x)(w-v)}
   \left\{
   \left(\frac{cw+d}{cv+d}\right)^3-1
   \right\}.
 \label{eq:DR-Schiffer-kernel-frame-defect}
\end{align}
Since
\begin{equation}\label{eq:DR-source-cubic-factorization}
 \frac1{w-v}
 \left\{\left(\frac{cw+d}{cv+d}\right)^3-1\right\}
 =c\left\{
 \frac{(cw+d)^2}{(cv+d)^3}
 +\frac{cw+d}{(cv+d)^2}
 +\frac1{cv+d}
 \right\},
\end{equation}
putting $x=f(w)$ and integrating against
$\zeta^{-j-1}d\zeta/(2\pi i)$ gives
\begin{align}
 (L_{-j-2}^Cf_a)(z)-(L_{-j-2}^Cf)(a(z))
 &=f'(a(z))Q_{a,j}(a(z))
  =f_a'(z)V_{f,j}(a)(z).
 \label{eq:DR-Schiffer-response-frame-defect}
\end{align}
Since $\deg Q_{a,j}\leq2$ and M\"obius pullback preserves quadratic
vector fields, $V_{f,j}(a)\partial_z$ lies in the complexified Lie algebra
of $\operatorname{Aut}(\mathbb D)$.  This proves verticality.
For $b\in\operatorname{Aut}(\mathbb D)$ and a source field $X$, write
\begin{equation}\label{eq:DR-source-frame-pullback-vector}
 b^*X:=\frac{X\circ b}{b'}.
\end{equation}
Adding and subtracting $(L_{-j-2}^C(f\circ a))\circ b$ in the defect for
$a\circ b$ gives
\begin{equation}\label{eq:DR-source-frame-anomaly-cocycle}
 V_{f,j}(a\circ b)
 =V_{f\circ a,j}(b)+b^*V_{f,j}(a).
\end{equation}
Put
\begin{equation}\label{eq:DR-source-frame-Jj}
 \ell_j(f):=\frac1{2\pi i}\oint_{|\zeta|=r_2}
 \psi'(\zeta)^2\zeta^{-j-1}\,d\zeta.
\end{equation}
For $X(z)=x_0+x_1z+x_2z^2$ in the real source Lie algebra, let
$a_t=\exp(tX)$.  Thus
\begin{equation}\label{eq:DR-source-frame-infinitesimal-a}
 a_t(z)=z+t(x_0+x_1z+x_2z^2)+O(t^2).
\end{equation}
The denominator coefficient of $a_t^{-1}$ is $c=tx_2+O(t^2)$, so
\eqref{eq:DR-source-frame-defect-polynomial} gives
\begin{equation}\label{eq:DR-source-frame-infinitesimal-defect}
 T_{f,j}X:=\left.\partial_t\right|_{t=0}V_{f,j}(a_t)
 =3x_2\ell_j(f).
\end{equation}
In the real basis
\[
 E_1=1-z^2,\qquad E_2=i(1+z^2),\qquad E_3=iz,
\]
the complexified map has matrix and trace
\begin{equation}\label{eq:DR-source-frame-defect-trace-zero}
 [T_{f,j}]=\frac{3\ell_j(f)}2
 \begin{pmatrix}-1&i&0\\i&1&0\\0&0&0\end{pmatrix},
 \qquad \operatorname{tr}T_{f,j}=0.
\end{equation}
Use the left-invariant frame
\begin{equation}\label{eq:DR-source-left-invariant-frame}
 X_a^L:=\left.\frac d{dt}\right|_{t=0}
 a\circ\exp(tX).
\end{equation}
Since
\begin{equation}\label{eq:DR-source-pullback-bracket}
 \left.\partial_t\right|_{t=0}(\exp(tX))^*V=[X,V],
\end{equation}
differentiating \eqref{eq:DR-source-frame-anomaly-cocycle} gives
\begin{equation}\label{eq:DR-source-frame-anomaly-derivative}
 X_a^L[V_{f,j}]
 =T_{f\circ a,j}(X)+[X,V_{f,j}(a)].
\end{equation}
Write $V_{f,j}(a)=\sum_{r=1}^3v_r(a)E_r$ in the displayed real
basis, with complex coefficients.  Unimodularity and
\eqref{eq:DR-source-frame-anomaly-derivative} give
\[
 \operatorname{Div}_{H_{\mathbb D}}V_{j,f}^{\rm src}
 =-\sum_{r=1}^3E_r^Lv_r
 =-\operatorname{tr}T_{f\circ a,j}
   +\operatorname{tr}\operatorname{ad}_{V_{f,j}(a)}=0.
\]
Thus, for $F_+\in C_c^1(\operatorname{Fr}^+(\gamma,U_+))$,
\begin{equation}\label{eq:DR-source-frame-anomaly-Haar-zero}
 \int_{\operatorname{Aut}(\mathbb D)}
 V_{j,f}^{\rm src}F_+\,dH_{\mathbb D}
 =\int F_+\operatorname{Div}_{H_{\mathbb D}}V_{j,f}^{\rm src}
       \,dH_{\mathbb D}=0.
\end{equation}
Inversion gives the exterior assertion.

The two contraction identities in Lemma~\ref{lem:DR-physical-trace}
also hold when $\operatorname{supp}\mu^a$ meets $0$.  Indeed,
\[
 \|\mu^a-\mathbf1_{\{|w|>\varepsilon\}}\mu^a\|_{L^1(\mathbb D)}
 \leq\pi\varepsilon^2\|\mu\|_\infty\longrightarrow0,
\]
and the integral kernels in
\eqref{eq:DR-matter-contraction-proof} and
\eqref{eq:DR-variable-coefficient-trace} are bounded on the fixed
compact support.
The change of variables $w=a(z)$ gives
\begin{align}
 \widehat m_j(f_a;\mu)
 &=\frac1\pi\int_{\mathbb D}\mu^a(w)f'(w)^2f(w)^j\,dA(w)
 =\widehat m_j(f;\mu^a),
 \label{eq:DR-moment-frame-covariance}\\
 (\mathcal S f_a,\mu)&=(\mathcal S f,\mu^a),
 \label{eq:DR-Schwarzian-frame-covariance}
\end{align}
because $\mathcal S a=0$.  Also $\pi_{j+2}(f_a)=\pi_{j+2}(f)$, so
Lemma~\ref{lem:DR-physical-trace} gives
\[
 \sum_{j\geq0}\widehat m_j(f_a;\mu)\pi_{j+2}(f_a)
 =\sum_{j\geq0}\widehat m_j(f;\mu^a)\pi_{j+2}(f)
 =-(\mathcal S f,\mu^a)=-(\mathcal S f_a,\mu).
\]
Differentiating \eqref{eq:DR-moment-frame-covariance} while holding $a$
fixed gives the trace identity from the same lemma:

\begin{align}
 \sum_{j\geq0}L_{-j-2}^{\rm eq}\widehat m_j(f_a;\mu)
 &=\sum_{j\geq0}L_{-j-2}^{C}\widehat m_j(f;\mu^a)\notag\\
 &=\frac{26}{12}(\mathcal S f,\mu^a)
 =\frac{26}{12}(\mathcal S f_a,\mu),
 \label{eq:DR-frame-covariant-ghost-proof}
\end{align}
On the fixed collar, Cauchy's estimates for $\mathcal S\psi$ and
$K_\psi$, together with \eqref{eq:DR-mj-raw-extension}, give
\begin{align*}
 |\widehat m_j(f_a;\mu)|&\leq Cr_1^j,\\
 |\pi_{j+2}(f_a)|+
 \sup_{z\in a(\operatorname{supp}\mu)}
 \bigl(|(L_{-j-2}^Cf)(z)|+|(L_{-j-2}^Cf)'(z)|\bigr)&\leq Cr_2^{-j}.
\end{align*}
Differentiating the moment integral therefore yields
\[
 |L_{-j-2}^{\rm eq}\widehat m_j(f_a;\mu)|
 \leq C(j+1)q^j.
\]
For $\ell\in\{0,1,2\}$, the geometric tails satisfy
\begin{align}
 \sum_{j=N+1}^{\infty}(j+1)^{\ell}q^j
 &=q^{N+1}\sum_{m=0}^{\infty}(N+m+2)^{\ell}q^m\notag\\
 &\leq(N+2)^{\ell}q^{N+1}\sum_{m=0}^{\infty}(m+1)^{\ell}q^m\notag\\
 &\leq\frac{2(N+2)^{\ell}q^{N+1}}{(1-q)^3}.
 \label{eq:DR-geometric-tail-estimate}
\end{align}
Summing the two coefficient bounds with
\eqref{eq:DR-geometric-tail-estimate} proves
\eqref{eq:DR-frame-covariant-truncation-rate}.
\end{proof}

\subsubsection{Changes of target frame}

Write
\begin{equation}\label{eq:DR-Mobius-matrix}
 M(\xi)=\frac{a\xi+b}{c\xi+d},
 \qquad ad-bc=1.
\end{equation}
Set $\operatorname{pole}(M)=-d/c$ when $c\ne0$ and
$\operatorname{pole}(M)=\infty$ when $c=0$.  Define
\begin{equation}\label{eq:DR-Mobius-pole-free-chart}
 G_f
 :=\{M\in G:\operatorname{pole}(M)
       \notin\overline{f(\mathbb D)}\}.
\end{equation}
Use the physical Cauchy vector \eqref{eq:DR-physical-Laurent} also for
unnormalized univalent maps.  Fix $M\in G_f$ and put
\begin{equation}\label{eq:DR-Mobius-anomaly-integrals}
 I_r(M,f):=\frac1\pi\int_{\mathbb D}
 \frac{\mu(z)f'(z)^2}{\{cf(z)+d\}^{r}}\,dA(z),
 \qquad r=1,2,3,
\end{equation}
and
\begin{equation}\label{eq:DR-Mobius-anomaly-polynomial}
 p_\mu(M,f;y)
 :=-c\left\{I_3(M,f)+(a-cy)I_2(M,f)
 +(a-cy)^2I_1(M,f)\right\}.
\end{equation}
The right-hand side is unchanged when the matrix representative in
\eqref{eq:DR-Mobius-matrix} is multiplied by $-1$ and is therefore
well-defined on $G$.

For $h\in\Sigma$ and $x=M\cdot h$, write $\widehat f=M\circ f$ and
$\widehat g=M\circ g$.  The condition $M\in G_f$ makes $\widehat f$
holomorphic on $\mathbb D$.  Let
$\widehat R_\mu^{\mathrm C}$ be the raw Schiffer variation induced by the
physical vector $V_{\mu,\widehat f}$.  It acts on the unnormalized
maps in $\mathcal F_{\rm fr}$ and restricts on the slice to
$R_\mu^{\rm raw}$ from \eqref{eq:DR-Rmu-variable-coefficients}.
To specify its interior component, choose a positively oriented
analytic contour $\Gamma_0\Subset\widehat f(\mathbb D)$ enclosing
$\widehat f(\operatorname{supp}\mu)$, and put
$\widehat\psi=\widehat f^{-1}$.  The Cauchy projection used in
\eqref{eq:DR-Schiffer-variation} gives
\begin{equation}\label{eq:DR-Cauchy-field-interior-definition}
 (\widehat R_\mu^{\mathrm C}\widehat f)(z)
 =-\frac1{2\pi i}\oint_{\Gamma_0}
 K_{\widehat\psi}(\widehat f(z),\zeta)
 V_{\mu,\widehat f}(\zeta)\,d\zeta.
\end{equation}
The formula extends analytically from the inside of $\Gamma_0$ to
$\mathbb D$.  The exterior component is the Schiffer projection in the
coordinate $\iota$, with no affine normalization subtracted.

Define its $G$-equivariant extension from the slice by
\[
 \widehat R_\mu^{\,\natural}(M\cdot h)
 :=d(M\cdot)_h\bigl[\widehat R_\mu^{\mathrm C}(h)\bigr],
 \qquad (M,h)\in G\times\Sigma,
\]
where $d(M\cdot)_h$ differentiates simultaneous postcomposition of the two
frames.  Define the difference $V_\mu^R$ by
\begin{equation}\label{eq:DR-Cauchy-natural-anomaly-decomposition}
 \widehat R_\mu^{\mathrm C}
 =\widehat R_\mu^{\,\natural}+V_\mu^R.
\end{equation}

In the following identities, $h\in\Sigma$ is fixed,
$w\notin f(\operatorname{supp}\mu)$, $cw+d\ne0$, and
$\chi\in C_c^1(G_f)$.

\begin{lemma}\label{lem:DR-Mobius-normalization-anomaly}
\begin{equation}\label{eq:DR-exact-Mobius-anomaly}
 V_{\mu,M\circ f}(M(w))
 =M'(w)V_{\mu,f}(w)+p_\mu(M,f;M(w)).
\end{equation}
The polynomial has degree at most two, and the vertical field satisfies
\[
 V_\mu^R\widehat f=p_\mu(M,f;\widehat f),
 \qquad V_\mu^R\widehat g=p_\mu(M,f;\widehat g).
\]
On $G_f\times\{h\}$,
\begin{align}
 \operatorname{Div}_{H_G}V_\mu^R(\,\cdot\,,h)&=0,
 \label{eq:DR-Mobius-anomaly-zero-divergence}\\
 \int_{G_f}V_\mu^R\chi\,dH_G&=0.
 \label{eq:DR-Mobius-anomaly-Haar-zero}
\end{align}
\end{lemma}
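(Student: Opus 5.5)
The plan has three parts: the anomaly formula \eqref{eq:DR-exact-Mobius-anomaly}, the identification of $V_\mu^R$ as a quadratic vertical field, and the vanishing Haar divergence.

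\textbf{The anomaly formula.} I will start from the Cauchy representation \eqref{eq:DR-physical-Laurent}, written for the unnormalized map $M\circ f$:
\[
 V_{\mu,M\circ f}(\xi)=-\frac1\pi\int_{\mathbb D}\frac{\mu(z)(M\circ f)'(z)^2}{\xi-M(f(z))}\,dA(z),\qquad \xi=M(w).
\]
This uses $\beta=-(M\circ f)_*\mu$, as in Lemma~\ref{lem:DR-physical-trace}; the pushforward computation there did not use the normalization of $f$.

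Put $u=f(z)$. Since $ad-bc=1$, we have $(M\circ f)'^2=f'^2/(cu+d)^4$ and $M(w)-M(u)=(w-u)/\{(cw+d)(cu+d)\}$. The integrand therefore becomes
\[
 \frac{\mu f'^2\,(cw+d)}{(cu+d)^3(w-u)}.
\]
I will then split
\[
 \frac{cw+d}{(cu+d)^3}=\frac{(cw+d)^{-2}\,(cw+d)^3}{(cu+d)^3},
\]
and use the cubic factorization \eqref{eq:DR-source-cubic-factorization} with $(w,v)$ replaced by $(w,u)$, namely
\[
 \frac{1}{w-u}\Bigl\{\Bigl(\frac{cw+d}{cu+d}\Bigr)^3-1\Bigr\}
 =c\sum_{k=0}^{2}\frac{(cw+d)^{2-k}}{(cu+d)^{3-k}}.
\]
The term $1/(w-u)$ multiplied by $(cw+d)^{-2}=M'(w)$ reproduces $M'(w)V_{\mu,f}(w)$. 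The remaining terms are quadratic in $cw+d$ with coefficients $I_3,I_2,I_1$.

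To rewrite these in the target variable $y=M(w)$, I will use $cw+d=1/(a-cy)$ together with the prefactor $(cw+d)^{-2}=(a-cy)^2$. This gives exactly
\[
 -c\{I_3+(a-cy)I_2+(a-cy)^2I_1\},
\]
which is $p_\mu$ and is visibly of degree at most two. The sign is fixed by the minus sign in the Cauchy representation. Invariance under $(a,b,c,d)\mapsto-(a,b,c,d)$ follows by parity counting, so $p_\mu$ is well defined on $G$. The hypothesis $M\in G_f$ keeps $cu+d\neq0$ on the support, so each $I_r$ is finite.

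\textbf{Identifying $V_\mu^R$.} The equivariant extension $\widehat R_\mu^{\,\natural}$ at $M\cdot h$ is the $M$-pushforward of the Schiffer lift of the physical field $V_{\mu,f}$. The pushforward of a physical field transforms by $V\mapsto M'V\circ M^{-1}$. The Schiffer projection \eqref{eq:DR-Cauchy-field-interior-definition} is linear in the physical vector. By the same Möbius covariance of the Schiffer kernel used in \eqref{eq:DR-Schiffer-kernel-frame-change}, now applied on the target side, it commutes with Möbius pushforward up to exactly the polynomial part of the field.

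A global quadratic physical field is holomorphic across the curve. Its Cauchy projection with jump zero is therefore the field itself, acting on both frames as $p_\mu(M,f;\widehat f)$ and $p_\mu(M,f;\widehat g)$. This requires checking that the interior and exterior projections carry no affine normalization that would remove polynomial terms, which the text stipulates for $\widehat R_\mu^{\mathrm C}$.

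\textbf{Vanishing Haar divergence; the main obstacle.} A quadratic field $p_\mu(M,f;\cdot)$ is the infinitesimal left action of an element $Y(M)$ of $\mathfrak{sl}_2(\mathbb C)$ on $G$, viewed as a complex vector field on $G_f$. I will compute its divergence as in the proof of Lemma~\ref{lem:DR-frame-covariant-contracted-trace}. Writing $V_\mu^R=\sum_r y_r(M)E_r^R$ in a right-invariant frame, unimodularity of $G$ gives
\[
 \operatorname{Div}_{H_G}V_\mu^R=-\sum_r E_r^R y_r .
\]
So I need the derivative of $M\mapsto Y(M)$ along $E^R$. Applying \eqref{eq:DR-exact-Mobius-anomaly} twice to $M_1M_2$ gives the cocycle
\[
 p(M_1M_2)=p(M_1)+(M_1)_*p(M_2).
\]
Differentiating it yields a derivative of the form $T+\operatorname{ad}$, where $T$ is the infinitesimal anomaly. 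By direct expansion, $T$ is proportional to $c\,I_1$-type terms, with $c=tx+O(t^2)$. Its trace on $\mathfrak{sl}_2$ must vanish, just as the $3\times3$ matrix in \eqref{eq:DR-source-frame-defect-trace-zero} is traceless. Since $\operatorname{tr}\operatorname{ad}=0$, the divergence is $0$.

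This trace computation is where I expect the real difficulty, in particular keeping track of left versus right frames and of complex versus real structure. Given the divergence-zero statement, \eqref{eq:DR-Mobius-anomaly-Haar-zero} follows by integrating against $\chi\in C_c^1(G_f)$. Compact support inside the open set $G_f$ removes boundary terms.
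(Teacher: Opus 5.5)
Your first part is the paper's own computation: substitute into the Cauchy representation, apply the cubic identity, use $1/(cw+d)=a-cy$, and check parity under $\pm(a,b,c,d)$. Your divergence argument also has the paper's architecture: holomorphic right-invariant frame, cocycle, a derivative of the form $T+\operatorname{ad}$, and unimodularity. Two steps, however, do not go through as written.

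\textbf{Identifying $V_\mu^R$.} The mechanism you describe is not what the interior lift \eqref{eq:DR-Cauchy-field-interior-definition} does. The diagonal singularity of $K_{\widehat\psi}$ is removable, so $\zeta\mapsto K_{\widehat\psi}(x,\zeta)$ is holomorphic on $\widehat f(\mathbb D)$. Hence the contour projection annihilates every field holomorphic inside $\Gamma_0$: the interior projection of a quadratic field is $0$, not the field itself. Your worry about affine normalizations is therefore beside the point. The polynomial enters instead through the target-side covariance defect of the kernel. With $s_x=cx+d$ and $s_\zeta=c\zeta+d$,
\[
 K_{\widehat\psi}(M(x),M(\zeta))\,M'(\zeta)^2
 =M'(x)K_\psi(x,\zeta)
 -c\Bigl\{\frac1{s_\zeta^3}+\frac1{s_xs_\zeta^2}+\frac1{s_x^2s_\zeta}\Bigr\}.
\]
Integrated against $V_{\mu,f}$ over $\Gamma=M^{-1}(\Gamma_0)$, the last term contributes exactly $p_\mu(M,f;M(x))$ to the response.

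So the correct interior bookkeeping is ``covariance defect $=p_\mu$, projection of $p_\mu=0$''. Your version, ``exact covariance plus polynomial projects to itself'', reaches the right answer only through two compensating errors. The reading ``defect plus polynomial projects to itself'' double counts.

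The simplest repair is the paper's:
\begin{enumerate}
\item Apply Cauchy's formula in $\zeta$ to the Cauchy representation of $V_{\mu,\widehat f}$. This gives $\widehat R_\mu^{\mathrm C}\widehat f=\widehat f'C_\mu+V_{\mu,\widehat f}\circ\widehat f$.
\item Equivariance gives $\widehat R_\mu^{\,\natural}\widehat f=(M'\circ f)\{f'C_\mu+V_{\mu,f}\circ f\}$.
\item The $C_\mu$ terms cancel because $\widehat f'=(M'\circ f)f'$.
\item Then \eqref{eq:DR-exact-Mobius-anomaly} at $w=f(z)$ yields $p_\mu(M,f;\widehat f)$. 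The same subtraction works for $\widehat g$, with $\widehat R_\mu^{\mathrm C}\widehat g=V_{\mu,\widehat f}\circ\widehat g$.
\end{enumerate}

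\textbf{The trace.} This is the step you leave open, and the analogy with \eqref{eq:DR-source-frame-defect-trace-zero} does not prove it. First, keep the base map in the cocycle:
\[
 p_\mu(N\circ M,f)=p_\mu(N,M\circ f)+N_*p_\mu(M,f).
\]
The infinitesimal anomaly is then $T_{\mu,M\circ f}$, not an anomaly at $f$.

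Second, the first-order term is not of ``$cI_1$-type''. Take $M_t=\exp(tX)$ with $X=(x_0+x_1y+x_2y^2)\partial_y$. Then $c=-tx_2+O(t^2)$, $a=1+O(t)$, and $I_r=\widehat m_0(M\circ f)+O(t)$ for $r=1,2,3$. All three terms contribute at first order. Every $y$-dependent term in $-c(a-cy)^kI_r$ carries a factor $c^2=O(t^2)$. Hence
\[
 T_{\mu,M\circ f}X=3x_2\,\widehat m_0(M\circ f),
\]
a constant polynomial. In the basis $(1,y,y^2)$ it sends $y^2\mapsto 3\widehat m_0$ and kills $1$ and $y$, so it is nilpotent with complex trace $0$.

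Since $p_\mu(M,f)$ is holomorphic in $M$, you may work with the holomorphic right-invariant frame and complex traces. This gives
\[
 \operatorname{Div}_{H_G}V_\mu^R=-\operatorname{tr}_{\mathbb C}T_{\mu,M\circ f}-\operatorname{tr}_{\mathbb C}\operatorname{ad}_{p_\mu(M,f)}=0,
\]
where the second trace vanishes because $\mathfrak{sl}_2$ is unimodular. Integration against $\chi\in C_c^1(G_f)$ then gives \eqref{eq:DR-Mobius-anomaly-Haar-zero}. With these two repairs your argument coincides with the paper's.
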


\begin{proof}
Write
\begin{equation}\label{eq:DR-Cauchy-vector-short}
 j_{\mu,f}(z):=\frac1\pi\mu(z)f'(z)^2,
 \qquad
 V_{\mu,f}(w)
 =-\int_{\mathbb D}\frac{j_{\mu,f}(z)}{w-f(z)}\,dA(z).
\end{equation}
Since
\begin{align}
 M'(x)&=(cx+d)^{-2},
 \label{eq:DR-Mobius-derivative}\\
 M(w)-M(x)&=\frac{w-x}{(cw+d)(cx+d)},
 \label{eq:DR-Mobius-difference}
\end{align}
direct substitution gives
\begin{align}
 V_{\mu,M\circ f}(M(w))
 &=-\int_{\mathbb D}j_{\mu,f}(z)
 \frac{cw+d}{\{cf(z)+d\}^3\{w-f(z)\}}\,dA(z),
 \label{eq:DR-Mobius-transformed-Cauchy}\\
 M'(w)V_{\mu,f}(w)
 &=-\int_{\mathbb D}j_{\mu,f}(z)
 \frac{1}{(cw+d)^2\{w-f(z)\}}\,dA(z).
 \label{eq:DR-Mobius-pushed-Cauchy}
\end{align}
Put $s_w=cw+d$ and $s_z=cf(z)+d$.  The algebraic identity
\begin{samepage}
\begin{equation}\label{eq:DR-Mobius-anomaly-algebra}
 \frac1{w-f(z)}\left\{\frac{s_w}{s_z^3}-\frac1{s_w^2}\right\}
 =c\left\{\frac1{s_z^3}+\frac1{s_ws_z^2}+\frac1{s_w^2s_z}\right\}
\end{equation}
and
\begin{equation}\label{eq:DR-Mobius-a-cy}
 a-cM(w)=\frac1{cw+d}=\frac1{s_w}
\end{equation}
\end{samepage}
give \eqref{eq:DR-exact-Mobius-anomaly}, with $p_\mu$ as in
\eqref{eq:DR-Mobius-anomaly-polynomial}.
To compare the lifts, substitute the Cauchy vector into
\eqref{eq:DR-Cauchy-field-interior-definition} and use Cauchy's formula:
\begin{align*}
 (\widehat R_\mu^{\mathrm C}\widehat f)(z)
 &=\frac1\pi\int_{\mathbb D}\mu(u)\widehat f'(u)^2
   K_{\widehat\psi}(\widehat f(z),\widehat f(u))\,dA(u)\\
 &=\widehat f'(z)C_\mu(z)
   +V_{\mu,\widehat f}(\widehat f(z)).
\end{align*}
Equivariance gives
\[
 (\widehat R_\mu^{\,\natural}\widehat f)(z)
 =M'(f(z))\{f'(z)C_\mu(z)+V_{\mu,f}(f(z))\}.
\]
Since $\widehat f'=(M'\circ f)f'$, subtraction and
\eqref{eq:DR-exact-Mobius-anomaly} yield
\begin{align*}
 V_\mu^R\widehat f
 &=V_{\mu,M\circ f}\circ M\circ f-(M'\circ f)V_{\mu,f}\circ f
   =p_\mu(M,f;\widehat f),\\
 V_\mu^R\widehat g
 &=V_{\mu,M\circ f}\circ M\circ g-(M'\circ g)V_{\mu,f}\circ g
   =p_\mu(M,f;\widehat g).
\end{align*}
The exterior identity uses the Schiffer projection after inversion.
Thus the difference is simultaneous M\"obius postcomposition.

For $X(y)=(x_0+x_1y+x_2y^2)\partial_y\in\mathfrak{sl}_2(\mathbb C)$,
the matrix of $M_t=\exp(tX)$ is
\begin{equation}\label{eq:DR-infinitesimal-Mobius-matrix}
 M_t=
 \begin{pmatrix}
  1+tx_1/2&tx_0\\
  -tx_2&1-tx_1/2
 \end{pmatrix}+O(t^2).
\end{equation}
Then
\begin{equation}\label{eq:DR-infinitesimal-Mobius-action}
 M_t(y)=y+t(x_0+x_1y+x_2y^2)+O(t^2).
\end{equation}
At $t=0$, all three integrals in
\eqref{eq:DR-Mobius-anomaly-integrals} equal
$m_0(f)$.  Hence
\begin{equation}\label{eq:DR-infinitesimal-anomaly-map}
 p_\mu(M_t,f;y)=3tx_2m_0(f)+O(t^2),
 \qquad T_{\mu,\widehat f}X:=3x_2\widehat m_0(\widehat f).
\end{equation}
In the ordered complex basis $(1,y,y^2)$,
\[
 [T_{\mu,f}]=
 \begin{pmatrix}0&0&3m_0(f)\\0&0&0\\0&0&0\end{pmatrix}.
\]
Therefore
\begin{equation}\label{eq:DR-anomaly-trace-zero}
 \operatorname{tr}_{\mathbb C}T_{\mu,f}=0,
 \qquad
 \operatorname{tr}_{\mathbb R}T_{\mu,f}
 =2\operatorname{Re}\operatorname{tr}_{\mathbb C}T_{\mu,f}=0.
\end{equation}

Whenever $M\in G_f$, $N\in G_{M\circ f}$, and
$N\circ M\in G_f$, the exact defect satisfies the cocycle identity
\begin{equation}\label{eq:DR-Mobius-anomaly-cocycle}
 p_\mu(N\circ M,f)=p_\mu(N,M\circ f)+N_*p_\mu(M,f).
\end{equation}
This follows by inserting $N_*V_{\mu,M\circ f}$ between the two
terms in the defect for $N\circ M$.  Use the holomorphic right-invariant
frame, with $t\in\mathbb C$:
\begin{equation}\label{eq:DR-right-invariant-frame-definition}
 X_M^R:=\left.\partial_t\right|_{t=0}\exp(tX)\circ M.
\end{equation}
The coefficient of $V_\mu^R$ in this frame is $p_\mu(M,f)$.
Differentiating \eqref{eq:DR-Mobius-anomaly-cocycle} gives
\begin{equation}\label{eq:DR-Mobius-anomaly-left-derivative}
 X_M^R[p_\mu(\,\cdot\,,f)]
 =T_{\mu,M\circ f}(X)+\operatorname{ad}_{p_\mu(M,f)}X.
\end{equation}
Here the second term follows from
$\left.\partial_t\right|_0(N_t)_*P=[P,X]$ for the flow of $X$.
Unimodularity gives zero Haar divergence for the real invariant fields
and their complex combinations.  Since $p_\mu(M,f)$ is holomorphic in
$M\in G_f$, the multiplier rule and
\eqref{eq:DR-Mobius-anomaly-left-derivative} give
\begin{equation}\label{eq:DR-Mobius-anomaly-divergence-proof}
 \operatorname{Div}_{H_G}V_\mu^R
 =-\operatorname{tr}_{\mathbb C}T_{\mu,M\circ f}
  -\operatorname{tr}_{\mathbb C}\operatorname{ad}_{p_\mu(M,f)}
 =0.
\end{equation}
This proves \eqref{eq:DR-Mobius-anomaly-zero-divergence}; integration by
parts against compactly supported $\chi$ proves
\eqref{eq:DR-Mobius-anomaly-Haar-zero}.
\end{proof}

\subsection{The localized physical Ward identity}

\subsubsection{The finite restriction formula}

Fix $p\geq2$ and use the physical flows $\phi_{p,t}^{(a)}$ from
\eqref{eq:DR-two-real-physical-fields}.
For $0<r<1$, put
\[
 \operatorname{Loop}^r
 :=\{(\gamma,U_+,U_-)\in\mathcal J^{\rm or}:
       0\in U_+,\ \infty\in U_-,\
       \gamma\subset\{\xi:r<|\xi|<r^{-1}\}\}.
\]
The SLE loop measure satisfies
$\nu_\kappa^{\rm sph,or}(\operatorname{Loop}^r)<\infty$
\cite[Section~2.2]{GQW25}.

Fix $0<\delta<R<\infty$ and a family of ordered loops satisfying
\begin{equation}\label{eq:DR-physical-collar-localization}
 B(0,\delta)\subset U_+,\qquad
 \infty\in U_-,\qquad
 \gamma\subset B(0,R).
\end{equation}
The flows are defined on a common annular neighborhood of these loops.
Choose $\epsilon>0$ so that their images for $|t|<\epsilon$ satisfy the
same containments with $\delta/2$ and $2R$.  We call this localization a
physical collar.  For $\gamma_t=\phi_{p,t}^{(a)}\gamma$, transport the
component order and write $U_{\pm,t}^{(a)}$ for the resulting components.

Let $G_0$ be a bounded measurable observable whose support and short
preimages lie in this collar.
All supremum norms below are pointwise norms on the target collar.  Set
\[
 s_p^{(1)}:=-\frac{c_{\mathrm m}}6\operatorname{Re}\pi_p,
 \qquad s_p^{(2)}:=\frac{c_{\mathrm m}}6\operatorname{Im}\pi_p.
\]

\begin{proposition}
\label{prop:DR-external-localized-Ward}
There are positive measurable densities $J_{p,t}^{(a)}$ such that
\begin{equation}\label{eq:DR-physical-finite-restriction-RN}
 \begin{aligned}
 &\int G_0(\phi_{p,t}^{(a)}\gamma,
           U_{+,t}^{(a)},U_{-,t}^{(a)})\,
      d\nu_\kappa^{\rm sph,or}(\gamma,U_+,U_-)\\
 &\qquad=\int G_0(\eta,V_+,V_-)J_{p,t}^{(a)}(\eta)\,
      d\nu_\kappa^{\rm sph,or}(\eta,V_+,V_-).
 \end{aligned}
\end{equation}
Their logarithmic derivatives at zero are
\begin{align}
 \left.\partial_t\right|_{t=0}\log J_{p,t}^{(1)}
 &=s_p^{(1)},
 \label{eq:DR-first-real-physical-score}\\
 \left.\partial_t\right|_{t=0}\log J_{p,t}^{(2)}
 &=s_p^{(2)},
 \label{eq:DR-second-real-physical-score}\\
 \shortintertext{hence}
 \frac12\{s_p^{(1)}-is_p^{(2)}\}
 &=-\frac{c_{\mathrm m}}{12}\pi_p.
 \label{eq:DR-complex-physical-score}
\end{align}
After shrinking the time interval, if necessary,
\begin{equation}\label{eq:DR-local-likelihood-dominator}
 \sup_{a\in\{1,2\}}\sup_{0<|t|<\epsilon}
 \left\|\frac{J_{p,t}^{(a)}-1}{t}\right\|_{\sup}<\infty,
 \qquad
 \frac{J_{p,t}^{(a)}-1}{t}\longrightarrow s_p^{(a)}
 \quad\text{pointwise}.
\end{equation}
\end{proposition}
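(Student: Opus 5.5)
The plan is to obtain \eqref{eq:DR-physical-finite-restriction-RN} from conformal restriction for the SLE loop measure, and then read off the first-order scores from the Schwarzian expansion of the restriction density. The physical flow $\phi_{p,t}^{(a)}$ of $u_{-p}^{(a)}$ is conformal on the annular collar, but it is not a global M\"obius map. So we first factor its action on loops through a conformal map defined on a neighborhood of $\gamma$. For $|t|<\epsilon$, the flow is a conformal embedding of a fixed annulus $A\supset\{\delta/2\leq|\xi|\leq2R\}$ onto its image, and by the collar choice every loop in the support of $G_0$ lies in $A$.

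The first step is to invoke the conformal restriction and covariance property of the spherical SLE loop measure in the form of \cite{Zhan21,GQW25}. If $\Phi:A\to A'$ is a conformal map between annular neighborhoods, then on loops contained in $A$ the pushforward satisfies
\[
 \Phi_*\bigl(\mathbf 1_{\{\gamma\subset A\}}\nu_\kappa^{\rm sph,or}\bigr)
 =\mathbf 1_{\{\eta\subset A'\}}
 \exp\Bigl(\tfrac{c_{\mathrm m}}{2}\Lambda^*(\eta;A,A')\Bigr)\,
 \nu_\kappa^{\rm sph,or},
\]
where $\Lambda^*$ is the normalized Brownian loop mass difference. We set $J_{p,t}^{(a)}$ equal to this exponential with $\Phi=\phi_{p,t}^{(a)}$. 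The density is positive and measurable, because the Brownian loop mass is a measurable functional of the hull coordinates from Lemma~\ref{lem:DR-standard-Borel-frame-model}. The component order is transported by $\Phi$, which gives \eqref{eq:DR-physical-finite-restriction-RN}. Finiteness of $\nu_\kappa^{\rm sph,or}(\operatorname{Loop}^r)$ ensures that both sides are finite for bounded $G_0$.

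The second step differentiates the loop-mass term at $t=0$ for the vector field $u=u_{-p}^{(a)}$. We use the standard first variation of the Brownian loop measure under a holomorphic deformation near the loop. This variation is the pairing of the interior Schwarzian $\mathcal S\psi$, with $\psi=f^{-1}$, against the vector field, given by the contour integral $\frac1{12}\cdot\frac{1}{2\pi i}\oint u(\xi)\mathcal S\psi(\xi)\,d\xi$ taken around $f(\operatorname{supp})$ inside $U_+$. The exterior Schwarzian contributes nothing, because $u$ is holomorphic on $U_-\setminus\{\infty\}$ with at most the allowed growth; a residue at infinity must be checked here.

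Substituting the expansion \eqref{eq:DR-Neretin-P}, $\mathcal S\psi(\xi)=\sum\pi_\ell\xi^{\ell-2}$, into this contour integral gives a residue that picks out $\pi_p$. With $u^{(1)}=-\xi^{1-p}$ the derivative is $-\frac{c_{\mathrm m}}{6}\operatorname{Re}\pi_p$ after taking the real part. The factor $1/6=2\cdot\frac1{12}$ comes from the two Wirtinger components of a real flow. With $u^{(2)}=-i\xi^{1-p}$ the derivative is $\frac{c_{\mathrm m}}6\operatorname{Im}\pi_p$. Combining the two gives \eqref{eq:DR-complex-physical-score}. The signs will be fixed against the convention of Proposition~\ref{prop:DR-sign-forcing} and the matter contraction \eqref{eq:DR-matter-contraction}, since the overall sign of the Brownian loop variation is the most error-prone point.

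The main obstacle is the uniform dominator \eqref{eq:DR-local-likelihood-dominator}. The plan is to write $\log J_{p,t}^{(a)}=\int_0^t\partial_s\log J_{p,s}^{(a)}\,ds$. By the cocycle property of restriction densities, each $\partial_s\log J_{p,s}^{(a)}$ is the same Schwarzian pairing evaluated at the transported loop $\phi_{p,s}^{(a)}\gamma$. On the collar, the Koebe distortion theorem and Cauchy estimates bound $|\pi_p|$ uniformly for loops with $B(0,\delta/2)\subset U_+$ and $\gamma\subset B(0,2R)$. After rescaling $f$ to a normalized univalent map, $\pi_p$ is controlled by de Branges-type bounds on the inverse coefficients at radius $\delta/2$. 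This gives $\sup|\log J_{p,t}^{(a)}|\leq C|t|$, so $|(J_{p,t}^{(a)}-1)/t|\leq Ce^{C\epsilon}$ uniformly. Pointwise convergence to $s_p^{(a)}$ then follows from continuity of $\pi_p$ along the flow, as in Lemma~\ref{lem:DR-finite-mode-continuity}.
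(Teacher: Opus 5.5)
You have the same architecture as the paper (restriction density, first variation, cocycle integration of the score, uniform bound on $\pi_p$), and your Koebe--Cauchy bound on $\pi_p$ is a fine substitute for \cite[Lemma~3.7]{GQW25}. The gap is that the actual content of \eqref{eq:DR-first-real-physical-score}--\eqref{eq:DR-complex-physical-score}, the sign and size of the score, is never derived. As written your factors are inconsistent: with density $\exp(\tfrac{c_{\mathrm m}}2\Lambda^*)$ and complex variation $\tfrac1{12}\cdot\tfrac1{2\pi i}\oint u\,\mathcal S\psi\,d\xi$, a real flow gives $\tfrac{c_{\mathrm m}}2\cdot2\operatorname{Re}\{\tfrac1{12}(-\pi_p)\}=-\tfrac{c_{\mathrm m}}{12}\operatorname{Re}\pi_p$, not $-\tfrac{c_{\mathrm m}}6\operatorname{Re}\pi_p$; you dropped the $\tfrac12$ in the exponent. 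More seriously, the sign cannot be fixed against Proposition~\ref{prop:DR-sign-forcing} or \eqref{eq:DR-matter-contraction}, for two reasons. First, that proposition presupposes the right integration by parts, and that integration by parts is proved from the present Ward identity, so the appeal is circular. Second, a sign error in $s_p^{(a)}$ would not appear as a global factor $\varepsilon$. It would replace the limit $\tfrac{c_{\mathrm L}}{12}\sigma_h(\mu)$ of $\delta_{\mu,N}$ in \eqref{eq:DR-raw-central-balance} by $\tfrac{26+c_{\mathrm m}}{12}\sigma_h(\mu)$, a wrong central charge that the $\varepsilon$-argument cannot detect. The sign and constant must come from a genuine infinitesimal restriction formula. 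The paper applies \cite[Corollary~3.9]{GQW25} with index $-p$ and evaluates $\tfrac1{2\pi i}\oint(-w^{1-p})\mathcal S(f^{-1})(w)\,dw=-\pi_p$.

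That corollary needs the removed set to be a compact set strictly inside the loop, which is why the annulus is the wrong choice. In the coordinate $v=1/\xi$ the field $u_{-p}^{(a)}$ becomes $v^{p+1}$ (up to the factor $i$), so it is holomorphic at $\infty$. Hence the flow is conformal on the simply connected spherical domain $E=\{|\xi|>\delta/3\}\cup\{\infty\}$, whose complement $\overline{B(0,\delta/3)}$ lies inside every tested loop. Then \cite[Lemma~2.2, Corollary~3.9]{GQW25} apply verbatim and no exterior term ever arises. Your doubly connected $A$ instead leaves a two-component complement, which needs its own restriction and variation formulas; your residue-at-infinity remark stands in for a proof.

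Your cocycle step also silently assumes that the density depends only on the map, not on the domain of restriction. Composing restrictions for $\phi_{p,r}^{(a)}$ on $A$ and $\phi_{p,t}^{(a)}$ on $\phi_{p,r}^{(a)}(A)$ gives $J_{t+r;A}(\eta)=J_{r;A}(\phi_{p,-t}^{(a)}\eta)\,J_{t;\phi_{p,r}^{(a)}(A)}(\eta)$. Uniqueness of Radon--Nikodym derivatives identifies the last factor with $J_{t;A}$ only almost everywhere, for each fixed pair $(t,r)$. That is not enough for the pointwise identity $\partial_t\log J_{p,t}^{(a)}(\eta)=s_p^{(a)}(\phi_{p,-t}^{(a)}\eta)$, nor for the pointwise bounds in \eqref{eq:DR-local-likelihood-dominator}. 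The paper closes this by proving domain independence pointwise from the explicit loop-mass formulas, via \eqref{eq:DR-Werner-localization-independence} and \cite[Theorem~2.3]{GQW25}; you need that step, or an equivalent, before integrating the score.
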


\begin{proof}
Use the simply connected spherical domains
\[
 D=\{\xi:|\xi|>\delta/4\}\cup\{\infty\},
 \qquad E=\{\xi:|\xi|>\delta/3\}\cup\{\infty\}.
\]
Both physical fields are holomorphic on $D$, and $\overline E\Subset D$.
Shrink $\epsilon$ so that $\phi_{p,q}^{(a)}$ is conformal near
$\overline E$ for $|q|\leq2\epsilon$ and all tested loops and their
short preimages remain in $E$.
Let $\Lambda^*(K_1,K_2)$ denote the renormalized Brownian loop mass
of loops meeting both disjoint compact sets, as in
\cite[Section~2.1]{GQW25}.  The conformal bijection
$\phi_{p,q}^{(a)}:E\to\phi_{p,q}^{(a)}(E)$ and the preceding
containments satisfy the hypotheses of \cite[Lemma~2.2]{GQW25}, giving
\eqref{eq:DR-physical-finite-restriction-RN} with
\begin{align}
 E_q^{(a)}&:=\phi_{p,q}^{(a)}(E),
 \label{eq:DR-transported-restriction-domain}\\
 J_{p,q;E}^{(a)}(\eta)
 &:=\exp\!\left\{\frac{c_{\mathrm m}}2\left[
   \Lambda^*(\eta,\widehat{\mathbb C}\setminus E_q^{(a)})
   -\Lambda^*(\phi_{p,-q}^{(a)}\eta,
              \widehat{\mathbb C}\setminus E)
 \right]\right\}.
 \label{eq:DR-domain-indexed-restriction-density}
\end{align}
For $t,r,t+r$ in the time interval, addition of the two exponents gives
\begin{equation}\label{eq:DR-domain-indexed-density-cocycle}
 J_{p,t+r;E}^{(a)}(\eta)
 =J_{p,r;E}^{(a)}\!\left(\phi_{p,-t}^{(a)}\eta\right)
  J_{p,t;E_r^{(a)}}^{(a)}(\eta).
\end{equation}
To compare the densities on different domains, choose a simply connected
spherical domain $F$ containing all tested loops and their shifted images,
and reduce the time interval so that
\[
 \overline F\Subset E_q^{(a)},
 \qquad |q|\leq2\epsilon,\quad a\in\{1,2\}.
\]
Put $F_t^{(a)}:=\phi_{p,t}^{(a)}(F)$ and write
$\operatorname{Wer}(A,B)$ for the Werner-loop mass of loops meeting both
$A$ and $B$.  Conformal invariance of that measure inside $E$, and
identically inside each $E_r^{(a)}$, gives
\begin{align}
 &\operatorname{Wer}(\eta,\widehat{\mathbb C}\setminus F_t^{(a)})
  -\operatorname{Wer}(\eta,\widehat{\mathbb C}\setminus E_t^{(a)})
  \notag\\
 &\qquad=
  \operatorname{Wer}(\phi_{p,-t}^{(a)}\eta,
                     \widehat{\mathbb C}\setminus F)
  -\operatorname{Wer}(\phi_{p,-t}^{(a)}\eta,
                     \widehat{\mathbb C}\setminus E).
 \label{eq:DR-Werner-localization-independence}
\end{align}
The differences count loops contained in the larger domain and meeting
the complement of the smaller one; $\phi_{p,t}^{(a)}$ maps one such family
onto the other.  Apply \cite[Theorem~2.3]{GQW25} to the conformal pairs
$(E,E_t^{(a)})$ and $(F,F_t^{(a)})$, whose domains are simply connected
and contain the tested curves.  It replaces each conformal restriction
difference for $\Lambda^*$ by that for $\operatorname{Wer}$, so
\[
 J_{p,t;E}^{(a)}=J_{p,t;F}^{(a)}=J_{p,t;E_r^{(a)}}^{(a)}.
\]
Denote this common positive measurable density by $J_{p,t}^{(a)}$.

Using $E_{-t}^{(a)}$ as the source domain keeps the target domain fixed:
\[
 \log J_{p,t}^{(a)}(\eta)
 =\frac{c_{\mathrm m}}2\left\{
   \Lambda^*(\eta,\widehat{\mathbb C}\setminus E)
   -\Lambda^*(\phi_{p,-t}^{(a)}\eta,
              \widehat{\mathbb C}\setminus E_{-t}^{(a)})
 \right\}.
\]
Here $\widehat{\mathbb C}\setminus E=\overline{B(0,\delta/3)}$ lies
strictly inside every tested loop.  Thus \cite[Corollary~3.9]{GQW25}
applies with index $-p$.  For an interior uniformizer $f$, the defining
counterclockwise contour integral is
\[
 \frac1{2\pi i}\oint
 (-w^{1-p})\mathcal S(f^{-1})(w)\,dw=-\pi_p.
\]
The two real logarithmic derivatives are consequently
\[
 -\frac{c_{\mathrm m}}6\operatorname{Re}\pi_p=s_p^{(1)},
 \qquad \frac{c_{\mathrm m}}6\operatorname{Im}\pi_p=s_p^{(2)}.
\]
The complex combination gives \eqref{eq:DR-complex-physical-score}, and
\eqref{eq:DR-domain-indexed-density-cocycle} becomes
\begin{equation}\label{eq:DR-shifted-target-density-cocycle}
 J_{p,t+r}^{(a)}(\eta)
 =J_{p,r}^{(a)}\!\left(\phi_{p,-t}^{(a)}\eta\right)
  J_{p,t}^{(a)}(\eta).
\end{equation}
Taking logarithms, subtracting $\log J_{p,t}^{(a)}(\eta)$, and dividing
by $r$ gives
\[
 \frac{\log J_{p,t+r}^{(a)}(\eta)-\log J_{p,t}^{(a)}(\eta)}r
 =\frac{\log J_{p,r}^{(a)}(\phi_{p,-t}^{(a)}\eta)}r.
\]
Letting $r\to0$ yields
\begin{equation}\label{eq:DR-shifted-target-score}
 \partial_t\log J_{p,t}^{(a)}(\eta)
 =s_p^{(a)}\!\left(\phi_{p,-t}^{(a)}\eta\right).
\end{equation}
Since $J_{p,0}^{(a)}=1$, integration gives
\begin{equation}\label{eq:DR-shifted-target-score-integral}
 J_{p,t}^{(a)}(\eta)
 =\exp\!\left\{\int_0^t
 s_p^{(a)}\!\left(\phi_{p,-q}^{(a)}\eta\right)\,dq\right\}.
\end{equation}
Choose
\[
 0<r_0<\min\{\delta/2,(2R)^{-1}\}.
\]
The collar assumptions place all shifted loops in
\(\operatorname{Loop}^{r_0}\).  The Neretin-polynomial formula and
\cite[Lemma~3.7]{GQW25} therefore give one \(C<\infty\) such that
\begin{equation}\label{eq:DR-shifted-target-score-bound}
 \left|
 s_p^{(a)}\!\left(\phi_{p,-r}^{(a)}\eta\right)
 \right|\leq C,
 \qquad |r|\leq\epsilon,\qquad a\in\{1,2\}.
\end{equation}
Consequently,
\begin{align}
 e^{-C|t|}\leq J_{p,t}^{(a)}(\eta)&\leq e^{C|t|},
 \qquad
 \left|\frac{J_{p,t}^{(a)}(\eta)-1}{t}\right|
 \leq Ce^{C\epsilon}.
 \label{eq:DR-shifted-target-density-bound}
\end{align}
The Neretin polynomial is continuous along the flow by the same lemma.
Thus \eqref{eq:DR-shifted-target-score-integral} also gives the pointwise
limit in \eqref{eq:DR-local-likelihood-dominator}.
\end{proof}

\subsubsection{Lifting the physical deformation}

On this collar, use the canonical frames
\begin{equation}\label{eq:DR-canonical-physical-frame-seeds}
 f_\gamma^0(0)=0,\quad (f_\gamma^0)'(0)>0,
 \qquad
 g_\gamma^0(\infty)=\infty,\quad (g_\gamma^0)'(\infty)>0.
\end{equation}
For \(\gamma_t=\phi_{p,t}^{(a)}\gamma\), let
\(\delta^{\rm raw}f_\gamma^0\) and
\(\delta^{\rm raw}g_\gamma^0\) be the two raw real Schiffer tangents, and
set
\begin{align}
 W_{p,\gamma}^{(a),+}
 &:=\frac{\delta^{\rm raw}f_\gamma^0-
   \left.\partial_t\right|_{t=0}f_{\gamma_t}^0}
  {(f_\gamma^0)'},\label{eq:DR-explicit-interior-frame-field}\\
 W_{p,\gamma}^{(a),-}
 &:=\frac{\delta^{\rm raw}g_\gamma^0-
   \left.\partial_t\right|_{t=0}g_{\gamma_t}^0}
  {(g_\gamma^0)'}. \label{eq:DR-explicit-exterior-frame-field}
\end{align}
For the unique source coordinate $b$ of each frame, define
\begin{align}
 T_{p,t,\gamma}^{(a),+}(f_\gamma^0\circ b)
 &:=f_{\gamma_t}^0\circ
 \exp(tW_{p,\gamma}^{(a),+})\circ b,\label{eq:DR-explicit-interior-frame-lift}\\
 T_{p,t,\gamma}^{(a),-}(g_\gamma^0\circ b)
 &:=g_{\gamma_t}^0\circ
 \exp(tW_{p,\gamma}^{(a),-})\circ b.
 \label{eq:DR-explicit-exterior-frame-lift}
\end{align}

Let $\widehat T_{p,t}^{(a),\rm eq}$ denote the transformation of the
full frame bundle obtained by combining the loop flow with
\eqref{eq:DR-explicit-interior-frame-lift}--
\eqref{eq:DR-explicit-exterior-frame-lift}.  Tests $F_{\rm fr}$ below
are nonnegative measurable functions supported in its target localization.

\begin{proposition}
\label{prop:DR-measurable-frame-lift}
For sufficiently small $|t|$, $\widehat T_{p,t}^{(a),\rm eq}$ is a
bijection between the source and target localizations.  The map and its
inverse are jointly measurable in time and frame.  The source fields satisfy
\[
 W_{p,\gamma}^{(a),+}\in\operatorname{Lie}\operatorname{Aut}(\mathbb D),
 \qquad W_{p,\gamma}^{(a),-}\in\operatorname{Lie}\operatorname{Aut}(\mathbb D^*),
\]
and
\begin{align}
 (T_{p,t,\gamma}^{(a),\pm})_*H_\gamma^\pm
 &=H_{\gamma_t}^\pm,\label{eq:DR-frame-Haar-pushforward}\\
 \int_{\mathcal F_{\rm fr}}
 F_{\rm fr}\circ\widehat T_{p,t}^{(a),\rm eq}\,dM_\kappa^{\rm fr}
 &=\int_{\mathcal F_{\rm fr}}F_{\rm fr}J_{p,t}^{(a)}
   \,dM_\kappa^{\rm fr}.
 \label{eq:DR-full-frame-finite-RN}
\end{align}
\end{proposition}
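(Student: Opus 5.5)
The proof has three parts: the algebraic properties of the lift, the Haar pushforward \eqref{eq:DR-frame-Haar-pushforward}, and the finite restriction identity \eqref{eq:DR-full-frame-finite-RN}.

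\emph{Verticality of the source fields.} I would begin with the claim $W_{p,\gamma}^{(a),+}\in\operatorname{Lie}\operatorname{Aut}(\mathbb D)$. The raw Schiffer tangent $\delta^{\rm raw}f_\gamma^0$ and the tangent $\partial_t|_{t=0}f_{\gamma_t}^0$ of the canonical seeds are both holomorphic variations of a frame onto the same moving domain $U_{+,t}$. Their difference, divided by $(f_\gamma^0)'$, is therefore a holomorphic vector field on $\mathbb D$ that is tangent to $\mathbb S^1$, so it lies in the Lie algebra of $\operatorname{Aut}(\mathbb D)$. To make this precise I would use the jump characterization from the proof of Lemma~\ref{lem:DR-exact-change-frame}: the two velocities have the same boundary jump $u$ along $\gamma$ and differ only in their normalizations. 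Both are holomorphic on the interior and have equal normal component on the curve. Pulling back by $f_\gamma^0$ then gives a field of the form $x_0+x_1z+x_2z^2$ with $x_2=-\overline{x_0}$ and $x_1\in i\mathbb R$. The exterior case follows after inversion by $\iota$.

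\emph{Bijectivity and measurability.} These follow from the explicit formulas \eqref{eq:DR-explicit-interior-frame-lift}--\eqref{eq:DR-explicit-exterior-frame-lift}. The inverse is given by $f_{\gamma_t}^0\circ b'\mapsto f_\gamma^0\circ\exp(-tW)\circ b'$ together with the inverse loop flow $\phi_{p,-t}^{(a)}$, which keeps everything inside the physical collar for small $|t|$. Joint measurability comes from Lemma~\ref{lem:DR-standard-Borel-frame-model}, used in the following order:
\begin{enumerate}[label=\textup{(\arabic*)},leftmargin=2.3em]
\item the canonical seeds depend measurably on the loop;
\item $W$ is a finite jet of these seeds, computed by Cauchy integrals, so it is measurable;
\item $\exp$ and composition are continuous in the seed coordinates.
\end{enumerate}

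\emph{Haar pushforward.} In seed coordinates, $T^{(a),+}_{p,t,\gamma}$ acts as left multiplication $b\mapsto\exp(tW)\,b$ in $\operatorname{Aut}(\mathbb D)$, followed by a change of seed from $f^0_\gamma$ to $f^0_{\gamma_t}$. The Haar measure $H_{\mathbb D}$ is left invariant, and by \eqref{eq:DR-frame-Haar-kernels} the kernels $H^\pm_\gamma$ do not depend on the seed. This gives \eqref{eq:DR-frame-Haar-pushforward}.

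\emph{Finite restriction identity.} I would write $M_\kappa^{\rm fr}$ as the product \eqref{eq:DR-doubly-marked-loop-measure} and integrate in two stages. First, for a fixed loop, integrate $F_{\rm fr}\circ\widehat T$ over both frame fibers. By the Haar pushforward, this fiber integral equals $\widetilde G_0(\gamma_t):=\int F_{\rm fr}\,dH^+_{\gamma_t}\,dH^-_{\gamma_t}$, which is a function of the transported ordered loop alone. Second, apply Proposition~\ref{prop:DR-external-localized-Ward} to $G_0=\widetilde G_0$. This produces the density $J^{(a)}_{p,t}$ on the loop, and Tonelli's theorem lets the frame fibers be reassembled on the right-hand side. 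Nonnegativity of the tests justifies Tonelli directly. The one requirement is that $\widetilde G_0$ be supported in the collar, and this is guaranteed by the target localization of $F_{\rm fr}$.

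\emph{Main obstacle.} I expect the hardest step to be verifying that $W$ really lies in $\operatorname{Lie}\operatorname{Aut}(\mathbb D)$ and not merely in $\mathfrak{sl}_2(\mathbb C)$. For the interior, the real tangency condition on $\mathbb S^1$ should follow from the Schiffer formula, because both frames map the circle onto the same moving curve. I would first try to verify this by direct boundary evaluation on the analytic collar.
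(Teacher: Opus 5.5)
Your handling of bijectivity, the Haar pushforward, and the derivation of \eqref{eq:DR-full-frame-finite-RN} matches the paper. For the pushforward, the paper also uses left translation $b\mapsto\exp(tW)\circ b$ together with seed independence of $H_\gamma^\pm$. For \eqref{eq:DR-full-frame-finite-RN}, it also integrates over the frame fibers, applies Proposition~\ref{prop:DR-external-localized-Ward}, and reassembles with Tonelli. One small correction: that proposition is stated for bounded $G_0$, and the fiber integral of a nonnegative test need not be bounded. You must first extend the restriction formula to nonnegative observables by monotone convergence, as the paper does.

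The genuine gap is the verticality step, on which everything else depends: $\exp(tW)$ is a left translation in $\operatorname{Aut}(\mathbb D)$ only once $W_{p,\gamma}^{(a),\pm}$ is known to be a Lie algebra element. Your argument is infinitesimal and needs boundary information that does not exist here.
\begin{itemize}
\item $\gamma$ is an arbitrary (in application, SLE) Jordan curve in the collar. So $\delta^{\rm raw}f^0_\gamma/(f^0_\gamma)'$ and $\partial_tf^0_{\gamma_t}/(f^0_\gamma)'$ have no usable boundary values on $\mathbb S^1$, and the reflection argument would need exactly such values.
\item ``Equal normal component on the curve'' has no meaning for a non-rectifiable curve.
\item The jump uniqueness in the proof of Lemma~\ref{lem:DR-exact-change-frame} compares an interior branch with an exterior branch across an \emph{analytic} contour inside $U_+$. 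It does not compare two interior velocities across $\gamma$.
\item You assume $t\mapsto f^0_{\gamma_t}$ is differentiable. That is part of what must be proved for $W$ even to be defined.
\end{itemize}

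The missing idea is to work with finite transformations rather than infinitesimal ones.
\begin{enumerate}
\item Extend $u_{-p}^{(a)}$ by a radial cutoff to a global field. Its flow $\widehat\phi_{p,t}^{(a)}$ agrees with $\phi_{p,t}^{(a)}$ near the loop and is quasiconformal, with Beltrami coefficient supported in two fixed annuli away from the loop and from $0,\infty$.
\item Pull this coefficient back through the seeds; Schwarz and Koebe keep its support in a fixed annulus of $\mathbb D$. Then reflect and apply the Ahlfors--Bers parameter theorem.
\item This yields conformal bijections $f_t^{\rm raw}:\mathbb D\to U_{+,t}$, $C^1$ in $t$, with $\partial_tf_t^{\rm raw}|_{t=0}=\delta^{\rm raw}f^0_\gamma$.
\item Set $b_t^+:=(f^0_{\gamma_t})^{-1}\circ f_t^{\rm raw}$. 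It lies in $\operatorname{Aut}(\mathbb D)$ for every $t$, with $b_0^+=\id$. It is fixed by the point of $\mathbb D$ that $f_t^{\rm raw}$ sends to $0$ and the argument of $(f_t^{\rm raw})'$ there, so it is $C^1$ in $t$.
\item Differentiating $f_t^{\rm raw}=f^0_{\gamma_t}\circ b_t^+$ at $t=0$ gives the existence of $\partial_tf^0_{\gamma_t}$ and $W_{p,\gamma}^{(a),+}=\partial_tb_t^+|_{t=0}\in\operatorname{Lie}\operatorname{Aut}(\mathbb D)$ at once. It also gives explicit, measurable, uniformly bounded coefficients $W(0)$ and $W'(0)$, which your measurability step needs.
\end{enumerate}
The exterior case is identical after inversion, and no boundary evaluation on $\gamma$ or $\mathbb S^1$ is required.
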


\begin{proof}
Choose a smooth real radial cutoff $c_D$ equal to one on
$\{\delta/2\leq|\xi|\leq2R\}$ and supported in
$\{\delta/3<|\xi|<3R\}$.  Let $\widehat\phi_{p,t}^{(a)}$ be the
spherical flow of
\begin{equation}\label{eq:DR-global-physical-extension-field}
 X_p^{(a)}
 :=c_Du_{-p}^{(a)}\partial_\xi
   +\overline{c_Du_{-p}^{(a)}}\partial_{\bar\xi}.
\end{equation}
It agrees with $\phi_{p,t}^{(a)}$ near the loop collar for small $|t|$
and fixes neighborhoods of $0$ and $\infty$.  Put
$\beta_t^{(a)}:=\mu_{\widehat\phi_{p,t}^{(a)}}$.
The flow equation gives
\begin{equation}\label{eq:DR-global-physical-extension-support}
 \begin{aligned}
 \operatorname{supp}\beta_t^{(a)}
 &\subset\bigcup_{|s|\leq\epsilon}
 (\widehat\phi_{p,s}^{(a)})^{-1}
 \operatorname{supp}\bar\partial(c_Du_{-p}^{(a)})\\
 &\Subset\{\delta/4<|\xi|<3\delta/4\}
 \cup\{3R/2<|\xi|<4R\}.
 \end{aligned}
\end{equation}
Smooth dependence on time on this fixed compact set and
$\beta_0^{(a)}=0$ allow $\epsilon$ to be chosen so that
\begin{equation}\label{eq:DR-global-physical-extension-control}
 \sup_{a\in\{1,2\},\,|t|\leq\epsilon}
 \left(\|\beta_t^{(a)}\|_\infty
       +\|\partial_t\beta_t^{(a)}\|_\infty\right)<\infty,
 \qquad
 \sup_{a,t}\|\beta_t^{(a)}\|_\infty<1.
\end{equation}
The interior pullback is
\[
 \beta_{t,\gamma}^{(a),+}
 :=(\beta_t^{(a)}\circ f_\gamma^0)
   \frac{\overline{(f_\gamma^0)'}}{(f_\gamma^0)'},
\]
with the same norm bounds.  Let $\beta_{t,\gamma}^{(a),-}$ be the
exterior pullback expressed on $\mathbb D$ by inversion.
Only the inner annulus in \eqref{eq:DR-global-physical-extension-support}
meets $U_+$; Schwarz' lemma applied to $(f_\gamma^0)^{-1}$ on
$B(0,\delta)$ gives
\[
 |(f_\gamma^0)^{-1}(w)|\leq |w|/\delta\leq3/4.
\]
After inversion, the exterior bound is $2/3$.  The Schwarz lemma also
gives $\delta\leq(f_\gamma^0)'(0),(g_\gamma^0)'(\infty)\leq R$.
Koebe's growth theorem therefore yields
\[
 |f_\gamma^0(z)|\leq\frac{R|z|}{(1-|z|)^2},
 \qquad
 |(\iota\circ g_\gamma^0\circ\iota)(z)|
 \leq\frac{|z|}{\delta(1-|z|)^2}.
\]
On either pullback support,
\[
 \frac{|z|}{(1-|z|)^2}\geq\frac{\delta}{4R},
 \qquad\text{hence}\qquad |z|\geq\frac{\delta}{16R}.
\]
Combining these bounds gives
\[
 \operatorname{supp}\beta_{t,\gamma}^{(a),\pm}
 \subset\{z:\delta/(16R)\leq|z|\leq3/4\}
 \qquad(a=1,2,\ |t|\leq\epsilon).
\]

Reflect these coefficients across $\mathbb S^1$ to obtain
circle-preserving solutions.  The reflected coefficients remain in a
fixed $L^\infty$ ball of radius below one.
The Ahlfors--Bers parameter theorem \cite[Theorems~10--11]{AB60}
gives analytic dependence
of the normalized solutions on the coefficient and hence $C^1$
dependence along these time paths.  Cauchy's formula on the disks where
the coefficients vanish gives the same dependence for finite jets.
The normalizing derivatives satisfy
\[
 \frac\delta2\leq(f_{\gamma_t}^0)'(0)\leq2R,
 \qquad
 \frac\delta2\leq(g_{\gamma_t}^0)'(\infty)\leq2R,
\]
by the Schwarz lemma, applied also after inversion.  The denominators in the normalizations are therefore bounded away from
zero, and their finite jets and time derivatives are jointly measurable
and uniformly bounded on a smaller collar.

Before imposing the canonical frame normalizations, this construction
gives the raw Schiffer uniformizers $f_t^{\rm raw},g_t^{\rm raw}$.
Their source normalizations are
\[
 b_t^+=(f_{\gamma_t}^0)^{-1}\circ f_t^{\rm raw}
       \in\operatorname{Aut}(\mathbb D),
 \qquad
 b_t^-=(g_{\gamma_t}^0)^{-1}\circ g_t^{\rm raw}
       \in\operatorname{Aut}(\mathbb D^*),
\]
with $b_0^\pm=\mathrm{id}$.  Differentiating
$f_t^{\rm raw}=f_{\gamma_t}^0\circ b_t^+$ and its exterior analogue gives
\[
 W_{p,\gamma}^{(a),\pm}
 =\left.\partial_t\right|_0 b_t^\pm.
\]
This proves membership in the two real Lie algebras.
An interior source field has the form
\[
 W(z)=A-\overline A z^2+iBz,
 \qquad A=W(0)\in\mathbb C,\quad B=-iW'(0)\in\mathbb R.
\]
For $W=W_{p,\gamma}^{(a),+}$, the coefficients are explicitly
\begin{align*}
 W(0)&=\frac{\delta^{\rm raw}f_\gamma^0(0)}{(f_\gamma^0)'(0)},\\
 W'(0)&=\frac{(\delta^{\rm raw}f_\gamma^0)'(0)
       -\left.\partial_t\right|_0(f_{\gamma_t}^0)'(0)
       -(f_\gamma^0)''(0)W(0)}{(f_\gamma^0)'(0)}.
\end{align*}
The finite-jet bounds give measurability in $\gamma$ and, for a constant
$C$ depending only on the collar and $p$,
\[
 |A|+|B|=|W(0)|+|W'(0)|\leq C.
\]
The same argument applies to the inverted exterior field
$(\iota_*W)(z)=-z^2W(1/z)$.

Smoothness of the exponential gives joint measurability of the lifts.
For $\eta=\gamma_t$ and target source coordinate $b'$, their inverse is
\[
 b=\exp\{-tW_{p,\phi_{p,-t}^{(a)}\eta}^{(a),\pm}\}\circ b'.
\]
This inverse is jointly measurable.  The map
\[
 b\longmapsto\exp(tW_{p,\gamma}^{(a),\pm})\circ b
\]
is left translation, so Haar invariance proves
\eqref{eq:DR-frame-Haar-pushforward}.

Integrate over the target frame fibers:
\[
 K_{F_{\rm fr}}(\eta)
 :=\int F_{\rm fr}(\eta,\widehat f,\widehat g)
         \,dH_\eta^+(\widehat f)\,dH_\eta^-(\widehat g).
\]
The frame kernels make this function measurable.  Extend
\eqref{eq:DR-physical-finite-restriction-RN} to nonnegative tests by
monotone convergence and apply Tonelli's theorem:
\begin{align*}
 \int F_{\rm fr}\circ\widehat T_{p,t}^{(a),\rm eq}\,dM_\kappa^{\rm fr}
 &=\int K_{F_{\rm fr}}(\phi_{p,t}^{(a)}\gamma)\,
      d\nu_\kappa^{\rm sph,or}(\gamma)\\
 &=\int K_{F_{\rm fr}}(\eta)J_{p,t}^{(a)}(\eta)\,
      d\nu_\kappa^{\rm sph,or}(\eta).
\end{align*}
Tonelli's theorem identifies the last line with the right side of
\eqref{eq:DR-full-frame-finite-RN}.  The derivative at zero is
\begin{align*}
 \left.\partial_t\right|_0
 T_{p,t,\gamma}^{(a),+}(f_\gamma^0\circ b)
 &=\left\{\left.\partial_t\right|_0f_{\gamma_t}^0
       +(f_\gamma^0)'W_{p,\gamma}^{(a),+}\right\}\circ b
   =\delta^{\rm raw}f_\gamma^0\circ b,\\
 \left.\partial_t\right|_0
 T_{p,t,\gamma}^{(a),-}(g_\gamma^0\circ b)
 &=\left\{\left.\partial_t\right|_0g_{\gamma_t}^0
       +(g_\gamma^0)'W_{p,\gamma}^{(a),-}\right\}\circ b
   =\delta^{\rm raw}g_\gamma^0\circ b.
\end{align*}
These are the equivariant extensions of the raw seed tangents.
\end{proof}

\subsubsection{Integration by parts on the frame space}

Let $\widehat L_{-p}^C$ be the Cauchy-normalized raw lift to
$\mathcal F_{\rm fr}$ of $-\xi^{1-p}\partial_\xi$.
Use bounded measurable tests $F_{\rm fr}$ supported on a collar
\eqref{eq:DR-physical-collar-localization}, with compact support in the
group coordinate and the two canonical source coordinates.
Extend each test and its short pullbacks by zero outside their
localizations; all supremum norms below are over the full frame space.
For the two lifted families of Proposition~\ref{prop:DR-measurable-frame-lift},
write
\[
 \widehat L_{-p}^{(a),\rm eq}F_{\rm fr}
 :=\left.\partial_t\right|_0
 (F_{\rm fr}\circ\widehat T_{p,t}^{(a),\rm eq}),
 \qquad
 \widehat L_{-p}^{\rm eq}
 :=\tfrac12(\widehat L_{-p}^{(1),\rm eq}
          -i\widehat L_{-p}^{(2),\rm eq}).
\]
The test class consists of functions whose actual difference quotients satisfy
\[
 \lim_{t\to0}\max_{a\in\{1,2\}}
 \left\|
 \frac{F_{\rm fr}\circ\widehat T_{p,t}^{(a),\rm eq}-F_{\rm fr}}t
 -\widehat L_{-p}^{(a),\rm eq}F_{\rm fr}
 \right\|_{\sup}=0.
\]
The tests are also $C^1$ along the real and imaginary parts of the two
source-frame anomaly fields, with bounded derivatives on the localization.
For compactly supported $\mu\in L^\infty(\mathbb D)$ and $N\geq0$, put
\begin{equation}\label{eq:DR-raw-variable-IBP}
 \widehat R_{\mu,N}
 :=\sum_{j=0}^N\widehat m_j\widehat L_{-j-2}^C.
\end{equation}

Set
\begin{align}
 \delta_{\mu,N}
 &:=-\frac{c_{\mathrm m}}{12}
   \sum_{j=0}^N\widehat m_j\pi_{j+2}
   -\sum_{j=0}^N
     \widehat L_{-j-2}^C\widehat m_j.
 \label{eq:DR-raw-partial-divergence}
\end{align}

For the finite-sum identity, require the same boundedness, support, and
differentiability conditions for $\widehat m_jF_{\rm fr}$ along the mode
$p=j+2$, for $0\leq j\leq N$.

\begin{lemma}\label{lem:DR-raw-physical-IBP}
\begin{equation}\label{eq:DR-raw-mode-IBP}
 \int_{\mathcal F_{\rm fr}}\widehat L_{-p}^CF_{\rm fr}\,dM_\kappa^{\rm fr}
 =-\frac{c_{\mathrm m}}{12}
    \int_{\mathcal F_{\rm fr}}F_{\rm fr}\pi_p\,dM_\kappa^{\rm fr},
\end{equation}
and
\begin{align}
 \int_{\mathcal F_{\rm fr}}\widehat R_{\mu,N}F_{\rm fr}\,dM_\kappa^{\rm fr}
 &=\int_{\mathcal F_{\rm fr}}F_{\rm fr}\delta_{\mu,N}\,dM_\kappa^{\rm fr}.
 \label{eq:DR-raw-partial-IBP}
\end{align}
\end{lemma}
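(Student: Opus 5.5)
The plan is to differentiate the finite restriction identity \eqref{eq:DR-full-frame-finite-RN} of Proposition~\ref{prop:DR-measurable-frame-lift} at $t=0$, and then to identify the lifted derivative $\widehat L_{-p}^{\rm eq}$ with the Cauchy-normalized raw lift $\widehat L_{-p}^C$ up to vertical frame fields of zero Haar divergence.

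\textbf{Step 1: the equivariant mode identity.} Fix $a\in\{1,2\}$ and a test $F_{\rm fr}$. Subtract $\int F_{\rm fr}\,dM_\kappa^{\rm fr}$ from both sides of \eqref{eq:DR-full-frame-finite-RN} and divide by $t$. On the left, the uniform difference-quotient hypothesis on the test class applies; the integration is over a localization of finite $M_\kappa^{\rm fr}$-measure, because the loop collar lies in some $\operatorname{Loop}^{r_0}$ and the group and source coordinates have compact support. On the right, dominated convergence with the bound \eqref{eq:DR-local-likelihood-dominator} applies. Together these give
\[
 \int\widehat L_{-p}^{(a),\rm eq}F_{\rm fr}\,dM_\kappa^{\rm fr}=\int F_{\rm fr}s_p^{(a)}\,dM_\kappa^{\rm fr}.
\]
Forming $\tfrac12(\cdot^{(1)}-i\,\cdot^{(2)})$ and using \eqref{eq:DR-complex-physical-score} yields \eqref{eq:DR-raw-mode-IBP}, with $\widehat L_{-p}^{\rm eq}$ in place of $\widehat L_{-p}^C$.

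\textbf{Step 2: replacing the lift.} By the last display in the proof of Proposition~\ref{prop:DR-measurable-frame-lift}, $\widehat L_{-p}^{\rm eq}$ is the source-equivariant extension of the raw seed tangent, so it coincides with $\widehat L_{-p}^C$ at the canonical seeds. Away from the seeds, Lemma~\ref{lem:DR-frame-covariant-contracted-trace} expresses the difference as $f_a'V_{f,p-2}(a)$, with an analogous exterior term. These are vertical fields in the complexified Lie algebras of $\operatorname{Aut}(\mathbb D)$ and $\operatorname{Aut}(\mathbb D^*)$, and by \eqref{eq:DR-source-frame-anomaly-Haar-zero} they have zero divergence with respect to $H_{\mathbb D}$ and $H_{\mathbb D^*}$. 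Integrate fiberwise over the frame kernels $H_\gamma^\pm$, using the $C^1$ hypothesis along the anomaly fields and compact support in the source coordinates, and then integrate against $\nu_\kappa^{\rm sph,or}$. The anomaly terms contribute nothing, so \eqref{eq:DR-raw-mode-IBP} follows.

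If the group coordinate also carries a target-frame defect, Lemma~\ref{lem:DR-Mobius-normalization-anomaly} disposes of it in the same way: \eqref{eq:DR-Mobius-anomaly-Haar-zero} gives zero contribution. This requires the product decomposition of Lemma~\ref{lem:DR-doubly-marked-slice} together with the compact group cutoff.

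\textbf{Step 3: the finite sum.} For $0\le j\le N$, apply \eqref{eq:DR-raw-mode-IBP} with $p=j+2$ to the product $\widehat m_jF_{\rm fr}$, which is an admissible test by hypothesis. The Leibniz rule gives
\[
 \widehat L_{-j-2}^C(\widehat m_jF_{\rm fr})=\widehat m_j\widehat L_{-j-2}^CF_{\rm fr}+F_{\rm fr}\widehat L_{-j-2}^C\widehat m_j,
\]
so
\[
 \int\widehat m_j\widehat L_{-j-2}^CF_{\rm fr}\,dM_\kappa^{\rm fr}=\int F_{\rm fr}\Bigl\{-\tfrac{c_{\mathrm m}}{12}\widehat m_j\pi_{j+2}-\widehat L_{-j-2}^C\widehat m_j\Bigr\}\,dM_\kappa^{\rm fr},
\]
which is the multiplier rule \eqref{eq:DR-multiplier-divergence}. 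Summing over $j$ gives \eqref{eq:DR-raw-partial-IBP} with $\delta_{\mu,N}$ as defined in \eqref{eq:DR-raw-partial-divergence}.

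\textbf{Main obstacle.} The hardest step is Step~2: matching the Cauchy-normalized lift $\widehat L_{-p}^C$ exactly with the equivariant lift modulo purely vertical fields. This requires tracking both the interior and the inverted exterior Schiffer projections, and checking that the frame-normalization corrections $W^{(a),\pm}$ are absorbed by left translation, which is Haar invariant. I would first verify the identification pointwise at the canonical seeds and then propagate it using the cocycle \eqref{eq:DR-source-frame-anomaly-cocycle}.
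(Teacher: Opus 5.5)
Your proposal is correct and follows essentially the same route as the paper. You differentiate \eqref{eq:DR-full-frame-finite-RN} at $t=0$, using the finite measure of the localization and dominated convergence, and identify $\widehat L_{-p}^C-\widehat L_{-p}^{\rm eq}$ with the interior and exterior source-frame anomaly fields, which integrate to zero fiberwise against the Haar kernels; you then apply the mode identity to $\widehat m_jF_{\rm fr}$ with the Leibniz rule and sum. Your conditional aside about a target-frame defect is unnecessary here, because the physical fields $u_{-p}^{(a)}$ do not depend on the frame; Lemma~\ref{lem:DR-Mobius-normalization-anomaly} is only needed later, for $\widehat R_\mu^{\mathrm C}$, and the identification with \eqref{eq:DR-source-frame-anomaly-field} holds by definition, so no cocycle propagation is required.
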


\begin{proof}
The collar conditions place the loops in $\operatorname{Loop}^{r_0}$,
where $r_0:=\tfrac12\min\{\delta,R^{-1}\}\in(0,1)$.  If $K_+$ and $K_-$
are the compact source-coordinate supports, then
\[
 M_\kappa^{\rm fr}(\operatorname{supp}F_{\rm fr})
 \leq\nu_\kappa^{\rm sph,or}(\operatorname{Loop}^{r_0})
      H_{\mathbb D}(K_+)H_{\mathbb D^*}(K_-)<\infty.
\]
Proposition~\ref{prop:DR-measurable-frame-lift} places all short pullbacks
of the support in a common larger localization $E_F$, with the same
type of finite-measure bound.

Subtract the $t=0$ instance of \eqref{eq:DR-full-frame-finite-RN} and
divide by $t$:
\[
 \int\frac{F_{\rm fr}\circ\widehat T_{p,t}^{(a),\rm eq}-F_{\rm fr}}t
       \,dM_\kappa^{\rm fr}
 =\int F_{\rm fr}\frac{J_{p,t}^{(a)}-1}t\,dM_\kappa^{\rm fr}.
\]
On the left, the error is at most
\[
 M_\kappa^{\rm fr}(E_F)
 \left\|
 \frac{F_{\rm fr}\circ\widehat T_{p,t}^{(a),\rm eq}-F_{\rm fr}}t
 -\widehat L_{-p}^{(a),\rm eq}F_{\rm fr}
 \right\|_{\sup}\longrightarrow0.
\]
On the right, \eqref{eq:DR-shifted-target-density-bound} gives the
integrable majorant
\[
 \left|F_{\rm fr}\frac{J_{p,t}^{(a)}-1}t\right|
 \leq Ce^{C\epsilon}\|F_{\rm fr}\|_{\sup}
       \mathbf1_{\operatorname{supp}F_{\rm fr}}.
\]
Dominated convergence therefore yields
\[
 \int\widehat L_{-p}^{(a),\rm eq}F_{\rm fr}\,dM_\kappa^{\rm fr}
 =\lim_{t\to0}\int F_{\rm fr}
       \frac{J_{p,t}^{(a)}-1}{t}\,dM_\kappa^{\rm fr}
 =\int F_{\rm fr}s_p^{(a)}\,dM_\kappa^{\rm fr}.
\]
Taking the complex combination
$\tfrac12(\widehat L_{-p}^{(1),\rm eq}
-i\widehat L_{-p}^{(2),\rm eq})$ gives
\begin{equation}\label{eq:DR-equivariant-mode-IBP}
 \int_{\mathcal F_{\rm fr}}\widehat L_{-p}^{\rm eq}F_{\rm fr}\,dM_\kappa^{\rm fr}
 =-\frac{c_{\mathrm m}}{12}
   \int_{\mathcal F_{\rm fr}}F_{\rm fr}\pi_p\,dM_\kappa^{\rm fr}.
\end{equation}

The difference $\widehat L_{-p}^C-\widehat L_{-p}^{\rm eq}$ is the sum
of the interior source-frame field
\eqref{eq:DR-source-frame-anomaly-field} and its exterior analogue.
For a real or imaginary part $Y$ of either anomaly field, its local
flow $\Phi_t^Y$ preserves the corresponding Haar measure $H$ by
\eqref{eq:DR-source-frame-anomaly-zero-divergence}.  On each fiber,
bounded directional derivatives and compact support give
\[
 \int YF_{\rm fr}\,dH
 =\lim_{t\to0}\int\frac{F_{\rm fr}\circ\Phi_t^Y-F_{\rm fr}}t\,dH=0.
\]
The derivatives are integrable on $E_F$, so Fubini's theorem gives
\begin{equation}\label{eq:DR-source-anomaly-full-frame-zero}
 \int_{\mathcal F_{\rm fr}}
 (\widehat L_{-p}^C-\widehat L_{-p}^{\rm eq})F_{\rm fr}\,
 dM_\kappa^{\rm fr}=0.
\end{equation}
Combining this with \eqref{eq:DR-equivariant-mode-IBP} proves
\eqref{eq:DR-raw-mode-IBP}.

Finally, apply \eqref{eq:DR-raw-mode-IBP} to $\widehat m_jF_{\rm fr}$ and use the
Leibniz rule:
\begin{align}
 \int_{\mathcal F_{\rm fr}}\widehat m_j
   \widehat L_{-j-2}^CF_{\rm fr}\,dM_\kappa^{\rm fr}
 &=-\frac{c_{\mathrm m}}{12}
   \int_{\mathcal F_{\rm fr}}\widehat m_jF_{\rm fr}\pi_{j+2}\,dM_\kappa^{\rm fr}
   -\int_{\mathcal F_{\rm fr}}F_{\rm fr}
     \widehat L_{-j-2}^C\widehat m_j\,dM_\kappa^{\rm fr}.
 \label{eq:DR-full-frame-multiplier-step}
\end{align}
Summing \eqref{eq:DR-full-frame-multiplier-step} for
$0\leq j\leq N$ proves \eqref{eq:DR-raw-partial-IBP}.
\end{proof}

\section{Proof of the right integration-by-parts formula}
\label{app:right-IBP}

\subsection{Normalized frames}

Fix an algebraic expression for $F\in\mathcal A_{\mathrm{fl}}$ in
\eqref{eq:DR-flow-saturated-core}.  On a normalized frame $h\in\Sigma$,
evaluate the generators of $\mathcal A_0$ on its conformal maps.
For each composition $\Phi$, use the fixed extension $\widehat\Phi$ and
\eqref{eq:DR-composition-source-map}--\eqref{eq:DR-composition-normalized-output}.
This defines the bounded measurable extension and its invariant lift
\begin{equation}\label{eq:DR-ambient-observable-extension}
 F^\Sigma:\Sigma\longrightarrow\mathbb C,
 \qquad
 \widehat F:=F^\Sigma\circ s.
\end{equation}
For $A>0$, put
\begin{equation}\label{eq:DR-K-localized-slice}
 \Sigma_A:=\{h\in\Sigma:|k(h)|\leq A\}.
\end{equation}
For $\zeta\in C_c^1((-A,A))$, define
\begin{equation}\label{eq:DR-intrinsic-and-ambient-cutoff-tests}
 G_0:=\zeta(k)F\quad\text{on }\mathcal W,
 \qquad
 G_0^\Sigma:=\zeta(k)F^\Sigma\quad\text{on }\Sigma,
\end{equation}
and
\begin{equation}\label{eq:DR-ambient-cutoff-extension}
 \widehat G_0:=G_0^\Sigma\circ s.
\end{equation}
We use the product coordinates $(M,h)=\Theta^{-1}(x)$ on
$\mathcal F_{\rm fr}$ and write $k$ also for its invariant lift $k\circ s$.
The estimates in Appendix~\ref{app:uniform-response} apply to these
extensions.  On $\Sigma_{\rm r}$, conformal removability and
Lemma~\ref{lem:B1-unique-welding-flow-stability} identify the constructed
frames with the intrinsic welding frames:
\begin{equation}\label{eq:DR-ambient-extension-restriction}
 F^\Sigma(h)=F(\operatorname{weld}(h)),
 \qquad h\in\Sigma_{\rm r}.
\end{equation}

Fix \(0<\delta<R<\infty\), \(0<\lambda_0<1\), and
\(0<c<C<\infty\).  Let \(\mathcal K\) be a family of framed loops
\[
 x=(\gamma,U_+,U_-,\widehat f,\widehat g).
\]
Use the marks $z_+,z_-$ from Appendix~\ref{sec:DR-frame-spaces}, put
$\lambda_+:=\widehat f'(0)$, and assume
\begin{align}
 B(0,\delta)&\subset U_+,
 &
 \widehat{\mathbb C}\setminus\overline{B(0,R)}&\subset U_-,
 \label{eq:DR-quantitative-collar}\\
 |z_+|&\leq\lambda_0\delta,
 &
 |\iota(z_-)|&\leq\frac{\lambda_0}{R},
 &
 c\leq|\lambda_+|&\leq C,
 \label{eq:DR-quantitative-frame-data}
\end{align}
for every $x\in\mathcal K$.
Let \(f_\gamma^0,g_\gamma^0\) be the canonical seeds from
\eqref{eq:DR-canonical-physical-frame-seeds}, and set
\[
 a_x:=(f_\gamma^0)^{-1}\circ\widehat f,
 \qquad
 b_x:=(g_\gamma^0)^{-1}\circ\widehat g.
\]

Put $u=z_+$ and $v=\iota(z_-)$, and retain the group coordinate
$M(x)=N_x^{-1}$ from \eqref{eq:DR-Weil-product-chart}.  Formula
\eqref{eq:DR-explicit-slice-Mobius} becomes
\begin{equation}\label{eq:DR-normalizer-local-coordinates}
 N_x(\xi)=\frac{(1-uv)(\xi-u)}{\lambda_+(1-v\xi)}.
\end{equation}
For a $C^1$ curve $s\mapsto x_s$ through $x$, a dot denotes its derivative
at $s=0$.  Use the left-invariant trivialization to define tangent norms
on $G$, and let $X$ be a real tangent vector at $x$.
The constant $C_0$ below depends only on $\delta,R,\lambda_0,c,C$.

\begin{lemma}\label{lem:DR-quantitative-normalization}
The frame coordinates satisfy
\begin{equation}\label{eq:DR-quantitative-transition-bounds}
 |a_x(0)|\leq\lambda_0,
 \qquad
 |(\iota\circ b_x\circ\iota)(0)|\leq\lambda_0.
\end{equation}
Consequently,
\begin{equation}\label{eq:DR-quantitative-transition-compactness}
 \{a_x:x\in\mathcal K\}\Subset\operatorname{Aut}(\mathbb D),
 \qquad
 \{b_x:x\in\mathcal K\}\Subset\operatorname{Aut}(\mathbb D^*).
\end{equation}

For the group coordinate,
\begin{equation}\label{eq:DR-group-projection-finite-jet-bound}
 \|dM(X)\|
 \leq C_0\{|du(X)|+|dv(X)|+|d\lambda_+(X)|\}.
\end{equation}
\end{lemma}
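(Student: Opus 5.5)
Three things need proving: the two containments for the frame coordinates, the compactness statement, and the tangent bound for $dM$. I will take them in that order.

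\textbf{Interior transition.} The transition $a_x=(f_\gamma^0)^{-1}\circ\widehat f$ is an automorphism of $\mathbb D$ with $a_x(0)=(f_\gamma^0)^{-1}(z_+)$. The canonical seed satisfies $f_\gamma^0(0)=0$ and maps onto $U_+\supset B(0,\delta)$. Applying the Schwarz lemma to $(f_\gamma^0)^{-1}$ on $B(0,\delta)$, exactly as in the proof of Proposition~\ref{prop:DR-measurable-frame-lift}, gives $|(f_\gamma^0)^{-1}(w)|\leq|w|/\delta$. Since $|z_+|\leq\lambda_0\delta$, this yields $|a_x(0)|\leq\lambda_0$.

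\textbf{Exterior transition.} Conjugate by $\iota$. The map $\iota\circ g_\gamma^0\circ\iota$ sends $\mathbb D$ onto $\iota(U_-)\supset B(0,1/R)$ and fixes $0$. The same Schwarz argument bounds its inverse by $R|w|$. Together with $|\iota(z_-)|\leq\lambda_0/R$, this gives $|(\iota\circ b_x\circ\iota)(0)|\leq\lambda_0$.

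\textbf{Compactness.} Every disk automorphism has the form $a(z)=e^{i\theta}(z-\alpha)/(1-\bar\alpha z)$, up to relabeling, with $a(0)$ determining $\alpha$. Bounding $|a(0)|\leq\lambda_0<1$ therefore confines the pair $(\alpha,\theta)$ to the compact set $\overline{B(0,\lambda_0)}\times\T$. The same holds after conjugation by $\iota$, which proves the compactness statement.

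\textbf{Tangent bound.} Formula \eqref{eq:DR-normalizer-local-coordinates} expresses $N_x$, and hence $M(x)=N_x^{-1}$, as a rational function of $(u,v,\lambda_+)$. Under the hypotheses, $|u|\leq\lambda_0\delta<R$ and $|v|\leq\lambda_0/R$, so $|uv|\leq\lambda_0^2\delta/R<1$. Hence $1-uv$ is bounded away from zero, and $c\leq|\lambda_+|\leq C$. Writing $N_x$ as a normalized $SL_2$ matrix,
\[
 \bigl(\lambda_+(1-uv)\bigr)^{-1/2}
 \begin{pmatrix}1-uv&-u(1-uv)\\ -v\lambda_+&\lambda_+\end{pmatrix},
\]
shows that all entries are smooth in $(u,v,\lambda_+)$ on the compact parameter set $K_0=\{|u|\leq\lambda_0\delta,\ |v|\leq\lambda_0/R,\ c\leq|\lambda_+|\leq C\}$. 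The set is compact and its image in $G$ is compact.

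The chain rule then gives $dM(X)=\sum (\partial M/\partial y)\,dy(X)$ over $y\in\{u,v,\lambda_+\}$. Transport this to the left-invariant trivialization by left multiplication with $M^{-1}$. This is a smooth operation whose norm is uniformly bounded over the compact image. The partial derivatives are continuous on $K_0$, so their supremum provides the constant $C_0$, which depends only on $\delta,R,\lambda_0,c,C$. Any local choice of square-root branch is harmless, because $G=\operatorname{PSL}_2$.

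The main point to check is that $1-uv$ stays away from zero uniformly, which follows from $\lambda_0\delta<R$ as noted above. I expect no other obstacle; the rest is compactness and smoothness.
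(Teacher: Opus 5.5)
Your Schwarz-lemma bounds on $a_x(0)$ and $(\iota\circ b_x\circ\iota)(0)$, and your compactness argument via the parametrization of disk automorphisms by $a(0)$ and a rotation angle, are exactly what the paper does.

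For the tangent bound you take a softer route. You regard $M(x)=N_x^{-1}$ as a smooth function of $(u,v,\lambda_+)$ on the compact set $K_0$, where $1-uv$ and $\lambda_+$ stay away from zero, and you bound $M^{-1}\partial_yM$ by its supremum there. The paper instead computes the right-invariant differential explicitly,
$\partial_sN_{x_s}|_{s=0}\circ N_x^{-1}(y)=-\dot u/\lambda_+-(\dot\lambda_+/\lambda_++2v\dot u/(1-uv))\,y+\lambda_+\dot v\,(1-uv)^{-2}y^2$,
and converts it with $d(L_{M^{-1}})_M\dot M=-\dot N\circ N^{-1}$.

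Both arguments give a constant depending only on $\delta,R,\lambda_0,c,C$. Yours avoids the computation. The paper's exhibits the $\mathfrak{sl}_2$-coefficients as explicit quadratic vector fields, which makes the constant explicit and shows directly that only the denominators $\lambda_+$ and $1-uv$ need control.

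One correction: the matrix you display has determinant $\lambda_+(1-uv)^2$. The $\mathrm{SL}_2$ normalizing factor is therefore $\lambda_+^{-1/2}(1-uv)^{-1}$, not $(\lambda_+(1-uv))^{-1/2}$. This is harmless, since only the projective class enters and the entries remain smooth on $K_0$.
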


\begin{proof}
Let \(\psi_+=(f_\gamma^0)^{-1}\).  The map
\(z\mapsto\psi_+(\delta z)\) maps \(\mathbb D\) into itself and fixes
zero.  Schwarz' lemma and \eqref{eq:DR-quantitative-frame-data} give
\[
 |a_x(0)|=|\psi_+(z_+)|
 \leq\frac{|z_+|}{\delta}\leq\lambda_0.
\]
Likewise,
\[
 \widetilde g_\gamma^0
 :=\iota\circ g_\gamma^0\circ\iota
\]
maps \(\mathbb D\) onto \(\iota(U_-)\), which contains
\(B(0,R^{-1})\).  Applying Schwarz' lemma to
\((\widetilde g_\gamma^0)^{-1}\) gives
\[
 |(\iota\circ b_x\circ\iota)(0)|
 \leq R|\iota(z_-)|\leq\lambda_0.
\]
For $w\in\mathbb D$ and $\theta\in\T$, every disk automorphism
with value $w$ at zero has the form
\[
 z\longmapsto\frac{w+e^{i\theta}z}{1+\overline w e^{i\theta}z}.
\]
The parameter set $\{|w|\leq\lambda_0\}\times\T$ is compact,
which proves \eqref{eq:DR-quantitative-transition-compactness}.

Since
\[
 |uv|\leq\lambda_0^2\frac{\delta}{R}<\lambda_0^2,
\]
the quantities \(\lambda_+^{-1}\), \((1-uv)^{-1}\), and
\((1-uv)^{-2}\) are uniformly bounded.  The parameter derivatives of
\eqref{eq:DR-normalizer-local-coordinates} satisfy
\[
 \begin{aligned}
 (\partial_uN_x)\circ N_x^{-1}(y)
   &=-\lambda_+^{-1}-\frac{2v}{1-uv}y,\\
 (\partial_vN_x)\circ N_x^{-1}(y)
   &=\frac{\lambda_+}{(1-uv)^2}y^2,\\
 (\partial_{\lambda_+}N_x)\circ N_x^{-1}(y)
   &=-\lambda_+^{-1}y.
 \end{aligned}
\]
The chain rule therefore gives
\begin{align}
 &\left.\partial_sN_{x_s}\right|_{s=0}\circ N_x^{-1}(y)
 \notag\\
 &\qquad=-\frac{\dot u}{\lambda_+}
 -\left(
    \frac{\dot\lambda_+}{\lambda_+}+\frac{2v\dot u}{1-uv}
  \right)y
 +\frac{\lambda_+\dot v}{(1-uv)^2}y^2.
 \label{eq:DR-explicit-normalizer-differential}
\end{align}
The denominator bounds imply
\begin{equation}\label{eq:DR-normalizer-differential-bound}
 \left\|
  \left.\partial_sN_{x_s}\right|_{s=0}\circ N_x^{-1}
 \right\|_{\mathfrak{sl}_2}
 \leq C_0\bigl(|\dot u|+|\dot v|+|\dot\lambda_+|\bigr).
\end{equation}
For left translation
$L_M(Q):=M\circ Q$, differentiation of $M=N^{-1}$ gives
\[
 d(L_{M^{-1}})_M\dot M=-\dot N\circ N^{-1}.
\]
Taking $\dot x=X$ proves
\eqref{eq:DR-group-projection-finite-jet-bound}.
\end{proof}

\subsection{Laurent approximation and test functions}

Let $\mu$ be a finite complex linear combination of the representatives
$\mu_{n,c}$, $n\geq2$, from \eqref{eq:DR-Fourier-Cauchy-vector}.
Using the support radius $r_+$ from that construction, choose
\begin{equation}\label{eq:DR-Laurent-three-radii}
 r_1:=\frac{r_+}{(1-r_+)^2}<r_2<r_3<\frac14.
\end{equation}
Koebe's growth and one-quarter theorems give
\begin{equation}\label{eq:DR-Laurent-Koebe-separation}
 |f(z)|\leq r_1\quad(z\in\operatorname{supp}\mu),
 \qquad B(0,1/4)\subset f(\mathbb D).
\end{equation}
Applied to $z\mapsto1/g(1/z)$, whose derivative at zero has modulus
$e^{-k}$, they also give
\begin{equation}\label{eq:DR-K-localizes-loop}
 \gamma\subset\{1/4\leq|\xi|\leq4e^k\}.
\end{equation}

Choose an identity neighborhood $U_A\Subset G$ and constants
\begin{equation}\label{eq:DR-Laurent-uniform-group-radii}
 0<r_{1,A}<r_{2,A}<r_{3,A}<R_A^0<\infty,
 \qquad 0<\lambda_A<1,\qquad r_{1,A}\leq\lambda_A r_{3,A},
\end{equation}
so that, for $(M,h)\in\overline U_A\times\Sigma_A$,
\begin{align}
 \{M(0)\}\cup M(f(\operatorname{supp}\mu))
 &\subset B(0,r_{1,A}),
 \label{eq:DR-Laurent-group-inner-separation}\\
 B(0,r_{3,A})&\subset MU_+,
 \label{eq:DR-Laurent-uniform-interior-ball}\\
 M(\gamma)&\subset\{\xi:r_{3,A}\leq|\xi|\leq R_A^0\},
 \label{eq:DR-Laurent-group-boundary-separation}\\
 |\iota(M(\infty))|&\leq\lambda_A/R_A^0,
 \label{eq:DR-Laurent-uniform-exterior-mark}\\
 0<c_A\leq|M'(0)|&\leq C_A<\infty.
 \label{eq:DR-Laurent-uniform-interior-tangent}
\end{align}
To obtain these bounds, start with radii strictly between $r_1$ and
$1/4$, and take $R_A^0>4e^A$.  Near the identity, $M$ converges
uniformly to the identity on $\overline{B(0,8e^A)}$, while
$M(0)$, $\iota(M(\infty))$, and $M'(0)$ vary continuously.
Shrink $U_A$ so that its poles stay outside this disk.
The displayed inner and outer disks are disjoint from $M\gamma$ and
contain the respective marks; connectedness places them in $MU_+$ and
$MU_-$.  Since $f(\mathbb D)\subset B(0,4e^A)$, this also makes
$M\circ f$ holomorphic on $\mathbb D$.

Relative to the canonical frames in
\eqref{eq:DR-canonical-physical-frame-seeds}, write
\begin{equation}\label{eq:DR-local-seed-transition-automorphisms}
 M\circ f=f_{M\gamma}^0\circ a_{M,h},
 \qquad M\circ g=g_{M\gamma}^0\circ b_{M,h}.
\end{equation}
Lemma~\ref{lem:DR-quantitative-normalization}, with
$\delta=r_{3,A}$, $R=R_A^0$, and $\lambda_0=\lambda_A$, gives
\begin{equation}\label{eq:DR-local-seed-transition-compactness}
 \begin{aligned}
 \{a_{M,h}:(M,h)\in\overline U_A\times\Sigma_A\}
   &\Subset\operatorname{Aut}(\mathbb D),\\
 \{b_{M,h}:(M,h)\in\overline U_A\times\Sigma_A\}
   &\Subset\operatorname{Aut}(\mathbb D^*).
 \end{aligned}
\end{equation}
In particular,
\begin{equation}\label{eq:DR-transition-support-compactness}
 \bigcup_{(M,h)\in\overline U_A\times\Sigma_A}
 a_{M,h}(\operatorname{supp}\mu)\Subset\mathbb D.
\end{equation}

Use $\widehat R_{\mu,N}$ and $\delta_{\mu,N}$ from
\eqref{eq:DR-raw-variable-IBP}--\eqref{eq:DR-raw-partial-divergence}, and set
\[
 \Delta_N:=\widehat R_{\mu,N}-\widehat R_\mu^{\mathrm C},
 \qquad q:=r_1/r_2<1,\qquad q_A:=r_{1,A}/r_{2,A}<1.
\]
Fix $\chi\in C_c^1(U_A)$ and regard it as $\chi\circ M$ on the
frame space.  Constants below may depend on the fixed
representative, test, and localization, but are independent of $N$.

\begin{lemma}\label{lem:DR-localized-Laurent-estimates}
For $N\geq0$,
\begin{equation}\label{eq:DR-Laurent-geometric-rate}
 \left\|(\Delta_N\widehat F)|_\Sigma\right\|_{\sup}
 +\left\|\delta_{\mu,N}|_\Sigma
       -\frac{c_{\mathrm L}}{12}\sigma_h(\mu)\right\|_{\sup}
 \leq C_F(N+2)^2q^{N+1}.
\end{equation}
On the localized frame space,
\begin{align}
 \sup_{\overline U_A\times\Sigma_A}|\Delta_N\widehat F|
 &\leq C_{F,A}(N+2)^2q_A^{N+1},
 \label{eq:DR-Laurent-full-frame-response-rate}\\
 \sup_{\overline U_A\times\Sigma_A}
 \left|\delta_{\mu,N}-\frac{c_{\mathrm L}}{12}\sigma_h(\mu)\right|
 &\leq C_A(N+2)^2q_A^{N+1},
 \label{eq:DR-Laurent-full-frame-divergence-rate}\\
 \shortintertext{and}
 \sup_{\overline U_A\times\Sigma_A}
 |\Delta_N(\chi\widehat G_0)|
 &\leq C_{\chi,F,\zeta,A}(N+2)^2q_A^{N+1}.
 \label{eq:DR-Laurent-cutoff-full-frame-response-rate}
\end{align}
\end{lemma}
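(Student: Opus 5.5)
The plan is to treat all four estimates as geometric tail bounds for one Laurent series, uniform over the relevant family of frames.

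\emph{Writing $\Delta_N$ as a tail.} First I would identify $\widehat R_\mu^{\mathrm C}$ with the full series $\sum_{j\geq0}\widehat m_j\widehat L_{-j-2}^C$. On the slice this is \eqref{eq:DR-Rmu-variable-coefficients}. At a general frame $M\cdot h$ it follows by expanding the physical Cauchy vector $V_{\mu,\widehat f}$ in the Laurent form \eqref{eq:DR-physical-Laurent}. The expansion converges uniformly with two derivatives on $\{|\xi|\geq r_2\}$, resp.\ $\{|\xi|\geq r_{2,A}\}$, and this is enough to pass it through the Cauchy projection \eqref{eq:DR-Cauchy-field-interior-definition}. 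Then
\[
 \Delta_N=-\sum_{j>N}\widehat m_j\widehat L_{-j-2}^C .
\]
The tail has two factors. For the first, the proof of Lemma~\ref{lem:DR-physical-trace} gives $|\widehat m_j|\leq M_{f,\mu}r_1^j$, with $M_{f,\mu}$ uniform by Koebe distortion. For the second, I would show
\[
 |\widehat L_{-j-2}^C\widehat F|\leq C_F(j+1)^2r_2^{-j}.
\]
This comes from the Schiffer formula \eqref{eq:DR-Schiffer-variation}. On the contour $|\zeta|=r_2$ the kernel $K_\psi$ is bounded uniformly by Koebe and Cauchy estimates, so the raw tangent of $f$ and its first finitely many jets are $O(r_2^{-j})$, up to polynomial factors from differentiating $\zeta^{-j-1}$. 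Each generator of $\widehat F$ has a response that is a bounded linear functional of these jets. For averaged observables this uses Lemma~\ref{lem:B1-J-response}. For coefficients, characters and flowed generators it uses the uniform $C^2$ bounds of Appendix~\ref{app:uniform-response}, applied to the compensated maps \eqref{eq:DR-composition-normalized-output}. Summing the geometric tail with \eqref{eq:DR-geometric-tail-estimate} gives the response half of \eqref{eq:DR-Laurent-geometric-rate}.

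\emph{The divergence part.} By \eqref{eq:DR-algebraic-central-balance}, $\frac{c_{\mathrm L}}{12}\sigma_h(\mu)$ equals the $N=\infty$ value of $\delta_{\mu,N}$ on $\Sigma$. Hence $\delta_{\mu,N}-\frac{c_{\mathrm L}}{12}\sigma_h(\mu)$ is the sum of the two truncation errors. These are bounded by \eqref{eq:DR-frame-covariant-truncation-rate}, taken with $a=\mathrm{id}$ on the slice.

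\emph{Localization.} For $(M,h)\in\overline U_A\times\Sigma_A$ I would rerun both arguments with $M\circ f$ in place of $f$ and radii $r_{i,A}$. The bounds \eqref{eq:DR-Laurent-group-inner-separation}--\eqref{eq:DR-Laurent-uniform-interior-tangent} supply the three-radius condition uniformly. To reduce to canonical seeds, write $M\circ f=f^0_{M\gamma}\circ a_{M,h}$. Lemma~\ref{lem:DR-frame-covariant-contracted-trace} then applies with $a=a_{M,h}$, and its constant is uniform because of \eqref{eq:DR-local-seed-transition-compactness} and \eqref{eq:DR-transition-support-compactness}. This yields \eqref{eq:DR-Laurent-full-frame-divergence-rate}. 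For \eqref{eq:DR-Laurent-full-frame-response-rate} I would use that $\widehat F=F^\Sigma\circ s$. By the chain rule, $\widehat L^C_{-j-2}\widehat F$ splits into a slice response at $s(x)$ and a correction from the variation of $N_x$. Lemma~\ref{lem:DR-quantitative-normalization}, through \eqref{eq:DR-group-projection-finite-jet-bound}, bounds that variation by the variations of $u,v,\lambda_+$. These are finite jets at the marks, and by the same Cauchy estimate they are $O(r_{2,A}^{-j})$.

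\emph{The cutoff estimate.} For \eqref{eq:DR-Laurent-cutoff-full-frame-response-rate} I would apply the Leibniz rule to $\chi(M)\,\zeta(k)\,\widehat F$. The $\chi$ term is again controlled by \eqref{eq:DR-group-projection-finite-jet-bound}. The $\zeta(k)$ term is controlled by the response of $\log|g'(\infty)/f'(0)|$. This is a jet at $0$ and at $\infty$, and its $j$-th Schiffer mode is $O(r_{2,A}^{-j})$ as in the capacity computation \eqref{eq:DR-interior-radius-variation}.

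I expect the main obstacle to be the uniform mode bound $|\widehat L^C_{-j-2}\widehat F|\lesssim (j+1)^2 r_2^{-j}$ for the flowed generators $F_0\circ R_\Phi$. There the response passes through the principal compensator and the normalization, not just through finitely many explicit jets. I would first handle it by linearizing the Ahlfors--Bers solution with uniformly bounded Beltrami data. The key point is that the mode-$j$ input enters only through the boundary-collar values of the raw tangent, which are $O(r_2^{-j})$.
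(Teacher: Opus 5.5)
Your outline uses the same ingredients as the paper: moment decay $|\widehat m_j|\lesssim r_1^j$, contour growth $r_2^{-j}$, linearity of the response in the physical vector, and the central balance \eqref{eq:DR-raw-central-balance}. What differs is how you organize the response estimate.

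You bound each mode, $|\widehat L^C_{-j-2}\widehat F|\lesssim(j+1)^2r_2^{-j}$, and then sum. The paper does not estimate individual modes in the response part. Instead it aggregates the tail into the single vector $E_N^M=V_{\mu,\widehat f}-V^{(N)}_{\mu,\widehat f}$ and bounds it in $C^2$ on $\{|\xi|\geq r_{2,A}\}$. A radial cutoff then makes $\bar\partial(\eta_AE_N^M)=E_N^M\bar\partial\eta_A$ a Beltrami coefficient on a fixed annulus, which is pulled back to the fixed source annulus \eqref{eq:DR-Laurent-pulled-support-annulus}. Every generator's response is then bounded once, linearly in that $L^\infty$ norm:
\begin{itemize}
\item averaged observables via the $C^1(\mathbb S^1)$ norm of the boundary source field;
\item coefficients and characters via the first Ahlfors--Bers variation;
\item compositions via $\operatorname{Ad}_{\Phi^{-1}}$ and $\widehat\Phi$.
\end{itemize}
Your per-mode bound is that same linear estimate applied to $-\xi^{-j-1}$. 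Your interchange $\Delta_N\widehat F=-\sum_{j>N}\widehat m_j\widehat L^C_{-j-2}\widehat F$ also needs the continuity of that estimate. So the aggregated route is more economical and gives the same rate.

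If you carry out your plan, sharpen three points.
\begin{itemize}
\item For averaged observables, what must be $O(r_2^{-j})$ is the boundary \emph{source} field on $\mathbb S^1$. It is not the raw tangent of $f$ near the rough boundary, which carries the uncontrolled factor $f'$.
\item In the localized response, the variation of $N_x$ is a M\"obius direction that $\widehat F$ ignores by $G$-invariance. Everything therefore rests on the slice response to the pushed field, which again needs the annulus estimate.
\item At a general frame, the lifted $k$ is computed from $u,v,\lambda_\pm$ through $g'(\infty)=(1-uv)^2/(\lambda_+\lambda_-)$, not as $\log|\widehat g'(\infty)/\widehat f'(0)|$. Your idea of using jets at the marks is the right one.
\end{itemize}

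One step is wrong as stated: the localized divergence bound \eqref{eq:DR-Laurent-full-frame-divergence-rate}. You write $M\circ f=f^0_{M\gamma}\circ a_{M,h}$ and apply Lemma~\ref{lem:DR-frame-covariant-contracted-trace} with $a=a_{M,h}$. That controls the partial sums of $L^{\rm eq}_{-j-2}\widehat m_j$, that is, the seed's Schiffer response transported by $a_{M,h}$. But $\delta_{\mu,N}$ in \eqref{eq:DR-raw-partial-divergence} is built from the Cauchy lift $\widehat L^C_{-j-2}$ computed at $\widehat f=M\circ f$ itself. The two differ by the source-frame defects \eqref{eq:DR-source-frame-anomaly-field}. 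Their full sums against $\widehat m_j$ agree, since both equal $\tfrac{26}{12}(\mathcal S\widehat f,\mu)$, but the lemma gives no control of the partial sums of the difference.

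The fix is what the paper does. Apply the contraction identities and Cauchy estimates directly to $\widehat f$, i.e.\ take base map $\widehat f$ and $a=\mathrm{id}$. Use $\mathcal S(M\circ f)=\mathcal Sf$ and the uniform radii \eqref{eq:DR-Laurent-group-inner-separation}--\eqref{eq:DR-Laurent-uniform-interior-ball} to get $|\widehat m_j\pi_{j+2}|+|\widehat L^C_{-j-2}\widehat m_j|\leq C_A(j+1)q_A^j$. Then sum the tails with \eqref{eq:DR-geometric-tail-estimate}. Alternatively, keep your reduction and bound the defect tail $\sum_{j>N}V^{\rm src}_j\widehat m_j$ separately; it is also geometric in $q_A$.
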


\begin{proof}
With $\widehat f=M\circ f$, define
\begin{align}
 V_{\mu,\widehat f}^{(N)}(\xi)
 &:=-\sum_{j=0}^N\widehat m_j(\widehat f)\xi^{-j-1},
 \label{eq:DR-off-slice-partial-vector}\\
 E_N^M(\xi)&:=V_{\mu,\widehat f}(\xi)-V_{\mu,\widehat f}^{(N)}(\xi).
 \label{eq:DR-off-slice-vector-tail}
\end{align}
The moment integral and Koebe distortion on $\operatorname{supp}\mu$
give
\begin{equation}\label{eq:DR-off-slice-moment-bound}
 \sup_{\overline U_A\times\Sigma_A}|\widehat m_j(M\circ f)|
 \leq C_A r_{1,A}^{\,j}.
\end{equation}
For $r=0,1,2$, termwise differentiation therefore yields
\begin{align}
 \sup_{\substack{(M,h)\in\overline U_A\times\Sigma_A\\
                  |\xi|\geq r_{2,A}}}
 |\partial_\xi^rE_N^M(\xi)|
 &\leq C_A r_{2,A}^{-r-1}
   \sum_{j>N}(j+1)\cdots(j+r)q_A^j\notag\\
 &\leq C_{r,A}(N+2)^r q_A^{N+1}.
 \label{eq:DR-off-slice-C2-tail}
\end{align}
The product is $1$ for $r=0$; the last inequality is
\eqref{eq:DR-geometric-tail-estimate}.
At $M=\mathrm{id}$ write $E_N:=E_N^{\mathrm{id}}$.
The same calculation, now using the normalized bounds
\eqref{eq:DR-Laurent-Koebe-separation}, gives
\begin{equation}\label{eq:DR-Laurent-C2-tail}
 \sup_{\substack{h\in\Sigma\\|\xi|\geq r_2}}
 |\partial_\xi^rE_N(\xi)|
 \leq C_r(N+2)^r q^{N+1},\qquad r=0,1,2.
\end{equation}

Choose smooth radial cutoffs $\eta_A,\eta$, equal to zero up to
$r_{2,A},r_2$ and to one before $r_{3,A},r_3$, respectively.
Their derivatives have compact support in the intervening annuli.
Put
\[
 E_N^{M,\eta_A}:=\eta_AE_N^M,\qquad E_N^\eta:=\eta E_N.
\]
Both agree with the corresponding tail near the physical curve, and
$\bar\partial E_N^{M,\eta_A}=E_N^M\bar\partial\eta_A$.
In the chart $w=\xi^{-1}$ a vector field with coefficient $E(\xi)$
has coefficient $-w^2E(w^{-1})$.  Thus the preceding estimates give
\begin{align}
 \sup_{\overline U_A\times\Sigma_A}
 \left(\|\bar\partial E_N^{M,\eta_A}\|_\infty
       +\|E_N^{M,\eta_A}\|_{C^1(\widehat{\mathbb C})}\right)
 &\leq C_A(N+2)^2q_A^{N+1},
 \label{eq:DR-full-frame-cutoff-tail}\\
 \sup_{h\in\Sigma}
 \|\bar\partial E_N^\eta\|_\infty
 &\leq C(N+2)^2q^{N+1}.
 \label{eq:DR-Laurent-dbar-tail}
\end{align}

These physical coefficients pull back to fixed compact subsets of
the source disks.  In the normalized case, Schwarz' lemma gives
$|f^{-1}(\xi)|\leq4|\xi|$ for $|\xi|<1/4$, while Koebe growth gives
$|f(z)|\leq|z|/(1-|z|)^2$.  Consequently,
\[
 \operatorname{supp}f^*(\bar\partial E_N^\eta)
 \subset\{z:r_2/4\leq|z|\leq s\},
 \qquad s:=4r_3<1.
\]
For the localized case put $S_A:=\operatorname{supp}\bar\partial\eta_A$,
$u=M(0)$, and choose $r_{*,A}<r_{3,A}$ with
$S_A\subset\{r_{2,A}<|\xi|\leq r_{*,A}\}$.
Schwarz--Pick for $\widehat f^{-1}:B(0,r_{3,A})\to\mathbb D$ gives
\[
 |\widehat f^{-1}(\xi)|
 \leq\left|\frac{r_{3,A}(\xi-u)}
                   {r_{3,A}^2-\overline u\,\xi}\right|
 \leq\frac{r_{3,A}(r_{*,A}+r_{1,A})}
             {r_{3,A}^2+r_{*,A}r_{1,A}}
 =:s_{+,A}<1,\qquad \xi\in S_A.
\]
For $\xi=\widehat f(z)\in S_A$, Koebe growth also gives
\[
 r_{2,A}-r_{1,A}
 \leq|\widehat f(z)-u|
 \leq C_A\frac{|z|}{(1-|z|)^2}.
\]
Choose $s_{-,A}>0$ so that
$C_As_{-,A}/(1-s_{-,A})^2<r_{2,A}-r_{1,A}$.
Then
\begin{equation}\label{eq:DR-Laurent-pulled-support-annulus}
 \operatorname{supp}\widehat f^*(\bar\partial E_N^{M,\eta_A})
 \subset\{z:s_{-,A}\leq|z|\leq s_{+,A}\}.
\end{equation}
Conformal pullback preserves the $L^\infty$ norm of a Beltrami
coefficient, so the bounds in \eqref{eq:DR-full-frame-cutoff-tail}--
\eqref{eq:DR-Laurent-dbar-tail} remain valid after pullback.

Let $w_N,w$ be the complex angular coefficients induced by
$V_{\mu,f}^{(N)},V_{\mu,f}$ with the same Cauchy choice of frames.
Extend $D_w$ complex linearly in $w$.  On the source support just
obtained,
\[
 |e^{i\theta}-z|\geq1-s,\qquad |z|\geq r_2/4.
\]
These inequalities bound the Cauchy kernels, their first boundary
derivatives, and the normalization terms at zero.  The reflected
kernels obey the same bounds, hence
\begin{equation}\label{eq:DR-Laurent-boundary-field-tail}
 \|w_N-w\|_{C^1(\mathbb S^1)}
 \leq C\|\bar\partial E_N^\eta\|_\infty
 \leq C'(N+2)^2q^{N+1}.
\end{equation}
For averaged generators, \eqref{eq:B1-J-response} gives
\begin{align}
 (D_{w_N}-D_w)J_{a,b}(h)
 &=-\frac1{2\pi}\int_0^{2\pi}
       \{a(w_N-w)\}'(\theta)b(h(e^{i\theta}))\,d\theta,
 \label{eq:DR-Laurent-averaged-differential}\\
 |(D_{w_N}-D_w)J_{a,b}(h)|
 &\leq\|b\|_\infty
       (\|a'\|_\infty+\|a\|_\infty)\|w_N-w\|_{C^1}.
 \label{eq:DR-Laurent-averaged-tail}
\end{align}
For coefficient and character generators, put
$\beta_N:=f^*(\bar\partial E_N^\eta)$; this coefficient has the fixed
support above.
The first Ahlfors--Bers variation, Cauchy's formula, and
\eqref{eq:DR-composition-exterior-character} give
\[
 \sup_\Sigma\bigl(|\Delta_N\widehat{a_m}|
                 +|\Delta_N\widehat{\chi_j}|\bigr)
 \leq C_{m,j}\sup_{h\in\Sigma}\|\beta_N\|_\infty
 \leq C_{m,j}'(N+2)^2q^{N+1}.
\]
The reflected variation gives the same bound for $\overline{a_m}$.
This bound for $\chi_j$ uses the interior normalizing derivative in
\eqref{eq:DR-composition-exterior-character} and is independent of $k$.

For a fixed composition $\Phi(e^{i\theta})=e^{i\phi(\theta)}$,
the transformed angular coefficient satisfies
\[
 (\operatorname{Ad}_{\Phi^{-1}}w)(\theta)
 =\frac{w(\phi(\theta))}{\phi'(\theta)},\qquad
 \|\operatorname{Ad}_{\Phi^{-1}}w\|_{C^1}
 \leq C_\Phi\|w\|_{C^1}.
\]
The fixed disk extension $\widehat\Phi$ has bounded differential on the
source compact sets, so the coefficient and character estimates also
persist under composition.  For a product of generators,
\[
 \Delta_N\!\left(\prod_{\ell=1}^m\widehat F_\ell\right)
 =\sum_{\ell=1}^m(\Delta_N\widehat F_\ell)
                      \prod_{r\ne\ell}\widehat F_r,
\]
with bounded undifferentiated factors.  The cylinder chain rule and this
identity prove the response bound in
\eqref{eq:DR-Laurent-geometric-rate}.
The same calculation using \eqref{eq:DR-Laurent-pulled-support-annulus}
and the compact transition maps
\eqref{eq:DR-local-seed-transition-compactness} proves
\eqref{eq:DR-Laurent-full-frame-response-rate}.

For the divergence, the moment bound and Cauchy estimates on the
separated contour give
\[
 |\widehat m_j\pi_{j+2}|
 +|\widehat L_{-j-2}^C\widehat m_j|
 \leq C_A(j+1)q_A^j
 \quad\text{on }\overline U_A\times\Sigma_A.
\]
At $M=\mathrm{id}$ the bound is $C(j+1)q^j$, uniformly on $\Sigma$.
The integral identities \eqref{eq:DR-matter-contraction-proof} and
\eqref{eq:DR-variable-coefficient-trace} apply to $\widehat f$:
\[
 \sum_{j\geq0}\widehat m_j\pi_{j+2}
 =-(\mathcal S\widehat f,\mu)=\sigma_h(\mu),\qquad
 \sum_{j\geq0}\widehat L_{-j-2}^C\widehat m_j
 =\frac{26}{12}(\mathcal S\widehat f,\mu).
\]
Here $\mathcal S(M\circ f)=\mathcal Sf$.  Thus
\begin{equation}\label{eq:DR-raw-central-balance}
 -\frac{c_{\mathrm m}}{12}\sum_{j\geq0}\widehat m_j\pi_{j+2}
 -\sum_{j\geq0}\widehat L_{-j-2}^C\widehat m_j
 =\frac{26-c_{\mathrm m}}{12}\sigma_h(\mu)
 =\frac{c_{\mathrm L}}{12}\sigma_h(\mu).
\end{equation}
Summing the coefficient tails by \eqref{eq:DR-geometric-tail-estimate}
proves both divergence bounds.

It remains to control the two cutoffs.  Put
$\varepsilon_{N,A}:=(N+2)^2q_A^{N+1}$.
Since the physical vector of $\Delta_N$ is $-E_N^M$, the Cauchy
projections give, with $\widehat\psi=\widehat f^{-1}$,
\[
 \begin{aligned}
 \Delta_N\widehat f(z)
 &=\frac1{2\pi i}\oint_{|\xi|=r_{2,A}}
       K_{\widehat\psi}(\widehat f(z),\xi)E_N^M(\xi)\,d\xi,\\
 \Delta_N\widehat g(z)&=-E_N^M(\widehat g(z)).
 \end{aligned}
\]
The first formula holds near zero and extends analytically to
$\mathbb D$.  The marks and contour are uniformly separated.
Differentiating under the integral and using the inverted exterior
coefficient $w^2E_N^M(w^{-1})$ therefore yields
\begin{equation}\label{eq:DR-Laurent-normalization-jet-rate}
 \sup_{\overline U_A\times\Sigma_A}
 \bigl\{|du(\Delta_N)|+|dv(\Delta_N)|
        +|d\lambda_+(\Delta_N)|\bigr\}
 \leq C_A\varepsilon_{N,A}.
\end{equation}
Set $\lambda_-:=(\iota\circ\widehat g\circ\iota)'(0)$.
The exterior formula and its derivative control $\Delta_N\lambda_-$;
the reflected variation controls the conjugate jets.
Apply \eqref{eq:DR-group-projection-finite-jet-bound} to the real and
imaginary parts of $\Delta_N$ to obtain
\begin{equation}\label{eq:DR-Laurent-group-coordinate-rate}
 \sup_{\overline U_A\times\Sigma_A}\|dM(\Delta_N)\|
 \leq C_A\varepsilon_{N,A}.
\end{equation}
Substitution into \eqref{eq:DR-normalizer-local-coordinates} gives
\[
 g'(\infty)=\frac{(1-uv)^2}{\lambda_+\lambda_-},
 \qquad
 \frac{\Delta_Ng'(\infty)}{g'(\infty)}
 =-\frac{2(v\Delta_Nu+u\Delta_Nv)}{1-uv}
   -\frac{\Delta_N\lambda_+}{\lambda_+}
   -\frac{\Delta_N\lambda_-}{\lambda_-}.
\]
Since $k=\log|g'(\infty)|$, complex linear differentiation gives
\[
 \Delta_Nk=\frac12\left\{
 \frac{\Delta_Ng'(\infty)}{g'(\infty)}
 +\frac{\Delta_N\overline{g'(\infty)}}{\overline{g'(\infty)}}\right\}.
\]
The collar bounds and $e^{-A}\leq|g'(\infty)|\leq e^A$ bound
$\lambda_-$ and its reciprocal.  The preceding jet estimates imply
\begin{equation}\label{eq:DR-Laurent-capacity-rate}
 \sup_{\overline U_A\times\Sigma_A}|\Delta_Nk|
 \leq C_A\varepsilon_{N,A}.
\end{equation}
The product rule now gives
\begin{align}
 \Delta_N\widehat G_0
 &=\zeta(k)\Delta_N\widehat F
    +\zeta'(k)\widehat F\,\Delta_Nk,
 \label{eq:DR-Laurent-invariant-cutoff-and-group-rate}\\
 \Delta_N(\chi\widehat G_0)
 &=\chi\,\Delta_N\widehat G_0
    +\widehat G_0\,d\chi[dM(\Delta_N)].
 \label{eq:DR-Laurent-group-cutoff-product}
\end{align}
The bounds just proved establish
\eqref{eq:DR-Laurent-cutoff-full-frame-response-rate}.
\end{proof}

For a fixed $N\geq0$, set
\[
 H_j:=\widehat m_j\chi\widehat G_0,\qquad 0\leq j\leq N,
\]
and extend these tests by zero outside their localization.
In the next lemma, supremum norms are over $\mathcal F_{\rm fr}$.
For each $j$, let $Y$ range over the real and imaginary parts of the
interior source-frame anomaly for $p=j+2$ and its exterior analogue.

\begin{lemma}\label{lem:DR-localized-test-regularity}
\begin{equation}\label{eq:DR-raw-test-uniform-differentiability}
 \lim_{t\to0}\max_{\substack{0\leq j\leq N\\a\in\{1,2\}}}
 \left\|
 \frac{H_j\circ\widehat T_{j+2,t}^{(a),\mathrm{eq}}-H_j}{t}
 -\widehat L_{-j-2}^{(a),\mathrm{eq}}H_j
 \right\|_{\sup}=0.
\end{equation}
Each $H_j$ is $C^1$ along the fields $Y$, and
\[
 \max_{0\leq j\leq N}\ \max_Y\|YH_j\|_{\sup}<\infty.
\]
Both assertions also hold with $H_j$ replaced by $\chi\widehat G_0$.
\end{lemma}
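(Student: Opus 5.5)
The plan is to reduce both assertions to the uniform response estimates of Appendix~\ref{app:uniform-response}, now carried out on the localized frame space. I will work in the product coordinates $(M,h)=\Theta^{-1}(x)$. There I write
\[
 H_j=\widehat m_j\,(\chi\circ M)\,\zeta(k)\,F^\Sigma\circ s ,
\]
a product of four factors, each bounded on the localization $\overline U_A\times\Sigma_A$. Each factor is a smooth function of finitely many quantities: the finite jets of $\widehat f$ at $0$ and of $\widehat g$ at $\infty$, the group coordinates $(u,v,\lambda_+)$, and the slice data through $s$. By the Leibniz rule, uniform differentiability of a product of bounded factors follows from uniform differentiability of each factor, using the same difference-quotient identity as at the end of Appendix~\ref{app:uniform-response}. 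So I will check each factor along the two lifted families $\widehat T^{(a),\mathrm{eq}}_{j+2,t}$ and along the source anomaly fields $Y$.

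\textbf{Step 1: the flow lift.} Proposition~\ref{prop:DR-measurable-frame-lift} writes $\widehat T^{(a),\mathrm{eq}}_{p,t}$ explicitly. The loop moves by the holomorphic flow $\phi^{(a)}_{p,t}$, and the frames are multiplied by $f^0_{\gamma_t}\circ\exp(tW^{(a),\pm}_{p,\gamma})$. From its proof I will use the following facts.
\begin{itemize}
 \item The Beltrami coefficients $\beta^{(a),\pm}_{t,\gamma}$ have support in the fixed annulus $\{\delta/(16R)\le|z|\le 3/4\}$.
 \item They are $C^1$ in $t$ in $L^\infty$, with bounds uniform in $\gamma$.
 \item The Lie algebra fields satisfy $|W(0)|+|W'(0)|\le C$ uniformly.
\end{itemize}
With these, I will run the Ahlfors--Bers argument of Appendix~\ref{app:uniform-response} at second order. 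That gives uniform bounds on $\partial_t^2$ of every finite jet of $\widehat f,\widehat g$, uniformly over the collar. By Taylor's formula, the difference quotients of the jets then converge uniformly.

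The factor $\widehat m_j$ is an integral of $\mu\,\widehat f'^2\widehat f^j$ over a compact subset of $\mathbb D$ (see \eqref{eq:DR-transition-support-compactness}). It is therefore controlled by local uniform $C^1$ control of $\widehat f$ on that compact set, which Cauchy estimates supply.

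The factors $\chi\circ M$ and $k$ are controlled by Lemma~\ref{lem:DR-quantitative-normalization}: the differential $dM$ and the quantity $k=\log|(1-uv)^2/(\lambda_+\lambda_-)|$ are smooth functions of the jets $u,v,\lambda_\pm$, with denominators bounded away from $0$.

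\textbf{Step 2: the slice factor $F^\Sigma\circ s$.} Moving along $\widehat T$ changes the slice point $s(x)$ by the normalized deformation. This deformation is a Beltrami-type deformation of the normalized pair whose coefficient has support in a fixed compact annulus, uniformly in $h\in\Sigma_A$. This is exactly the setting of \eqref{eq:DR-composition-source-map}--\eqref{eq:DR-composition-coefficient-uniform-difference}, with $\widehat\Xi_t$ replaced by the compensated physical flow. So the coefficient, character and averaged generators, and their compositions with a fixed $R_\Phi$, have uniform second $t$-derivatives. For averaged observables $J_{a,b}$ I will use the boundary expansion of the induced circle map in $C^1$, as in \eqref{eq:B1-BJ-boundary-expansion}. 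Taking the maximum over the finitely many $j\le N$ and $a\in\{1,2\}$ gives \eqref{eq:DR-raw-test-uniform-differentiability}.

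\textbf{Step 3: the anomaly fields $Y$.} By Lemma~\ref{lem:DR-frame-covariant-contracted-trace}, each $Y$ is a vertical field. It acts only on the source coordinates $a_x,b_x$ by a left-invariant-type field with coefficient $V_{f,j}(a)$, which is quadratic and bounded on the compact transition sets \eqref{eq:DR-local-seed-transition-compactness}. The flow of $Y$ is smooth on $\operatorname{Aut}(\mathbb D)\times\operatorname{Aut}(\mathbb D^*)$. Each factor of $H_j$ is a smooth function of $(M,h)$, and $(M,h)$ depends smoothly on the source coordinates through \eqref{eq:DR-normalizer-local-coordinates}. The factor $F^\Sigma\circ s$ is a smooth function of the slice frame, since it changes only by a disk automorphism of bounded size. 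Hence $YH_j$ exists and is bounded by compactness.

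\textbf{Main obstacle.} I expect the hardest part to be Step 2: making the second-order Ahlfors--Bers bounds uniform simultaneously in $h\in\Sigma_A$ and in $M\in\overline U_A$. I will try first to absorb $M$ into $f$ by working with $\widehat f=M\circ f$. Koebe distortion on the compact annulus then gives the needed uniform lower bounds on $|\widehat f'|$, exactly as in Appendix~\ref{app:uniform-response}.

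The final assertion, with $H_j$ replaced by $\chi\widehat G_0$, is the case where the factor $\widehat m_j$ is omitted, so it follows from the same argument.
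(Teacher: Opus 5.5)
Your overall architecture is the paper's. You reduce by the Leibniz rule to generators, control finite jets through pulled-back Beltrami coefficients with fixed compact support and norm at most $q_0<1$, treat $M$ and $k$ by Lemma~\ref{lem:DR-quantitative-normalization}, use Cauchy estimates for the moments, and use bounded source coefficients for the fields $Y$. Two steps, however, do not go through as written.

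\textbf{Step~2 (recurring in Step~3): the welding changes on both sides.} You treat the induced change of the welding as a right composition, ``exactly the setting'' of \eqref{eq:DR-composition-source-map}--\eqref{eq:DR-composition-coefficient-uniform-difference}, with a single induced circle map as in \eqref{eq:B1-BJ-boundary-expansion}. It is not.
\begin{itemize}
\item Along $\widehat T_{p,t}^{(a),\mathrm{eq}}$ the loop moves by $\phi_{p,t}^{(a)}$ and both frames are re-uniformized. The welding becomes $l_t\circ h\circ\vartheta_t$, with $\vartheta_t$ from the interior source factorization and $l_t$ from the exterior one; in general neither is M\"obius.
\item The exterior anomaly fields $Y$ act on the welding by a \emph{left} M\"obius composition.
\end{itemize}
Neither Lemma~\ref{lem:DR-flow-core-uniform-differentiability} nor Appendix~\ref{app:uniform-response} controls left factors.

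The paper bounds $\|\partial_t^\ell l_t\|_{C^2(\mathbb S^1)}$ and $\|\partial_t^\ell\vartheta_t\|_{C^2(\mathbb S^1)}$ for $\ell\le2$, uniformly in the frame. It then writes $J_{a,b}(l_t\circ h\circ\vartheta_t)=\frac1{2\pi}\int_0^{2\pi}a_t(u)\,b_t(h(e^{iu}))\,du$, where $a_t$ is the weight transported by $\vartheta_t$ and $b_t=b\circ l_t$. For the fields $Y$ it uses the same formula with $l_t,\vartheta_t$ source automorphisms. Coefficient and character generators, including fixed compositions, are reduced to finite jets of principal solutions.

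\textbf{Uniformity.} The issue you flag as the main obstacle is also not settled by Koebe lower bounds on $|\widehat f'|$. Those bounds control the size of the coefficients $\beta^{(a),\pm}_{t,\gamma}$, not the first two differentials of the solution map uniformly in the frame. The paper uses the following argument instead.
\begin{itemize}
\item The map sending a coefficient supported in a fixed compact set to the needed jets is holomorphic on the unit ball of such $L^\infty$ coefficients.
\item Cauchy estimates between radii $q_0<q_1<1$ then bound its first two differentials.
\item This requires $\sup\sum_{r\le2}\|\partial_t^r\beta^{(a),\pm}_{t,\gamma}\|_\infty<\infty$. That is more than the $C^1$ bound you quote from Proposition~\ref{prop:DR-measurable-frame-lift}; it follows from the smooth cutoff flows and the Beltrami composition formula.
\end{itemize}

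\textbf{Support localization.} Your factorwise Leibniz argument lives on $\overline U_A\times\Sigma_A$. The supremum in the lemma is over all of $\mathcal F_{\rm fr}$, with $H_j$ extended by zero, and factors such as $\widehat m_j$ are neither controlled nor even defined off the localization. You must show that, for small $|t|$, $\operatorname{supp}H_j$ and its preimages under $\widehat T_{j+2,t}^{(a),\mathrm{eq}}$ stay inside the region where your factor estimates hold. The paper gets this from four ingredients:
\begin{itemize}
\item $\operatorname{supp}\chi\Subset U_A$;
\item $\operatorname{supp}\zeta\Subset(-A,A)$;
\item the uniform bounds \eqref{eq:DR-finite-mode-absolute-frame-bounds} on $\widehat L_{-j-2}^{(a),\mathrm{eq}}k$ and $dM(\widehat L_{-j-2}^{(a),\mathrm{eq}})$;
\item the explicit inverse lifts.
\end{itemize}
Outside the common localization, both the tests and their short pullbacks then vanish.
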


\begin{proof}
The physical collars and compact source-coordinate ranges above satisfy
the hypotheses of Proposition~\ref{prop:DR-measurable-frame-lift}.
Choose the physical cutoffs to vanish near the compact ranges of both
marks.  Their pullbacks are supported in fixed annuli, by
\eqref{eq:DR-Laurent-group-inner-separation},
\eqref{eq:DR-Laurent-uniform-exterior-mark}, and
\eqref{eq:DR-local-seed-transition-compactness}.
For each mode, let $\beta_{x,t}$ denote the pullback of the physical
extension's Beltrami coefficient through the interior frame, or through
the exterior frame followed by inversion.
For some $0<q_0<1$ and sufficiently small $\epsilon_0>0$, uniformly
over the finitely many modes,
\[
 \sup_{x,\,|t|\leq\epsilon_0}
 \sum_{r=0}^2\|\partial_t^r\beta_{x,t}\|_\infty<\infty,
 \qquad
 \sup_{x,\,|t|\leq\epsilon_0}\|\beta_{x,t}\|_\infty\leq q_0<1.
\]
Here $x$ ranges over the localized frames; the time derivatives follow
from the smooth cutoff flows and the Beltrami composition formula.

For each principal-solution family used by the test and normalization,
let $K$ be its fixed compact coefficient support.  Denote the required
values and derivatives by $\operatorname{Jet}(\beta)$; their evaluation
points are uniformly separated from $K$.
The Ahlfors--Bers parameter theorem \cite[Theorem~11]{AB60} and
Cauchy's formula make this map holomorphic on the unit ball of the
closed subspace of $L^\infty(\mathbb C)$ supported in $K$.
Choose $q_0<q_1<1$.  Normal-family bounds and Cauchy estimates in this
subspace give
\[
 \sup_{\substack{\operatorname{supp}\beta\subset K\\
                  \|\beta\|_\infty\leq q_0}}
 \|\mathrm d^r\operatorname{Jet}(\beta)\|
 \leq\frac{C_r}{(q_1-q_0)^r}
       \sup_{\substack{\operatorname{supp}\beta\subset K\\
                        \|\beta\|_\infty\leq q_1}}
          |\operatorname{Jet}(\beta)|
 <\infty,\qquad r=1,2.
\]
The normalizations are smooth functions of these jets and their
conjugates, with denominators bounded away from zero by
Lemma~\ref{lem:DR-quantitative-normalization}.
For fixed $p\in\{2,\ldots,N+2\}$ and $a\in\{1,2\}$, write
$x_t=\widehat T_{p,t}^{(a),\rm eq}x$.
The chain rule gives, for each fixed $m\geq1$ and $\ell\in\mathbb Z$,
\[
 \sup_{x,\,|t|\leq\epsilon_0}\sum_{r=0}^2
 \left(|\partial_t^r\widehat{a_m}(x_t)|
       +|\partial_t^r\widehat{\chi_\ell}(x_t)|\right)<\infty.
\]
The source-defect formula \eqref{eq:DR-Schiffer-response-frame-defect}
also controls the raw fields on the compact source-coordinate ranges.
In particular,
\begin{equation}\label{eq:DR-finite-mode-absolute-frame-bounds}
 \max_{\substack{0\leq j\leq N\\a\in\{1,2\}}}
 \sup_{\overline U_A\times\Sigma_A}
 \left\{|\widehat L_{-j-2}^{(a),\mathrm{eq}}k|
 +\|dM(\widehat L_{-j-2}^{(a),\mathrm{eq}})\|
 +\|dM(\widehat L_{-j-2}^C)\|\right\}<\infty.
\end{equation}
The support margins
$\operatorname{supp}\chi\Subset U_A$ and
$\operatorname{supp}\zeta\Subset(-A,A)$, together with the explicit
inverse lifts in Proposition~\ref{prop:DR-measurable-frame-lift}, allow
$\epsilon_0$ to be decreased so that
\[
 \bigcup_{\substack{0\leq j\leq N,\ a\in\{1,2\}\\|t|\leq\epsilon_0}}
 \left\{\operatorname{supp}H_j
       \cup(\widehat T_{j+2,t}^{(a),\mathrm{eq}})^{-1}
                    (\operatorname{supp}H_j)\right\}
 \subset U_A\times\{h:|k(h)|<A\}.
\]

Write $\widehat f_t$ for the interior component of $x_t$.
Differentiation of the moment integral gives
\[
 \partial_t\widehat m_j(\widehat f_t)
 =\frac1\pi\int_{\mathbb D}\mu\left\{
 2\widehat f_t'\widehat f_t^{\,j}\partial_t\widehat f_t'
 +j(\widehat f_t')^2\widehat f_t^{\,j-1}\partial_t\widehat f_t
 \right\}\,dA.
\]
The second term is absent when $j=0$.  Cauchy's estimates on a compact
neighborhood of $\operatorname{supp}\mu$, followed by one more time
derivative, give
\[
 \max_{0\leq j\leq N}\sup_{x,\,|t|\leq\epsilon_0}
 \sum_{r=0}^2|\partial_t^r\widehat m_j(\widehat f_t)|<\infty.
\]

For averaged generators write the induced welding as
$l_t\circ h\circ\vartheta_t$, where $l_t,\vartheta_t$ are the circle
maps from the two source factorizations.  These factorizations are conformal on fixed
annuli around $\mathbb S^1$.  The same parameter estimates give
\[
 \sup_{x,\,|t|\leq\epsilon_0}\sum_{\ell=0}^2
 \left(\|\partial_t^\ell l_t\|_{C^2(\mathbb S^1)}
       +\|\partial_t^\ell \vartheta_t\|_{C^2(\mathbb S^1)}\right)<\infty.
\]
Their angular derivatives stay bounded away from zero for small time.
Write $\vartheta_t(e^{i\theta})=e^{i\varphi_t(\theta)}$ and set
\[
 a_t(u):=a(\varphi_t^{-1}(u))(\varphi_t^{-1})'(u),
 \qquad b_t:=b\circ l_t.
\]
Changing variables gives
\[
 J_{a,b}(l_t\circ h\circ \vartheta_t)
 =\frac1{2\pi}\int_0^{2\pi}a_t(u)b_t(h(e^{iu}))\,du,
\]
and, for $\ell=1,2$,
\[
 |\partial_t^\ell J_{a,b}(l_t\circ h\circ \vartheta_t)|
 \leq\sum_{r=0}^\ell\binom{\ell}{r}
       \|\partial_t^ra_t\|_\infty
       \|\partial_t^{\ell-r}b_t\|_\infty.
\]
These bounds are uniform in $h$.  Fixed additional compositions are
handled by \eqref{eq:DR-averaged-composition-change-variable} and the
fixed disk extensions from Appendix~\ref{app:uniform-response}.
The product and cylinder chain rules give the same bounds for the
chosen expression for $F$.

The derivatives of $\chi$ and $\zeta$ are uniformly continuous on
their compact coordinate ranges.  Hence, for any of the lifted families
$\widehat T_t$ and
$\widehat LH_j:=\left.\partial_t(H_j\circ\widehat T_t)\right|_0$,
\[
 \left\|\frac{H_j\circ\widehat T_t-H_j}{t}-\widehat LH_j\right\|_{\sup}
 \leq\sup_{|s|\leq|t|}
 \|\partial_s(H_j\circ\widehat T_s)-\widehat LH_j\|_{\sup}
 \longrightarrow0.
\]
Outside the common localization both the tests and their short
pullbacks vanish, so this is the required norm on the full frame space.

Finally, the source anomalies have bounded coefficients on the compact
source-coordinate ranges by
\eqref{eq:DR-Schiffer-response-frame-defect} and
\eqref{eq:DR-source-frame-vertical-field}.
Their action on coefficient and character generators is controlled by
the same finite jets.  On averaged generators it is given by the
preceding change of variables with $l_t,\vartheta_t$ equal to source
automorphisms.  Thus
\[
 YH_j=(Y\widehat m_j)\chi\widehat G_0
      +\widehat m_j\,d\chi[dM(Y)]\,\widehat G_0
      +\widehat m_j\chi\,Y\widehat G_0
\]
has uniformly bounded terms.  This proves the second assertion.
\end{proof}

\subsection{Descent to the welding measure}

Choose $\chi\in C_c^1(U_A)$ satisfying
\eqref{eq:DR-slice-cutoff-normalization}.
For a bounded slice function $H^\Sigma$ whose invariant lift
$\widehat H=H^\Sigma\circ s$ is differentiable along
$\widehat R_\mu^{\,\natural}$, set
\begin{equation}\label{eq:DR-ambient-slice-response}
 (R_\mu^\Sigma H^\Sigma)(h)
 :=(\widehat R_\mu^{\,\natural}\widehat H)(\id,h),
 \qquad h\in\Sigma.
\end{equation}
\begin{lemma}
\label{lem:DR-direct-Laurent-descent}
\begin{equation}\label{eq:DR-raw-quotient-IBP}
 \int R_\mu G_0\,d\widehat\nu_\kappa
 =\frac{c_{\mathrm L}}{12}
   \int G_0\sigma_h(\mu)\,d\widehat\nu_\kappa.
\end{equation}
\end{lemma}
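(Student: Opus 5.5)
We integrate the raw Laurent identity of Lemma~\ref{lem:DR-raw-physical-IBP} against the full frame measure, pass $N\to\infty$ with the geometric rates of Lemma~\ref{lem:DR-localized-Laurent-estimates}, and then descend to the slice through the product decomposition of Lemma~\ref{lem:DR-doubly-marked-slice}.

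\emph{Step 1: the truncated frame identity.} We apply \eqref{eq:DR-raw-partial-IBP} with $F_{\rm fr}=\chi\widehat G_0$. The hypotheses hold: the support is finite by the collar bounds, and the regularity requirements for $\chi\widehat G_0$ and the multiplied tests $H_j=\widehat m_j\chi\widehat G_0$ are exactly Lemma~\ref{lem:DR-localized-test-regularity}. This gives
\[
 \int\widehat R_{\mu,N}(\chi\widehat G_0)\,dM_\kappa^{\rm fr}
 =\int\chi\widehat G_0\,\delta_{\mu,N}\,dM_\kappa^{\rm fr}.
\]

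\emph{Step 2: the limit $N\to\infty$.} Write $\widehat R_{\mu,N}=\widehat R_\mu^{\mathrm C}+\Delta_N$. By \eqref{eq:DR-Laurent-cutoff-full-frame-response-rate} and \eqref{eq:DR-Laurent-full-frame-divergence-rate}, both error terms are $O((N+2)^2q_A^{N+1})$ uniformly on the support $\overline U_A\times\Sigma_A$, and this set has finite $M_\kappa^{\rm fr}$-measure by \eqref{eq:DR-Weil-product-measure}, since $\widehat\nu_\kappa(\Sigma_A)\le e^{2A}$. Letting $N\to\infty$ yields
\[
 \int\widehat R_\mu^{\mathrm C}(\chi\widehat G_0)\,dM_\kappa^{\rm fr}
 =\frac{c_{\mathrm L}}{12}\int\chi\widehat G_0\,\sigma_h(\mu)\,dM_\kappa^{\rm fr}.
\]

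\emph{Step 3: removing the vertical anomaly.} Use the decomposition \eqref{eq:DR-Cauchy-natural-anomaly-decomposition}, $\widehat R_\mu^{\mathrm C}=\widehat R_\mu^{\,\natural}+V_\mu^R$. Since $\widehat G_0$ is $G$-invariant, $V_\mu^R(\chi\widehat G_0)=\widehat G_0\,V_\mu^R\chi$. In the coordinates $\Theta$ this term integrates to zero fibrewise by \eqref{eq:DR-Mobius-anomaly-Haar-zero}, after applying Fubini with \eqref{eq:DR-Weil-product-measure}; shrinking $U_A\subset G_f$ is harmless here. For the equivariant part, $\widehat R_\mu^{\,\natural}$ commutes with the $G$-action and hence preserves the product structure. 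We then check that $\widehat R_\mu^{\,\natural}(\chi\widehat G_0)=\chi\,(R_\mu^\Sigma G_0^\Sigma)\circ s+\widehat G_0\,\widehat R_\mu^{\,\natural}\chi$. The last term again vanishes after Haar integration, by unimodularity, because the group component of an equivariant field is right-invariant and therefore divergence-free. Integrating out $\chi$ with $\int\chi\,dH_G=1$ leaves
\[
 \int R_\mu^\Sigma G_0^\Sigma\,d\widehat\nu_\kappa=\frac{c_{\mathrm L}}{12}\int G_0\sigma_h(\mu)\,d\widehat\nu_\kappa .
\]

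\emph{Step 4: identifying $R_\mu^\Sigma$ with $R_\mu$.} On $\Sigma_{\rm r}$, which carries full measure, we identify $R_\mu^\Sigma G_0^\Sigma$ with $R_\mu G_0$. The physical compensator of Lemma~\ref{lem:DR-physical-trace} realises the welding $h\circ\widetilde\Phi_t$; the raw Schiffer lift differs from the normalized one only by an affine postcomposition, and $\widehat H=H^\Sigma\circ s$ is insensitive to that by \eqref{eq:DR-ambient-extension-restriction}.

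The main obstacle is Step 3. We have to verify carefully that the group component of the equivariant lift has zero Haar divergence. We also have to justify the Fubini and product-rule manipulations for merely measurable $G_0$ in the slice variable, which uses the uniform bounds \eqref{eq:DR-finite-mode-absolute-frame-bounds}.
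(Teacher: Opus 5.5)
Your Steps 1--3 follow the paper's route: the truncated frame identity, the limit $N\to\infty$ using the geometric rates, and removal of the vertical fields by Haar integration. Together they give the slice identity $\int R_\mu^\Sigma G_0^\Sigma\,d\widehat\nu_\kappa=\frac{c_{\mathrm L}}{12}\int G_0^\Sigma\sigma_h(\mu)\,d\widehat\nu_\kappa$. One naming correction in Step 3: the group component of $\widehat R_\mu^{\,\natural}$ at $(M,h)$ is $d(L_M)_{\mathrm{id}}a_\mu(h)$. This is a left-invariant field, generated by right translations, not a right-invariant one. It is Haar-divergence-free only because $\mathrm{PSL}_2(\mathbb C)$ is unimodular. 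You do invoke unimodularity, so the conclusion stands.

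The genuine gap is Step 4: the pointwise identification $R_\mu^\Sigma G_0^\Sigma=R_\mu G_0$ on $\Sigma_{\rm r}$ fails in general.
\begin{itemize}
\item On the slice the Cauchy lift acts by $\widehat R_\mu^{\mathrm C}f=V_{\mu,f}\circ f+f'C_\mu$ and $\widehat R_\mu^{\mathrm C}g=V_{\mu,f}\circ g$. Its source part is the unnormalized Cauchy transform $C_\mu$, together with its reflection.
\item This is the derivative of the source family $\Phi_t^0$, which fixes $0$ and $\infty$ but not $1$.
\item For the Fourier representatives, $C_\mu(0)=0$ but $C_\mu(1)=C_\mu'(0)\neq0$. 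For $\mu_{n,c}$ one finds $C_\mu=\chi_0V_n+\tfrac i2\xi$, so $C_\mu'(0)=i/2$.
\item Hence $\Phi_t^0=r_{\alpha(t)}\circ\widetilde\Phi_t$ with $\alpha(t)=-itC_\mu'(0)+i\bar t\,\overline{C_\mu'(0)}+o(|t|)$. The raw lift therefore realizes the welding $h\circ r_{\alpha(t)}\circ\widetilde\Phi_t$, not $h\circ\widetilde\Phi_t$.
\end{itemize}
Your appeal to target affine normalization does not touch this. The projection $s$ only postcomposes by M\"obius maps, whereas this discrepancy is a source rotation, which changes the welding by a right rotation. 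The correct bridge is
\[
 R_\mu^\Sigma G_0^\Sigma=\bigl(R_\mu G_0-iC_\mu'(0)D_{\mathrm{rot}}G_0\bigr)\circ\operatorname{weld}
 \qquad\text{on }\Sigma_{\rm r}.
\]
This is consistent with $R_{\mu_{n,c}}=\tfrac12\{D_{n,c}-D_{\mathrm{rot}}-iD_{n,s}\}$, since the raw lift of $\mu_{n,c}$ has real boundary field $\cos(n\theta)$ rather than $\cos(n\theta)-1$.

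To close the proof you need one extra ingredient that your proposal lacks:
\begin{itemize}
\item Right rotation preserves both $k$ and $\widetilde\nu_\kappa$, hence also $\widehat\nu_\kappa=e^{2k}\widetilde\nu_\kappa$.
\item $G_0$ is uniformly differentiable along $T_t^1$ by Lemma~\ref{lem:DR-flow-core-uniform-differentiability}.
\item The compact support of $\zeta$ keeps everything on $\Sigma_A$, where $\widehat\nu_\kappa(\Sigma_A)\le e^{2A}$.
\end{itemize}
Together these give $\int D_{\mathrm{rot}}G_0\,d\widehat\nu_\kappa=0$. Because $C_\mu'(0)$ is a constant, the slice identity then becomes \eqref{eq:DR-raw-quotient-IBP}.
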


\begin{proof}
On the slice, the Cauchy projection in the proof of
Lemma~\ref{lem:DR-Mobius-normalization-anomaly} gives
\begin{equation}\label{eq:DR-Cauchy-source-response-comparison}
 \widehat R_\mu^{\mathrm C}f=V_{\mu,f}\circ f+f'C_\mu,
 \qquad
 \widehat R_\mu^{\mathrm C}g=V_{\mu,f}\circ g.
\end{equation}
Together with the reflected variation, these are the derivatives of
$(\Psi_t\circ f\circ\Phi_t^0,\Psi_t\circ g)$ from the proof of
Lemma~\ref{lem:DR-physical-trace}.  For the finite Fourier representatives,
\eqref{eq:DR-Fourier-Cauchy-vector} and the decay at infinity give
\[
 C_\mu(0)=0,\qquad C_\mu(1)=C_\mu'(0).
\]
Let $\alpha(t)=\arg\Phi_t^0(1)$, using the branch with $\alpha(0)=0$.
The family $\widetilde\Phi_t$ fixing $0,1,\infty$ then satisfies
\[
 \Phi_t^0=r_{\alpha(t)}\circ\widetilde\Phi_t,
 \qquad
 \alpha(t)=-itC_\mu'(0)+i\bar t\,\overline{C_\mu'(0)}+o(|t|).
\]
Normalization of the target maps leaves the welding unchanged.
The uniform generator expansions in Appendix~\ref{app:uniform-response}
therefore give, on $\Sigma_{\rm r}$,
\begin{equation}\label{eq:DR-ambient-intrinsic-response-bridge}
 \begin{aligned}
 G_0^\Sigma&=G_0\circ\operatorname{weld},\\
 R_\mu^\Sigma G_0^\Sigma
 &=\bigl(R_\mu G_0-iC_\mu'(0)D_{\mathrm{rot}}G_0\bigr)
     \circ\operatorname{weld}.
 \end{aligned}
\end{equation}

We next prove the identity on the slice:
\begin{equation}\label{eq:DR-ambient-raw-quotient-IBP}
 \int_\Sigma R_\mu^\Sigma G_0^\Sigma\,d\widehat\nu_\kappa
 =\frac{c_{\mathrm L}}{12}
   \int_\Sigma G_0^\Sigma\sigma_h(\mu)\,d\widehat\nu_\kappa.
\end{equation}
The measure identity \eqref{eq:DR-raw-frame-measure} gives
\begin{equation}\label{eq:DR-K-cutoff-finite-measure}
 \widehat\nu_\kappa(\operatorname{supp}G_0^\Sigma)
 \leq\widehat\nu_\kappa(\Sigma_A)\leq e^{2A}<\infty.
\end{equation}
For each $N$, Lemma~\ref{lem:DR-localized-test-regularity} makes
$\chi\widehat G_0$ and its moment multiples admissible in
Lemma~\ref{lem:DR-raw-physical-IBP}.  Thus
\[
 \int\widehat R_{\mu,N}(\chi\widehat G_0)\,dM_\kappa^{\rm fr}
 =\int\chi\widehat G_0\,\delta_{\mu,N}\,dM_\kappa^{\rm fr}.
\]
The localization has measure at most
$H_G(\overline U_A)e^{2A}$.  By
\eqref{eq:DR-Laurent-cutoff-full-frame-response-rate} and
\eqref{eq:DR-Laurent-full-frame-divergence-rate},
\[
 \begin{aligned}
 &\int\left|
  (\widehat R_{\mu,N}-\widehat R_\mu^{\mathrm C})
       (\chi\widehat G_0)\right|\,dM_\kappa^{\rm fr}\\
 &\quad+\int|\chi\widehat G_0|
    \left|\delta_{\mu,N}-\frac{c_{\mathrm L}}{12}\sigma_h(\mu)\right|
       \,dM_\kappa^{\rm fr}
 \leq C_{\chi,F,\zeta,A}(N+2)^2q_A^{N+1}
 \longrightarrow0.
 \end{aligned}
\]
Passing to the limit in the finite identity gives
\begin{equation}\label{eq:DR-completed-Cauchy-full-frame-IBP}
 \int_{\mathcal F_{\rm fr}}
 \widehat R_\mu^{\mathrm C}(\chi\widehat G_0)\,dM_\kappa^{\rm fr}
 =\frac{c_{\mathrm L}}{12}
 \int_{\mathcal F_{\rm fr}}\chi\widehat G_0\sigma_h(\mu)\,
 dM_\kappa^{\rm fr}.
\end{equation}
To apply the decomposition \eqref{eq:DR-Cauchy-natural-anomaly-decomposition},
extend $dM$ complex linearly and set
$a_\mu(h):=dM_{(\mathrm{id},h)}(\widehat R_\mu^{\,\natural})$.
Let $V_{a_\mu(h)}^L$ be the corresponding left-invariant field on $G$,
and write $R_M(Q)=Q\circ M$ for right translation.
Equivariance and the anomaly identity give
\begin{align}
 dM(\widehat R_\mu^{\,\natural})(M,h)
 &=d(L_M)_{\mathrm{id}}a_\mu(h),
 \label{eq:DR-natural-left-invariant-path}\\
 dM(V_\mu^R)(M,h)
 &=d(R_M)_{\mathrm{id}}p_\mu(M,f).
 \label{eq:DR-anomaly-right-invariant-path}
\end{align}
The slice component is $R_\mu^\Sigma$, so
\begin{equation}\label{eq:DR-total-field-Weil-decomposition}
 \widehat R_\mu^{\,\natural}=R_\mu^\Sigma+V_{a_\mu(h)}^L.
\end{equation}
The coordinate maps and fields are measurable.  Equations
\eqref{eq:DR-off-slice-moment-bound},
\eqref{eq:DR-finite-mode-absolute-frame-bounds}, and
\eqref{eq:DR-Laurent-group-coordinate-rate}, with $M=\mathrm{id}$, give
\[
 \sup_{h\in\Sigma_A}\|a_\mu(h)\|<\infty.
\]
The compact support of $\chi$ lies in the uniform pole-free chart.
Thus \eqref{eq:DR-Mobius-anomaly-integrals}--
\eqref{eq:DR-Mobius-anomaly-polynomial} also give
\[
 \sup_{\operatorname{supp}\chi\times\Sigma_A}\|V_\mu^R\|<\infty.
\]
Together with \eqref{eq:DR-K-cutoff-finite-measure}, these bounds ensure
absolute integrability of the vertical derivatives.
Since $\widehat G_0$ is
$G$-invariant,
\begin{equation}\label{eq:DR-Cauchy-field-on-cutoff-lift}
 \widehat R_\mu^{\mathrm C}(\chi\widehat G_0)
 =\chi R_\mu^\Sigma G_0^\Sigma
  +\widehat G_0\{V_{a_\mu(h)}^L\chi+V_\mu^R\chi\}.
\end{equation}
Unimodularity and \eqref{eq:DR-Mobius-anomaly-Haar-zero} give
\begin{equation}\label{eq:DR-vertical-Haar-zero}
 \int_G V_{a_\mu(h)}^L\chi\,dH_G=0,
 \qquad
 \int_G V_\mu^R\chi\,dH_G=0.
\end{equation}
Fubini's theorem in
\eqref{eq:DR-full-frame-Weil-disintegration}, followed by
\eqref{eq:DR-slice-cutoff-normalization} and
\eqref{eq:DR-vertical-Haar-zero}, transforms
\eqref{eq:DR-completed-Cauchy-full-frame-IBP} into
\eqref{eq:DR-ambient-raw-quotient-IBP}.

It remains to remove the rotation term in
\eqref{eq:DR-ambient-intrinsic-response-bridge}.  Right rotation preserves
$k$ and $\widetilde\nu_\kappa$, hence also $\widehat\nu_\kappa$ by
\eqref{eq:DR-raw-frame-measure}.
Lemma~\ref{lem:DR-flow-core-uniform-differentiability}, with $v=1$, and
the compact support of $\zeta$ give
\[
 \begin{aligned}
 \int D_{\mathrm{rot}}G_0\,d\widehat\nu_\kappa
 &=\lim_{t\to0}\frac1t
   \left\{\int G_0\circ T_t^1\,d\widehat\nu_\kappa
                  -\int G_0\,d\widehat\nu_\kappa\right\}=0.
 \end{aligned}
\]
Since $C_\mu'(0)$ is constant and $\widehat\nu_\kappa$ is carried by
$\Sigma_{\rm r}$, the identification in Appendix~\ref{sec:DR-frame-spaces} and
\eqref{eq:DR-ambient-intrinsic-response-bridge} now turn
\eqref{eq:DR-ambient-raw-quotient-IBP} into
\eqref{eq:DR-raw-quotient-IBP}.
\end{proof}

\subsection{Final proof of Proposition~\ref{prop:DR-right-IBP}}
\label{app:final-right-IBP}

Retain $F\in\mathcal A_{\mathrm{fl}}$ and the finite Fourier
representative $\mu$.  Choose
$\zeta\in C_c^1(\mathbb R)$ such that
$0\leq\zeta\leq1$, $\zeta=1$ on $[-1,1]$, and
$\operatorname{supp}\zeta\subset[-2,2]$.  For $R\geq1$ and $s\in\mathbb R$, put
\begin{equation}\label{eq:DR-probability-cutoff}
 \zeta_R(s):=\zeta(s/R).
\end{equation}
The scalar function
\begin{equation}\label{eq:DR-tilted-compact-cutoff}
 \widetilde\zeta_R(s):=e^{-2s}\zeta_R(s)
 \end{equation}
belongs to $C_c^1(\mathbb R)$.  Apply
\eqref{eq:DR-raw-quotient-IBP} with
$\widetilde\zeta_R(k)F$ and use the measure identity
\eqref{eq:DR-raw-frame-measure},
$d\widehat\nu_\kappa=e^{2k}d\widetilde\nu_\kappa$.  The Leibniz rule gives
\begin{align}
 &\int\zeta_R(k)R_\mu F\,d\widetilde\nu_\kappa
 +\int\{\zeta_R'(k)-2\zeta_R(k)\}
      F R_\mu k\,d\widetilde\nu_\kappa\notag\\
 &=\frac{c_{\mathrm L}}{12}
   \int\zeta_R(k)F\sigma_h(\mu)\,
   d\widetilde\nu_\kappa,
 \label{eq:DR-tilt-before-capacity}\\
 \shortintertext{and hence}
 \int\zeta_R(k)R_\mu F\,d\widetilde\nu_\kappa
 &=\int\zeta_R(k)F\left\{
   \frac{c_{\mathrm L}}{12}\sigma_h(\mu)
   +2R_\mu k\right\}d\widetilde\nu_\kappa
 -\int\zeta_R'(k)F R_\mu k\,
  d\widetilde\nu_\kappa.
 \label{eq:DR-tilt-cutoff-expanded}
\end{align}
For the basic representative $\mu_{n,c}$, complex linearity in
\eqref{eq:DR-real-cos-response}--\eqref{eq:DR-real-sin-response} gives
\begin{equation}\label{eq:DR-complex-response-from-real-flows}
 R_{\mu_{n,c}}
 =\frac12\{D_{n,c}-D_{\mathrm{rot}}-iD_{n,s}\}.
\end{equation}
Lemma~\ref{lem:DR-flow-core-uniform-differentiability} and
\eqref{eq:DR-complex-response-from-real-flows} show that $R_\mu F$ is
bounded for every finite Fourier representative.  Let $K=\operatorname{supp}\mu$.  Koebe distortion on this fixed
compact subset of $\mathbb D\setminus\{0\}$ and
\eqref{eq:DR-capacity-complex-variation} give
\[
 |\sigma_h(\mu)|+2|R_\mu k|
 \leq\frac{\|\mu\|_\infty}{\pi}
 \int_K\left(|\mathcal Sf|
       +\left|\frac{f'^2}{f^2}-\frac1{z^2}\right|\right)dA
 \leq C_\mu,
\]
uniformly in $h$.  Moreover,
\begin{equation}\label{eq:DR-cutoff-error-bound}
 \left|\int\zeta_R'(k)F R_\mu k\,
 d\widetilde\nu_\kappa\right|
 \leq\frac{\|\zeta'\|_\infty}{R}
 \|F\|_{\sup}\|R_\mu k\|_{\sup}
 \longrightarrow0.
\end{equation}
Since $\zeta_R(k)\to1$ pointwise and $0\leq\zeta_R\leq1$,
dominated convergence as $R\to\infty$, followed by
$2R_\mu k=\tau_h(\mu)$ from
\eqref{eq:DR-capacity-complex-variation}, gives
\begin{align}
 \int R_\mu F\,d\widetilde\nu_\kappa
 &=\int F\left\{
   \frac{c_{\mathrm L}}{12}\sigma_h(\mu)
   +\tau_h(\mu)\right\}d\widetilde\nu_\kappa
 =\int F\rho_h(\mu)\,d\widetilde\nu_\kappa.
 \label{eq:DR-complex-right-IBP-proved}
\end{align}
Because the test algebra is stable under conjugation and
$\bar R_\mu F=\overline{R_\mu\overline F}$,
\[
 \int\bar R_\mu F\,d\widetilde\nu_\kappa
 =\overline{\int R_\mu\overline F\,d\widetilde\nu_\kappa}
 =\overline{\int\overline F\rho_h(\mu)\,d\widetilde\nu_\kappa}.
\]
Hence
\begin{equation}\label{eq:DR-conjugate-right-IBP-proved}
 \int\bar R_\mu F\,d\widetilde\nu_\kappa
 =\int F\overline{\rho_h(\mu)}\,d\widetilde\nu_\kappa.
\end{equation}

The Fourier pairings are given by \eqref{eq:DR-Fourier-A-values-proof}.
Lemma~\ref{lem:DR-flow-core-uniform-differentiability}, applied to
$v=1$, permits differentiation of right-rotation invariance:
\[
 \int D_{\mathrm{rot}}F\,d\widetilde\nu_\kappa
 =\lim_{t\to0}\frac1t
   \left(\int F\circ T_t^1\,d\widetilde\nu_\kappa
         -\int F\,d\widetilde\nu_\kappa\right)=0.
\]
Add
\eqref{eq:DR-complex-right-IBP-proved} and
\eqref{eq:DR-conjugate-right-IBP-proved}, first for $\mu_{n,c}$ and then
for $i\mu_{n,c}$, and use
\eqref{eq:DR-real-cos-response}--\eqref{eq:DR-real-sin-response}.  This
yields, for every $F\in\mathcal A_{\mathrm{fl}}$,
\begin{align}
 \int D_{n,c}F\,d\widetilde\nu_\kappa
 &=\int F\operatorname{Im}t_n\,d\widetilde\nu_\kappa,
 \label{eq:DR-coefficient-cos-IBP-proved}\\
 \shortintertext{and}
 \int D_{n,s}F\,d\widetilde\nu_\kappa
 &=\int F\operatorname{Re}t_n\,d\widetilde\nu_\kappa.
 \label{eq:DR-coefficient-sin-IBP-proved}
\end{align}
Real linearity and \eqref{eq:intro-score} prove
\eqref{eq:DR-hyp-IBP}.


\begin{thebibliography}{99}
\addcontentsline{toc}{section}{References}
\small

\bibitem{AB60}
L.~V.~Ahlfors and L.~Bers,
\emph{Riemann's mapping theorem for variable metrics},
Ann.\ of Math. (2) \textbf{72} (1960), 385--404.

\bibitem{AM01}
H.~Airault and P.~Malliavin,
\emph{Unitarizing probability measures for representations of Virasoro
algebra},
J. Math. Pures Appl. (9) \textbf{80} (2001), no.~6, 627--667.

\bibitem{AN08}
H.~Airault and Y.~A.~Neretin,
\emph{On the action of the Virasoro algebra on the space of univalent
functions},
Bull. Sci. Math. \textbf{132} (2008), no.~1, 27--39.

\bibitem{BJ24}
G.~Baverez and A.~Jego,
\emph{The CFT of SLE loop measures and the Kontsevich--Suhov conjecture},
arXiv:2407.09080, 2024, version 3 (July 2026).

\bibitem{BJ25}
G.~Baverez and A.~Jego,
\emph{Conformal welding and the matter--Liouville--ghost factorisation},
arXiv:2502.17076, 2025, version 2 (July 2025).

\bibitem{GQW25}
M.~Gordina, W.~Qian, and Y.~Wang,
\emph{Infinitesimal conformal restriction and unitarizing measures for
Virasoro algebra},
J. Math. Pures Appl. \textbf{195} (2025), Article 103669,
doi:10.1016/j.matpur.2025.103669; arXiv:2407.09426v3.

\bibitem{deBranges85}
L.~de Branges,
\emph{A proof of the Bieberbach conjecture},
Acta Math. \textbf{154} (1985), no.~1--2, 137--152.


\bibitem{FS25}
S.~Fan and J.~Sung,
\emph{Quasi-invariance for SLE welding measures},
arXiv:2502.15669, 2025.

\bibitem{FVW26}
S.~Fan, F.~Viklund, and Y.~Wang,
\emph{On the Loewner energy of a welding homeomorphism},
arXiv:2604.16737, 2026.

\bibitem{KMS22}
K.~Kavvadias, J.~Miller, and L.~Schoug,
\emph{Conformal removability of $\mathrm{SLE}_4$},
arXiv:2209.10532, 2022.

\bibitem{Nag92}
S.~Nag,
\emph{A period mapping in universal Teichm\"uller space},
Bull. Amer. Math. Soc. (N.S.) \textbf{26} (1992), no.~2, 280--287.

\bibitem{NS95}
S.~Nag and D.~Sullivan,
\emph{Teichm\"uller theory and the universal period mapping via quantum
calculus and the $H^{1/2}$ space on the circle},
Osaka J. Math. \textbf{32} (1995), no.~1, 1--34.

\bibitem{SW24}
J.~Sung and Y.~Wang,
\emph{Quasiconformal deformation of the chordal Loewner driving function
and first variation of the Loewner energy},
Math. Ann. \textbf{390} (2024), 4789--4812.

\bibitem{TT06}
L.~A.~Takhtajan and L.-P.~Teo,
\emph{Weil--Petersson metric on the universal Teichm\"uller space},
Mem. Amer. Math. Soc. \textbf{183} (2006), no.~861, viii+119.

\bibitem{Wang19}
Y.~Wang,
\emph{Equivalent descriptions of the Loewner energy},
Invent. Math. \textbf{218} (2019), no.~2, 573--621.

\bibitem{Zhan21}
D.~Zhan,
\emph{SLE loop measures},
Probab. Theory Related Fields \textbf{179} (2021), no.~1--2, 345--406.

\bibitem{KO24}
P.~Krupski and K.~Omiljanowski,
\emph{On hyperspaces of knots and planar simple closed curves},
arXiv:2401.13084, 2024.

\end{thebibliography}
\end{document}